\numberwithin{equation}{section}
\newtheorem{Theorem}{Theorem}[section]
\newtheorem*{Theorem*}{Theorem}
\newtheorem{Corollary}[Theorem]{Corollary}
\newtheorem{Lemma}[Theorem]{Lemma}
\newtheorem{Proposition}[Theorem]{Proposition}
\newtheorem{Assumption}[Theorem]{Assumption}
\newtheorem{Claim}[Theorem]{Claim}
\theoremstyle{definition}
\newtheorem{Definition}[Theorem]{Definition}
\newtheorem{Example}[Theorem]{Example}
\newtheorem{Remark}[Theorem]{Remark}
\newcommand\rank{\mathop{\rm rank}\nolimits}
\newcommand\im{\mathop{\rm Im}\nolimits}
\newcommand\coker{\mathop{\rm coker}\nolimits}
\newcommand{\Tr}{\mathop{\rm Tr}\nolimits}
\newcommand\Hom{\mathop{\rm Hom}\nolimits}
\newcommand\End{\mathop{\rm End}\nolimits}
\newcommand\Spec{\mathop{\rm Spec}\nolimits}
\newcommand\Hilb{\mathop{\rm Hilb}\nolimits}
\newcommand{\length}{\mathop{\rm length}\nolimits}
\newcommand{\res}{\mathop{\sf res}\nolimits}
\newcommand{\Sym}{\mathop{\rm Sym}\nolimits}
\newcommand{\balpha}{\boldsymbol \alpha}
\begin{document}
\allowdisplaybreaks

\newcommand{\arXivNumber}{2108.09667}

\renewcommand{\PaperNumber}{013}

\FirstPageHeading

\ShortArticleName{Moduli Space of Factorized Ramified Connections}

\ArticleName{Moduli Space of Factorized Ramified Connections\\ and Generalized Isomonodromic Deformation}

\Author{Michi-aki INABA}

\AuthorNameForHeading{M.-a.~Inaba}

\Address{Department of Mathematics, Kyoto University, Kyoto 606-8502, Japan}
\Email{\href{mailto:email@address}{inaba@math.kyoto-u.ac.jp}}

\ArticleDates{Received January 21, 2022, in final form February 23, 2023; Published online March 22, 2023}

\Abstract{We introduce the notion of factorized ramified structure on a generic ramified irregular singular connection on a smooth projective curve. By using the deformation theory of connections with factorized ramified structure, we construct a canonical 2-form on the moduli space of ramified connections. Since the factorized ramified structure provides a~duality on the tangent space of the moduli space, the 2-form becomes nondegenerate. We prove that the 2-form on the moduli space of ramified connections is ${\rm d}$-closed via constructing an unfolding of the moduli space. Based on the Stokes data, we introduce the notion of local generalized isomonodromic deformation for generic unramified irregular singular connections on a unit disk. Applying the Jimbo--Miwa--Ueno theory to generic unramified connections, the local generalized isomonodromic deformation
is equivalent to the extendability of the family of connections to an integrable connection. We give the same statement for ramified connections. Based on this principle of Jimbo--Miwa--Ueno theory, we construct a global generalized isomonodromic deformation on the moduli space of generic ramified connections by constructing a horizontal lift of a universal family of connections. As a consequence of the global generalized isomonodromic deformation, we can lift the relative symplectic form on the moduli space to a total closed form, which is called a generalized isomonodromic 2-form.\looseness=-1}

\Keywords{moduli; ramified connection; isomonodromic deformation; symplectic structure}

\Classification{14D20; 53D30; 34M56; 34M40}

\vspace{-6mm}

{\small \setcounter{tocdepth}{1}
\tableofcontents}

\section{Introduction}\label{section1}

Let $C$ be a complex smooth projective curve and $D$ be an effective divisor on $C$.
Consider an algebraic vector bundle $E$ on $C$ of rank $r$ and a rational connection
$\nabla\colon E\longrightarrow E\otimes\Omega_C(D)$
admitting poles along $D$.
The connection $\nabla$ is said to be logarithmic at $x\in D$
if it has at most a simple pole at~$x$.
The notion of logarithmic connection is well formulated in~\cite{Simpson-0}
by adding parabolic structure on the underlying vector bundle.
In~\cite{Simpson-0}, C.T.~Simpson established the Riemann--Hilbert correspondence
as an isomorphism between parabolic logarithmic connections and filtered local systems.
The most important point of~\cite{Simpson-0} is the non-abelian Hodge theory,
which connects parabolic logarithmic connections with parabolic Higgs bundles
through a harmonic metric.
Its effect on the geometry of the corresponding two algebraic moduli spaces
seems mysterious to the author.

The connection $\nabla$ is said to be irregular singular at $x\in D$,
if it cannot be reduced to a~logarithmic connection via a meromorphic transform around~$x$.
So the order of pole of $\nabla$ at $x$ is at least two.
An irregular singular connection $\nabla$ is locally written
$\nabla|_U={\rm d}+A(z){\rm d}z/z^m$
for a~matrix $A(z)$ of holomorphic functions in $z$,
where $m$ is the order of pole of $\nabla$ at $x$ and $z$ is a~local holomorphic coordinate
on a neighborhood $U$ of~$x$.
We say that~$\nabla$ is generic unramified at~$x$ if
the leading term~$A(0)$ has $r$ distinct eigenvalues.
Among the irregular singular connections,
a generic unramified connection is of most generic type.
The next generic irregular singular connections are
generic ramified connections.
In this paper, we say that a connection $(E,\nabla)$ is generic $\nu$-ramified at $x$ if
the formal completion $\big(\widehat{E},\widehat{\nabla}\big)$ at $x$ is isomorphic to
$(\mathbb{C}[[w]],\nabla_{\nu})$,
where $w=z^{\frac{1}{r}}$,
$\nu(w)\in\sum_{l=0}^{mr-r} \mathbb{C}w^l{\rm d}w/w^{mr-r+1}$,
the formal connection $\nabla_{\nu}$ is defined by
\begin{equation}
\label{equation: naive generic ramified connection}
 \nabla_{\nu(w)}\colon \ \mathbb{C}[[w]] \ni f(w)
 \mapsto {\rm d}f(w)+f(w)\nu(w)
 \in \mathbb{C}[[w]]\otimes\frac{{\rm d}z}{z^m}
\end{equation}
and the $w{\rm d}w/w^{mr-r+1}$-coefficient of $\nu(w)$ does not vanish.

The moduli space of logarithmic connections is well formulated by adding the
parabolic structure and it is smooth and has a symplectic structure.
It is constructed in the work with K.~Iwasaki and M.-H.~Saito in~\cite{Inaba-1, IIS-1}.
For unramified irregular singular connections,
the moduli space is analytically constructed by O.~Biquard and P.~Boalch in~\cite{Biquard-Boalch} together with establishing the
non-abelian Hodge theory.
The algebraic construction of the moduli space of generic unramified irregular singular connections
was done in the work with Masa-Hiko Saito in~\cite{Inaba-Saito}
by using the same method as in the logarithmic case.
Compared with the unramified connections,
it is a~more difficult task to construct the
moduli space of ramified connections.
Over the trivial bundle on $\mathbb{P}^1$,
Bremer and Sage construct, in~\cite{Bremer-Sage-1},
the moduli space of ramified connections
via a careful consideration of the formal ramified structure
from a viewpoint of representation theory.
In a higher genus case, the moduli space of ramified connections of generic ramified type
is constructed by the author in \cite{Inaba-2}.
T.~Pantev and B.~To\"{e}n introduce in~\cite{Pantev-Toen} the derived geometric approach to
the moduli space of connections in a general abstract setting.

Both in logarithmic and unramified irregular singular cases,
the moduli space of connections has a natural symplectic structure.
Roughly speaking, the moduli space of parabolic logarithmic connections
is a torsor over the moduli space of parabolic bundles,
which is locally isomorphic to the cotangent bundle.
So the moduli space has a natural symplectic structure,
though we precisely need a more careful consideration to the locus of
non-simple underlying parabolic bundles.
The method of parabolic structure is also valid for the construction of symplectic form
on the moduli space of unramified irregular singular connections.
However, in the case of ramified irregular singular connections,
the method of parabolic structure does not go well with the construction of symplectic form.
In \cite[Theorem 4.1]{Inaba-2}, we proved the existence of a symplectic form
on the moduli space of ramified connections,
but the proof of nondegeneracy was not given directly.
It is reduced to the nondegeneracy
in the case of logarithmic or unramified irregular singular connections
by using an argument of codimension.
So, in \cite{Inaba-2}, we could not find a duality on the tangent space
like in logarithmic or unramified irregular singular case.
In this paper, we introduce the notion of factorized ramified structure,
which supplies the place of parabolic structure.
It induces a canonical duality on the tangent space of
the moduli space of ramified connections which was not done in \cite{Inaba-2}.
In order to see it easily, we adopt a simpler setting than~\cite{Inaba-2},
while we follow almost the same formulation of the moduli space
constructed in~\cite{Inaba-2}.\looseness=-1

Let us see a rough idea of factorized ramified structure.
Assume that a rank $r$ irregular singular connection $(E,\nabla)$ is formally isomorphic to
$(\mathbb{C}[[w]],\nabla_{\nu(w)})$ at $x$ for
$\nabla_{\nu(w)}$ defined in~(\ref{equation: naive generic ramified connection}).
Let $N$ be the endomorphism of $E|_{mx}$,
which corresponds to the action of $w$ on $\mathbb{C}[w]/(w^{mr})$.
Then we can consider the ${\mathcal O}_{mx}[T]$-module structure on
$E|_{mx}$ defined by $P(T)v:=P(N)v$ for a polynomial
$P(T)$ in ${\mathcal O}_{mx}[T]$.
By the elementary linear algebra, we can see that there is an isomorphism
${\mathcal O}_{mx}[T]/(T^r-z)\xrightarrow{\sim} E|_{mx}$
of ${\mathcal O}_{mx}[T]$-modules.
The dual $E|_{mx}^{\vee}$ also has the ${\mathcal O}_{mx}[T]$-module structure
via the map $^tN$ and we have an isomorphism
$E|_{mx}^{\vee} \xrightarrow{\sim} {\mathcal O}_{mx}[T]/(T^r-z)$
of ${\mathcal O}_{mx}[T]$-modules.
Composing these isomorphisms, we get an isomorphism
$\theta\colon E|_{mx}^{\vee} \stackrel{\sim}\longrightarrow E|_{mx}$
of ${\mathcal O}_{mx}[T]$-modules.
Set $\kappa:=\theta^{-1}\circ N$.
Then $\theta$ induces a perfect pairing
$\vartheta\colon E|_{mx}^{\vee}\times E|_{mx}^{\vee}\longrightarrow{\mathcal O}_{mx}$
which becomes symmetric
and $\kappa$ induces a pairing
$\varkappa\colon E|_{mx}\times E|_{mx} \longrightarrow {\mathcal O}_{mx}$
which is also symmetric.
Roughly speaking, a factorized ramified structure on $(E,\nabla)$ at $x$ is
given by $(\theta,\kappa)$ or~$(\vartheta,\varkappa)$.

The purpose of introducing factorized ramified structure
is to construct a duality on the tangent space of the moduli space.
So we require it to go well with the deformation theory.
In that context, all the conditions for the connection $(E,\nabla)$
should be given only by the restriction $(E,\nabla)|_{mx}$ to the divisor $mx$
and the rational one form $\nu(w)$ should be considered modulo holomorphic forms in~$w$.
Under such setting, the endomorphism $N$ on $E|_{mx}$ in fact has an ambiguity
in $z^{m-1}$-term, while the restriction $N|_{(m-1)x}$ is uniquely determined
from~$\nabla|_{mx}$ and~$\nu(w)$.
We take account of this ambiguity in the precise formulation of
factorized $\nu$-ramified structure in
Definition \ref{def-fac-connection}.

In Section~\ref{section: definition regular singular, unramified or ramified structure},
we introduce the notion of logarithmic $\lambda$-parabolic structure
and that of generic unramified $\mu$-parabolic structure, which locally
characterize the parabolic connections introduced in~\cite{Inaba-1}
and the unramified parabolic connections introduced in~\cite{Inaba-Saito}, respectively.
We also recall the notion of generic $\nu$-ramified structure given in~\cite{Inaba-2}.
In Section~\ref{section: factorized ramified structure},
we introduce the notion of factorized $\nu$-ramified structure and
prove that it is equivalent to the generic $\nu$-ramified structure given
in Section~\ref{section: definition regular singular, unramified or ramified structure}.
In Section~\ref{section: recovery of formal structure},
we see that a generic $\nu$-ramified structure enables us to
recover a formal isomorphism to the connection $\nabla_{\nu}$
in~(\ref{equation: naive generic ramified connection}).
In Section~\ref{section: construction of the moduli space},
we give a~construction of the moduli space of connections with $(\lambda,\mu,\nu)$-structure
(Theorem~\ref{theorem: existence of the moduli space})
using an embedding to the moduli space of parabolic triples constructed in~\cite{IIS-1}.
It is a~variant of the standard method of the GIT-construction of the moduli space
established by C.T.~Simpson in~\cite{Simpson-1,Simpson-2}.
The following is an important property of the moduli space
(see Theorem \ref{theorem: existence of symplectic form and d-closedness}
in a~precise setting).\looseness=-1

\begin{Theorem} \label{theorem: 1}
There exists a canonical symplectic form on the moduli space of connections
with $(\lambda,\mu,\nu)$-structure.
\end{Theorem}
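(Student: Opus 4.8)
The plan is to realize the tangent space of the moduli space at a point corresponding to $(E,\nabla)$ with its $(\lambda,\mu,\nu)$-structure as the first hypercohomology $\mathbb{H}^1(C,\mathcal{F}^\bullet)$ of a two-term deformation complex
\[
\mathcal{F}^\bullet\colon\quad \mathcal{F}^0\xrightarrow{\ \nabla\ }\mathcal{F}^1,
\]
where $\mathcal{F}^0\subset\mathcal{E}nd(E)$ is the subsheaf of infinitesimal endomorphisms preserving the parabolic and factorized ramified data, $\mathcal{F}^1\subset\mathcal{E}nd(E)\otimes\Omega_C(D)$ is the twisted sheaf cut out by the corresponding compatibility along $D$, and the differential is the commutator $s\mapsto[\nabla,s]=\nabla\circ s-s\circ\nabla$. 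First I would check, by the standard deformation theory of the pair $(E,\nabla)$ refined to incorporate the factorized $\nu$-ramified structure at the ramified points, that $\mathbb{H}^1(C,\mathcal{F}^\bullet)$ is the Zariski tangent space while obstructions lie in $\mathbb{H}^2(C,\mathcal{F}^\bullet)$; smoothness of the moduli space then comes from the vanishing or controlled behaviour of $\mathbb{H}^0$ and $\mathbb{H}^2$.

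Next I would build the $2$-form as a cup product. The trace form $\mathcal{E}nd(E)\otimes\mathcal{E}nd(E)\to\mathcal{O}_C$, $(a,b)\mapsto\Tr(ab)$, combined with the symmetric pairing $\varkappa$ (equivalently the duality $\theta$) furnished by the factorized ramified structure at each ramified point, should assemble into a pairing of complexes $\mathcal{F}^\bullet\otimes\mathcal{F}^\bullet\to\Omega_C[-1]$, i.e.\ valued in $\Omega_C$ placed in degree one. The induced cup product
\[
\mathbb{H}^1(C,\mathcal{F}^\bullet)\times\mathbb{H}^1(C,\mathcal{F}^\bullet)\longrightarrow\mathbb{H}^2(C,\Omega_C[-1])=H^1(C,\Omega_C)\xrightarrow{\ \sim\ }\mathbb{C}
\]
then defines a $2$-form $\omega$; the graded-commutativity of the cup product on degree-one classes together with the symmetry of $\varkappa$ forces $\omega$ to be alternating. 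The point requiring care here is that the trace-plus-$\varkappa$ pairing must actually land in the honest sheaf $\Omega_C$ with no poles: the polar parts coming from the irregular singularity have to cancel, and it is precisely the factorization at the ramified points --- together with the controlled ambiguity of $N$ in the $z^{m-1}$-term built into the definition of factorized $\nu$-ramified structure --- that produces this cancellation.

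For nondegeneracy, the decisive feature is that the factorized ramified structure makes the local pairing perfect, so that $\mathcal{F}^\bullet$ becomes self-dual: the pairing should induce an isomorphism $\mathcal{F}^\bullet\xrightarrow{\sim}R\mathcal{H}om(\mathcal{F}^\bullet,\Omega_C[-1])$ in the derived category. Composing with Serre duality, which identifies $\mathbb{H}^1(C,\mathcal{F}^\bullet)^\vee$ with $\mathbb{H}^1\big(C,R\mathcal{H}om(\mathcal{F}^\bullet,\Omega_C[-1])\big)$, shows that $\omega$ identifies the tangent space with its own dual and is therefore nondegenerate. This is the conceptual gain over \cite{Inaba-2}, where nondegeneracy could only be deduced indirectly, by a codimension argument reducing to the logarithmic and unramified cases; here it is immediate from the self-duality supplied by $(\theta,\kappa)$.

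Finally, for $d$-closedness I would construct an unfolding of the moduli space --- a family over an enlarged base in which the local formal invariants (the coefficients of $\nu$ and the remaining spectral parameters) are allowed to vary --- and exhibit $\omega$ as the restriction of a relative $2$-form. Representing $\omega$ by an explicit \v{C}ech cocycle and differentiating, the quantity $d\omega$ decomposes into local contributions supported at the singular points, and the extra directions of the unfolding provide exactly the room needed to write each contribution as a coboundary, giving $d\omega=0$. I expect this closedness step to be the main obstacle: it demands a delicate global-to-local analysis of the cocycle representing $\omega$ and a careful verification that the contributions from the ramified points vanish, which is where the bookkeeping of the factorized ramified structure must be handled most carefully.
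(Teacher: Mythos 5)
Your overall architecture (hypercohomology tangent space, trace-type cup product, duality for nondegeneracy, unfolding for ${\rm d}$-closedness) matches the paper's, but two of your key steps do not go through as stated.

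First, the tangent complex is not the two-term complex $\mathcal{F}^0\xrightarrow{\nabla}\mathcal{F}^1$ of endomorphisms preserving the data, and the self-duality $\mathcal{F}^\bullet\cong R\mathcal{H}om(\mathcal{F}^\bullet,\Omega_C[-1])$ you invoke is exactly what fails for that naive complex at the ramified points --- this is the obstruction that already blocked a direct nondegeneracy proof in the earlier work \cite{Inaba-2}. The trace pairing only identifies $\ker\big(\mathcal{G}^1\to G^1\big)$ with $\big(\mathcal{G}^0\big)^\vee\otimes\Omega^1_C$; the quotient $G^1$ has no dual partner inside endomorphisms of $E$. The paper's fix is that the first-order deformations of the factorization $N_k=\theta_k\circ\kappa_k$ (the pairs $(\tau_k,\xi_k)\in\Sym^2(\overline{W})\oplus\Sym^2(\overline{V})$, together with the automorphism data $A^0$, $A^1$) are genuine extra directions that must be built into the complex: the actual tangent complex is the three-term cone $\mathcal{F}^\bullet=\mathrm{Cone}\big(\mathcal{G}^\bullet\to\mathcal{S}^\bullet_{\mathrm{ram}}[1]\big)[-1]$, and the nondegenerate pairing is the trace pairing on the $\mathcal{G}$-part \emph{plus} the pairing $\Xi_{\mathrm{ram}}$ between $\Sym^2(\overline{W})$ and $\Sym^2(\overline{V})$, whose perfectness (between $\ker\big(\Sym^2(\overline{V})\to A^1\big)$ and $\coker\big(A^0\to\Sym^2(\overline{W})\big)$) is a nontrivial local computation, fed into a five-lemma argument. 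Your proposal collapses these extra directions into $\mathcal{E}nd(E)$ and thereby assumes away the hard point. (Relatedly, the pairing need not land in pole-free $\Omega_C$: the paper takes values in the complex $\big[\mathcal{O}_C\to\Omega^1_C(D_{\mathrm{ram}})\to\Omega^1_C(D_{\mathrm{ram}})|_{D_{\mathrm{ram}}}\big]$, whose $\mathbf{H}^2$ is still $\mathbb{C}$, rather than arranging a cancellation of polar parts.)

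Second, your mechanism for ${\rm d}$-closedness --- using the extra directions of the unfolding to write each local contribution to ${\rm d}\omega$ as a coboundary --- is not how the unfolding is used and is not substantiated. The paper's unfolding deforms the multiple divisor $D$ itself, over a parameter $h$, into a reduced divisor $D_h$ (splitting each point of multiplicity $m_x$ into $m_x$ simple poles), so that for generic $h$ the fiber of the family of moduli spaces is a moduli space of \emph{logarithmic} connections. There the $2$-form is identified with the one of \cite{Inaba-1}, whose ${\rm d}$-closedness is known, and smoothness of the total family over $H^{\circ}$ then forces ${\rm d}\omega=0$ on the special fiber $h=0$. Without this reduction to the logarithmic case, your closedness step has no actual argument behind it.
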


For the construction of the canonical $2$-form in Theorem \ref{theorem: 1}
(or Theorem \ref{theorem: existence of symplectic form and d-closedness} precisely),
we describe the tangent space of the moduli space
using the hypercohomology of a complex defined in Section \ref{section: tangent space}.
In Section~\ref{section: smoothness of the moduli},
we see that this tangent space has a canonical duality
(Proposition~\ref{proposition: nondegenerate pairing on tangent space})
coming from the factorized ramified structure,
which gives a canonical nondegenerate $2$-form.
This duality is also of benefit to prove the smoothness of the moduli space.
We also need to prove that the canonical $2$-form is ${\rm d}$-closed.
For its proof, we construct an unfolding of the moduli space of connections
with $(\lambda,\mu,\nu)$-structure in Section~\ref{section: symplectic structure}.
An unfolding means a deformation of the moduli space to logarithmic moduli spaces.
A factorized ramified structure enables us to construct such an unfolding
in an easy way.
By reducing to the fact that the canonical $2$-form on the logarithmic
moduli space is ${\rm d}$-closed, we can complete the proof of
Theorem~\ref{theorem: 1}.

The main aim of considering the moduli space of connections with $(\lambda,\mu,\nu)$-structure
is to construct the generalized isomonodromic deformation
that fits in with our setting of the moduli space.
In the logarithmic case, the isomonodromic deformation naively means that
the monodromy representation corresponding to the connection is constant.
Over the trivial bundle on~$\mathbb{P}^1$,
the isomonodromic deformation is classically known as the Schlesinger equation.
The formulation of isomonodromic deformation in a higher genus case requires
an appropriate setting of the moduli space of connections, which is done
in the work with K.~Iwasaki and M.-H.~Saito in~\cite{IIS-1} and in~\cite{Inaba-1}.
A cohomological description of the isomonodromic deformation on the moduli space
is also established by I.~Biswas, V.~Heu, J.~Hurtubise and A.~Komyo
in \cite{Biswas-Heu-Hurtubise-1, Biswas-Heu-Hurtubise-2,Komyo-1}.
Conceptually, the isomonodromic deformation is obtained by
pulling back, via the Riemann--Hilbert morphism,
the local trivial foliation on the family of character varieties.

For irregular singular connections, we cannot recover a meromorphic connection
from the naive monodromy data and we need to consider the Stokes data.
By virtue of the theorem of Deligne, Malgrange and Sibuya
\cite[Theorems~4.5.1 and 4.7.3]{Babbitt-Varadarajan},
there is a bijective correspondence between the local meromorphic connections
and the Stokes data on a punctured disk.
The generalized isomonodromic deformation means a family of irregular singular connections,
whose corresponding monodromy representation equipped with the Stokes data
is locally constant.
In~\cite{Jimbo-Miwa-Ueno},
M.~Jimbo, T.~Miwa and K.~Ueno established
the formulation of generalized isomonodromic deformation of
generic unramified irregular singular connections over the trivial bundle on $\mathbb{P}^1$
and described its differential equation completely.
The generalized isomonodromic deformation was also introduced by B. Malgrange
in \cite{Malgrange}.
The purpose of this paper is to extend this theory to higher genus case
including generic ramified connections.
In order to realize the formulation of generalized isomonodromic deformation in such a general setting,
we need the moduli space of connections
with $(\lambda,\mu,\nu)$-structure constructed
in Section~\ref{section: construction of the moduli space}.\looseness=1

In \cite{Boalch-1}, P.~Boalch constructs the moduli space of unramified connections
over the trivial bundle on $\mathbb{P}^1$ and describes the generalized isomonodromic deformation
in~\cite{Jimbo-Miwa-Ueno} through the correspondence with the wild character variety
which is the moduli space of monodromy Stokes data.
P.~Boalch extends the framework of wild character variety to the higher genus case
in~\cite{Boalch-2}.
In~\cite{van-der-Put-Saito},
M.~van der Put and M.-H.~Saito gives
another construction of the moduli space of monodromy Stokes data,
which includes all possible singularities, and
provides the explicit descriptions of the moduli spaces in the case of Painlev\'e equations.
I.Krichever also extends the argument by Jimbo, Miwa and Ueno in~\cite{Jimbo-Miwa-Ueno}
to the higher genus case and describes the generalized isomonodromic $2$-form in~\cite{Krichever}.
Placing importance on the Simpson's framework of Betti and de Rham correspondence
in~\cite{Simpson-2},
the generalized isomonodromic deformation is formulated
via the full moduli space of generic unramified connections on curves of general genus
in the work with M.-H.~Saito in~\cite{Inaba-Saito} and in~\cite{Inaba-3}.
C.~Bremer and D.~Sage establish the generalized isomonodromic deformation
of ramified connections over the trivial bundle on $\mathbb{P}^1$ in~\cite{Bremer-Sage-2}
and they prove the integrability condition of the generalized isomonodromic deformation
via examining a property of the corresponding differential ideal.
Their work is based on the construction of the moduli space in~\cite{Bremer-Sage-1},
which partially uses the method by P.~Boalch in~\cite{Boalch-1}.

In Section \ref{section: local analytic theory}, we recall a brief sketch of
the local analytic theory of ramified irregular singular connections.
First we consider the pullback of a generic ramified connection to a local analytic
ramified cover.
After applying an elementary transform of vector bundle to
the pullback of the ramified connection,
we get an unramified irregular connection.
Such a~process is called a~shearing transformation method
\cite[Section 19.3]{Wasow}.
Its description is given by K.~Diarra, F.~Loray and A.~Komyo
in \cite{Diarra-Loray,Komyo-2}
for rank $2$ ramified connections on~$\mathbb{P}^1$.
On the other hand,
we give a brief idea of producing the Stokes data corresponding to the unramified
connection on the local analytic ramified cover.
Then we give a definition of local generalized isomonodromic deformation
of generic unramified irregular singular connections on a unit disk
in Definition~\ref{definition: local generalized isomonodromic deformation}.
Applying the Jimbo--Miwa--Ueno theory in~\cite{Jimbo-Miwa-Ueno}
to the local setting, we get the following theorem
(see Theorem~\ref{theorem: Jimbo-Miwa-Ueno equation} precisely).

\begin{Theorem}[Jimbo, Miwa and Ueno] \label{theorem: 2}
A family of generic unramified irregular singular connections on a unit disk
is a local generalized isomonodromic deformation
if and only if it can be extended to an integrable connection.
\end{Theorem}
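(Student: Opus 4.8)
The plan is to recast the statement in the analytic language of Jimbo--Miwa--Ueno and to reduce both implications to the construction and analysis of a normalized fundamental solution. Write the family as $\nabla=\mathrm{d}+A(z,t)\,\mathrm{d}z$ on the unit disk $\Delta$ with irregular singular point at $z=0$, the deformation parameters $t=(t_i)$ ranging over a small polydisk $B$. By genericity the leading coefficient of $A$ has distinct eigenvalues, so the connection admits a formal fundamental solution $\hat Y(z,t)=\hat F(z,t)\,z^{L(t)}\exp\!\big(Q(z,t)\big)$, with $Q$ diagonal and polynomial in $z^{-1}$, $L$ the diagonal exponent of formal monodromy, and $\hat F$ a formal power series in $z$ with invertible constant term. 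On each sector bounded by the Stokes rays of $Q$ this formal series is asymptotic to a genuine holomorphic fundamental solution $Y_\sigma(z,t)$, and adjacent sectorial solutions are related by the Stokes matrices; together with $L$ and the formal monodromy these constitute the Stokes data entering Definition~\ref{definition: local generalized isomonodromic deformation}.

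\textbf{Integrable $\Rightarrow$ isomonodromic.}
Suppose $\nabla$ extends to a flat connection $\tilde\nabla=\mathrm{d}+A\,\mathrm{d}z+\sum_i B_i\,\mathrm{d}t_i$ on $\Delta\times B$, holomorphic away from $\{0\}\times B$. Flatness makes the horizontal sections over $(\Delta\setminus\{0\})\times B$ into a local system, so the monodromy around $z=0$ is locally constant in $t$. Shrinking $B$ so that no Stokes ray of $Q(z,t)$ sweeps past a fixed direction, each sectorial solution $Y_\sigma(z,t)$ can be continued flatly in the $t$-directions; since the Stokes matrices are the transition matrices between these flat continuations and the transition is pinned by the prescribed asymptotics governed by $Q$, they are independent of $t$. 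Hence the Stokes data is constant and the family is a local generalized isomonodromic deformation.

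\textbf{Isomonodromic $\Rightarrow$ integrable.}
Assume the Stokes data is constant and construct the deformation $1$-form directly. Fix one sectorial fundamental solution $Y(z,t)$ and set $B_i:=(\partial Y/\partial t_i)\,Y^{-1}$; then $Y$ satisfies $\partial_z Y=A Y$ and $\partial_{t_i}Y=B_i Y$ simultaneously, so the cross-derivative identities $\partial_{t_i}\partial_z Y=\partial_z\partial_{t_i}Y$ and $\partial_{t_i}\partial_{t_j}Y=\partial_{t_j}\partial_{t_i}Y$ yield the zero-curvature equations, i.e.\ integrability of $\tilde\nabla$, for free. The entire content is therefore to prove that each $B_i$ is single-valued and meromorphic on $\Delta$ with pole only at $z=0$. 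Single-valuedness follows because constancy of the formal monodromy and of the Stokes matrices makes $(\partial_{t_i}Y)Y^{-1}$ invariant under analytic continuation around $z=0$ and across Stokes rays. For the pole structure one substitutes the normal form $\hat F\,z^{L}\mathrm{e}^{Q}$ into $(\partial_{t_i}Y)Y^{-1}$ and computes that, when $\partial_{t_i}L=0$, its asymptotic expansion at $z=0$ equals $\partial_{t_i}\hat F\cdot\hat F^{-1}+\hat F\,\partial_{t_i}Q\,\hat F^{-1}$, whose polar part $\big(\hat F\,\partial_{t_i}Q\,\hat F^{-1}\big)_{-}$ is a polynomial in $z^{-1}$ while the rest is a convergent power series; by uniqueness of asymptotic expansions the genuine $B_i$ coincides with this meromorphic function. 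This is exactly the Jimbo--Miwa--Ueno deformation equation realized locally.

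\textbf{Main obstacle.}
The difficulty lies entirely in the meromorphy argument of the converse direction, and two points require care. First, one must arrange the deformation so that the formal exponent $L$ is held fixed, for otherwise the $t$-derivative of $z^{L}$ contributes a term $\hat F\,(\partial_{t_i}L)(\log z)\,\hat F^{-1}$ that would destroy both single-valuedness and meromorphy; genericity together with the chosen set of deformation parameters ensures $\partial_{t_i}L=0$. Second, one must verify that the polar part computed formally from $\hat F\,z^{L}\mathrm{e}^{Q}$ genuinely reproduces the analytic $B_i$ on each sector and that these sectorial expressions agree on overlaps, which is precisely where constancy of the Stokes matrices is used in an essential way. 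This matching, combined with the removable-singularity principle away from $z=0$, upgrades $B_i$ from a formal to an honest meromorphic deformation form and completes the extension of the family to an integrable connection.
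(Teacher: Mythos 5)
Your direction ``isomonodromic $\Rightarrow$ integrable'' is essentially the paper's argument, which in turn follows Jimbo--Miwa--Ueno: differentiating $Y_{\Sigma'}=Y_\Sigma C_{\Sigma,\Sigma'}$ with $C_{\Sigma,\Sigma'}$ constant shows that $B_j=-(\partial Y_\Sigma/\partial t_j)Y_\Sigma^{-1}$ is single-valued, the uniform asymptotics $Y_\Sigma\exp\big(\int\Lambda\big)\sim P$ force $w^{mr-r-1}B_j$ to be bounded and hence holomorphic across the puncture, so $B_j$ is meromorphic with controlled pole, and the cross-derivative identities give flatness. That half of your proposal matches the paper and is sound (including your observation that the residue/exponent part must be held fixed under the deformation, which is built into the paper's choice of time variables).

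The gap is in the direction ``integrable $\Rightarrow$ isomonodromic,'' at the step where you assert that the flat $t$-continuation of a sectorial solution is ``pinned by the prescribed asymptotics governed by $Q$'' and therefore yields constant Stokes matrices. In the paper's Definition~\ref{definition: local generalized isomonodromic deformation} the sectors may be small and the asymptotic solution $Y_\Sigma$ is \emph{not} unique (the paper explicitly flags this as the point of departure from the Jimbo--Miwa--Ueno setting), so the flat continuation $Y^{\mathrm{flat}}_\Sigma$ of $Y_\Sigma(\cdot,t')$ satisfies only $Y^{\mathrm{flat}}_\Sigma=Y_\Sigma\,C(t)$ for some a priori arbitrary invertible matrix $C(t)$ with $C(t')=I_r$, and nothing in your argument shows that $Y^{\mathrm{flat}}_\Sigma$ retains the prescribed asymptotics. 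The paper's Proposition~\ref{proposition: extendability to integrable connection in w variable} (essentially Mochizuki's result) closes exactly this hole: from the boundedness on sectors of $w^N\exp\big({-}\int\Lambda\big)\frac{\partial C}{\partial t_j}C^{-1}\exp\big(\int\Lambda\big)$, obtained by comparing the two sides of (\ref{equation: difference of dt-coefficient with asymptotic}), a growth analysis along rays forces $\frac{\partial C}{\partial t_j}C^{-1}$, hence $C(t)$ itself, to be block upper triangular with diagonal blocks diagonal, and a further comparison across sectors shows the diagonal part $C_{\mathrm{diag}}(t)$ is sector-independent; only after replacing $P(w,t)$ by $P(w,t)C_{\mathrm{diag}}(t)$ and $Y_\Sigma$ by $Y^{\mathrm{flat}}_\Sigma$ do the Stokes matrices become constant. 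Even in the classical large-sector setting, the claim that flat continuation preserves the asymptotic normalization requires an argument of this kind; as written, your proof assumes the conclusion at this step.
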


Precisely, there are ambiguities in the asymptotic solutions in our setting
and our proof of Theorem \ref{theorem: 2}
(Theorem \ref{theorem: Jimbo-Miwa-Ueno equation} precisely)
follows from the asymptotic property of flat solutions,
which is essentially the result by T. Mochizuki in \cite[Chapter 20]{Mochizuki}.
Using Theorem \ref{theorem: 2} (precisely Theorem \ref{theorem: Jimbo-Miwa-Ueno equation}),
we get a similar statement for local ramified connections
in Corollary \ref{corollary: rigorous ramified local generalized isomonodromic deformation},
which is a main consequence of Section \ref{section: local analytic theory}.

Based on the viewpoint of Theorem \ref{theorem: 2}
(precisely Theorem \ref{theorem: Jimbo-Miwa-Ueno equation} and its consequence
Corollary~\ref{corollary: rigorous ramified local generalized isomonodromic deformation}),
we formulate the generalized isomonodromic deformation
on the moduli space of ramified connections
in Section~\ref{section: generalized isomonodromy equation}.
For the construction, we introduce in Section~\ref{section: horizontal lift}
the notion of horizontal lift (Definition~\ref{definition: horizontal lift in one vector case})
of the universal family of connections
on the moduli space.
The horizontal lift is locally a restriction of the family of integrable connections, given in
Theorem~\ref{theorem: 2}
(precisely Corollary~\ref{corollary: rigorous ramified local generalized isomonodromic deformation}),
to a first order infinitesimal neighborhood of the base parameter space.
Nevertheless, it is defined purely algebraically.
In the case of logarithmic or unramified irregular singular connections,
the notion of horizontal lift is introduced in~\cite{Inaba-1,Inaba-3,Inaba-Saito}.
We can prove the existence and the uniqueness of the horizontal lift
in Propositions~\ref{proposition: horizontal lift in one variable}
and~\ref{proposition: horizontal lift in two variable of deep order},
whose proof needs an isomorphism
$(E,\nabla)|_{qx}\cong(\mathbb{C}[[w]],\nabla_{\nu})|_{qx}$
in deep order (for $q=2m-1$ or $q=3m-1$),
that is proved in Proposition~\ref{proposition: normalization of formal type over finite scheme}.
The existence of horizontal lift in Proposition~\ref{proposition: horizontal lift in one variable}
produces a~tangent splitting
$\Psi\colon \pi_{\mathcal T}^* T_{\mathcal T}
\longrightarrow
T_{M^{\balpha}_{{\mathcal C},{\mathcal D}}(\lambda,\tilde{\mu},\tilde{\nu})}$
in Section~\ref{section: generalized isomonodromy equation},
equation~(\ref{equation: generalized isomonodromic splitting homomorphism}),
where
$M^{\balpha}_{{\mathcal C},{\mathcal D}}(\lambda,\tilde{\mu},\tilde{\nu})$
is a family of moduli spaces of $\balpha$-stable connections
with $(\lambda,\tilde{\mu},\tilde{\nu})$-structure
and ${\mathcal T}$ is the space of time variables parameterizing
local exponents and curves with divisors.
We call the subbundle
$\im\Psi\subset T_{M^{\balpha}_{{\mathcal C},{\mathcal D}}(\lambda,\tilde{\mu},\tilde{\nu})}$
the generalized isomonodromic subbundle
(Definition \ref{definition: generalized isomonodromic subbundle}).
The main purpose of this paper is the following theorem
(see Theorem \ref{theorem: integrability of generalized isomonodromy} precisely).

\begin{Theorem} \label{theorem: 3} The generalized isomonodromic subbundle $\im\Psi$ of
$T_{M^{\balpha}_{{\mathcal C},{\mathcal D}}(\lambda,\tilde{\mu},\tilde{\nu})}$
satisfies the integrability condition
$[\im\Psi,\im\Psi]\subset\im\Psi$.
\end{Theorem}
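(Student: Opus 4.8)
Write $M:=M^{\balpha}_{{\mathcal C},{\mathcal D}}(\lambda,\tilde\mu,\tilde\nu)$ and let $\pi\colon M\to{\mathcal T}$ be the structure morphism, so that $\Psi$ splits the relative tangent sequence
\[
0\longrightarrow T_{M/{\mathcal T}}\longrightarrow T_{M}\stackrel{{\rm d}\pi}{\longrightarrow}\pi^{*}T_{\mathcal T}\longrightarrow 0 .
\]
The plan is to reduce the involutivity to the vanishing of the curvature of the Ehresmann connection determined by $\Psi$, and then to read off that vanishing from the two--variable horizontal lift. First I would reduce to a frame: since the Lie bracket obeys the Leibniz rule, $[\im\Psi,\im\Psi]\subset\im\Psi$ holds as soon as it is checked on generators of $\im\Psi$. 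Choosing local coordinates $t_1,\dots,t_n$ on ${\mathcal T}$, the subbundle $\im\Psi$ is generated over ${\mathcal O}_{M}$ by the lifts $\Psi(\partial_{t_i})$, which satisfy ${\rm d}\pi\big(\Psi(\partial_{t_i})\big)=\partial_{t_i}$. As projectable fields, their bracket pushes forward to $[\partial_{t_i},\partial_{t_j}]=0$, so $[\Psi(\partial_{t_i}),\Psi(\partial_{t_j})]$ is a section of $T_{M/{\mathcal T}}$; since $\Psi$ is a splitting and hence $\im\Psi\cap T_{M/{\mathcal T}}=0$, the theorem is equivalent to the commutation $[\Psi(\partial_{t_i}),\Psi(\partial_{t_j})]=0$.

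To prove this I would pass to the hypercohomology description of $T_{M}$ from Section~\ref{section: tangent space}. By Definition~\ref{definition: horizontal lift in one vector case} and Proposition~\ref{proposition: horizontal lift in one variable}, $\Psi(\partial_{t_i})$ at $[(E,\nabla)]$ is the first--order part of the isomonodromic deformation, namely the unique infinitesimal deformation that extends $\nabla$ to a connection $\nabla+\Phi_i\,{\rm d}t_i$ integrable in the curve and $t_i$ directions, whose local existence is Theorem~\ref{theorem: 2} and its ramified form Corollary~\ref{corollary: rigorous ramified local generalized isomonodromic deformation}. Representing the lifts by cocycles, $[\Psi(\partial_{t_i}),\Psi(\partial_{t_j})]$ is the class of the Frobenius expression $\partial_{t_i}\Phi_j-\partial_{t_j}\Phi_i+[\Phi_i,\Phi_j]$, corrected by the $\nabla$--action and the variation of the gluing data, i.e.\ the ${\rm d}t_i\wedge{\rm d}t_j$--component of the curvature of $\nabla+\Phi_i\,{\rm d}t_i+\Phi_j\,{\rm d}t_j$. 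Assembling both directions requires the deformation data to deep order, and this is exactly what Proposition~\ref{proposition: horizontal lift in two variable of deep order} provides at order $q=3m-1$: via the deep--order normalization $(E,\nabla)|_{qx}\cong(\mathbb{C}[[w]],\nabla_{\nu})|_{qx}$ of Proposition~\ref{proposition: normalization of formal type over finite scheme}, the local integrable connections glue, by the uniqueness in Propositions~\ref{proposition: horizontal lift in one variable} and~\ref{proposition: horizontal lift in two variable of deep order}, to a connection integrable on the whole curve to second order in $(t_i,t_j)$. Its flatness forces the above curvature component, hence the bracket, to vanish.

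The hard part will be the identification and globalization in the previous paragraph. The integrable connections of Corollary~\ref{corollary: rigorous ramified local generalized isomonodromic deformation} are analytic objects built from convergent models at the singular points, while $\Psi$ is defined purely algebraically and globally; I must match them to the order at which the curvature is read, and this rests on the uniqueness of the horizontal lift together with the deep--order isomorphism of Proposition~\ref{proposition: normalization of formal type over finite scheme}, which pins the second--order deformation uniquely. A further delicate point is that the time flow must preserve the factorized $(\lambda,\tilde\mu,\tilde\nu)$--structure, so that each $\Phi_i$ genuinely represents a tangent vector of $M$ and the whole computation remains inside $T_{M/{\mathcal T}}$ rather than in an ambient deformation space; I would verify this compatibility using the duality supplied by the factorized ramified structure. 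Once the gluing and the preservation of the factorized structure are secured, the commutation $[\Psi(\partial_{t_i}),\Psi(\partial_{t_j})]=0$ follows, and with it the integrability $[\im\Psi,\im\Psi]\subset\im\Psi$.
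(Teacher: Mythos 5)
Your proposal is correct in substance and rests on exactly the same key input as the paper: the existence and uniqueness of the horizontal lift over $\Spec{\mathcal O}_{\mathcal T'}[\epsilon_1,\epsilon_2]/\big(\epsilon_1^2,\epsilon_2^2\big)$ (Proposition~\ref{proposition: horizontal lift in two variable of deep order}), with the obstruction to involutivity sitting in the $\epsilon_1\epsilon_2$-term. Your reduction to $[\Psi(\partial_{t_i}),\Psi(\partial_{t_j})]=0$ is also implicit in the paper, which proves the stronger statement $\Phi([v_1,v_2])=[\Phi(v_1),\Phi(v_2)]$ together with ${\mathcal O}_{\mathcal T}$-linearity of $\Phi$ (Proposition~\ref{proposition: isomonodromic splitting is a homomorphism}). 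Where you diverge is in the mechanism: you propose to exhibit the bracket as the ${\rm d}t_i\wedge{\rm d}t_j$-component of the curvature of $\nabla+\Phi_i\,{\rm d}t_i+\Phi_j\,{\rm d}t_j$ in the hypercohomology description, whereas the paper avoids any curvature computation on the moduli space and argues purely formally -- it compares the commutator of the infinitesimal automorphisms $\tilde{I}_{\Phi(\tilde v_2)}\circ\tilde{I}_{\Phi(\tilde v_1)}\circ\tilde{I}_{\Phi(\tilde v_2)}^{-1}\circ\tilde{I}_{\Phi(\tilde v_1)}^{-1}$ with the lift of $I_{v_1v_2-v_2v_1}\circ\rho$, and matches the two horizontal lifts by uniqueness. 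The formal route buys you two simplifications that your sketch labels as ``the hard part'': first, no appeal to the analytic theory of Section~\ref{section: local analytic theory} (Theorem~\ref{theorem: Jimbo-Miwa-Ueno equation}, Corollary~\ref{corollary: rigorous ramified local generalized isomonodromic deformation}) is needed for the integrability proof itself -- those results only justify calling $\im\Psi$ isomonodromic, while the lift and its uniqueness are entirely algebraic -- so there is no analytic-versus-algebraic matching to perform; second, preservation of the factorized $(\lambda,\tilde\mu,\tilde\nu)$-structure is built into the definition and the existence/uniqueness lemmas of the horizontal lift (Lemmas~\ref{lemma: existence of local horizontal lift of ramified connection} and~\ref{lemma: uniqueness of ramified local horizontal lift}), so it does not require a separate verification via the duality. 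If you carry out your curvature computation carefully it will close, but the bookkeeping of how the cocycle for $\Psi(\partial_{t_j})$ varies along the flow of $\Psi(\partial_{t_i})$ is precisely what the paper's commutator-of-flows argument packages away.
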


In the proof of the above theorem, we need the uniqueness of the horizontal lift
with respect to two deformation parameters $\epsilon_1$, $\epsilon_2$,
which is proved in Proposition~\ref{proposition: horizontal lift in two variable of deep order}.
We can prove the integrability condition of Theorem~\ref{theorem: 3}
by looking at the $\epsilon_1\epsilon_2$-term of the horizontal lift.

By Theorem~\ref{theorem: 3}
(or Theorem \ref{theorem: integrability of generalized isomonodromy}),
the generalized isomonodromic subbundle $\im\Psi$ determines a foliation on the moduli space
$M^{\balpha}_{{\mathcal C},{\mathcal D}}(\lambda,\tilde{\mu},\tilde{\nu})$,
which we call the generalized isomonodromic foliation
(Definition \ref{definition: generalized isomonodromic foliation}).
We regard the generalized isomonodromic subbundle or the induced foliation
as the generalized isomonodromic deformation.
However, our construction of generalized isomonodromic deformation is not complete,
because we do not establish the generalized Riemann--Hilbert correspondence
between the moduli space of connections and the wild character variety.
The construction of wild character variety in~\cite{Boalch-Yamakawa} will be
a key work in that framework.

The generalized isomonodromic deformation is known to be characterized by a canonical $2$-form,
which is introduced in~\cite{Jimbo-Miwa-Ueno} and extended to higher genus case
in~\cite{Krichever}.
The works \cite{Boalch-1,Bremer-Sage-2}
are also based on this principle.
By means of the generalized isomonodromic subbundle
$\im\Psi$ constructed in Theorem \ref{theorem: 3},
we can extend the relative symplectic form
given in Theorem~\ref{theorem: 1} to a total $2$-form
(Definition~\ref{definition: generalized isomonodromic 2-form}),
which we call the generalized isomonodromic $2$-form.
Using the generalized isomonodromic foliation produced by Theorem~\ref{theorem: 3},
we can prove in Corollary~\ref{corollary d-closedness of generalized isomonodromic 2-form}
that the generalized isomonodromic $2$-form is ${\rm d}$-closed.

\section[Logarithmic, unramified irregular singular or ramified irregular singular structure\\ on a connection]{Logarithmic, unramified irregular singular\\ or ramified irregular singular structure on a connection}\label{section: definition regular singular, unramified or ramified structure}

Let $C$ be a complex smooth projective curve of genus $g$.
We consider an effective divisor
$D=D_{\mathrm{log}}+D_{\mathrm{un}}+D_{\mathrm{ram}}$ on $C$,
where
$D_{\mathrm{log}}$, $D_{\mathrm{un}}$ and $D_{\mathrm{ram}}$ are mutually disjoint,
$D_{\mathrm{log}}$ is a reduced divisor,
$D_{\mathrm{un}}=\sum_{x\in D_{\mathrm{un}}} m_x x$
and
$D_{\mathrm{ram}}=\sum_{x\in D_{\mathrm{ram}}} m_x x$
are multiple divisors with $m_x\geq 2$ for
$x\in D_{\mathrm{un}} \cup D_{\mathrm{ram}}$.

For each point $x\in D_{\mathrm{log}}$,
we fix a tuple $(\lambda_0^x,\dots,\lambda_{r-1}^x)\in\mathbb{C}^r$
and put
$\lambda^x:=(\lambda^x_k)_{0\leq k\leq r-1}$
and $\lambda:=(\lambda^x)_{x\in D_{\mathrm{log}}}$.

For $x\in D_{\mathrm{un}}$, we take
$\mu^x_0,\dots,\mu^x_{r-1}\in \Omega^1_C(m_x x)\big|_{m_x x}$
whose leading terms are mutually distinct.
In other words, $\mu^x_k- \mu^x_{k'}$ is a generator of the ${\mathcal O}_{m_xx}$-module
$\Omega^1_C(m_x x)\big|_{m_xx}$ for $k\neq k'$.
We write
$\mu^x:=(\mu^x_k)_{0\leq k\leq r-1}$
and
$\mu:=(\mu^x)_{x\in D_{\mathrm{un}}}$.

Let $E$ be an algebraic vector bundle on $C$
of rank $r$ and let
$\nabla\colon E\longrightarrow E\otimes \Omega^1_C(D)$
be an algebraic connection admitting poles along $D$.

\begin{Definition}
We say that $l^x$ is a logarithmic $\lambda^x$-parabolic structure on $(E,\nabla)$
at $x\in D_{\mathrm{log}}$,
if it is a filtration $E|_x=l^x_0\supset\cdots\supset l^x_{r-1}\supset l^x_r=0$ satisfying
$(\res_x(\nabla)-\lambda^x_k\mathrm{id})(l^x_k)\subset l^x_{k+1}$
for $k=0,\dots,r-1$, where
$\res_x(\nabla)\colon E|_x \longrightarrow E|_x$
is the linear map determined by taking the residue at $x$.
\end{Definition}

\begin{Definition}We say that $\ell^x$ is a generic unramified $\mu^x$-parabolic structure
on $(E,\nabla)$ at $x\in D_{\mathrm{un}}$, if it is a filtration
$E|_{m_xx}=\ell^x_0\supset\cdots\supset \ell^x_{r-1}\supset \ell^x_r=0$
satisfying
$\ell^x_k/\ell^x_{k+1}\cong {\mathcal O}_{m_xx}$ and
$(\nabla|_{m_xx}-\mu^x_k\mathrm{id})(\ell^x_k)\subset \ell^x_{k+1}$
for $k=0,\dots,r-1$,
where $\nabla|_{m_xx} \colon E|_{m_xx} \longrightarrow E\otimes\Omega_X^1(D)|_{m_xx}$
is the ${\mathcal O}_{m_xx}$-homomorphism given by the restriction of
$\nabla$ to the finite subscheme $m_xx\subset X$.
\end{Definition}

For each $x\in D_{\mathrm{ram}}$, we take a generator
$z$ of the maximal ideal of the local ring
${\mathcal O}_{C,x}$.
Assume that
\[
 \nu^x_0(z)\in\Omega^1_C(D_{\mathrm{ram}})|_{m_xx},
 \qquad
 \nu^x_1(z),\dots,\nu^x_{r-1}(z)\in\Omega^1_C(D_{\mathrm{ram}})|_{(m_x-1)x}
\]
are given and that the leading term of $\nu_1^x(z)$ does not vanish.
In other words, $\nu_1^x(z)$ is a generator of
the ${\mathcal O}_{C,x}$-module $\Omega^1_C(D_{\mathrm{ram}})|_{(m_x-1)x}$.
We take a variable $w$ with $w^r=z$ and put
\[
 \nu^x(w)=\nu^x_0(z)+\nu^x_1(z)w+\cdots+\nu^x_{r-1}(z)w^{r-1}.
\]
We write
$\nu=(\nu^x(w))_{x\in D_{\mathrm{ram}}}$.
Furthermore, we assume the following

\begin{Assumption}\rm
We assume that
\[
 d \ := \
 -\sum_{x\in D_{\mathrm{log}}} \sum_{k=0}^{r-1} \lambda^x_k
 -\sum_{x\in D_{\mathrm{un}}} \sum_{k=0}^{r-1}\res_{x}(\mu^x_k)
 -\sum_{x\in D_{\mathrm{ram}}} \left(r\res_x(\nu^x_0)+\frac{r-1}{2}\right)
\]
is an integer.
\end{Assumption}

Next we recall the formulation of ramified connection given in \cite{Inaba-2}.
In this paper, we give a simplified version,
since the formulation in \cite{Inaba-2} is somewhat complicated.
Before stating the precise definition,
we will see the reason why we introduce a filtration on $E|_{m_xx}$.
What we want to consider is a connection $(E,\nabla)$ on $C$ with a formal isomorphism
$(E,\nabla)\otimes\widehat{\mathcal O}_{C,x}\cong(\mathbb{C}[[w]],\nabla_{\nu^x})$.
However, it is difficult to treat the formal isomorphism
in the construction of the moduli space
and also in the deformation theory.
It is rather convenient to formulate the ramified condition
only by the data of the restriction
$(E,\nabla)|_{m_xx}$.
With respect to the frame of $E|_{m_xx}$
corresponding to $1,w,\dots,w^{r-1}$,
the representation matrix of $\nabla|_{mx}$ is
\[
 \begin{pmatrix}
 \nu_0^x(z) & z\nu_{r-1}^x(z) & \cdots & z\nu_1^x(z) \\
 \nu_1^x(z) & \nu_0^x(z)+\frac{{\rm d}z}{rz} & \cdots & z\nu_2^x(z) \\
 \vdots & \vdots & \ddots & \vdots \\
 \nu_{r-1}^x(z) & \nu_{r-2}^x(z) & \cdots & \nu_0^x(z)+\frac{(r-1){\rm d}z}{r}
 \end{pmatrix}.
\]
However, the assumption on $\nabla|_{m_xx}$ by the above matrix is too strict
and that does not go well with the formulation of the moduli space.
It is rather better to allow ambiguities in $\nabla|_{m_xx}$
which is given by
\begin{equation}
\label{equation: representation matrix with ambiguity}
 \begin{pmatrix}
 \nu_0^x(z) & z\nu_{r-1}^x(z) & \cdots & z\nu_1^x(z) \\
 \nu_1^x(z)+a_{1,0}\frac{{\rm d}z}{z} & \nu_0^x(z)+\frac{{\rm d}z}{rz} & \cdots & z\nu_2^x(z) \\
 \vdots & \vdots & \ddots & \vdots \\
 \nu_{r-1}^x(z)+a_{r-1,0}\frac{{\rm d}z}{z} & \nu_{r-2}^x(z)+a_{r-1,1}\frac{{\rm d}z}{z}
 & \cdots & \nu_0^x(z)+\frac{(r-1){\rm d}z}{r}
 \end{pmatrix}
\end{equation}
where $a_{i,j}\in\mathbb{C}$ for $r-1\geq i>j\geq 0$.
Indeed, if $\nabla|_{m_xx}$ is given by the above matrix with ambiguities,
there is a formal isomorphism
$\big(\widehat{E},\widehat{\nabla}\big)\cong(\mathbb{C}[[w]],\nabla_{\nu^x})$
(which will follow from \cite[Proposition 1.3]{Inaba-2} or
Corollary \ref{corollary: formal structure} later).
In order to allow the ambiguities of $\nabla|_{m_xx}$ as above,
we introduce a filtration
$E|_{m_xx}=V^x_0\supset V^x_1\supset\cdots\supset V^x_{r-1}\supset z V^x_0$.
If we identify $E|_{m_xx}$ with $\mathbb{C}[[w]]/(w^{m_xr})$
via the formal isomorphism,
then we set $V^x_k:=(w^k)/(w^{m_xr})$
and $L_k^x:=(w^k)/\big(w^{m_xr-r+k+1}\big)$,
where $(w^k)$ the ideal of $\mathbb{C}[w]$ generated by
$w^k$.
Then all the conditions in the following definition will be obvious.

\begin{Definition}[{\cite[Definitions 1.2 and~2.1] {Inaba-2}}]
\label{definition of ramified structure}
Let $(E,\nabla)$ be a pair of an algebraic vector bundle $E$ of rank $r$ on $C$ and
an algebraic connection $\nabla$ on $E$.
We say that a tuple
${\mathcal V}^x=\big( \big( V^x_k,L^x_k,\pi^x_k\big)_{0\leq k\leq r-1} ,
\big(\phi^x_k\big)_{1\leq k\leq r}\big)$
is a generic $\nu$-ramified structure
on $(E,\nabla)$ at $x\in D_{\mathrm{ram}}$, if
\begin{itemize}\itemsep=0pt
\item[(i)]
$E|_{m_x x}=V^x_0\supset V^x_1\supset \cdots \supset V^x_{r-1}\supset
V^x_r=z V^x_0$
is a filtration by ${\mathcal O}_{m_x x}$-submodules
which satisfies
$\length(V^x_k/V^x_{k+1})=1$
and
$\nabla|_{m_x x}(V^x_k)\subset V^x_k\otimes\Omega^1_C(D)|_{m_x x}$
for $0\leq k\leq r-1$,
\item[(ii)]
$\pi^x_k \colon V^x_k \otimes\mathbb{C}[w]/(w^{m_x r-r+1})
\longrightarrow L^x_k$
is a quotient free $\mathbb{C}[w]/(w^{m_x r-r+1})$-module of rank one
for $0\leq k \leq r-1$ such that the restrictions
$\pi^x_k|_{V^x_k}\colon V^x_k\hookrightarrow
V^x_k\otimes\mathbb{C}[w]/(w^{m_x r-r+1})
\xrightarrow{\pi^x_k} L^x_k$
are surjective and that the diagrams
\[
 \begin{CD}
 V^x_k \otimes \mathbb{C}[w]/\big(w^{m_x r-r+1}\big)
 @> \pi^x_k >> L^x_k \\
 @V \nabla|_{m_x x} VV @VV \nu^x(w)+\frac{k}{r}\frac{{\rm d}z}{z} V \\
 V^x_k \otimes\Omega^1_C(D) \otimes \mathbb{C}[w]/\big(w^{m_x r-r+1}\big)
 @> \pi^x_k \otimes 1 >> L^x_k \otimes\Omega^1_C(D)
 \end{CD}
\]
are commutative for $0\leq k\leq r-1$,
\item[(iii)]
$\phi_k\colon L^x_k \longrightarrow w L^x_{k-1}$ for $1\leq k\leq r-1$
and
$\phi^x_r \colon (z)/\big(z^{m_x+1}\big)\otimes L^x_0
\longrightarrow w L^x_{r-1}$
are surjective $\mathbb{C}[w]$-homomorphisms
such that the diagrams
\[
 \begin{CD}
 V^x_k \otimes \mathbb{C}[w]/\big(w^{m_x r-r+1}\big)
 @> \pi^x_k >> L^x_k \\
 @VVV @VV \phi^x_k V \\
 V^x_{k-1} \otimes \mathbb{C}[w]/\big(w^{m_x r-r+1}\big)
 @> \pi^x_{k-1} >> L^x_{k-1}
 \end{CD}
\]
are commutative
for $1\leq k \leq r-1$ and that the diagram
\[
 \begin{CD}
 (z)/\big(z^{m_x+1}\big)\otimes V^x_0 \otimes\mathbb{C}[w]/\big(w^{m_x r-r+1}\big)
 @> 1\otimes \pi^x_0 >>
 (z)/\big(w^{m_xr+1}\big) \otimes L^x_0 \\
 @VVV @VV \phi^x_r V \\
 V^x_{r-1} \otimes\mathbb{C}[w]/\big(w^{m_x r-r+1}\big)
 @> \pi^x_{r-1} >> L^x_{r-1}
 \end{CD}
\]
is commutative,
\item[(iv)]
there are isomorphisms
$\psi^x_k\colon L^x_k\xrightarrow{\sim} (w)/\big(w^{m_xr-r+2}\big)\otimes L^x_{k-1}$
of $\mathbb{C}[w]$-modules for $1\leq k\leq r-1$
such that the composition
$L^x_k\xrightarrow{\psi^x_k} (w)/\big(w^{m_xr-r+2}\big)\otimes L^x_{k-1}
\longrightarrow w L^x_{k-1}$
coincides with~$\phi^x_k$
and that
the composition
\begin{align*}
& (z)/\big(w^{m_xr+1}\big)\otimes L^x_0
 \xrightarrow{\phi^x_r}
 L^x_{r-1}
 \xrightarrow[\sim]{\psi^x_{r-1}}
 (w)/\big(w^{m_xr-r+2}\big)\otimes L^x_{r-2}
 \\
 & \xrightarrow[\sim]{\psi^x_{r-2}}\cdots\xrightarrow[\sim]{\psi^x_{1}}
\big((w)/\big(w^{m_xr-r+2}\big)\big)^{\otimes r-1}\otimes L^x_0
 \xrightarrow{\sim}
 \big(w^{r-1}\big)/\big(w^{m_xr}\big)\otimes L^x_0
\end{align*}
coincides with the $\mathbb{C}[w]$-homomorphism
obtained by tensoring $L^x_0$ to
the canonical map
$(z)/\big(w^{m_xr+1}\big)
\longrightarrow \big(w^{r-1}\big)/\big(w^{m_xr}\big)$.
\end{itemize}
Two ramified structures
$\big(V^x_k,L^x_k,\pi^x_k,\phi^x_k\big)$
and $\big(V'^x_k,L'^x_k,\pi'^x_k,\phi'^x_k\big)$
on $(E,\nabla)$ at $x\in D_{\mathrm{ram}}$ are equivalent
if $V^x_k=V'^x_k$ for $0\leq k\leq r$,
there are isomorphisms
$\sigma_k\colon L^x_k\xrightarrow{\sim} L'^x_k$
of $\mathbb{C}[w]$-modules for $0\leq k\leq r-1$
such that the diagrams
\[
 \begin{CD}
 V^x_k @>\pi^x_k |_{V^x_k}>> L^x_k \\
 \parallel & & @V \cong V \sigma_k V \\
 V^x_k @>\pi'^x_k|_{V^x_k}>> L'^x_k
 \end{CD}
 \quad \quad (0\leq k\leq r-1)
 \qquad
 \begin{CD}
 L^x_k @>\phi^x_k >> L^x_{k-1} \\
 @V\sigma_k V \cong V @V \cong V \sigma_{k-1} V \\
 L'^x_k @>\phi'^x_k >> L'^x_{k-1}
 \end{CD}
 \quad\quad (1\leq k\leq r-1)
\]
and the diagram
\[
 \begin{CD}
 (z)/\big(w^{m_xr+1}\big)\otimes L^x_0 @>\phi^x_r>> L^x_{r-1} \\
 @V\mathrm{id}\otimes\sigma_0 V \cong V @V \cong V\sigma_{r-1}V \\
 (z)/\big(w^{m_xr+1}\big)\otimes L'^x_0 @>\phi'^x_r>> L'^x_{r-1}
 \end{CD}
\]
are commutative.
\end{Definition}

\begin{Remark} \rm
In the condition (iv) of Definition \ref{definition of ramified structure},
the composition
$\psi^x_1\circ\cdots\circ\psi^x_{r-1}\circ\phi^x_r$
is independent of the choices of the lifts
$\psi_k$ of $\phi_k$ taken for $1\leq k\leq r-1$.
In particular, the condition (iv) is independent of the choices of $\psi_k$.
\end{Remark}

\begin{Example}
\label{example: ramified structure} \rm
Let us consider the typical case
$(E,\nabla)\otimes\widehat{\mathcal O}_{C,x}
= (\mathbb{C}[[w]],\nabla_{\nu})$,
where $z\in{\mathcal O}_{C,x}$ is a generator of the maximal ideal,
$w=z^{\frac{1}{r}}$ and the connection $\nabla_{\nu}$ is given by
\[
 \nabla_{\nu} \colon \
 \mathbb{C}[[w]] \ \ni \ f(w) \ \mapsto \ df(w)+f(w)\nu \ \in \mathbb{C}[[w]]\otimes \frac{{\rm d}z}{z^m}.
\]
In this case, a generic $\nu$-ramified structure in Definition~\ref{definition of ramified structure}
is given in the following way.
We consider the filtration
$\mathbb{C}[[w]]/z^m\mathbb{C}[[w]]\supset(w)/\big(w^{mr}\big)\supset\cdots\supset
\big(w^{r-1}\big)/\big(w^{mr}\big)\supset z\mathbb{C}[[w]]/z^m\mathbb{C}[[w]]$
and put $V_k:=\big(w^k\big)/\big(w^{mr}\big)$ for $0\leq k\leq r-1$.
We put
$L_k:=\big(w^k\big)/\big(w^{mr-r+k+1}\big)$
and regard it as a~$\mathbb{C}[w]/\big(w^{mr-r+1}\big)$-module.
The canonical surjection
\[
V_k=\big(w^k\big)/\big(w^{mr}\big)\longrightarrow \big(w^k\big)/\big(w^{mr-r+k+1}\big)=L_k
\]
induces a surjective homomorphism
\[
 \pi_k\colon \ V_k\otimes_{\mathbb{C}[z]/(z^m)}\mathbb{C}[w]/\big(w^{mr-r+1}\big)
 \longrightarrow L_k
\]
of $\mathbb{C}[w]/\big(w^{mr-r+1}\big)$-modules.
Then the conditions~(i), (iii), (iv) of Definition~\ref{definition of ramified structure}
are obvious for such data.
Since the restriction
\[
 \nabla_{\nu}\colon \ w^k\mathbb{C}[[w]]\longrightarrow w^k\mathbb{C}[[w]]\otimes\frac{{\rm d}z}{z^m}
\]
satisfies the equality
\begin{align*}
 \nabla_{\nu}\big( w^kf(w) \big)& =kw^{k-1}{\rm d}w \, f(w)+w^k{\rm d}f(w)+w^kf(w)\nu\\
 & =w^kf(w)\frac{k}{r}\frac{{\rm d}z}{z}+w^k ( {\rm d}f(w)+f(w)\nu ),
\end{align*}
we can also see the commutativity of the diagrams
in Definition~\ref{definition of ramified structure}\,(ii).

We will see later in Corollary \ref{corollary: formal structure}
that any connection with generic $\nu$-ramified structure at $x$
is in fact isomorphic to the one given in this example.
\end{Example}

\begin{Definition}
\rm
We say that $(E,\nabla,l,\ell,{\mathcal V})$
is a connection with $(\lambda,\mu,\nu)$-structure, if
\begin{itemize}\itemsep=0pt
\item[(i)]
$E$ is an algebraic vector bundle of rank $r$ on $C$ of degree $d$,
\item[(ii)]
$\nabla\colon E \longrightarrow E\otimes \Omega^1_C(D)$
is an algebraic connection admitting poles along $D$,
\item[(iii)] $l=(l^x)_{x\in D_{\mathrm{log}}}$ is a tuple of
logarithmic $\lambda^x$-parabolic structures
$l^x$ on $(E,\nabla)$ at $x\in D_{\mathrm{log}}$,
\item[(iii)]
$\ell=(\ell^x)_{x\in D_{\mathrm{un}}}$ is a tuple of generic unramified
$\mu^x$-parabolic structures $\ell^x$ on $(E,\nabla)$ at $x\in D_{\mathrm{un}}$,
\item[(iv)]
${\mathcal V}=({\mathcal V}^x)_{x\in D_{\mathrm{ram}}}$
is a tuple of generic
$\nu^x$-ramified structures ${\mathcal V}^x$ on $(E,\nabla)$
at $x\in D_{\mathrm{ram}}$.
\end{itemize}
\end{Definition}

We take a tuple
$\balpha=\left( \alpha^x_k \right)^{x\in D}_{1\leq k\leq r} $
of positive rational numbers such that
$0<\alpha^x_1<\cdots<\alpha^x_r<1$ for any $x\in D$
and that
$\alpha^x_k\neq\alpha^{x'}_{k'}$ for $(x,k)\neq(x',k')$.

For a non-zero subbundle $F$ of $E$, we write
\begin{gather*}
 \mathrm{pardeg}^{\balpha}(F)=
 \deg F+\sum_{x\in D_{\mathrm{log}}}\sum_{k=1}^r
 \alpha^x_k \length((F|_{x}\cap l^x_{k-1})/(F|_{x}\cap l^x_k))
 \\
\hphantom{\mathrm{pardeg}^{\balpha}(F)=}{}
 + \sum_{x\in D_{\mathrm{un}}}\sum_{k=1}^r
 \alpha^x_k \length ( (F|_{n_x x}\cap \ell^x_{k-1}) / (F|_{n_xx}\cap \ell^x_k) )
 \\
\hphantom{\mathrm{pardeg}^{\balpha}(F)=}{}
 +\sum_{x\in D_{\mathrm{ram}}}\sum_{k=1}^r
 \alpha^x_k\length \left( (F|_{m_xx}\cap V^x_{k-1})/(F|_{m_xx}\cap V^x_k)\right).
\end{gather*}

\begin{Definition} \label{definition: stability}
We say that a connection
$(E,\nabla,l,\ell,{\mathcal V})$ with $(\lambda,\mu,\nu)$-structure
is $\balpha$-stable (resp. $\balpha$-semistable) if
the inequality
\begin{equation*}
 \frac { \mathrm{pardeg}^{\balpha}(F) }
 {\rank F}
 <
 \frac { \mathrm{pardeg}^{\balpha}(E) }
 {\rank E}
 \qquad
 \left( \text{resp. }
 \frac { \mathrm{pardeg}^{\balpha}(F) }
 {\rank F}
 \leq
 \frac { \mathrm{pardeg}^{\balpha}(E) }
 {\rank E} \right)
\end{equation*}
holds for any subbundle $0\neq F\subsetneq E$ satisfying
$\nabla(F)\subset F\otimes\Omega^1_C(D)$.
\end{Definition}

\begin{Remark}If $D_{\mathrm{ram}}\neq\varnothing$,
then we can see
$(E,\nabla)\otimes\widehat{\mathcal O}_{C,x}\cong(\mathbb{C}[[w]],\nabla_{\nu})$
by Corollary~\ref{corollary: formal structure},
which will be proved later.
Since $(\mathbb{C}[[w]],\nabla_{\nu})$ is irreducible,
$(E,\nabla)$ is also irreducible and
$(E,\nabla,l,\ell,{\mathcal V})$ is automatically $\balpha$-stable for
any parabolic weight $\balpha$ in this case.
\end{Remark}

\section {Factorized ramified structure}
\label{section: factorized ramified structure}

In this section, we introduce the notion of factorized ramified structure
which is a rephrasing of generic $\nu$-ramified structure
in Definition \ref{definition of ramified structure}.
This notion is useful for the description of symplectic form later.
In the Introduction, we saw a rough idea of factorized ramified structure.
Before giving the precise definition of factorized ramified structure,
we will see another aspect of the ambiguity
in (\ref{equation: representation matrix with ambiguity}),
which affects the definition of factorized ramified structure.

Let $(E,\nabla)$ be a connection on $C$ with a formal isomorphism
$(E,\nabla)\otimes\widehat{\mathcal O}_{C,x}
\cong (\mathbb{C}[[w]], \nabla_{\nu})$,
where $z$ is a generator of the maximal ideal of ${\mathcal O}_{C,x}$, $w^r=z$
and $\nabla_{\nu}$ is the connection defined in~(\ref{equation: naive generic ramified connection}).
Write
$\nu(w)=\sum_{k=0}^{r-1} c_k(z) w^k {\rm d}z/z^{m_xx}$
with $c_0(z),\dots,c_{r-1}(z)\in
{\mathcal O}_{m_xx}$
and ${c_1(z)\in{\mathcal O}_{m_xx}^{\times}}$.
There is in fact an ambiguity coming from the choice of $z$,
but it can be expressed by a modification of $\nu$ and
we do not pursue this point any more.
Recall that the endomorphism~$N$ on~$E|_{m_xx}$ corresponds to
the action of $w$ via the isomorphism
$E|_{m_xx}\xrightarrow{\sim} \mathbb{C}[w]/\big(w^{rm_x}\big)$.
Since~$c_1(z)$ is invertible,
there is a polynomial
$P(T)\in {\mathcal O}_{m_xx}[T]$ satisfying
the equality
\[
 w=P\big(c_0(z)+c_1(z)w+\cdots+c_{r-1}w^{r-1}\big)
\]
in the ring ${\mathcal O}_{m_xx}[w]/(w^r-z)$.
Since the equality $\nabla|_{(m_x-1)x}=\nu(N)|_{(m_x-1)x}$ holds,
$N|_{(m_x-1)x}$ is uniquely determined from $\nabla|_{m_xx}$ by substitution to $P(T)$.
However, $N$ always has an ambiguity in the $z^{m_x-1}$-coefficients.
This ambiguity causes the ambiguity in the matrix
(\ref{equation: representation matrix with ambiguity}) of $\nabla|_{m_xx}$.

In order to see more precisely,
consider the filtration
$E|_{m_x}=V_0\supset V_1\supset\cdots\supset V_{r-1}\supset V_r=zV_0$
given in Definition \ref{definition of ramified structure}\,(i).
Since this filtration is determined by $N(V_i)=V_{i+1}$,
it is uniquely determined from $\nabla|_{m_xx}$.
Then the restriction $N|_{V_i}$ induces an endomorphism on $V_i/z^{m_x-1}V_{i+1}$,
which is uniquely determined from $\nabla|_{m_x}$.
So the factorization $N=\theta\circ\kappa$ will be justified when we replace it
with the induced maps on $V_i/z^{m_x-1}V_{i+1}$ or on its dual.
Although we need a careful consideration for the expression
of these induced maps, all the conditions in the following definition will be natural.

Let $C$, $D_{\mathrm{log}}$, $D_{\mathrm{un}}$, $D_{\mathrm{ram}}$, $\nu$, $z$, $w$ be as in Section~\ref{section1}
and let $(E,\nabla)$ be a pair of an algebraic vector bundle $E$ of rank $r$ on $C$ and
an algebraic connection $\nabla$ on $E$ with poles along~$D$.

\begin{Definition} \label{def-fac-connection} \rm
We say that a tuple
$(V_k,\vartheta_k,\varkappa_k)_{0\leq k\leq r-1}$
is a factorized $\nu$-ramified structure on $(E,\nabla)$ at $x\in D_{\mathrm{ram}}$, if
\begin{itemize}\itemsep=0pt
\item[(i)] $E|_{m_xx}=V_0\supset V_1\supset\cdots\supset
V_{r-1}\supset V_r=zV_0$
is a filtration by ${\mathcal O}_{m_xx}$-submodules
satisfying $\nabla|_{m_xx}(V_k)\subset V_k\otimes\Omega^1_C(D)$
and $\length(V_k/V_{k+1})=1$
for $0\leq k\leq r-1$,
\item[(ii)]
for $\overline{V}_k:=V_k/z^{m_x-1}V_{k+1}$
and
$\overline{W}_k:=\big( \overline{V}_{r-k-1} \big)^{\vee}
=\Hom_{{\mathcal O}_{m_xx}}\big(\overline{V}_{r-k-1},{\mathcal O}_{m_xx}\big)$,
\[
 \vartheta_k\colon \ \overline{W}_k\times \overline{W}_{r-k-1}
 \longrightarrow {\mathcal O}_{m_xx}
\]
is an ${\mathcal O}_{m_xx}$-bilinear pairing
for $0\leq k\leq r-1$
such that
the equality $\vartheta_k(v,v')=\vartheta_{r-k-1}(v',v)$
holds for $v\in \overline{W}_k$ and $v'\in \overline{W}_{r-k-1}$
and that
the induced homomorphisms
\[
 \theta_k\colon \ \overline{W}_k \stackrel{\sim}\longrightarrow
 \Hom(\overline{W}_{r-k-1},{\mathcal O}_{m_xx})
 =\overline{V}_k
 \qquad
 (0\leq k\leq r-1)
\]
are isomorphisms,
which make
the diagrams
\[
 \begin{CD}
 \overline{W}_k @>>> \overline{W}_{k-1} \\
 @V \theta_k V \cong V @V \theta_{k-1} V \cong V \\
 \overline{V}_k @>>> \overline{V}_{k-1}
 \end{CD}
 \quad (1\leq k\leq r-1)
 \hspace{40pt}
 \begin{CD}
 (z)/\big(z^{m_x+1}\big)\otimes\overline{W}_{0} @>>> \overline{W}_{r-1} \\
 @V 1\otimes\theta_{0} V \cong V @V \theta_{r-1} V \cong V \\
 (z)/\big(z^{m_x+1}\big)\otimes \overline{V}_{0} @>>\sim> \overline{V}_{r-1}
 \end{CD}
\]
commutative,
where the horizontal arrow $\overline{W}_k \longrightarrow \overline{W}_{k-1}$
is the dual of
$\overline {V}_{r-k} \longrightarrow \overline {V}_{r-k-1}$
and the horizontal arrow
$(z)/\big(z^{m_x+1}\big)\otimes \overline {W}_0
\longrightarrow \overline {W}_{r-1}$
is induced by tensoring $(z)/\big(z^{m_x+1}\big)$
to
$\overline {W}_0=\Hom (\overline {V}_{r-1},{\mathcal O}_{m_xx})
\longrightarrow
\Hom((z)/\big(z^{m_x+1}\big)\otimes \overline {V}_0,{\mathcal O}_{m_xx})
=
\big((z)/\big(z^{m_x+1}\big)\big)^{\vee} \otimes \overline {W}_{r-1}$,
\item[(iii)] for $0\leq k\leq r-1$,
\[
 \varkappa_k\colon \ \overline{V}_k \times \overline{V}_{r-k-1}
 \longrightarrow {\mathcal O}_{m_xx}
\]
is an ${\mathcal O}_{m_xx}$-bilinear pairing
such that the equality
$\varkappa_k(v,v')=\varkappa_{r-k-1}(v',v)$
holds for ${v\in \overline{V}_k}$, $v'\in \overline{V}_{r-k-1}$
and that
the induced homomorphisms
\[
 \kappa_k\colon \ \overline{V}_k \longrightarrow
 \Hom_{{\mathcal O}_{m_xx}}\big(\overline{V}_{r-k-1},{\mathcal O}_{m_xx}\big)
 =\overline{W}_k
 \qquad
 (0\leq k\leq r-1)
\]
make the diagrams
\[
 \begin{CD}
 \overline{V}_k @>>> \overline{V}_{k-1} \\
 @V \kappa_k VV @V \kappa_{k-1} VV \\
 \overline{W}_k @>>> \overline{W}_{k-1}
 \end{CD}
 \quad (1\leq k\leq r-1)
 \hspace{20pt}
 \begin{CD}
 (z)/\big(z^{m_x+1}\big)\otimes\overline{V}_0 @>>> \overline{V}_{r-1} \\
 @V 1\otimes\kappa_0 VV @V \kappa_{r-1} VV \\
 (z)/\big(z^{m_x+1}\big)\otimes\overline{W}_0 @>>> \overline{W}_{r-1}
 \end{CD}
\]
commutative,
\item[(iv)]
the composition
$N_k := \langle \vartheta_k , \varkappa_k \rangle = \theta_k\circ\kappa_k
\colon \overline{V}_k\longrightarrow \overline{V}_k$
satisfies the equalities
$(N_k)^r = z \, \mathrm{id}_{\bar{V}_k}$
and $(N_k)^{m_xr-r+1}=0$,
from which
the injective ring homomorphism
\begin{equation} \label{ring homomorphism defining C[w]-module structure}
 \mathbb{C}[w]/\big(w^{m_xr-r+1}\big)
 \ni \overline{f(w)} \mapsto f(N_k)
 \in \End_{{\mathcal O}_{m_xx}}\big(\overline{V}_k\big)
\end{equation}
is induced and
the diagrams
\[
 \begin{CD}
 V_k @>\nabla|_{m_xx}>> V_k\otimes\Omega^1_C(D) \\
 @VVV @VVV \\
 \overline{V}_k
 @> \nu (N_k) +\frac{k}{r} \frac{{\rm d}z}{z} >>
 \overline{V}_k\otimes\Omega^1_C(D)
 \end{CD}
\]
are commutative
for $k=0,1,\dots,r-1$,
\item[(v)]
with respect to the $\mathbb{C}[w]$-module structure on $\overline{V}_k$
defined by the ring homomorphism~(\ref{ring homomorphism defining C[w]-module structure}),
there are $\mathbb{C}[w]$-isomorphisms
$\psi_k\colon \overline{V}_k\xrightarrow{\sim}
(w)/\big(w^{m_xr-r+2}\big)\otimes\overline{V}_{k-1}$
such that the composition
\[
 \overline{V}_k \xrightarrow[\sim]{\psi_k} (w)/\big(w^{m_xr-r+2}\big)\otimes\overline{V}_{k-1}
 \longrightarrow w \overline{V}_{k-1}\hookrightarrow\overline{V}_{k-1}
\]
coincides with the homomorphism
$\overline{V}_k\longrightarrow\overline{V}_{k-1}$
induced by the inclusion $V_k\hookrightarrow V_{k-1}$ and that the composition
\begin{gather*}
 (z)/\big(z^{m_x+1}\big)\otimes \overline{V}_0\rightarrow \overline{V}_{r-1}
 \xrightarrow[\sim]{\psi_{r-1}} (w)/\big(w^{m_xr-r+2}\big)\otimes \overline{V}_{r-2}\\
 \qquad{}
 \xrightarrow[\sim]{\psi_{r-2}}\cdots\xrightarrow[\sim]{\psi_{1}}
 \big(w^{r-1}\big)/\big(w^{m_xr}\big)\otimes \overline{V}_0
\end{gather*}
coincides with the homomorphism
$(z)/\big(z^{m_x+1}\big)\otimes \overline{V}_0
\longrightarrow \big(w^{r-1}\big)/\big(w^{m_xr}\big)\otimes \overline{V}_0$
obtained by tensoring $\overline{V}_0$ to the canonical homomorphism
$(z)/\big(z^{m_x+1}\big)\longrightarrow \big(w^{r-1}\big)/\big(w^{m_xr}\big)$.
\end{itemize}
Two factorized ramified structures
$(V_k,\vartheta_k,\varkappa_k)$
and
$(V'_k,\vartheta'_k,\varkappa'_k)$
are equivalent if
$V_k~=V'_k$ and
$\langle \vartheta_k,\varkappa_k \rangle=N_k=N'_k=\langle \vartheta'_k,\varkappa'_k \rangle$
for any $k$ and
there are isomorphisms
$\varsigma_k\colon \overline{W}_k\xrightarrow{\sim}
\overline{W}_k$ satisfying
$^tN_{r-k-1}\circ\varsigma_k=\varsigma_k\circ\,^tN_{r-k-1}$, \
$\theta'_k=\theta_k\circ\varsigma_k$, \
$\kappa'_k=\varsigma_k^{-1}\circ\kappa_k$
and the commutative diagrams
\[
 \begin{CD}
 (z)/\big(z^{m_x+1}\big)\otimes\overline{W}_0 @>>> \overline{W}_{r-1} \\
 @V 1\otimes\varsigma_0 V \cong V @V \varsigma_{r-1} V \cong V \\
 (z)/\big(z^{m_x+1}\big)\otimes\overline{W}_0 @>>> \overline{W}_{r-1}
 \end{CD}
 \hspace{40pt}
 \begin{CD}
 \overline{W}_k @>>> \overline{W}_{k-1} \\
 @V \varsigma_k V \cong V @V \varsigma_{k-1} V \cong V \\
 \overline{W}_k @>>> \overline{W}_{k-1}
 \end{CD}
 \quad (1\leq k\leq r-1).
\]
\end{Definition}

\begin{Remark}\rm
The condition $\vartheta_k(v,v')=\vartheta_{r-k-1}(v',v)$ for $v\in\overline{W}_k$,
$v'\in \overline{W}_{r-k-1}$
in Definition~\ref{def-fac-connection}\,(ii)
is equivalent to the condition
$^t(\theta_k)=\theta_{r-k-1}$
under the identifications
$\overline{W}_{r-k-1} = \big(\overline{V}_k\big)^{\vee}$
and
$\big(\overline{W}_k\big)^{\vee} = \overline{V}_{r-k-1}$
for $0\leq k\leq r-1$.
Similarly, the condition $\varkappa_k(v,w)=\varkappa_{r-k-1}(w,v)$ for $v\in\overline{V}_k$,
$w\in \overline{V}_{r-k-1}$
in Definition~\ref{def-fac-connection}\,(ii)
is equivalent to the condition
$^t\kappa_k=\kappa_{r-k-1}$
under the identifications
$\big(\overline{W}_k\big)^{\vee}=\overline{V}_{r-k-1}$,
$\big(\overline{V}_k\big)^{\vee}=\overline{W}_{r-k}$,
\end{Remark}

For a factorized $\nu$-ramified structure
$(V_k,\vartheta_k,\varkappa_k)$ on $(E,\nabla)$,
we can regard the ${\mathcal O}_{m_xx}$-module
$\overline{V}_k=V_k/z^{m_x-1}V_{k+1}$
as a~$\mathbb{C}[w]$-module by using the ring homomorphism in Definition~\ref{def-fac-connection}\,(iv),
(\ref{ring homomorphism defining C[w]-module structure})
and we have $\overline{V}_k\cong\mathbb{C}[w]/\big(w^{m_xr-r+1}\big)$.
The canonical surjection $V_k\longrightarrow\overline{V}_k$
induces a~surjection
$
 \pi_k\colon V_k\otimes_{\mathbb{C}[z]/(z^{m_x})}\mathbb{C}[w]/\big(w^{m_xr-r+1}\big)
 \longrightarrow \overline{V}_k
$
of $\mathbb{C}[w]/\big(w^{m_xr-r+1}\big)$-modules.
For $1\leq k\leq r-1$,
the canonical inclusion $\iota_k\colon V_k\hookrightarrow V_{k-1}$
induces a homomorphism
$\overline{\iota}_{k}\colon\overline{V}_k\longrightarrow\overline{V}_{k-1}$
and the canonical homomorphism
$(z)/\big(z^{m_x+1}\big)\otimes V_{0}\rightarrow
z V_0\hookrightarrow V_{r-1}$
induces a~homomorphism
$\overline{\iota}_r\colon
(z)/\big(z^{m_x+1}\big)\otimes\overline{V}_0\longrightarrow \overline{V}_{r-1}$.
Then $\big(V_k,\overline{V}_k,\pi_k,\overline{\iota}_k\big)$
becomes a generic $\nu$-ramified structure on~$(E,\nabla)$ at $x\in D_{\mathrm{ram}}$
in the sense of Definition~\ref{definition of ramified structure}.

\begin{Proposition} \label{prop:correspondence-factorized}
The correspondence
$(V_k,\vartheta_k,\varkappa_k)\mapsto
\big(V_k,\overline{V}_k,\pi_k,\overline{\iota}_k\big)$
gives a bijection between
the set of equivalence classes of factorized $\nu$-ramified structures
on $(E,\nabla)$ at $x\in D_{\mathrm{ram}}$
and the set of isomorphism classes of
generic $\nu$-ramified structures on $(E,\nabla)$ at $x\in D_{\mathrm{ram}}$.
\end{Proposition}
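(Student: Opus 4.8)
The plan is to exhibit the stated map as a bijection by checking, in turn, that it is well defined on equivalence classes, injective, and surjective; the inverse is constructed explicitly in the surjectivity step. I would first record that the forward map is constant on equivalence classes. If $(V_k,\vartheta_k,\varkappa_k)$ and $(V'_k,\vartheta'_k,\varkappa'_k)$ are equivalent, then by definition $V_k=V'_k$, and writing $\theta'_k=\theta_k\circ\varsigma_k$, $\kappa'_k=\varsigma_k^{-1}\circ\kappa_k$ one gets $N'_k=\theta'_k\circ\kappa'_k=\theta_k\circ\varsigma_k\circ\varsigma_k^{-1}\circ\kappa_k=N_k$. Since the associated generic datum $\big(V_k,\overline V_k,\pi_k,\overline\iota_k\big)$ depends only on the filtration $V_k$ and on the $\mathbb{C}[w]$-module structure determined by $N_k$ through (\ref{ring homomorphism defining C[w]-module structure}), the two equivalent factorized structures produce the identical generic $\nu$-ramified structure. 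Thus the correspondence descends to equivalence classes.

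For injectivity, suppose two factorized structures produce isomorphic generic structures, and let $(\sigma_k)$ be an isomorphism between the latter. Under our construction $\pi_k|_{V_k}$ and $\pi'_k|_{V_k}$ are both the canonical surjection $V_k\to\overline V_k$ and $V_k=V'_k$, so the compatibility $\sigma_k\circ\pi_k|_{V_k}=\pi'_k|_{V_k}$ forces $\sigma_k=\mathrm{id}_{\overline V_k}$; as $\sigma_k$ is $\mathbb{C}[w]$-linear, this gives $N_k=N'_k$. I would then set $\varsigma_k:=\theta_k^{-1}\circ\theta'_k\colon\overline W_k\to\overline W_k$, so that $\theta'_k=\theta_k\circ\varsigma_k$ holds automatically, and deduce $\kappa'_k=\varsigma_k^{-1}\circ\kappa_k$ from $\theta_k\circ\kappa_k=N_k=N'_k=\theta_k\circ\varsigma_k\circ\kappa'_k$. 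The remaining relations ${}^tN_{r-k-1}\circ\varsigma_k=\varsigma_k\circ{}^tN_{r-k-1}$ and the two commutative squares for $\varsigma_k$ follow by transporting, through $\theta_k$ and $\theta'_k$, the diagrams of Definition~\ref{def-fac-connection}\,(ii) satisfied by $\theta_k$ and $\theta'_k$. Hence the two factorized structures are equivalent.

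For surjectivity, given a generic $\nu$-ramified structure $(V_k,L_k,\pi_k,\phi_k)$ I would first normalize it. A length count gives $\length(L_k)=m_xr-r+1=\length\big(V_k/z^{m_x-1}V_{k+1}\big)$, and the commutativity conditions in Definition~\ref{definition of ramified structure}\,(ii)--(iv) identify $\ker(\pi_k|_{V_k})$ with $z^{m_x-1}V_{k+1}$ and the $\mathbb{C}[w]$-action on $L_k$ with an operator $N_k$ on $\overline V_k=V_k/z^{m_x-1}V_{k+1}$; this lets me assume $L_k=\overline V_k$ with $\pi_k$ the canonical surjection and $\phi_k=\overline\iota_k$. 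It then remains to realize $N_k$ as a composite $\theta_k\circ\kappa_k$ of maps arising from symmetric pairings $\vartheta_k,\varkappa_k$ with ${}^t\theta_k=\theta_{r-k-1}$ and ${}^t\kappa_k=\kappa_{r-k-1}$. This is exactly the linear algebra sketched in the Introduction: $\overline V_k\cong\mathbb{C}[w]/\big(w^{m_xr-r+1}\big)$ is a zero-dimensional Gorenstein cyclic module, so the residue (socle) pairing gives a canonical symmetric self-duality with respect to which $N_k$, being multiplication by $w$, is self-adjoint. Choosing $\theta_k$ for the indices with $k\leq r-k-1$, setting $\theta_{r-k-1}:={}^t\theta_k$ and $\kappa_k:=\theta_k^{-1}\circ N_k$, one obtains the required factorization $N_k=\theta_k\circ\kappa_k$; I would then verify that the choices can be threaded compatibly through the filtration isomorphisms $\overline V_k\to\overline V_{k-1}$ and the wrap-around $\overline\iota_r$, i.e.\ the squares of (ii), (iii) and condition (v), and conclude that the resulting factorized structure maps back to the given generic one.

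The main obstacle is precisely this last construction: producing the symmetric factorizations $N_k=\theta_k\circ\kappa_k$ \emph{simultaneously for all $k$} in a way compatible with the filtration isomorphisms and with the cyclic wrap-around encoded by $\overline\iota_r$ and by condition~(v) of Definition~\ref{def-fac-connection}. The pairwise duality ${}^t\theta_k=\theta_{r-k-1}$ leaves genuine freedom on only half of the indices, so the real work is to check that a single coherent choice passes through every commuting square; this is where the explicit residue pairing on the model cyclic module and the self-adjointness of multiplication by $w$ do the essential job.
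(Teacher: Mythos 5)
Your overall architecture (well-definedness, injectivity, explicit inverse for surjectivity) matches the paper, and your injectivity step is essentially the paper's uniqueness argument: there one writes $\theta_k=\theta'_k\circ\beta_k\big({}^t N_k\big)$ for a unit $\beta_k(w)\in\mathbb{C}[w]/\big(w^{m_xr-r+1}\big)^{\times}$ and checks $\beta_k\equiv\beta_{k-1}\pmod{w^{mr-r}}$; your $\varsigma_k=\theta_k^{-1}\circ\theta'_k$ is that same unit, though you should not skip the cross-index compatibility check, which is where the $\pmod{w^{mr-r}}$ condition actually enters.

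The genuine gap is in surjectivity, and you have flagged it yourself without closing it. Two points. First, the pairing you need is not a ``canonical symmetric self-duality'' of the single Gorenstein module $\overline{V}_k$: the datum $\vartheta_k$ is an isomorphism $\theta_k\colon\overline{W}_k=\big(\overline{V}_{r-k-1}\big)^{\vee}\xrightarrow{\sim}\overline{V}_k$, i.e.\ a duality between the two \emph{different} filtration quotients $\overline{V}_k$ and $\overline{V}_{r-k-1}$, with the symmetry condition $\vartheta_k(v,v')=\vartheta_{r-k-1}(v',v)$ coupling index $k$ to index $r-k-1$. The socle/residue pairing on one cyclic module does not by itself produce this cross-index duality, and transporting it through non-canonical generators is exactly the ``threading'' problem you defer. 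Second, the paper closes this gap by a device you do not use: from the generic structure it builds a basis $e_0,\dots,e_{r-1}$ and the operator $N$ with $N^r=z\,\mathrm{id}$ on the \emph{undivided} restriction $E|_{m_xx}$, observes that both $E|_{m_xx}$ and $E|_{m_xx}^{\vee}$ are free cyclic over ${\mathcal O}_{m_xx}[w]/(w^r-z)$ (elementary linear algebra at the closed point plus Nakayama), and composes the two trivializations into one global module isomorphism $\theta\colon E|_{m_xx}^{\vee}\xrightarrow{\sim}E|_{m_xx}$. Symmetry of the associated pairing is then a one-line consequence of $P(N)\circ Q(N)=Q(N)\circ P(N)$, and setting $\kappa=\theta^{-1}\circ N$, $W_k=\ker\big(z^{m_x-1}({}^tN)^{r-k}\big)$ one gets $\theta(W_k)=V_k$ and $\kappa(V_k)\subset W_k$, so \emph{all} the $\theta_k$, $\kappa_k$ descend from the single pair $(\theta,\kappa)$ and every compatibility square — including the wrap-around — is automatic. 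Until you either adopt this global construction or actually carry out the simultaneous threading of residue pairings across all $k$, your surjectivity argument is a plan rather than a proof.
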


\begin{proof}
We will construct the inverse correspondence.
Let $(V_k,L_k,\pi_k,\phi_k)$
be a generic $\nu$-ramified structure on $(E,\nabla)$ at $x\in D_{\mathrm{ram}}$.
By Definition~\ref{definition of ramified structure}\,(ii), the restriction
$\pi_k|_{V_k}\colon V_k\longrightarrow L_k$
is a~surjection, which induces the isomorphism
$\overline{V}_k=V_k/z^{m_x-1}V_{k+1}
\xrightarrow{\sim} L_k$.
Take a~generator~$\bar{e}_0$ of $L_0$ as a $\mathbb{C}[w]$-module.
Let $\bar{e}_k$ be the element of $L_k$
which corresponds to $w^k\otimes \bar{e}_0$
via the isomorphism
\[
 L_k\xrightarrow[\sim]{\psi_k}(w)\otimes L_{k-1}
 \xrightarrow[\sim]{\psi_{k-1}}\cdots\xrightarrow[\sim]{\psi_1}\big(w^k\big)\otimes L_0.
\]
Since $\pi_k|_{V_k}$ is surjective, we can take an element $e_k\in V_k$ satisfying
$\pi_k(e_k)=\bar{e}_k$.
Then $e_0,e_1,\dots,e_{r-1}$ is a basis of the free ${\mathcal O}_{m_xx}$-module
$E|_{m_xx}$ and
we have
\begin{gather*}
 \pi_k(e_l)
 =
 (\phi_{k+1}\circ\cdots\circ\phi_l)(\pi_l(e_l))
 =w^{l-k}\pi_k(e_k) \qquad \text{if $k\leq l\leq r-1$},
 \\
 \pi_k(ze_l)
 =
 (\phi_{k+1} \circ\cdots\circ\phi_r)(z\otimes\pi_0(e_l))
 =w^{r-k+l}\pi_k(e_k)
 \qquad \text{if $0\leq l<k$}.
\end{gather*}
Furthermore, $V_k$ is generated by $e_k,e_{k+1},\dots,e_{r-1},ze_0,\dots,ze_{k-1}$.
If we define a homomorphism
$
 N\colon E|_{m_xx}\longrightarrow E|_{m_xx}
$
by
\[
 N(e_k)
 =
 \begin{cases}
 e_{k+1} & \text{if $0\leq k\leq r-2$}, \\
 ze_0 & \text{if $k=r-1$},
 \end{cases}
\]
then $N$ preserves $V_k$ and the diagram
\[
 \begin{CD}
 V_k @> \pi_k|_{V_k}>> L_k \\
 @V N|_{V_k} VV @VV w V \\
 V_k @> \pi_k|_{V_k} >> L_k
 \end{CD}
\]
is commutative.
By the definition, we have the equality $N^r=z\cdot\mathrm{id}_{E|_{m_xx}}$.
The induced ring homomorphism
\[
 {\mathcal O}_{m_xx}[w]/\big(w^r-z\big)
 \ni f(w) \mapsto f(N) \in \End_{{\mathcal O}_{m_xx}}(E|_{m_xx})
\]
endows $E|_{m_xx}$ with a structure of ${\mathcal O}_{m_xx}[w]$-module.
Since the minimal polynomial of
$N|_x$ is $w^r$ whose degree is $r$,
we can see $E|_x\cong\mathbb{C}[w]/(w^r)$ by elementary linear algebra.
By Nakayama's lemma, we can extend it to an isomorphism
\begin{equation} \label{equation: isomorphism by PID theory}
 E|_{m_xx}\cong{\mathcal O}_{m_xx}[w]/\big(w^r-z\big)
\end{equation}
of ${\mathcal O}_{m_xx}[w]$-modules.
Similarly, the endomorphism
$^tN$ on $E|_{m_xx}^{\vee}$
induces a structure of ${\mathcal O}_{m_xx}[w]$-module and we have an isomorphism
\begin{equation} \label{equation: isomorphism by PID theory (dual)}
 E|_{m_xx}^{\vee}\cong{\mathcal O}_{m_xx}[w]/\big(w^r-z\big).
\end{equation}
Combining (\ref{equation: isomorphism by PID theory}) and
(\ref{equation: isomorphism by PID theory (dual)}),
we get an isomorphism
\[
 \theta\colon \ E|_{m_xx}^{\vee} \stackrel{\sim}\longrightarrow E|_{m_xx}
\]
of ${\mathcal O}_{m_xx}[w]$-modules.
Let
\begin{equation} \label{equation: total symmetric pairing}
 \vartheta\colon \
 E|_{m_xx}^{\vee} \times E|_{m_xx}^{\vee} \longrightarrow {\mathcal O}_{m_xx}
\end{equation}
be the corresponding bilinear pairing defined by
$\vartheta(v^*,w^*)=w^*(\theta(v^*))$
for $v^*,w^*\in E|_{m_xx}^{\vee}$.
Take a generator $e^*$ of $E|_{m_xx}^{\vee}$
as an ${\mathcal O}_{m_xx}[w]$-module.
Then any element $v^*,w^*\in E|_{m_xx}^{\vee}$
can be written $v^*=P\big({}^tN\big)e^*$, $w^*=Q\big({}^tN\big)e^*$
for polynomials $P(w),Q(w)\in{\mathcal O}_{m_xx}[w]$ in $w$.
So we have
\begin{align}
 \vartheta(v^*,w^*)=w^*(\theta(v^*))
 & =
 \big(Q\big({}^tN\big)e^*\big) \big(\theta(P\big({}^tN\big)e^*)\big)\nonumber \\
 &=
 (e^*\circ Q(N)) (P(N)(\theta(e^*)))\nonumber\\
 &=
 (e^*\circ Q(N)\circ P(N)\circ \theta) (e^*)\nonumber \\
 &= (e^*\circ P(N)\circ Q(N)\circ \theta) (e^*)
 =\vartheta(w^*,v^*). \label{equation: symmetric property of theta}
\end{align}
In other words, the pairing $\vartheta$ defined in (\ref{equation: total symmetric pairing})
is symmetric,
which is also equivalent to $^t\theta=\theta$.
If we put
\[
 \kappa:=\theta^{-1}\circ N \colon \
 E|_{m_xx} \longrightarrow E|_{m_xx}^{\vee},
\]
then we have $\theta\circ\kappa=N$.
By the similar calculation to (\ref{equation: symmetric property of theta}),
we can see that the bilinear pairing
\[
 \varkappa \colon \ E|_{m_xx} \times E|_{m_xx}
 \longrightarrow {\mathcal O}_{m_xx},
\]
determined by $\varkappa(v,w)=\kappa(v)(w)$ is also symmetric,
which is equivalent to $^t\kappa=\kappa$.

Now we put
\[
 W_k:=\big\{
 v^*\in E|_{m_xx}^{\vee} \mid
 v^*\big(z^{m_x-1}V_{r-k}\big)=0 \big\}
 =\ker \big(z^{m_x-1}\big({}^tN\big)^{r-k}\big)
\]
for $0\leq k\leq r$.
Then we get the exact commutative diagram
\[
 \begin{CD}
 & & z^{m_x-1}W_{k+1} & = & z^{m_x-1}W_{k+1} & \\
 & & @VVV @VVV & \\
 0 @>>> W_k @>>> E|_{m_xx}^{\vee} @>>>
 & (z^{m_x-1}V_{r-k})^{\vee} & \longrightarrow 0 \\
 & & @VVV @VVV & \parallel & & \\
 0 @>>> \overline{V}_{r-k-1}^{\vee} @>>> V_{r-k-1}^{\vee} @>>>
 & (z^{m_x-1}V_{r-k})^{\vee} & \longrightarrow 0 \\
 & & @VVV @VVV \\
 & & 0 & & 0.
 \end{CD}
\]
So we have an isomorphism
\[
 W_k/z^{m_x-1}W_{k+1} \stackrel{\sim}\longrightarrow
 \overline{V}_{r-k-1}^{\vee}=\overline{W}_k.
\]
Using $W_k=\ker\big(z^{m_x-1}\big({}^tN\big)^{r-k}\big)$,
we can see
\[
\theta(W_k)
=\theta \big(\ker\big(z^{m_x-1}\big({}^tN\big)^{r-k}\big)\big)
=\ker\big(z^{m_x-1}N^{r-k}\big)=V_k.
\]
So $\theta|_{W_k}$ induces an isomorphism
$\theta_k\colon \overline{W}_k\xrightarrow{\sim} \overline{V}_k$
which makes the diagram
\[
 \begin{CD}
 W_k @>\theta|_{W_k}>\sim> V_k \\
 @VVV @VVV \\
 \overline{W}_k @>\theta_k>\sim> \overline{V}_k
 \end{CD}
\]
commutative.
By the equality
$\kappa=\theta^{-1}N$, we have
$\kappa(V_k)\subset W_k$ for $0\leq k\leq r$ and
get the commutative diagram
\[
 \begin{CD}
 V_k @>\kappa|_{V_k}>\sim> W_k \\
 @VVV @VVV \\
 \overline{V}_k @>\kappa_k>\sim> \overline{W}_k.
 \end{CD}
\]

We can associate $(\vartheta_k,\varkappa_k)$ to $(\theta_k,\kappa_k)$
and the conditions (ii) and (iii) of Definition \ref{def-fac-connection}
follow from the properties of $\theta$, $\kappa$.
The other conditions (i), (iv) and (v) of Definition~\ref{def-fac-connection}
are satisfied by that of $(V_k,L_k,\pi_k,\phi_k)$.
So we get a factorized $\nu(w)$-ramified structure
$(V_k,\vartheta_k,\varkappa_k)$.

Assume that there is another factorized ramified structure
$(V_k,\vartheta'_k,\varkappa'_k)$
which gives the same generic $\nu$-ramified structure
$(V_k,L_k,\pi_k,\phi_k)$.
Recall that $\overline{V}_k\xrightarrow{\sim}L_k$.
So we have $\theta'_k\circ\kappa'_k=N_k=\theta_k\circ\kappa_k$,
because both sides correspond to the multiplication by $w$ on $L_k$.
Since the diagram
\[
 \begin{CD}
 \overline{W}_k \ @>\theta'_k>> \ \overline{V}_k \\
 @V ^tN_k= V \kappa'_k\circ\theta'_k V @V N_k V =\theta'_k\circ\kappa'_k V \\
 \overline{W}_k \ @>\theta'_k>> \ \overline{V}_k
 \end{CD}
\]
is commutative,
$\theta'_k\colon \overline{W}_k\xrightarrow{\sim}\overline{V}_k$
is an isomorphism of
free $\mathbb{C}[w]/\big(w^{m_xr-r+1}\big)$-modules of rank one.
So there is an element
$\beta_k(w)\in\mathbb{C}[w]/\big(w^{m_xr-r+1}\big)^{\times}$ such that
$\theta_k=\theta'_k\circ \beta_k(\,^tN_k)$.
Then we also have
$\kappa_k=\beta_k\big({}^tN_k\big)^{-1}\circ\kappa'_k$.
Taking account of the compatibility of
$(\theta'_k,\kappa'_k)$ with $(\theta'_{k-1},\kappa'_{k-1})$,
we can see
$\beta_k(w)\equiv \beta_{k-1}(w) \pmod{w^{mr-r}}$
for $k=1,\dots,r-1$.
Thus we have
$(V_k,\vartheta'_k,\varkappa'_k)\sim(V_k,\vartheta_k,\varkappa_k)$.
In other words, the equivalence class of
factorized $\nu$-ramified structure $(V_k,\theta_k,\kappa_k)$
is uniquely determined by the generic $\nu$-ramified structure
$(V_k,L_k,\pi_k,\phi_k)$.
So we can define a~correspondence
\[
 (V_k,L_k,\pi_k,\phi_k)\mapsto
 (V_k,\theta_k,\kappa_k)
\]
and it is the inverse to the correspondence stated in the proposition.
\end{proof}

\begin{Example}\label{example: formal factorized ramified structure}
We will see what the factorized ramified structure is
in the typical case explained in Example~\ref{example: ramified structure}.
We have
$(E,\nabla)\otimes\widehat{\mathcal O}_{C,x}=
(\mathbb{C}[[w]],\nabla_{\nu})$
in that case
and the filtration in Definition~\ref{def-fac-connection}\,(i) is given by
$V_k=\big(w^k\big)/\big(w^{mr}\big)$ for $0\leq k\leq r$.
Consider the trace map
\begin{equation*}
 \Tr \colon \ \mathbb{C}[[w]] \longrightarrow \mathbb{C}[[z]].
\end{equation*}
For
$f(w)\in\mathbb{C}[[w]]$,
$\Tr(f(w))$ is defined as the trace of the endomorphism
$\mathbb{C}[[w]]\xrightarrow{f(w)}\mathbb{C}[[w]]$
on the free $\mathbb{C}[[z]]$-module $\mathbb{C}[[w]]$
of rank $r$.
By construction, we have
$\Tr\big(z^l\big)=rz^l$ and $\Tr\big(w^kz^l\big)=0$ for $1\leq k\leq r-1$.
So the above map induces a homomorphism
$\Tr\colon\mathbb{C}[w]/\big(w^{mr-r+1}\big)
\rightarrow\mathbb{C}[z]/(z^{m})$,
which also induces
\begin{gather*}
 \mathbf{Tr}\colon \
 \big(w^{r-1}\big)/
 \big(w^{mr}\big)\otimes\Omega^1_{\mathbb{C}[[w]]/\mathbb{C}}\!
 =
 \mathbb{C}[w]/\big(w^{mr-r+1}\big)\otimes
 \Omega^1_{\mathbb{C}[[z]]/\mathbb{C}}
 \!\xrightarrow {\Tr\otimes \mathrm{id}}\!
 \mathbb{C}[z]/\big(z^{m}\big)\otimes\Omega^1_{\mathbb{C}[[z]]/\mathbb{C}}.
\end{gather*}
Then we can define a pairing
\[
 \Theta_k\colon \
 \big(w^k\big)/\big(w^{mr-r+k+1}\big) \times \big(w^{r-k-1}\big)/\big(w^{mr-k}\big)
 \longrightarrow \mathbb{C}[z]/\big(z^m\big)
\]
by setting
\[
 \Theta_k \big( f(w),g(w) \big){\rm d}z=\mathbf{Tr} \big( f(w)g(w){\rm d}w \big)
\]
for $f(w)\in \big(w^k\big)/\big(w^{mr-r+k+1}\big)$ and
$g(w)\in \big(w^{r-k-1}\big)/\big(w^{mr-k}\big)$.
By the construction, the induced $\mathbb{C}[z]/(z^m)$-homomorphism
$\big(w^k\big)/\big(w^{mr-r+k+1}\big) \rightarrow
\big(\big(w^{r-k-1}\big)/\big(w^{mr-k}\big)\big)^{\vee}$
is an isomorphism.
If we denote the inverse of this homomorphism by
\[
\theta_k\colon \ \big(\big(w^{r-k-1}\big)/\big(w^{mr-k}\big)\big)^{\vee}
\xrightarrow{\sim}
\big(w^k\big)/\big(w^{mr-r+k+1}\big),
\]
then $\theta_k$ induces a pairing
\[
 \vartheta_k\colon \
 \big(\big(w^{r-k-1}\big)/\big(w^{mr-k}\big)\big)^{\vee} \times \big(\big(w^k\big)/\big(w^{mr-r+k+1}\big)\big)^{\vee}
 \longrightarrow
 \mathbb{C}[z]/\big(z^m\big)
\]
satisfying
$\vartheta_k(v,v')=\vartheta_{r-k-1}(v',v)$
for $v\!\in\! \big(\big(w^{r-k-1}\big)/\big(w^{mr-k}\big)\big)^{\vee}$
and $v'\!\in\! \big(\big(w^k\big)/\big(w^{mr-r+k+1}\big)\big)^{\vee}\!$.
We can also define a pairing
\[
 \varkappa_k\colon \
 \big(w^k\big)/\big(w^{mr-r+k+1}\big) \times \big(w^{r-k-1}\big)/\big(w^{mr-k}\big)
 \longrightarrow \mathbb{C}[z]/\big(z^m\big)
\]
by setting
\[
 \varkappa_k(f(w),g(w))=\Theta_k(wf(w)g(w))
\]
for $f(w)\in \big(w^k\big)/\big(w^{mr-r+k+1}\big)$ and
$g(w)\in \big(w^{r-k-1}\big)/\big(w^{mr-k}\big)$.
We can see that the filtration
$\mathbb{C}[[w]]/z^m\mathbb{C}[[w]]
\supset (w)/\big(w^{mr}\big)\supset \big(w^2\big)/\big(w^{mr}\big)
\supset\cdots\supset \big(w^{r-1}\big)/\big(w^{mr}\big)
\supset z\mathbb{C}[[w]]/z^m\mathbb{C}[[w]]$
together with $(\vartheta_k,\varkappa_k)_{0\leq k\leq r-1}$
gives a factorized $\nu$-ramified structure
on $(E,\nabla)$ at~$x$.
\end{Example}

\begin{Remark}We can extend the notion of generic $\nu$-ramified structure
or that of factorized $\nu$-ramified structure in a relative setting.
So, if $S$ is a noetherian scheme (or a noetherian ring) and
if $(E,\nabla)$ is a pair of a vector bundle $E$ on $C\times S$
and a connection $\nabla$ on $E$,
we can mention about a generic $\nu$-ramified structure
on $(E,\nabla)$.
\end{Remark}

\section[Recovery of formal structure from a generic ramified structure]{Recovery of formal structure from a generic ramified\\ structure}
\label{section: recovery of formal structure}

In this section, we will see in Corollary \ref{corollary: 3 conditions are equivalent}
that the generic ramified condition
given in the Introduction
is equivalent to the generic ramified structure
(Definition \ref{definition of ramified structure})
or the factorized $\nu$-ramified structure
(Definition \ref{def-fac-connection}).
The most essential point is to recover a formal isomorphism
from a generic ramified structure or a factorized $\nu$-ramified structure
(in Corollary \ref{corollary: formal structure}).
In fact, we proved it in \cite[Proposition 1.3]{Inaba-2}
by using the Hukuhara--Levert--Turrittin theorem
(see \cite[Proposition 1.4.1]{Babbitt-Varadarajan} or \cite[Theorem 6.8.1]{Sibuya} for example).
In this paper, we will examine it by a direct computation only by using regular formal transforms
rather than formal Laurent transforms in the Hukuhara--Levert--Turrittin theorem.
It has the advantage of applying to (\ref{equation: family of formal isomorphisms})
or (\ref{equation: formal transform in w}) later.
For such applications, we actually require a formal isomorphism in a relative setting
in Corollary \ref{corollary: formal structure}.

Let $A$ be a noetherian ring over $\mathbb{C}$.
Take a flat family $U\longrightarrow \Spec A$ of
smooth affine curves over $\Spec A$
and let $\tilde{x}$ be a section of $U$ over $\Spec A$.
We can take a local defining equation
$z\in{\mathcal O}_U$ of $\tilde{x}$.
Let $w$ be a variable satisfying $w^r=z$.
We take an integer $m$ with $m\geq 2$.
Choose
\begin{equation} \label{equation: coefficient data of ramified exponent}
 \big(a^{(0)}_0,a^{(0)}_1,\dots,a^{(0)}_{m-1}\big)\in A^m,
 \qquad
 \big(a^{(k)}_0,a^{(k)}_1,\dots,a^{(k)}_{m-2}\big)\in A^{m-1}
 \qquad (k=1,\dots,r-1)
\end{equation}
with the condition $a^{(1)}_0\in A^{\times}$.
Using the data~(\ref{equation: coefficient data of ramified exponent}),
we put
\begin{equation} \label{equation: ramified exponent given by coefficient data}
 \nu_0(z)
 =
 \sum_{l=0}^{m-1} a^{(0)}_l z^l \frac{{\rm d}z}{z^m} ,
 \qquad
 \nu_k(z)
 =
 \sum_{l=0}^{m-2} a^{(k)}_l z^l \frac{{\rm d}z}{z^m}
 \qquad
 (k=1,\dots,r-1)
\end{equation}
and set
\begin{equation} \label{equation: total ramified exponent given by coefficient data}
 \nu(w):=
 \nu_0(z)+\nu_1(z)w+\cdots+\nu_{r-1}(z)w^{r-1}.
\end{equation}
For an integer $q$ with $q\geq m$,
we can regard $A[w]/(w^{qr})$ as a free $A[z]/(z^q)$-module of rank~$r$.
Define the $A$-linear homomorphism
\[
 \nabla_{\nu}|_{q\tilde{x}} \colon \
 A[w]/(w^{qr}) \longrightarrow
 A[w]/(w^{qr})\otimes \Omega^1_{U/A}(m\tilde{t})|_{q\tilde{x}}
\]
by setting
$\nabla_{\nu}|_{q\tilde{x}}(f(w))=df(w)+f(w)\nu(w)$
for $f(w)\in A[w]/(w^{qr})$.

We need the following proposition in the construction of
generalized isomonodromic deformation later in
Sections~\ref{section: horizontal lift} and~\ref{section: generalized isomonodromy equation}.

\begin{Proposition}\label{proposition: normalization of formal type over finite scheme}
Let the notations be as
in \eqref{equation: coefficient data of ramified exponent},
\eqref{equation: ramified exponent given by coefficient data}
and \eqref{equation: total ramified exponent given by coefficient data}
with the assumption that the leading coefficient
$a^{(1)}_0$ of $\nu_1(z)$ is invertible in $A$.
Take a vector bundle $E$ on $U$ of rank $r$ and a connection
$\nabla\colon E\longrightarrow E\otimes \Omega^1_{U/A}(m\tilde{x})$
with a generic $\nu$-ramified structure
\[
\big( (V_k,\pi_k,,L_k)_{0\leq k\leq r-1} ,(\phi_k)_{1\leq k\leq r}\big)
\]
at $\tilde{x}$.
Then, for any integer $q$ with $q\geq m$, there is an isomorphism
\[
 \sigma \colon E|_{q\tilde{x}}
 \xrightarrow{\sim}
 (A[z]/(z^{q}))[w]/(w^r-z)
 \cong A[w]/(w^{qr})
\]
which makes the diagram
\[
 \begin{CD}
 E|_{q\tilde{x}} @>\sigma>\sim> A[w]/(w^{qr}) \\
 @V \nabla|_{q\tilde{x}} VV
 @V \nabla_{\nu}|_{q\tilde{x}} VV \\
 E|_{q \tilde{x}} \otimes \Omega^1_{U/A}(m\tilde{x})|_{q\tilde{x}}
 @>\sigma\otimes 1>\sim>
 A[w]/(w^{qr}) \otimes \frac {{\rm d}z} {z^m}
 \end{CD}
\]
commutative.
\end{Proposition}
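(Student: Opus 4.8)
The plan is to exhibit $\sigma$ as the composite of a cyclic framing coming from the ramified structure with a regular gauge transformation that carries $\nabla|_{q\tilde x}$ onto $\nabla_\nu|_{q\tilde x}$, constructed recursively in the $z$-adic order up to $z^{q}$. Since $z$ is nilpotent on $q\tilde x$, everything is a finite algebraic computation and no convergence is involved; the noetherian base $A$ plays no role beyond containing $\mathbb{C}$, so one may divide by nonzero integers throughout. The starting point is to bring $\nabla|_{m\tilde x}$ into the explicit shape \eqref{equation: representation matrix with ambiguity}.

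First I would construct the cyclic frame. Applying the construction in the proof of Proposition~\ref{prop:correspondence-factorized} to the given generic $\nu$-ramified structure produces sections $e_0,\dots,e_{r-1}$ of $E$ whose classes generate the quotients $L_k$ and which satisfy $N(e_k)=e_{k+1}$ for $0\le k\le r-2$ and $N(e_{r-1})=ze_0$. Their values at $\tilde x$ form a basis of the fibre, so by Nakayama's lemma along the nilpotent ideal $(z)$ they restrict to an $\mathcal O_{q\tilde x}$-basis of $E|_{q\tilde x}$ for every $q\ge m$. With respect to this frame, writing $\nabla={\rm d}+\Omega$, Definition~\ref{definition of ramified structure} forces $\Omega$ to have the form \eqref{equation: representation matrix with ambiguity} modulo $z^{m}$, so that $\Omega-\Omega_\nu$ (where $\Omega_\nu$ is the model matrix of $\nabla_\nu$) consists only of the lower-triangular ambiguity terms $a_{i,j}\,{\rm d}z/z$ together with a holomorphic tail of order $\ge z^{0}\,{\rm d}z$ invisible to the ramified structure.

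Next I would remove $\Omega-\Omega_\nu$ by a regular gauge transformation $P=\mathrm{id}+\sum_{n\ge 1}P_n z^{n}\in GL_r(\mathcal O_{q\tilde x})$, i.e.\ solve ${\rm d}P=P\,\Omega_\nu-\Omega\,P$ order by order. Clearing the pole by setting $B:=z^{m}\Omega/{\rm d}z$ and $B_\nu:=z^{m}\Omega_\nu/{\rm d}z$, the coefficient of $z^{N}$ yields a homological equation of the shape $\mathrm{ad}_{B_0}(P_N)=R_N$, where $B_0$ is the leading matrix and $R_N$ is built from the data and from the already-determined $P_{<N}$ (the Euler term contributing $-(N-m+1)P_{N-m+1}$ for $N\ge m$). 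Because the leading coefficient $a^{(1)}_0$ is a unit, $B_0$ is a regular (cyclic) matrix: its centraliser is the rank-$r$ algebra of polynomials in $B_0$, and $\mathrm{ad}_{B_0}$ is injective with prescribed image on the complementary directions, so the off-diagonal part of each $P_N$—in particular the part absorbing the ambiguity $a_{i,j}$—is determined uniquely.

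The main obstacle is the remaining obstruction: solvability of $\mathrm{ad}_{B_0}(P_N)=R_N$ requires the $\coker(\mathrm{ad}_{B_0})$-component of $R_N$ to vanish, and this $r$-dimensional cokernel is exactly where the formal invariants of the singularity sit. I would verify that these invariants already agree with those of $\nabla_\nu$, so that every obstruction vanishes. The clean way to see this is to pass to the induced rank-one connection in the ramified variable: under the $\mathbb C[w]$-module structure the model is the rank-one connection ${\rm d}+\nu(w)$ on $A[w]/(w^{qr})$, and condition~(ii) (equivalently~(iv)) of the ramified structure forces the centraliser part of $\Omega$ to equal $\nu(N)+\frac{k}{r}\frac{{\rm d}z}{z}$ on the polar orders $N<m$, killing the obstruction there. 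For the holomorphic orders $N\ge m$ the discrepancy is a holomorphic one-form in $w$, which is exact after dividing by integers (legitimate since $\mathbb C\subset A$); thus it equals ${\rm d}\log g$ for a unit $g\in(A[w]/(w^{qr}))^{\times}$, and adjusting the centraliser (rank-one) part of $P$ by $g$ annihilates the obstruction at those orders as well. Running the recursion to order $z^{q}$ produces $P$; then $\sigma$, the cyclic framing followed by $P$, is the desired isomorphism intertwining $\nabla|_{q\tilde x}$ with $\nabla_\nu|_{q\tilde x}$.
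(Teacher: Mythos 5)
Your frame construction is the same as the paper's (lift generators of the $L_k$ to a cyclic basis $e_0,\dots,e_{r-1}$ with $N(e_k)=e_{k+1}$, $N(e_{r-1})=ze_0$), and a gauge-fixing recursion is a reasonable strategy; but the heart of the proposition is the treatment of the $r$-dimensional centraliser/obstruction directions at each order, and that is exactly where your sketch breaks down. Grading by powers of $z$ and solving $\mathrm{ad}_{B_0}(P_N)=R_N$ is the \emph{unramified} scheme; for a ramified connection the leading matrix $B_0$ is regular \emph{nilpotent} (up to a scalar), so the trace pairing on its centraliser $\mathbb{C}[N_0]$ is degenerate: $\Tr\big(N_0^pN_0^k\big)=r\delta_{p+k,0}$. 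Consequently the map ``adjust the centraliser part of $P_{N-m+1}$ to kill the obstruction at order $N$'', whose leading contribution is $\gamma\mapsto\big({-}(N-m+1)\Tr\big(\gamma N_0^k\big)\big)_k$, sees only one of the $r$ obstruction components. The missing $r-1$ components are reached only through the subleading terms --- the corner entry $z$ of $N(z)$ and the $\nu_1$-part of $(B_\nu)_j$ --- so the linear system that must be inverted couples the Euler eigenvalue with the leading coefficient $a^{(1)}_0$; this is precisely the paper's $r\times r$ matrix with determinant $\big((q'-m)r+s\big)\nu_1^{r-1}/r$, and it is invisible in your order-by-order $z$-adic bookkeeping. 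Your appeal to ``dividing by nonzero integers'' and to ``$B_0$ regular'' separately does not establish surjectivity of the adjustment map onto $\coker(\mathrm{ad}_{B_0})$.

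The claimed vanishing of the obstruction at the polar orders is also not justified as stated. Condition (ii) of Definition \ref{definition of ramified structure} only controls the induced maps on the quotients $L_k$, i.e., it pins down $\nabla$ modulo $z^{m-1}\tilde V_{k+1}\,{\rm d}z/z^m$; it does not control the cross terms $[P_i,(B_\nu)_j]$ entering $R_N$ for $2\le N\le m-1$, whose traces against $N_0^k$ involve the corner correction of $N(z)^k$ and are not formally zero. (The ambiguity matrix $(a_{i,j})_{i>j}$ itself happens to lie in $\im(\mathrm{ad}\,N_0)$, but that is a computation, not a consequence of ``the centraliser part of $\Omega$ equals $\nu(N)+\tfrac{k}{r}\tfrac{{\rm d}z}{z}$''.) The paper avoids all of this by refining the recursion to the $w$-adic grading: the double induction over $(q',s)$, $1\le s\le r$, replaces each $z$-order by $r$ sub-steps, at each of which one solves an explicitly invertible $r\times r$ system --- so there is never a cokernel to analyse. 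To repair your argument you would either have to carry out the obstruction computation you currently assert, or regrade by powers of $w$, at which point you essentially reproduce the paper's lemma.
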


\begin{proof}Let $\tilde{V}_k$ be the pullback of $V_k$ via the canonical surjection
$E|_{q\tilde{x}} \longrightarrow E|_{m\tilde{x}}$
for $0\leq k \leq r-1$.
We take a generator $e'_0\in L_0$ as an
$A[w]/\big(w^{mr-r+1}\big)$-module.
By the condition~(iv) of Definition~\ref{definition of ramified structure},
there is a composition of isomorphisms
\[
 L_k\xrightarrow[\sim]{\psi_k} (w)\otimes L_{k-1}\xrightarrow[\sim]{\psi_{k-1}}
 \cdots\xrightarrow[\sim]{\psi_1}\big(w^k\big)\otimes L_0.
\]
Let $e'_k\in L_k$ be the element corresponding to $w^k\otimes e'_0$
via this isomorphism.
Since
$ \pi_k|_{V_k} \colon V_k \xrightarrow {\pi_k|_{V_k}} L_k $
is surjective, we can take
$\bar{e}_k\in V_k$
satisfying
$\pi_k(\bar{e}_k)=e'_k$.
Then we have
\begin{gather*}
 \pi_k(\bar{e}_{k+l})
 =
 w^l\pi_k(\bar{e}_k)
 \qquad
 \text{for $0\leq l\leq r-k-1$},
 \\
 \pi_k(z\bar{e}_l)
 =
 w^{r-k+l}\pi_k(\bar{e}_k)
 \qquad
 \text{for $0\leq l\leq k-1$}.
\end{gather*}
We take lifts $e_0,e_1,\dots,e_{r-1}\in E|_{q\tilde{x}}$
of $\bar{e}_0,\bar{e}_1,\dots,\bar{e}_{r-1}\in E|_{m\tilde{x}}$.
The commutativity of the diagram in
Definition~\ref{definition of ramified structure}\,(ii)
yields the equality
\begin{gather*}
 \nabla|_{q\tilde{x}}(e_k)
 \equiv
 \left( \nu_0(z) + \frac {k\,{\rm d}z} {r z} \right)\! e_k
 +
 \sum_{l=k+1}^{r-1}\! \nu_{l-k}(z) e_l
 +\sum_{l=0}^{k-1} z \nu_{r+l-k}(z) e_l
 \quad
 \left(\! \bmod \ z^{m-1}\tilde{V}_{k+1}\frac{{\rm d}z}{z^m} \right)
\end{gather*}
for $k=0,1,\dots,r-1$.
Applying the following lemma to the cases
\[
 (q',s)=(m,1),(m,2),\dots,(m,r),(m+1,1),(m+1,2),\dots,(q,1),\dots,(q,r-1)
\]
successively, we get the proposition.
\end{proof}

\begin{Lemma}Let $q'$, $s$ be integers with $m\leq q'\leq q$ and $1\leq s\leq r$.
Assume that the equalities
\begin{gather} \label{equation: assumption of formal equivalence modulo first case}
 \nabla|_{q\tilde{x}}(e_k)
 \equiv
 \left( \nu_0+\frac{k\,{\rm d}z}{r z} \right) e_k
 +\sum_{l=k+1}^{r-1} \nu_{l-k} e_l
 +\sum_{l=0}^{k-1} \nu_{r+l-k} z e_l
 \qquad \left(\! \bmod \ z^{q'-1}\tilde{V}_{k+s}\frac{{\rm d}z}{z^m} \right)\!\!\!
\end{gather}
hold for $0\leq k < r-s$
and
the equalities
\begin{equation} \label{equation: assumption of formal equivalence modulo second case}
 \nabla|_{q\tilde{x}}(e_k)
 \equiv
 \left( \nu_0+\frac{k\,{\rm d}z}{r\,z} \right) e_k
 +\sum_{l=k+1}^{r-1} \nu_{l-k} e_l
 +\sum_{l=0}^{k-1} \nu_{r+l-k} z e_l
 \qquad \left(\! \bmod \ z^{q'}\tilde{V}_{k+s-r}\frac{{\rm d}z}{z^m} \right)
\end{equation}
hold for $r-s\leq k\leq r-1$.
Then there exist $c,b_1,\dots,b_{r-1}\in A$
such that the replacement
\begin{gather}
 \tilde{e}_0
 =
\begin{cases}
 e_0+c z^{q'-m} e_s & \text{if $1\leq s\leq r-1$}, \\
 e_0+c z^{q'-m+1}e_0 & \text{if $s=r$},
 \end{cases}\nonumber
 \\
 \tilde{e}_k
 =
 \begin{cases}
 e_k+c z^{q'-m}e_{k+s}+b_kz^{q'-1}e_{k+s-1}
 & \text{if $k+s <r$ and $1\leq k\leq r-1$},
 \\
 e_k+c z^{q'-m+1} e_{k+s-r}+b_k z^{q'-1}e_{k+s-1}
 & \text{if $k+s= r$ and $1\leq k\leq r-1$},
 \\
 e_k+c z^{q'-m+1} e_{k+s-r}+b_kz^{q'}e_{k+s-1-r}
 & \text{if $k+s>r$ and $1\leq k\leq r-1$}
 \end{cases}\label{equation: replacement in general case}
 \end{gather}
leads to the equalities
\begin{gather} \label{equation: equivalence in the next step case 1}
 \nabla|_{q\tilde{x}}(\tilde{e}_k)
 \equiv
 \left( \nu_0+\frac{k\,{\rm d}z}{r z} \right) \tilde{e}_k
 +\sum_{l=k+1}^{r-1} \nu_{l-k} \tilde{e}_l
 +\sum_{l=0}^{k-1} \nu_{r+l-k} z \tilde{e}_l
 \quad \left(\! \bmod \ z^{q'-1}\tilde{V}_{k+s+1}\frac{{\rm d}z}{z^m} \right)
\end{gather}
for $0\leq k<r-s-1$
and the equalities
\begin{gather} \label{equation: equivalence in the next step case 2}
 \nabla|_{q\tilde{x}}(\tilde{e}_k)
 \equiv
 \left( \nu_0+\frac{k\,{\rm d}z}{r z} \right) \tilde{e}_k
 +\sum_{l=k+1}^{r-1}\! \nu_{l-k} \tilde{e}_l
 +\sum_{l=0}^{k-1} \nu_{r+l-k} z \tilde{e}_l
 \quad \left(\! \bmod \ z^{q'}\tilde{V}_{k+s+1-r}\frac{{\rm d}z}{z^m} \right)\!\!\!
\end{gather}
for $r-s-1\leq k\leq r-1$.
\end{Lemma}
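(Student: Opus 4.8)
The plan is to read the lemma as the single successive-normalization step underlying Proposition~\ref{proposition: normalization of formal type over finite scheme}: the hypotheses \eqref{equation: assumption of formal equivalence modulo first case}--\eqref{equation: assumption of formal equivalence modulo second case} say that the frame $e_0,\dots,e_{r-1}$ already puts $\nabla|_{q\tilde x}$ into the normal form of $\nabla_\nu$ up to an error lying in the filtration piece indexed by $(q',s)$, and the replacement \eqref{equation: replacement in general case} is designed precisely to absorb the leading part of that error, pushing it into the next piece indexed by $(q',s+1)$. For a frame $f=(f_0,\dots,f_{r-1})$ write $R_k(f):=\big(\nu_0+\tfrac{k}{r}\tfrac{{\rm d}z}{z}\big)f_k+\sum_{l=k+1}^{r-1}\nu_{l-k}f_l+\sum_{l=0}^{k-1}\nu_{r+l-k}z\,f_l$ for the $k$-th standard right-hand side, so that the hypotheses read $\nabla|_{q\tilde x}(e_k)\equiv R_k(e)$ in the stated piece. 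Setting $\delta_k:=\tilde e_k-e_k$ and using that both $\nabla|_{q\tilde x}$ and $R_k$ are additive in the frame, I would first record
\[
 \nabla|_{q\tilde x}(\tilde e_k)-R_k(\tilde e)
 =\big(\nabla|_{q\tilde x}(e_k)-R_k(e)\big)+\big(\nabla|_{q\tilde x}(\delta_k)-R_k(\delta)\big),
\]
so that the new error is the old error plus the correction $\nabla|_{q\tilde x}(\delta_k)-R_k(\delta)$. Everything then reduces to computing the leading part of this correction in the length-one graded quotient generated by $z^{q'-1}e_{k+s}$ (when $k+s<r$) or by $z^{q'}e_{k+s-r}$ (when $k+s\ge r$).

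The heart of the matter is the Leibniz rule. Applying $\nabla|_{q\tilde x}$ to the $c$-term of $\delta_k$ produces two contributions to the leading graded piece: a derivative contribution from ${\rm d}\big(z^{q'-m}\big)$ or ${\rm d}\big(z^{q'-m+1}\big)$, and an ``exponent-shift'' contribution equal to the difference between the diagonal exponent $\tfrac{k+s}{r}$ (resp.\ $\tfrac{k+s-r}{r}$) carried by the shifted basis vector and the exponent $\tfrac{k}{r}$ demanded by $R_k$. I expect the key point to be that, precisely because of the choice of the powers $z^{q'-m}$ and $z^{q'-m+1}$ in \eqref{equation: replacement in general case}, these two contributions combine, in all three regimes $k+s<r$, $k+s=r$, $k+s>r$, to the \emph{same} scalar factor $q'-m+\tfrac{s}{r}$ multiplying $c$. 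The $b_k$-term contributes through the coupling $\nu_1$, whose leading coefficient is the invertible $a^{(1)}_0$: the piece $\nabla|_{q\tilde x}\big(b_kz^{q'-1}e_{k+s-1}\big)$ yields $b_k a^{(1)}_0$ in the leading direction, while $R_k(\delta)$ feeds back $-b_{k+1}a^{(1)}_0$ through its $\nu_1\delta_{k+1}$ summand, so the net $b$-contribution is the telescoping difference $a^{(1)}_0(b_k-b_{k+1})$, with the boundary convention $b_0=b_r=0$ forced by the absence of a $b$-term in $\tilde e_0$ and the absence of $\delta_r$.

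Collecting coefficients, killing the leading error amounts to solving the linear system
\[
 \lambda_k+\Big(q'-m+\tfrac{s}{r}\Big)c+a^{(1)}_0\,(b_k-b_{k+1})=0
 \qquad(0\le k\le r-1),
\]
where $\lambda_k\in A$ is the leading coefficient of the old error in the $k$-th equation. Summing over $k$, the telescoping $b$-terms cancel because both boundary contributions $b_0$ and $b_r$ vanish, which isolates
\[
 \Big(r(q'-m)+s\Big)\,c=-\sum_{k=0}^{r-1}\lambda_k.
\]
Since $r(q'-m)+s$ is a positive integer, hence invertible in the $\mathbb{C}$-algebra $A$, this determines $c$; the remaining equations then determine $b_1,\dots,b_{r-1}$ recursively, using the invertibility of $a^{(1)}_0$.

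The main obstacle I anticipate is exactly this bookkeeping across the three regimes: one must verify that the \emph{single} scalar $c$ simultaneously fixes the leading error in every equation (which is what the uniform factor $q'-m+\tfrac{s}{r}$ guarantees) and that the telescope is consistent, so that the entire global obstruction is carried by $\sum_k\lambda_k$ alone. Once $c$ and the $b_k$ are so chosen, it remains to check that every other term generated by the Leibniz expansion already lands in the deeper piece: the higher couplings $\nu_{l-k}$ with $l>k+1$, the $z$-shifted summands $\nu_{r+l-k}z\,\delta_l$, and the suppressed derivative ${\rm d}\big(z^{q'-1}\big)$ (which appears at order $z^{q'+m-2}$ and is deep because $m\ge2$) all lie in $z^{q'-1}\tilde V_{k+s+1}\tfrac{{\rm d}z}{z^m}$ (resp.\ $z^{q'}\tilde V_{k+s+1-r}\tfrac{{\rm d}z}{z^m}$). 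This final verification is a routine comparison of filtration degrees and yields \eqref{equation: equivalence in the next step case 1}--\eqref{equation: equivalence in the next step case 2}.
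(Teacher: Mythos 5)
Your proposal is correct and follows essentially the same route as the paper: substitute the replacement, use the Leibniz rule to see that the $c$-term contributes the uniform factor $\frac{(q'-m)r+s}{r}$ in all three regimes and the $b$-terms contribute the telescoping $\nu_1(b_k-b_{k+1})$ with $b_0=b_r=0$, and then solve the resulting $r\times r$ linear system (the paper simply observes the coefficient matrix is invertible, which is exactly your telescoping-sum-plus-recursion argument in matrix form). No gaps.
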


\begin{proof}
By the assumption (\ref{equation: assumption of formal equivalence modulo first case}),
we can find
$\eta_0,\dots,\eta_{r-s-1}\in z^{q'-1}\Omega^1_{U/A}(D)|_{q\tilde{x}}$
satisfying the equalities
\begin{gather*}
 \nabla|_{q\tilde{x}} (e_k)
 \equiv
 \left( \nu_0+\frac{k\,{\rm d}z}{r z} \right) e_k
 +\sum_{l=k+1}^{r-1}\nu_{l-k} e_l\\
 \hphantom{\nabla|_{q\tilde{x}} (e_k) \equiv}{}
 +\sum_{l=0}^{k-1} \nu_{r+l-k} z e_l
 +\eta_k e_{k+s}
 \qquad
 \left(\! \bmod \ z^{q'-1}\tilde{V}_{k+s+1}\frac{{\rm d}z}{z^m} \right)
\end{gather*}
for $0\leq k<r-s-1$ and the equality
\begin{gather*}
 \nabla|_{q\tilde{x}} (e_k)
 \equiv
 \left( \nu_0+\frac{k\,{\rm d}z}{r z} \right) e_k
 +\sum_{l=k+1}^{r-1}\nu_{l-k} e_l
 +\sum_{l=0}^{k-1} \nu_{r+l-k} z e_l
 +\eta_k e_{r-1}
 \quad
 \left(\! \bmod \ z^{q'}\tilde{V}_0\frac{dz}{z^m} \right)
\end{gather*}
for $k+s=r-1$.
By the assumption (\ref{equation: assumption of formal equivalence modulo second case}),
we can find $\eta_{r-s},\dots,\eta_{r-1}\in z^{q'-1}\Omega^1_{U/A}(D)|_{q\tilde{x}}$
satisfying the equalities
\begin{gather*}
 \nabla|_{q\tilde{x}} (e_k)
 \equiv
 \left( \nu_0+\frac{k\,{\rm d}z}{r z} \right) e_k
 +\sum_{l=k+1}^{r-1}\nu_{l-k} e_l\\
 \hphantom{\nabla|_{q\tilde{x}} (e_k) \equiv}{}
 +\sum_{l=0}^{k-1} \nu_{r+l-k} z e_l
 +\eta_k z e_{k+s-r}
 \qquad
 \left(\! \bmod \ z^{q'}\tilde{V}_{k+s+1-r}\frac{{\rm d}z}{z^m} \right)
\end{gather*}
for $r-s\leq k \leq r-1$.
We will determine $c,b_1,\dots,b_{r-1}\in A$
so that the substitution of~(\ref{equation: replacement in general case})
enables the equalities~(\ref{equation: equivalence in the next step case 1})
and~(\ref{equation: equivalence in the next step case 2}) to hold.

Consider the substitution of $\tilde{e}_k$
for $0\leq k<r-s$.
In that case, we have
\begin{gather*}
 \nabla|_{q\tilde{x}}(\tilde{e}_k) =
 \nabla|_{q\tilde{x}}(e_k)+(q'-m)c z^{q'-m-1}{\rm d}z e_{k+s}
 +cz^{q'-m}\nabla(e_{k+s})
 \\
\hphantom{\nabla|_{q\tilde{x}}(\tilde{e}_k) =}{}
 +(q'-1)b_kz^{q'-2}{\rm d}z e_{k+s-1}
 +b_kz^{q'-1}\nabla(e_{k+s-1}).
\end{gather*}
If we put $b_0:=0$ and $b_r:=0$,
then we can calculate the above substitution
in the following,
while using
$b_kz^{q'-1}\nu_{l-k-s+1} e_l \equiv 0 \pmod{z^{q'-1}\tilde{V}_{k+s+1}{\rm d}z/z^m}$
for $l\geq k+s+1$
in the second equality;
\begin{gather*}
 \nabla|_{q\tilde{x}} (\tilde{e}_k) \equiv
 (q'-m)c z^{q'-m-1} {\rm d}z e_{k+s}
 +(q'-1)b_k z^{q'-2} dz e_{k+s-1}
 +\left( \nu_0 + \frac {k \,{\rm d}z} {r z} \right) e_k
 +\nu_1 e_{k+1}
 \\
 \hphantom{\nabla|_{q\tilde{x}} (\tilde{e}_k) \equiv}{}
 +\sum_{l=k+2}^{r-1}\nu_{l-k} e_l
 +\sum_{l=0}^{k-1} \nu_{r+l-k} z e_l
 + c z^{q'-m} \Big( \nu_0 +\frac{(k+s)}{r}\frac{{\rm d}z}{z} \Big) e_{k+s}
 \\
 \hphantom{\nabla|_{q\tilde{x}} (\tilde{e}_k) \equiv}{}
 +c z^{q'-m} \nu_1 e_{k+s+1} + \sum_{l=k+s+2}^{r-1} c z^{q'-m} \nu_{l-k-s} e_l
 +\sum_{l=0}^{k+s-1} c z^{q'-m+1} \nu_{r+l-k-s} e_l
 \\
 \hphantom{\nabla|_{q\tilde{x}} (\tilde{e}_k) \equiv}{}
 +b_k z^{q'-1}\nu_0 e_{k+s-1} + b_k z^{q'-1}\nu_1 e_{k+s}
 +\sum_{l=k+s+1}^{r-1} b_k z^{q'-1}\nu_{l-k-s+1} e_l\\
 \hphantom{\nabla|_{q\tilde{x}} (\tilde{e}_k) \equiv}{}
 +\sum_{l=0}^{k+s-2} b_k z^{q'} \nu_{r+l-k-s+1} e_l
 +\eta_k e_{k+s}
 \\
\hphantom{\nabla|_{q\tilde{x}} (\tilde{e}_k)}{}
 \equiv
 \left( \!\nu_0\!+\frac {k\,{\rm d}z} {r z} \!\right)\! \tilde{e}_k
 +\left(\! \nu_s +\eta_k+\frac { ( (q'\!-m)r+s )z^{q'-1} c } {r z^m} {\rm d}z
 +(b_k\!-b_{k+1})z^{q'-1}\nu_1 \!\right)\! \tilde{e}_{k+s}
 \\
 \hphantom{\nabla|_{q\tilde{x}} (\tilde{e}_k) \equiv}{}
 +\nu_1 \tilde{e}_{k+1}
 +\sum_{k+2\leq l\leq r-1, l \neq k+s} \nu_{l-k} \tilde{e}_l
 \ +\sum_{l=0}^{k-1} \nu_{r+l-k} z \tilde{e}_l
 \quad
 \left( \!\bmod \ z^{q'-1}\tilde{V}_{k+s+1} \frac {{\rm d}z} {z^m} \right).
\end{gather*}
We can similarly calculate the substitution of $\tilde{e}_k$ for
$r-s\leq k \leq r-1$ and we have
\begin{gather*}
 \nabla|_{q\tilde{x}} ( \tilde{e}_k ) \equiv
 \left( \nu_0 + \frac {k\,{\rm d}z} {rz} \right) \tilde{e}_k\\
 \hphantom{\nabla|_{q\tilde{x}} ( \tilde{e}_k ) \equiv}{}
 +\left( \nu_s +\eta_k + \frac { ( (q'\!-m)r+s )z^{q'-1} c } {r z^m} {\rm d}z
 +(b_k-b_{k+1})z^{q'-1}\nu_1 \right) z \tilde{e}_{k+s-r}
 \\
 \hphantom{\nabla|_{q\tilde{x}} ( \tilde{e}_k ) \equiv}{}
 +\sum_{k+1\leq l \leq r-1, l \neq k+s} \nu_{l-k} \tilde{e}_l
 \ +\sum_{l=0}^{k-1} \nu_{r+l-k} z \tilde{e}_l
 \quad
 \left( \!\bmod \ z^{q'}\tilde{V}_{k+s+1-r} \frac {{\rm d}z} {z^m} \right).
\end{gather*}
So it is sufficient to solve the equation
\begin{equation*}
 \begin{pmatrix}
 \frac{((q'-m)r+s)z^{q'-1}}{r z^m} {\rm d}z & -z^{q'-1}\nu_1 & 0 & \cdots & 0 \vspace{1mm}\\
 \frac { ((q'-m)r+s)z^{N'-1} } {r z^m} {\rm d}z & z^{q'-1}\nu_1 & -z^{q'-1}\nu_1 & \ddots & 0 \vspace{1mm}\\
 \frac { ((q'-m)r+s)z^{q'-1} } {r z^m} {\rm d}z & 0 & z^{q'-1}\nu_1 & \ddots & 0 \vspace{1mm}\\
 \vdots & \vdots & \ddots & \ddots & -z^{q'-1}\nu_1 \\
 \frac{((q'-m)r+s)z^{q'-1}} {r z^m} {\rm d}z
 & 0 & \cdots & 0 & z^{q'-1}\nu_1
 \end{pmatrix}
 \begin{pmatrix} c \\ b_1 \\ b_2 \\ \vdots \\ b_{r-2} \\ b_{r-1} \end{pmatrix}
 =
 \begin{pmatrix} -\eta_0 \\ -\eta_1 \\ -\eta_2 \\
 \vdots \\ -\eta_{r-2} \\ -\eta_{r-1} \end{pmatrix},
\end{equation*}
which is possible because the $r\times r$ matrix of the left hand side
is invertible.
\end{proof}

Under the setting
(\ref{equation: coefficient data of ramified exponent}),
(\ref{equation: ramified exponent given by coefficient data})
and (\ref{equation: total ramified exponent given by coefficient data}),
let
$\nabla_{\nu}\colon A[[w]]\longrightarrow A[[w]]\otimes \Omega^1_{U/A}\big(m\tilde{t}\big)$
be the relative formal connection
defined by $\nabla_{\nu}(f(w))={\rm d}f(w)+f(w)\nu$
for $f(w)\in A[[w]]$.
If we take the inverse limit of the isomorphisms
$ (E,\nabla)\otimes A[z]/(z^{q})\stackrel{\sim}\longrightarrow
 (A[[w]]/(w^{qr}),\nabla_{\nu}|_{q\tilde{x}})$
constructed in Proposition \ref{proposition: normalization of formal type over finite scheme},
we get the following corollary.

\begin{Corollary} \label{corollary: formal structure}
Under the same assumption as
Proposition~{\rm \ref{proposition: normalization of formal type over finite scheme}},
there is an isomorphism
\[
 (E,\nabla)\otimes A[[z]]
 \cong (A[[w]],\nabla_{\nu}).
\]
\end{Corollary}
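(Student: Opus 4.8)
The plan is to obtain the desired formal isomorphism as the $z$-adic inverse limit of the finite-level isomorphisms produced by Proposition~\ref{proposition: normalization of formal type over finite scheme}. Since $A$ is noetherian and $U\to\Spec A$ is smooth of relative dimension one with $z$ a local defining equation of $\tilde x$, the completion of $E$ along $\tilde x$ is $\varprojlim_q E|_{q\tilde x}=E\otimes A[[z]]$, while $\varprojlim_q A[w]/(w^{qr})=A[[w]]$ because the ideals $(w^{qr})$ are cofinal among the powers of $w$. Both relative connections $\nabla$ and $\nabla_\nu$ are themselves the inverse limits of their truncations $\nabla|_{q\tilde x}$ and $\nabla_\nu|_{q\tilde x}$. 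Hence, once we exhibit a compatible system of isomorphisms $\sigma_q\colon E|_{q\tilde x}\xrightarrow{\sim}A[w]/(w^{qr})$ intertwining these truncated connections, passing to the limit yields an isomorphism $\sigma\colon E\otimes A[[z]]\xrightarrow{\sim}A[[w]]$ with $(\sigma\otimes 1)\circ\nabla=\nabla_\nu\circ\sigma$, which is exactly the assertion.

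The one point requiring care is the compatibility $\sigma_{q+1}\equiv\sigma_q\pmod{z^q}$, since Proposition~\ref{proposition: normalization of formal type over finite scheme} asserts only the separate existence of each $\sigma_q$. To arrange it, I would run the successive-approximation procedure of the Lemma preceding this corollary a single time, for the infinite sequence of pairs $(q',s)=(m,1),(m,2),\dots,(m,r),(m+1,1),\dots$ with $q'\ge m$, rather than truncating at $q'=q$. Each application of the Lemma replaces the current frame $e_0,\dots,e_{r-1}$ by $\tilde e_0,\dots,\tilde e_{r-1}$ using correction terms whose $z$-order is at least $q'-m$ (see~\eqref{equation: replacement in general case}); consequently these corrections tend to $0$ in the $z$-adic topology as $q'\to\infty$, and the resulting sequence of frames converges in $E\otimes A[[z]]$ to a frame $e_0^{\infty},\dots,e_{r-1}^{\infty}$. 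Reducing this single construction modulo $z^q$ yields, for every $q$, an isomorphism of the type furnished by Proposition~\ref{proposition: normalization of formal type over finite scheme}, and these truncations are compatible by construction, forming the required inverse system.

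Finally, because each step of the Lemma increases the order to which the frame satisfies the connection relation, and this order grows without bound along the sequence $(q',s)$, the limit frame satisfies the relation exactly: in $E\otimes A[[z]]\otimes\Omega^1_{U/A}(m\tilde x)$ one has
\[
 \nabla\big(e_k^{\infty}\big)
 =\Big(\nu_0+\tfrac{k}{r}\tfrac{{\rm d}z}{z}\Big)e_k^{\infty}
 +\sum_{l=k+1}^{r-1}\nu_{l-k}e_l^{\infty}
 +\sum_{l=0}^{k-1}\nu_{r+l-k}\,z\,e_l^{\infty},
\]
which is precisely the matrix of $\nabla_\nu$ in the frame $1,w,\dots,w^{r-1}$ of $A[[w]]$. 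Defining $\sigma$ by $e_k^{\infty}\mapsto w^k$ then gives an $A[[z]]$-linear map intertwining $\nabla$ and $\nabla_\nu$; since $e_0^{\infty},\dots,e_{r-1}^{\infty}$ reduce modulo $z$ to a basis, Nakayama's lemma together with $z$-adic completeness shows $\sigma$ is an isomorphism. The main obstacle beyond Proposition~\ref{proposition: normalization of formal type over finite scheme} is thus purely a convergence issue---verifying that the corrections vanish $z$-adically and that the limit is a genuine isomorphism over $A[[z]]$ rather than merely over each $A[z]/(z^q)$; all the algebraic content is already contained in the Proposition and its Lemma.
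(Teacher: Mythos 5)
Your proposal is correct and follows essentially the same route as the paper, whose entire proof is the single observation that one takes the inverse limit of the finite-level isomorphisms $(E,\nabla)\otimes A[z]/(z^{q})\xrightarrow{\sim}\big(A[w]/(w^{qr}),\nabla_{\nu}|_{q\tilde{x}}\big)$ from Proposition~\ref{proposition: normalization of formal type over finite scheme}. Your additional care in making the system compatible — running the successive-approximation Lemma once through the full infinite sequence $(q',s)$ and noting that the corrections vanish $z$-adically — is a legitimate filling-in of a detail the paper leaves implicit, not a different argument.
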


If a connection $(E,\nabla)$ has a formal isomorphism
$(E,\nabla)\otimes\widehat{\mathcal O}_{C,x}\cong
(\mathbb{C}[[w]],\nabla_{\nu})$ at $x$,
then it induces a generic $\nu$-ramified structure
as in Example~\ref{example: ramified structure}.
Conversely, the above corollary enables us to
recover a formal isomorphism
from a $\nu$-ramified structure in
Definition \ref{definition of ramified structure}
or a~factorized $\nu$-ramified structure in
Definition \ref{def-fac-connection}.
So we have the following corollary.

\begin{Corollary}\label{corollary: 3 conditions are equivalent}
Let $(E,\nabla)$ be a pair of a vector bundle $E$ of rank $r$ on a curve $C$
and a~connection $\nabla\colon E \longrightarrow E\otimes\Omega^1_C(D)$
with poles along the divisor $D$ whose multiplicity at $x$ is~$m$.
Take a~generator $z$ of the maximal ideal of ${\mathcal O}_{X,x}$
and a~variable $w$ with $w^r=z$.
Consider a rational one form
$\nu(w)=\nu_0(z)+\nu_1(z)w+\cdots+\nu_{r-1}(z)w^{r-1}$
such that
$\nu_0(z)\in\sum_{i=0}^{m-1}\mathbb{C}z^{i-m}{\rm d}z$,
$\nu_k(z)\in \sum_{i=0}^{m-2}\mathbb{C}z^{i-m}{\rm d}z$
for $1\leq k\leq r-1$
and that the leading term of $\nu_1(z)$ does not vanish.
Then the following conditions are equivalent.
\begin{itemize}\itemsep=0pt
\item[$(1)$]
$(E,\nabla)$ is generic $\nu$-ramified at $x$, that is,
$(\widehat{E},\widehat{\nabla})\cong (\mathbb{C}[[w]],\nabla_{\nu})$.
\item[$(2)$]
There is a generic $\nu$-ramified structure on $(E,\nabla)$ at $x$
in the sense of Definition~{\rm \ref{definition of ramified structure}}.
\item[$(3)$]
There is a factorized $\nu$-ramified structure on $(E,\nabla)$ at $x$
in the sense of Definition~{\rm \ref{def-fac-connection}}.
\end{itemize}
\end{Corollary}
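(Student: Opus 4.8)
The plan is to prove the three conditions equivalent by closing the cycle $(1)\Rightarrow(2)\Rightarrow(1)$ and separately recording the equivalence $(2)\Leftrightarrow(3)$, assembling the machinery already built up in this section. Note first that over the base field $\mathbb{C}$ the hypothesis that the leading term of $\nu_1(z)$ does not vanish is exactly the invertibility condition $a^{(1)}_0\in\mathbb{C}^{\times}$ required by Proposition~\ref{proposition: normalization of formal type over finite scheme} and Corollary~\ref{corollary: formal structure}, so all three pieces apply directly.

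For $(1)\Rightarrow(2)$, I would take the formal isomorphism $(\widehat{E},\widehat{\nabla})\cong(\mathbb{C}[[w]],\nabla_{\nu})$ and transport the explicit structure constructed in Example~\ref{example: ramified structure}. Concretely, the isomorphism identifies $E|_{mx}$ with $\mathbb{C}[w]/(w^{mr})$; setting $V_k=(w^k)/(w^{mr})$ and $L_k=(w^k)/(w^{mr-r+k+1})$, letting $\pi_k$ be induced by the canonical surjection $V_k\to L_k$ after tensoring up to a $\mathbb{C}[w]/(w^{mr-r+1})$-module, and defining $\phi_k$ from the inclusions $V_k\hookrightarrow V_{k-1}$, the computation already carried out in Example~\ref{example: ramified structure} verifies conditions (i)--(iv) of Definition~\ref{definition of ramified structure}. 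Thus $(E,\nabla)$ carries a generic $\nu$-ramified structure.

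For $(2)\Rightarrow(1)$, this is precisely Corollary~\ref{corollary: formal structure} specialized to $A=\mathbb{C}$: a generic $\nu$-ramified structure is exactly the data needed to run Proposition~\ref{proposition: normalization of formal type over finite scheme}, and taking the inverse limit over $q\geq m$ of the normalization isomorphisms $(E,\nabla)\otimes\mathbb{C}[z]/(z^q)\xrightarrow{\sim}(\mathbb{C}[w]/(w^{qr}),\nabla_{\nu}|_{qx})$ produces the desired formal isomorphism, which is condition~(1). Combined with the previous paragraph this yields $(1)\Leftrightarrow(2)$.

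Finally, $(2)\Leftrightarrow(3)$ is immediate from Proposition~\ref{prop:correspondence-factorized}, whose bijection between equivalence classes of factorized $\nu$-ramified structures and isomorphism classes of generic $\nu$-ramified structures shows in particular that one exists if and only if the other does. The only substantial content of the whole corollary is hidden in the iterated shearing computation behind Corollary~\ref{corollary: formal structure}, namely the normalization Lemma; but since that is already established, the corollary itself is a bookkeeping exercise, and I expect no further obstacle.
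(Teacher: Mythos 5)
Your proposal is correct and follows essentially the same route as the paper: the implication $(1)\Rightarrow(2)$ is transported from the explicit structure in Example~\ref{example: ramified structure}, the implication $(2)\Rightarrow(1)$ is Corollary~\ref{corollary: formal structure} specialized to $A=\mathbb{C}$ (which rests on the iterated normalization in Proposition~\ref{proposition: normalization of formal type over finite scheme}), and $(2)\Leftrightarrow(3)$ is Proposition~\ref{prop:correspondence-factorized}. Your observation that the nonvanishing of the leading term of $\nu_1$ is exactly the invertibility hypothesis needed to invoke these results is the only point requiring care, and you have handled it.
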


\section{Construction of the moduli space of connections}\label{section: construction of the moduli space}

The moduli space of ramified connections is constructed in~\cite{Inaba-2}.
Since some notations in this paper are different from those in~\cite{Inaba-2},
we recall the construction of the moduli space in our setting.

Let $n_{\mathrm{log}}$, $n_{\mathrm{un}}$, $n_{\mathrm{ram}}$ be non-negative integers
and put $n=n_{\mathrm{log}}+n_{\mathrm{un}}+n_{\mathrm{ram}}$.
Consider the moduli stack ${\mathcal M}_{g,n}$
of $n$-pointed curves
$\big( C,x^{(\mathrm{log})}_1,\dots,x^{(\mathrm{log})}_{n_{\mathrm{log}}},
x^{(\mathrm{un})}_1,\dots,x^{(\mathrm{un})}_{n_{\mathrm{un}}},
x^{(\mathrm{ram})}_1,\dots,x^{(\mathrm{ram})}_{n_{\mathrm{ram}}}\big)$
of genus $g$ over $\Spec \mathbb{C}$.
We can take a smooth algebraic scheme ${\mathcal H}$ over $\Spec\mathbb{C}$
with a smooth surjective morphism
$H\longrightarrow{\mathcal M}_{g,n}$.
Indeed, we can take a subscheme $H'$ of $\Hilb_{\mathbb{P}^L}$
parameterizing the $l$-th canonical embeddings
$C\hookrightarrow \mathbb{P}(H^0(\omega_C^l))$
of smooth projective curves $C$ of genus $g$ for a~fixed large $l$
if $g\geq 2$.
If $g=1$, we take $H'$ as the open subset of
$\mathbb{P}_*(H^0({\mathcal O}_{\mathbb{P}^2}(3)))$
parameterizing the smooth cubic curves in~$\mathbb{P}^2$.
If $g=0$, we take~$H'$ as a point.
In any case, there is a universal family ${\mathcal Z}\subset\mathbb{P}^L\times H'$
of curves over $H'$.
Then the open subscheme ${\mathcal H}$ of the fiber product of~$n$ copies of~${\mathcal Z}$
over $H'$ parameterizing the distinct $n$ points on the curves satisfies our request.
We can take a universal family
$\big(
{\mathcal C}\times{\mathcal H},
\big( \tilde{x}^{\mathrm{log}}_i \big)_{1\leq i\leq n_{\mathrm{log}}} ,
( \tilde{x}^{\mathrm{un}}_i )_{1\leq i\leq n_{\mathrm{un}}} ,
( \tilde{x}^{\mathrm{ram}}_i )_{1\leq i\leq n_{\mathrm{ram}}} \big)$
consisting of flat family of curves of genus $g$ over ${\mathcal H}$
and sections
$\tilde{x}^{\mathrm{log}}_i$ ($1\leq i\leq n_{\mathrm{log}}$),
$\tilde{x}^{\mathrm{un}}_i$ ($1\leq i\leq n_{\mathrm{un}}$),
$\tilde{x}^{\mathrm{ram}}_i$ ($1\leq i\leq n_{\mathrm{ram}}$)
of ${\mathcal C}$ over ${\mathcal H}$.
We denote the ideal sheaf of $\tilde{x}^{\mathrm{un}}_i$
(resp. $\tilde{x}^{\mathrm{ram}}_j$)
by $I_{\tilde{x}^{\mathrm{un}}_i}$ (resp. $I_{\tilde{x}^{\mathrm{ram}}_j}$).

Assume that integers $m^{\mathrm{un}}_i\geq 2$ are given for $1\leq i\leq n_{\mathrm{un}}$
and integers $m^{\mathrm{ram}}_i\geq 2$ are given for $1\leq i\leq n_{\mathrm{ram}}$.
We put
\begin{align*}
 &{\mathcal D}_{\mathrm{log}}
 :=
 \sum_{i=1}^{n_{\mathrm{log}}} \tilde{x}^{\mathrm{log}}_i,
 \qquad
 {\mathcal D}_{\mathrm{un}}
 :=
 \sum_{i=1}^{n_{\mathrm{un}}} m^{\mathrm{un}}_i \tilde{x}^{\mathrm{un}}_i,
 \qquad
 {\mathcal D}_{\mathrm{ram}}
 :=
 \sum_{i=1}^{n_{\mathrm{ram}}} m^{\mathrm{ram}}_i \tilde{x}^{\mathrm{ram}}_i,
 \\
 &{\mathcal D}
 :=
 {\mathcal D}_{\mathrm{log}}+{\mathcal D}_{\mathrm{un}}+{\mathcal D}_{\mathrm{ram}}.
\end{align*}
Let ${\mathcal X}$ be the maximal open subset of
\[
 \Spec \mathrm{Sym}_{{\mathcal O}_{\mathcal H}}\bigg(
 {\mathcal H}{\rm om}_{{\mathcal O}_{\mathcal H}}
 \bigg( \bigoplus_{i=1}^{n_{\mathrm{un}}}
 I_{\tilde{x}^{\mathrm{un}}_i}/(I_{\tilde{x}^{\mathrm{un}}_i})^{m^{\mathrm{un}}_i+1}
 \oplus\bigoplus_{j=1}^{n_{\mathrm{ram}}}
 I_{\tilde{x}^{\mathrm{ram}}_j}/(I_{\tilde{x}^{\mathrm{ram}}_j})^{m^{\mathrm{ram}}_j+1},
 \, {\mathcal O}_{\mathcal H} \bigg)\bigg)
\]
such that the restriction $\bar{z}$ of the universal section to ${\mathcal X}$
gives a generator of
$\big(I_{\tilde{x}^{\mathrm{un}}_i}/I_{\tilde{x}^{\mathrm{un}}_i}^{m^{\mathrm{un}}_i+1}\big)
\otimes_{{\mathcal O}_{\mathcal H}} {\mathcal O}_{\mathcal X}$
at each
$\tilde{x}^{\mathrm{un}}_i$
(resp.\ a generator of $\big(I_{\tilde{x}^{\mathrm{ram}}_j}/I_{\tilde{x}^{\mathrm{ram}}_j}^{m^{\mathrm{ram}}_j+1}\big)
\otimes_{{\mathcal O}_{\mathcal H}} {\mathcal O}_{\mathcal X}$
at each $\tilde{x}^{\mathrm{ram}}_j$).

Fix complex numbers
\begin{gather*}
 \lambda
 =
 \big( \lambda^{(i)}_k \big)
 _{1\leq i\leq n_{\mathrm{log}} , \, 0\leq k\leq r-1} \
 \in\mathbb{C}^{rn_{\mathrm{log}}},
 \\
 c^{\mathrm{un}}
 =
 \big( c^{\mathrm{un}}_{i,k} \big)
 _{1\leq i\leq n_{\mathrm{un}} , \, 0\leq k\leq r-1} \,
 \in\mathbb{C}^{r n_{\mathrm{un}}},
 \\
 c^{\mathrm{ram}}
 =
\big( c^{\mathrm{ram}}_i \big)
 _{1\leq i\leq n_{\mathrm{ram}}}\in\mathbb{C}^{n_{\mathrm{ram}}},
\end{gather*}
which satisfy the equality
\begin{equation*}
 d+\sum_{i=1}^{n_{\mathrm{log}}}\sum_{k=0}^{r-1}\lambda^{(i)}_k
 +\sum_{i=1}^{n_{\mathrm{un}}}\sum_{k=0}^{r-1} c^{\mathrm{un}}_{i,k}
 +\sum_{i=1}^{n_{\mathrm{ram}}} \left(rc^{\mathrm{ram}}_i+\frac{r-1}{2} \right)
 =0
\end{equation*}
for an integer $d$.
We set
\[
 \mathbf{V}
 =\Spec \left( \mathrm{Sym}_{{\mathcal O}_{\mathcal X}} \bigg(\bigoplus_{i=1}^{n_{\mathrm{un}}}
 {\mathcal O}_{\mathcal X}^{\oplus (m^{\mathrm{un}}_i-1)r}
 \oplus
 \bigoplus_{j=1}^{n_{\mathrm{ram}}} {\mathcal O}_{\mathcal X}
 ^{\oplus (m^{\mathrm{ram}}_j-1)r} \bigg) \right)
\]
and
take universal sections
\begin{gather*}
 \big( \tilde{a}^{\mathrm{un}}_{i,k,j}\big)
 _{1\leq i\leq n_{\mathrm{un}}, 0\leq k \leq r-1, 0\leq j\leq m^{\mathrm{un}}_i-2}
 \in\bigoplus_{i=1}^{n_{\mathrm{un}}}\bigoplus_{k=0}^{r-1}\bigoplus_{j=0}^{m^{\mathrm{un}}_i-2}
 {\mathcal O}_{\mathbf{V}},
 \\
\big(\tilde{a}^{\mathrm{ram}}_{i,k,j}\big)
 _{1\leq i\leq n_{\mathrm{ram}},0\leq k\leq r-1,0\leq j\leq m^{\mathrm{ram}}_i-2}
 \in\bigoplus_{i=1}^{n_{\mathrm{ram}}}\bigoplus_{k=0}^{r-1}\bigoplus_{j=0}^{m^{\mathrm{ram}}_i-2}
 {\mathcal O}_{\mathbf{V}}.
\end{gather*}
Let ${\mathcal T}$ be the Zariski open subset of
$\mathbf{V}$ defined by
\[
 {\mathcal T}=
 \left\{ t\in \mathbf{V} \, \middle| \,
 \begin{array}{@{}l@{}}
 \text{for each $1\leq i\leq n_{\mathrm{un}}$,
 $\tilde{a}^{\mathrm{un}}_{i,k,0}(t)\neq \tilde{a}^{\mathrm{un}}_{i,k',0}(t)$ for $k\neq k'$,}
 \\
 \text{and $\tilde{a}^{\mathrm{ram}}_{i,1,0}(t)\neq 0$ for any $1\leq i\leq n_{\mathrm{ram}}$}
 \end{array}
 \right\}.
\]
We take a lift $z$ of $\overline{z}$ as a local algebraic function in a neighborhood of
${\mathcal D}$ and rephrase the above universal sections by setting
\begin{gather*}
 \tilde{\mu}_k(\bar{z})
 =
 \sum_{i=1}^{n_{\mathrm{un}}} \big(
 \tilde{a}^{\mathrm{un}}_{i,k,0}
 +\cdots
 +\tilde{a}^{\mathrm{un}}_{i,k,m^{\mathrm{un}}_i-2} \, \bar{z}^{m^{\mathrm{un}}_i-2}
 +c^{\mathrm{un}}_{i,k} \, \bar{z}^{m^{\mathrm{un}}_i-1}
\big) \frac{{\rm d}z} {z^{m^{\mathrm{un}}_i}} \bigg|_{m^{\mathrm{un}}_i(\tilde{x}_i)_{\mathcal T}}
 \ (0\leq k\leq r-1),
 \\
 \tilde{\nu}_0(\bar{z})
 =
 \sum_{i=1}^{n_{\mathrm{ram}}} \big(
 \tilde{a}^{\mathrm{ram}}_{i,0,0}+\tilde{a}^{\mathrm{ram}}_{i,0,1}\,\bar{z}+\cdots
 +\tilde{a}^{\mathrm{ram}}_{i,0,m^{\mathrm{ram}}_i-2} \, \bar{z}^{m^{\mathrm{ram}}_i-2}
 +c^{\mathrm{ram}}_i \, \bar{z}^{m^{\mathrm{ram}}_i-1} \big)
 \frac{{\rm d}z} {z^{m^{\mathrm{ram}}_i}} \bigg|_{m^{\mathrm{ram}}_i(\tilde{x}_i)_{\mathcal T}},
 \\
 \tilde{\nu}_k(\bar{z})
 =
 \sum_{i=1}^{n_{\mathrm{ram}}} \big(
 \tilde{a}^{\mathrm{ram}}_{i,k,0}+\tilde{a}^{\mathrm{ram}}_{i,k,1}\,\bar{z}+\cdots
 +\tilde{a}^{\mathrm{ram}}_{i,k,m^{\mathrm{ram}}_i-2} \, \bar{z}^{m^{\mathrm{ram}}_i-2}
\big) \frac{{\rm d}z} {z^{m^{\mathrm{ram}}_i}}
 \bigg|_{m^{\mathrm{ram}}_i(\tilde{x}_i)_{\mathcal T}}
 \quad (1\leq k\leq r-1),
 \\
 \tilde{\nu}(w)
=
 \tilde{\nu}_0(\bar{z})+\tilde{\nu}_1(\bar{z})w+\cdots+\tilde{\nu}_{r-1}(\bar{z})w^{r-1}
\end{gather*}
and we write
$\tilde{\mu}:=(\tilde{\mu}_k)_{0\leq k\leq r-1}$
and $\tilde{\nu}:=\tilde{\nu}(w)$.
Note that the restriction of the differential forms
$\frac{{\rm d}z} {z^{m^{\mathrm{un}}_i}} \big|_{m^{\mathrm{un}}_i(\tilde{x}_i)_{\mathcal T}}$,
$\frac{{\rm d}z} {z^{m^{\mathrm{ram}}_i}} \big|_{m^{\mathrm{ram}}_i(\tilde{x}_i)_{\mathcal T}}$
are independent of the choice of the representative $z$ of
$\overline{z}$ and are uniquely determined by $\overline{z}$.

We fix a parabolic weight
$\balpha=
\big( \big(\alpha^{\mathrm{log}}_k\big)^{1\leq i\leq n_{\mathrm{log}}}_{1\leq k\leq r} ,
\big(\alpha^{\mathrm{un}}_{i,k}\big)^{1\leq i\leq n_{\mathrm{un}}}_{1\leq k\leq r} ,
\big(\alpha^{\mathrm{ram}}_{i,k}\big)^{1\leq i\leq n_{\mathrm{ram}}}_{1\leq k\leq r} \big)$
as in Definition~\ref{definition: stability}.

\begin{Theorem}[{\cite[Theorem 2.1]{Inaba-2}}]\label{theorem: existence of the moduli space}
There exists a relative coarse moduli space
$M^{\balpha}_{{\mathcal C},{\mathcal D}}(\lambda,\tilde{\mu},\tilde{\nu})
\longrightarrow {\mathcal T}$
of $\balpha$-stable connections with $(\lambda,\tilde{\mu},\tilde{\nu})$-structure
on $({\mathcal C},{\mathcal D})$.
Furthermore,
$M^{\balpha}_{{\mathcal C},{\mathcal D}}(\lambda,\tilde{\mu},\tilde{\nu})
\longrightarrow {\mathcal T}$
is a quasi-projective morphism.
\end{Theorem}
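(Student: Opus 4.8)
The construction follows the relative GIT method of Simpson \cite{Simpson-1,Simpson-2}, realized through an embedding into the moduli space of parabolic triples of \cite{IIS-1}, exactly as in \cite[Theorem 2.1]{Inaba-2}; the plan is to reproduce that argument in the present simplified setting. The first step is \emph{boundedness}: because $\balpha$-stability (Definition \ref{definition: stability}) bounds the instability of the underlying bundle of a connection with $(\lambda,\tilde\mu,\tilde\nu)$-structure, the family of all such $\balpha$-stable objects over ${\mathcal T}$ is bounded. Hence there is an integer $N$ such that, after twisting, $E(N)$ is globally generated with vanishing higher cohomology for every member, and the underlying bundles are parametrized by an open subscheme of a relative $\Quot$-scheme over ${\mathcal T}$ carrying a universal quotient $\mathcal{O}_{\mathcal C}(-N)^{\oplus P}\to\tilde E$.

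Next I would build the parameter space. Over the $\Quot$-scheme the connections $\nabla\colon \tilde E\to\tilde E\otimes\Omega^1_{{\mathcal C}/{\mathcal T}}({\mathcal D})$ form the fibres of an affine bundle (a torsor under $\Hom(\tilde E,\tilde E\otimes\Omega^1)$ once one such operator exists), so they are cut out by a representable, relatively affine condition. The three kinds of local structure are all conditions on the restriction to the divisor: the logarithmic $\lambda^x$-parabolic structure is a section of a relative flag bundle of $\tilde E|_{{\mathcal D}_{\mathrm{log}}}$, the generic unramified $\tilde\mu^x$-parabolic structure is a closed condition inside a flag bundle of $\tilde E|_{{\mathcal D}_{\mathrm{un}}}$, and the factorized $\tilde\nu^x$-ramified structure (Definition \ref{def-fac-connection}) is --- by the bijection of Proposition \ref{prop:correspondence-factorized} and its relative version --- equivalent to the generic $\nu$-ramified structure of Definition \ref{definition of ramified structure}, whose data $(V^x_k,L^x_k,\pi^x_k,\phi^x_k)$ on $\tilde E|_{{\mathcal D}_{\mathrm{ram}}}$ define a closed subscheme (the filtration, the quotients $\pi_k$, and the commuting diagrams are finite-type closed conditions). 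Combining these, the whole datum $(E,\nabla,l,\ell,{\mathcal V})$ is encoded as a parabolic triple in the sense of \cite{IIS-1}: one regards $\nabla$ as a morphism into $\tilde E\otimes\Omega^1({\mathcal D})$ and imposes the closed condition characterizing connections among first-order operators, while the filtrations supply the parabolic weights $\balpha$. This exhibits our parameter scheme as a locally closed subscheme of the parameter scheme of parabolic triples, equivariant for the natural $\mathrm{PGL}_P$-action.

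With the parameter scheme in hand, the construction is completed by taking the relative GIT quotient by $\mathrm{PGL}_P$ with Simpson's linearization. \textbf{The main obstacle} is the stability comparison: one must show that a point is GIT-(semi)stable for this linearization if and only if the corresponding connection is $\balpha$-(semi)stable in the sense of Definition \ref{definition: stability}. This is Simpson's numerical criterion applied to $\Lambda$-submodules, and the delicate point is to account correctly for the ramified parabolic contribution $\sum_{x\in D_{\mathrm{ram}}}\sum_k \alpha^x_k\,\length((F|_{m_xx}\cap V^x_{k-1})/(F|_{m_xx}\cap V^x_k))$ coming from the filtration $V^x_\bullet$, matching it against the weights appearing in the Hilbert polynomial of the triple. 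A simplifying feature is that at each ramified point the formal type forces $(E,\nabla)$ to be irreducible (Corollary \ref{corollary: formal structure}), so no proper subconnection survives there and the subtle estimates are localized at the logarithmic and unramified points and in the global bundle, exactly as in the purely logarithmic case of \cite{IIS-1,Inaba-1}.

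Finally, granting the stability comparison, the GIT quotient of the stable locus is quasi-projective over ${\mathcal T}$ (projective over ${\mathcal T}$ on the semistable locus modulo S-equivalence, hence quasi-projective on the stable part), and its universal property makes it the relative coarse moduli space $M^{\balpha}_{{\mathcal C},{\mathcal D}}(\lambda,\tilde\mu,\tilde\nu)\to{\mathcal T}$. One last point requiring care is that the ramified structure carries its own equivalence relation, the automorphisms $\varsigma_k$ of Definition \ref{def-fac-connection}; I would absorb this either by rigidifying the $L^x_k$ before forming the quotient or by enlarging the structure group, checking that the resulting quotient is unaffected --- again routine, since the ramified structure is essentially rigid once $N_k=\langle\vartheta_k,\varkappa_k\rangle$ is fixed.
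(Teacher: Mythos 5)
Your outline is sound and rests on the same two pillars as the paper -- the parabolic-triple machinery of \cite{IIS-1} and a separate parametrization of the ramified data -- but it distributes the work differently. The paper does \emph{not} redo the GIT construction or the stability comparison: it embeds the moduli \emph{functor} of tuples $(E,\nabla,l,\ell,(V_k))$ as a locally closed subfunctor of the moduli functor of stable parabolic triples, whose representability by a quasi-projective scheme is already proved in \cite[Theorem~5.1]{IIS-1}, and thereby obtains a quasi-projective moduli space $M$ (and then $M_{\lambda,\tilde\mu}$) without any new numerical-criterion analysis. So the ``main obstacle'' you single out -- matching GIT-(semi)stability for Simpson's linearization against $\balpha$-stability including the ramified parabolic contribution -- is work the paper deliberately avoids; your route would also succeed but at the cost of repeating estimates already absorbed into \cite{IIS-1}. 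Conversely, the step you dismiss as ``routine'' is where the paper's actual effort lies: the ramified structure is not merely a closed condition on $\tilde E|_{{\mathcal D}_{\mathrm{ram}}}$ but carries extra data, so one must build, over an \'etale cover $M'$ of $M_{\lambda,\tilde\mu}$, a product of Quot-schemes for the quotients $\pi_k\colon \tilde V_k\otimes A_w\to L_k$, cut out the locally closed locus $\Sigma$ where the compatibility diagrams with $\tilde\nabla$ and the factorizations $\tilde\phi_k$ exist, introduce affine bundles $\mathbf V_k$ of lifts $\psi_k$ whose relevant fibers $c_k^{-1}(\tilde\phi_k)$ assemble into a torsor $Y\to\Sigma$ under the group $G(S)=\prod_i\big(1+H^0({\mathcal O}_S)z^{m^{\mathrm{ram}}_i-1}\big)$, and finally check that the closed locus $Z\subset Y$ where $\psi_1\circ\cdots\circ\psi_{r-1}\circ\tilde\phi_r$ equals the canonical map is $G$-invariant, so that it descends to $\Sigma_{\tilde\nu}\subset\Sigma$ and then to the moduli space. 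Your suggestion to ``rigidify the $L^x_k$ or enlarge the structure group'' is the right idea in embryo, but identifying the correct reparametrization group and exhibiting the torsor and descent is precisely the nontrivial content of the proof, not an afterthought. Your observation that a nonempty ${\mathcal D}_{\mathrm{ram}}$ forces irreducibility and hence automatic $\balpha$-stability is correct and consistent with the paper's remark, though it only simplifies the stability discussion, not the parametrization of the ramified data.
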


\begin{proof}
We use the same argument as in the proof of
\cite[Theorem 2.1]{Inaba-Saito} and
\cite[Theorem 2.1]{Inaba-2}.
Consider the moduli functor
${\mathcal M}$ of
tuples $(E,\nabla,l,\ell,(V_k))$ consisting of rank $r$ vector bundles~$E$,
connections $\nabla$ admitting poles along~${\mathcal D}$
and parabolic structure $\ell,l,(V_k)$ along ${\mathcal D}$
satisfying $\balpha$-stability.
Then we can embed ${\mathcal M}$ to a locally closed subfunctor of
the moduli functor of stable parabolic triples $(E_1,E_2,\phi,\nabla,F_*(E_1))$,
whose existence is proved in
\cite[Theorem~5.1]{IIS-1}.
So we can get a moduli space $M$ which represents the \'etale sheafification of
${\mathcal M}$ and $M$ is quasi-projective over~$\mathcal T$.
We can construct a quasi-projective scheme $M_{\lambda,\tilde{\mu}}$ over
$M$ which parameterizes $(\lambda,\tilde{\mu})$-structure on $(E,\nabla)$
compatible with~$l$, $\ell$
as in the proof of
\cite[Theorem~2.1]{Inaba-1}
and \cite[Theorem~2.1]{Inaba-Saito}.

We only have to construct a parameter space of $\tilde{\nu}$-ramified structure
over $M_{\lambda,\tilde{\mu}}$
such that the filtration in Definition~\ref{definition of ramified structure}\,(i)
coincides with the given filtration~$(V_k)$.
There is an \'etale surjective morphism
$M'\longrightarrow M_{\lambda,\tilde{\mu}}$
with a universal family
$\big(\tilde{E},\tilde{\nabla},\tilde{l},\tilde{\ell},\big(\tilde{V}_k\big)\big)$ on
${\mathcal C}_{M'}$ over $M'$.
We set
\[
 A_w:=\prod_{i=1}^{n_{\mathrm{ram}}}{\mathcal O}_{M'}[w]/\big(w^{m^{\mathrm{ram}}_ir-r+1}\big).
\]
Since $A_w$ is a finite module over $M'$,
we can construct a locally closed subscheme $Q$ of a product of Quot-schemes over $M'$
such that the set of $S$-valued points of $Q$ is
\[
 Q(S)=
 \left\{ \big( \tilde{V}_k\otimes (A_w)_S \xrightarrow{\pi_k} L_k \big)_{0\leq k\leq r-1}
 \middle|
 \begin{array}{@{}l@{}}
 \text{$L_k$ is a quotient $A_w$ module of
 $\tilde{V}_k\otimes (A_w)_S$ and}
 \\
 \text{$L_k$ is a locally free $(A_w)_S$-module of rank one}
 \end{array}
 \right\}.
\]
Let $\pi_k\colon\tilde{V}_k\otimes (A_w)_Q\longrightarrow \tilde{L}_k$
be the universal quotient sheaf.
There exists a maximal locally closed subscheme
$\Sigma \subset Q$ such that the restrictions
$(\pi_k)_{\Sigma}\big|_{(\tilde{V}_k)_{\Sigma}}
\colon \big(\tilde{V}_k\big)_{\Sigma} \longrightarrow \big(\tilde{L}_k\big)_{\Sigma}$ are surjective,
the diagrams
\[
 \begin{CD}
 \big(\tilde{V}_k\big)_{\Sigma} @>\tilde{\nabla}|_{({\mathcal D}_{\mathrm{ram}})_{\Sigma}}>>
 (\tilde{V}_k)_{\Sigma}\otimes
 \Omega^1_{{\mathcal C}_{\Sigma}/{\Sigma}}(({\mathcal D}_{\mathrm{ram}})_{\Sigma}) \\
 @V \pi_k VV @VV \pi_k\otimes\mathrm{id} V \\
 \big(\tilde{L}_k\big)_{\Sigma} @>\nu(w)+\frac{kdz}{rz}>>
 \big(\tilde{L}_k\big)_{\Sigma}\otimes \Omega^1_{{\mathcal C}_{\Sigma}/{\Sigma}}
 (({\mathcal D}_{\mathrm{ram}})_{\Sigma})
 \end{CD}
\]
are commutative for $0\leq k\leq r-1$,
each composition $\big(\tilde{V}_k\big)_{\Sigma}\longrightarrow \big(\tilde{V}_{k-1}\big)_{\Sigma}
\xrightarrow{\pi_{k-1}} \big(\tilde{L}_{k-1}\big)_{\Sigma}$
factors through an $(A_w)_{\Sigma}$-homomorphism
$\tilde{\phi}_k\colon \big(\tilde{L}_k\big)_{\Sigma}\longrightarrow \big(\tilde{L}_{k-1}\big)_{\Sigma}$
whose image is $w\big(\tilde{L}_{k-1}\big)_{\Sigma}$
for $1\leq k\leq r-1$
and the composition
$(z)\otimes (V_0)_{\Sigma}\longrightarrow (V_{r-1})_{\Sigma}
\xrightarrow{\pi_{r-1}} \big(\tilde{L}_{r-1}\big)_{\Sigma}$
factors thorough an $(A_w)_{\Sigma}$-homomorphism
$\tilde{\phi}_r\colon (z)\otimes \big(\tilde{L}_0\big)_{\Sigma}\longrightarrow (\tilde{L}_{r-1})_{\Sigma}$
whose image is $w\big(\tilde{L}_{r-1}\big)_{\Sigma}$.
We denote the free $(A_w)_{\Sigma}$-module
$\bigoplus_{i=1}^{n_{\mathrm{ram}}}\big(w^k\big)/\big(w^{k+m_i^{\mathrm{ram}}r-r+1}\big)$ simply by $\big(w^k\big)$.
Consider the affine space bundle
\[
 \mathbf{V}_k=
 \Spec \mathrm{Sym}_{{\mathcal O}_{\Sigma}}
\big({\mathcal H}{\rm om}_{{\mathcal O}_{\Sigma}}
\big({\mathcal H}{\rm om}_{(A_w)_{\Sigma}} \big(
 \big(\tilde{L}_k\big)_{\Sigma} , ((w)\otimes A_w \otimes \tilde{L}_{k-1})_{\Sigma} \big) ,
 {\mathcal O}_{\Sigma}
\big) \big)
 \longrightarrow {\Sigma}
\]
for $k=1,\dots,r-1$ and take a universal section
\[
 \psi_k\colon \
 \big(\tilde{L}_k\big)_{\mathbf{V}_k}\longrightarrow \big((w)\otimes A_w\otimes \tilde{L}_{k-1}\big)_{\mathbf{V}_k}.
\]
There is a morphism
\[
 c_k\colon \
 \mathbf{V}_k \longrightarrow
 \Spec \mathrm{Sym}_{{\mathcal O}_{\Sigma}}
\big({\mathcal H}{\rm om}_{{\mathcal O}_{\Sigma}}
\big({\mathcal H}{\rm om}_{(A_w)_{\Sigma}} \big( \big(\tilde{L}_k\big)_{\Sigma}, \big(w \tilde{L}_{k-1}\big)_{\Sigma}\big) ,
 {\mathcal O}_{\Sigma}
\big) \big)
\]
over $\Sigma$ defined by the composition
\[
 \big(\tilde{L}_k\big)_{\mathbf{V}_k} \xrightarrow{\psi_k}
 \big((w)\otimes A_w\otimes \tilde{L}_{k-1}\big)_{\mathbf{V}_k}
 \longrightarrow
 \big(w\tilde{L}_{k-1}\big)_{\mathbf{V}_k}.
\]
Over the fiber
$c_k^{-1}\big(\tilde{\phi}_k\big)\subset \mathbf{V}_k$,
the composition
\[
 \big(\tilde{L}_k\big)_{c_k^{-1}(\tilde{\phi}_k)} \xrightarrow{\psi_k}
 \big((w)\otimes A_w\otimes \tilde{L}_{k-1}\big)_{c_k^{-1}(\tilde{\phi}_k)}
 \longrightarrow
 \big(w\tilde{L}_{k-1}\big)_{c_k^{-1}(\tilde{\phi}_k)}
\]
coincides with
$\big(\tilde{\phi}_k\big)_{c_k^{-1}(\tilde{\phi}_k)}\colon
\big(\tilde{L}_k\big)_{c_k^{-1}(\tilde{\phi}_k)}
\longrightarrow
\big(w\tilde{L}_{k-1}\big)_{c_k^{-1}(\tilde{\phi}_k)}$,
which is surjective.
So, we can see by the Nakayama's lemma, that
$(\psi_k)_{c_k^{-1}(\tilde{\phi}_k)} \colon
\big(\tilde{L}_k\big)_{c_k^{-1}(\tilde{\phi}_k)} \longrightarrow
(w) \otimes \big(\tilde{L}_{k-1}\big)_{c_k^{-1}(\tilde{\phi}_k)}$
is surjective and then $(\psi_k)_{c_k^{-1}(\tilde{\phi}_k)} $ is isomorphic
because it is a surjection between locally free $(A_w)_{c_k^{-1}(\tilde{\phi}_k)}$-modules
of rank one.
Consider the group scheme $G$ over $\Sigma$
whose set of $S$-valued points is
\[
 G(S)=
 \prod_{i=1}^{n_{\mathrm{ram}}}\big(1+H^0({\mathcal O}_S)z^{m^{\mathrm{ram}}_i-1}\big),
\]
where each component
$\big(1+H^0({\mathcal O}_S)z^{m^{\mathrm{ram}}_i-1}\big)$
is regarded as a subgroup of the group of invertible elements of
$H^0((A_w)_S)$.
Then there is a canonical action of $G$ on the product
$Y:=\prod_{k=1}^{r-1}c_k^{-1}\big(\tilde{\phi}_k\big)$
and
\[
 Y=\prod_{k=1}^{r-1}c_k^{-1}\big(\tilde{\phi}_k\big) \longrightarrow \Sigma
\]
is a $G$-torsor.
Consider the composition
\begin{gather*}
\begin{split}
 & \psi_1\circ\cdots\circ\psi_{r-1}\circ\phi_r
 \colon \
 (z)\otimes \big(\tilde{L}_0\big)_Y\xrightarrow{\tilde{\phi}_r}
 \big(\tilde{L}_{r-1}\big)_Y\xrightarrow[\sim]{\psi_{r-1}}
 (w)\otimes \big(\tilde{L}_{r-2}\big)_Y
\\
& \hphantom{\psi_1\circ\cdots\circ\psi_{r-1}\circ\phi_r
 \colon \
 (z)\otimes \big(\tilde{L}_0\big)_Y\xrightarrow{\tilde{\phi}_r}
 \big(\tilde{L}_{r-1}\big)_Y}{}
 \xrightarrow[\sim]{\psi_{r-2}}\cdots \xrightarrow[\sim]{\psi_1}
 \big(w^{r-1}\big)\otimes \big(\tilde{L}_0\big)_Y.
 \end{split}
\end{gather*}
Then there exists a maximal closed subscheme
$Z\subset Y$ such that
the composition
$\big(\psi_1\circ\cdots\circ\psi_{r-1}\circ\tilde{\phi}_r\big)_Y$
coincides with the canonical homomorphism
$(z)\otimes\big(\tilde{L}_0\big)_Y\longrightarrow \big(w^{r-1}\big)\otimes \big(\tilde{L}_0\big)_Y$
induced by the inclusion $(z)\hookrightarrow \big(w^{r-1}\big)$.
By the construction, $Z$ is invariant under the action of~$G$.
So~$Z$ descends to a closed subscheme
$\Sigma_{\tilde{\nu}}\subset\Sigma$.
We can see that the quasi-projective scheme $\Sigma_{\tilde{\nu}}$
over~$M'$ descends to a quasi-projective scheme
$M^{\balpha}_{{\mathcal C},{\mathcal D}}(\lambda,\tilde{\mu},\tilde{\nu})$
over $M_{\lambda,\tilde{\mu}}$
which is the desired moduli space.
\end{proof}

\section[Tangent space of the moduli space using factorized ramified structure]{Tangent space of the moduli space\\ using factorized ramified structure}
\label{section: tangent space}

The aim of introducing the factorized ramified structure is to
construct a duality on the tangent space of the moduli space,
which was not achieved in~\cite{Inaba-2}.
We will first describe the tangent space of the moduli space
by the infinitesimal deformation of factorized ramified structure.

Let the notation be as in Section~\ref{section: construction of the moduli space}.
Take a point $t\in{\mathcal T}$.
We will describe the tangent space of the fiber
$M^{\balpha}_{{\mathcal C},{\mathcal D}}(\lambda,\tilde{\mu},\tilde{\nu})_t$
of the moduli space over $t$.
We write
$C:={\mathcal C}_t$, $D:={\mathcal D}_t$,
$D_{\mathrm{log}}=({\mathcal D}_{\mathrm{log}})_t$,
$D_{\mathrm{un}}=({\mathcal D}_{\mathrm{un}})_t$,
$D_{\mathrm{ram}}=({\mathcal D}_{\mathrm{ram}})_t$
and
$(\mu,\nu):=(\tilde{\mu},\tilde{\nu})_t$.
We put
$m_x:=m^{\mathrm{un}}_i$ for $x=\tilde{x}^{\mathrm{un}}_i|_t$
and
$m_x:=m^{\mathrm{ram}}_i$ for $x=\tilde{x}^{\mathrm{ram}}_i|_t$.

Let
$(E,\nabla,l,\ell,{\mathcal V})$
be a connection on $(C,D)$ with
$(\lambda,\mu,\nu)$-structure.
If we put
\[
 l_k:=
 \bigoplus_{x\in D_{\mathrm{log}}} l^x_k,
 \qquad
 \ell_k
 :=
 \bigoplus_{x\in D_{\mathrm{un}}} \ell^x_k,
\]
then we get filtrations
$E|_{D_{\mathrm{log}}}=l_0\supset l_1\supset\cdots\supset l_{r-1}\supset l_r=0$,
$E|_{D_{\mathrm{un}}}=\ell_0\supset\ell_1\supset\cdots\supset\ell_{r-1}\supset l_r=0$
such that
$l_k/l_{k+1}\cong{\mathcal O}_{D_{\mathrm{log}}}$
and $\ell_k/\ell_{k+1}\cong{\mathcal O}_{D_{\mathrm{un}}}$
for $0\leq k\leq r-1$.
If we put
\[
 V_k:=
 \bigoplus_{x\in D_{\mathrm{ram}}} V^x_k,
 \qquad
 \overline{V}_k:=
 \bigoplus_{x\in D_{\mathrm{ram}}} \overline{V}^x_k,
 \qquad
 \overline{W}_k:=
 \bigoplus_{x\in D_{\mathrm{ram}}} \overline{W}^x_k,
\]
then we get a filtration
$E|_{D_{\mathrm{ram}}}=V_0\supset V_1\supset\cdots\supset V_{r-1}\supset V_r=zV_0$
with surjections $V_k\longrightarrow \overline{V}_k$
and isomorphisms
$\overline{W}_k\cong\Hom_{{\mathcal O}_{D_{\mathrm{ram}}}}
(\overline{V}_{r-k-1}, {\mathcal O}_{D_{\mathrm{ram}}} )$
for $0\leq k\leq r-1$.

Define a complex ${\mathcal G}^{\bullet}$ of sheaves on $C$ by
setting
\begin{gather} \label{equation: definition of G^0,G^1}
 {\mathcal G}^0
=
 \left\{ u\in {\mathcal E}{\rm nd}(E) \, \middle|
 \begin{array}{@{}l@{}}
 \text{$u|_{D_{\mathrm{log}}}(l_k)\subset l_k$,
 $u|_{D_{\mathrm{un}}}(\ell_k)\subset \ell_k$ and $u|_{D_{\mathrm{ram}}}(V_k)\subset V_k$} \\
 \text{for $0\leq k \leq r-1$}
 \end{array}
 \right\}, \\
 {\mathcal G}^1
 =
 \left\{v\in{\mathcal E}{\rm nd}(E)\otimes\Omega^1_C(D) \,\middle|
 \begin{array}{@{}l@{}}
 \text{$v|_{D_{\mathrm{log}}}(l_k)\subset l_{k+1}\otimes\Omega^1_C(D)$,
 $v|_{D_{\mathrm{un}}}(\ell_k)\subset \ell_{k+1}\otimes\Omega^1_C(D)$} \\
 \text{and $v|_{D_{\mathrm{ram}}}(V_k)\subset V_k\otimes\Omega^1_C(D)$ for $0\leq k\leq r-1$}
 \end{array}
 \right\},\nonumber
\end{gather}
and by defining the homomorphism
\begin{equation} \label{equation: homomorphism of complex G}
 d^0_{\mathcal G^{\bullet}}\colon \
 {\mathcal G}^0 \ni u \mapsto \nabla \circ u-(u\otimes 1)\circ \nabla
 \in {\mathcal G}^1.
\end{equation}
The meaning of the hypercohomology
$\mathbb{H}^1({\mathcal G}^{\bullet})$ is the tangent space of
the moduli space of connections
$(E,\nabla)$ on $C$ equipped with logarithmic $\lambda$-parabolic structure along $D_{\rm log}$,
generic unramified $\mu$-parabolic structure along $D_{\rm un}$
and a filtration $E|_{D_{\rm ram}}=V_0\supset V_1\supset\cdots\supset V_{r-1}\supset V_r=zV_0$
preserved by $\nabla$.
For the description of the tangent space of the moduli space
$M^{\balpha}_{{\mathcal C},{\mathcal D}}(\lambda,\tilde{\mu},\tilde{\nu})_t$,
we will construct the data of deformation of factorized ramified structure
in addition to $\mathbb{H}^1({\mathcal G}^{\bullet})$.

For
$(v_k) \in \bigoplus_{k=0}^{r-1}
\Hom\big(\overline{V}_k,\overline{V}_k\otimes\Omega^1_C(D)\big)$,
consider the diagrams
\begin{gather}
 \begin{CD}
 z{\mathcal O}_{D_{\mathrm{ram}}}\otimes \overline{V}_0 @> v_k >>
 z{\mathcal O}_{D_{\mathrm{ram}}}\otimes\overline{V}_0\otimes\Omega^1_C(D) \\
 @VVV @VVV \\
 \overline{V}_{r-1} @> v_{k-1} >> \overline{V}_{r-1}\otimes\Omega^1_C(D),
 \end{CD}
 \nonumber\vspace{1mm}\\
 \begin{CD}
 \overline{V}_k @> v_k >> \overline{V}_k\otimes\Omega^1_C(D) \\
 @VVV @VVV \\
 \overline{V}_{k-1} @> v_{k-1} >> \overline{V}_{k-1}\otimes\Omega^1_C(D)
 \end{CD}
 \qquad (1\leq k\leq r-1).\label{equation: commutative diagram defining G^1}
\end{gather}
If we put
\begin{equation*}
 G^1
 =
 \left\{
 (v_k) \in \bigoplus_{k=0}^{r-1}
 \Hom(\overline{V}_k,\overline{V}_k\otimes\Omega^1_C(D))
 \, \middle|
 \begin{array}{@{}l@{}}
 \text{all the diagrams in (\ref{equation: commutative diagram defining G^1})} \\
 \text{are commutative}
 \end{array}\right\},
\end{equation*}
then there is a canonical homomorphism
\[
 \varpi_G\colon \
 {\mathcal G}^1\longrightarrow G^1
\]
defined by
$\varpi_G(v)=\big( \overline{v|_{D_{\mathrm{ram}}}} \big)_k$,
where
$\overline{v|_{D_{\mathrm{ram}}}} \colon \overline{V}_k
\longrightarrow \overline{V}_k\otimes\Omega^1_C(D)$
is the homomorphism induced by $v|_{D_{\mathrm{ram}}}$.
We can see the surjectivity of $\varpi_G$ by the following lemma,
which is often used later.

\begin{Lemma} \label{lemma: extension of endomorphisms}
For any tuple $(h_k)\in \prod_{k=0}^{r-1} \End_{{\mathcal O}_{D_{\mathrm{ram}}}}\big(\overline{V}_k\big)$
of endomorphisms satisfying the commutative diagrams
\[
 \begin{CD}
 z{\mathcal O}_{D_{\mathrm{ram}}}\otimes \overline{V}_0 @> \mathrm{id}\otimes h_0 >>
 z{\mathcal O}_{D_{\mathrm{ram}}}\otimes\overline{V}_0 \\
 @VVV @VVV \\
 \overline{V}_{r-1} @> h_{r-1} >> \overline{V}_{r-1},
 \end{CD}
 \qquad
 \begin{CD}
 \overline{V}_k @> h_k >> \overline{V}_k \\
 @VVV @VVV \\
 \overline{V}_{k-1} @> h_{k-1} >> \overline{V}_{k-1}
 \end{CD}
 \qquad (1\leq k\leq r-1),
\]
there exists an endomorphism
$h\in\End_{{\mathcal O}_{D_{\mathrm{ram}}}}(E|_{D_{\mathrm{ram}}})$ satisfying
$h(V_k)\subset V_k$ and the commutative diagrams
\[
 \begin{CD}
 V_k @> \pi_k >> \overline{V}_k \\
 @V h|_{V_k} VV @VV h_k V \\
 V_k @>\pi_k >> \overline{V}_k
 \end{CD}
\]
for $0\leq k\leq r-1$.
Moreover, $\Tr(h)\in{\mathcal O}_{D_{\mathrm{ram}}}$ is uniquely determined by
$(h_k)$ and independent of the choice of $h$.
\end{Lemma}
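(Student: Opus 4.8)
The plan is to reduce to a single point $x\in D_{\mathrm{ram}}$, write $m=m_x$, construct $h$ on each $E|_{m_xx}$ separately and take the direct sum. I would work with the free ${\mathcal O}_{m_xx}$-basis $e_0,\dots,e_{r-1}$ of $E|_{m_xx}$ produced in the proof of Proposition~\ref{prop:correspondence-factorized}. Writing $\bar{e}_k:=\pi_k(e_k)$, this basis satisfies $\pi_k(e_l)=w^{l-k}\bar{e}_k$ for $k\le l\le r-1$ and $\pi_k(ze_l)=w^{r-k+l}\bar{e}_k$ for $0\le l<k$, and $V_k$ is generated over ${\mathcal O}_{m_xx}$ by $e_k,\dots,e_{r-1},ze_0,\dots,ze_{k-1}$. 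In these terms the structure maps act by $\overline{\iota}_k(w^j\bar{e}_k)=w^{j+1}\bar{e}_{k-1}$ and $\overline{\iota}_r(z\otimes w^j\bar{e}_0)=w^{j+1}\bar{e}_{r-1}$.

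For existence I would choose, for each $k$, a lift $f_k\in V_k$ of $h_k(\bar{e}_k)\in\overline{V}_k$ under the surjection $\pi_k|_{V_k}\colon V_k\to\overline{V}_k$ (surjective by Definition~\ref{definition of ramified structure}(ii)), and define $h\in\End_{{\mathcal O}_{m_xx}}(E|_{m_xx})$ to be the ${\mathcal O}_{m_xx}$-linear map with $h(e_k)=f_k$. Since $h(e_k)=f_k\in V_k$ and $h(ze_j)=zf_j\in zV_j\subset V_k$ whenever $ze_j$ is a listed generator of $V_k$ (i.e.\ $j<k$), the map $h$ preserves every $V_k$; this is the easy half.

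The substance is to check $\pi_k\circ h|_{V_k}=h_k\circ\pi_k$ on all generators of $V_k$, using only the single diagonal normalization $\pi_k(f_k)=h_k(\bar{e}_k)$. For a generator $e_l$ with $l\ge k$ one has $\pi_k(f_l)=\overline{\iota}_{k+1}\cdots\overline{\iota}_l(\pi_l(f_l))$ from the defining squares of the $\overline{\iota}_j$, and pushing $h_l$ leftward through these via $\overline{\iota}_j\circ h_j=h_{j-1}\circ\overline{\iota}_j$ gives $\pi_k(f_l)=h_k(w^{l-k}\bar{e}_k)=h_k(\pi_k(e_l))$. For a generator $ze_l$ with $l<k$ the computation must run through the wrap-around: reducing $f_l$ in $\overline{V}_0$ gives $\pi_0(f_l)=h_0(w^l\bar{e}_0)$, so that $\pi_{r-1}(zf_l)=\overline{\iota}_r(z\otimes\pi_0(f_l))=h_{r-1}(w^{l+1}\bar{e}_{r-1})$ by the wrap-around square $\overline{\iota}_r\circ(\mathrm{id}\otimes h_0)=h_{r-1}\circ\overline{\iota}_r$, and then applying $\overline{\iota}_{k+1}\cdots\overline{\iota}_{r-1}$ yields $\pi_k(zf_l)=h_k(w^{r-k+l}\bar{e}_k)=h_k(\pi_k(ze_l))$. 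I expect this propagation step — converting the diagonal choices into agreement on every generator by repeated use of the commutative squares, with correct index bookkeeping through the wrap-around square — to be the main obstacle, since it is the only place where all the hypotheses on $(h_k)$ are genuinely consumed.

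For the final assertion I would compare two lifts $h,h'$: their difference $\delta=h-h'$ satisfies $\delta(V_k)\subset\ker\pi_k=z^{m_x-1}V_{k+1}$ for all $k$. Since $z^{m_x-1}V_{k+1}=\sum_{j=k+1}^{r-1}\mathbb{C}\,z^{m_x-1}e_j$ (the generators $z^{m_x}e_i$ vanish), $\delta(e_k)$ carries no $e_k$-component, so every diagonal entry of $\delta$ is zero and $\Tr(\delta)=0$. Hence $\Tr(h)\in{\mathcal O}_{D_{\mathrm{ram}}}$ depends only on $(h_k)$, which is the uniqueness claimed.
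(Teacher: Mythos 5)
Your proposal is correct and follows essentially the same route as the paper: both work with the free basis $e_0,\dots,e_{r-1}$ from the proof of Proposition~\ref{prop:correspondence-factorized}, define $h$ by lifting $h_k(\bar e_k)$ on each $e_k$, and obtain the trace statement from the observation that the ambiguity of a lift lies in the strictly lower-triangular entries (equivalently, $h-h'$ maps $V_k$ into $z^{m_x-1}V_{k+1}$, which kills all diagonal entries). The only difference is that you spell out the propagation of the intertwining condition to all generators of $V_k$ via the commutative squares, a verification the paper leaves implicit.
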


\begin{proof}
Let $e_0,\dots,e_{r-1}$ be the basis of $E|_{D_{\mathrm{ram}}}$ taken in the proof of
Proposition \ref{prop:correspondence-factorized}.
Then we can write
\[
 h_k(e_k)=a_{k,k}e_k+\overline{a}_{k+1,k}e_{k+1}+\cdots+\overline{a}_{r-1,k}e_{r-1}
 +za_{0,k}e_1+\cdots+za_{k-1,k}e_{k-1}
\]
for $a_{k,k}\in{\mathcal O}_{D_{\mathrm{ram}}}$ and
$\overline{a}_{l,k}\in {\mathcal O}_{D'_{\mathrm{ram}}}$ for $l\neq k$,
where we put $D'_{\mathrm{ram}}:=\sum_{x\in D_{\mathrm{ram}}} (m_x-1)x$
and $za_{l,k}$ is the image of $z\otimes\overline{a}_{l,k}$ under the isomorphism
$(z)\otimes {\mathcal O}_{D'_{\mathrm{ram}}}\xrightarrow{\sim} z{\mathcal O}_{D_{\mathrm{ram}}}$
for $l<k$
We can see that a lift $h\in\End_{{\mathcal O}_{D_{\mathrm{ram}}}}(E|_{D_{\mathrm{ram}}})$
of $(h_k)$ desired in the lemma is given by the matrix
\[
 \begin{pmatrix}
 a_{0,0} & za_{0,1} & \cdots & za_{0,r-1} \\
 a_{1,0} & a_{1,1} & \cdots & z_{1,r-1} \\
 \vdots & \vdots & \ddots & \vdots \\
 a_{r-1,0} & a_{r-1,1} & \cdots & a_{r-1,r-1}
 \end{pmatrix}
\]
with respect to the basis $e_0,\dots,e_{r-1}$,
where $a_{l,k}\in{\mathcal O}_{D_{\mathrm{ram}}}$
are lifts of $\overline{a}_{k,l}$ for $l> k$.
In particular, we obtain the existence of $h$.
The ambiguities of $h$ are the lower triangular entries
$a_{i,j}$ with $i>j$.
So its trace
$\Tr(h)=a_{0,0}+\cdots+a_{r-1,r-1}$ is independent of the choice of $h$.
\end{proof}

The trace pairing
$\Tr\colon \ker(\varpi_G) \otimes {\mathcal G}^0
\ni v\otimes u \mapsto \Tr(v\circ u)\in \Omega^1_C$
induces an isomorphism
\begin{equation*}
 \ker \varpi_G \stackrel{\sim}\longrightarrow \big({\mathcal G}^0\big)^{\vee}\otimes \Omega^1_C.
\end{equation*}

For $(\tau_k)\in \bigoplus_{k=0}^{r-1} \Hom\big(\overline{W}_k,\overline{V}_k\big)$,
consider the diagrams
\begin{equation} \label{equation: commutativity condition of symmetric tensor 1}
\begin{CD}
 \overline{W}_k @>>> \overline{W}_{k-1} \\
 @V \tau_k VV @V \tau_{k-1} VV \\
 \overline{V}_k @>>> \overline{V}_{k-1}
 \end{CD}
 \quad (1\leq k\leq r-1),
 \qquad
 \begin{CD}
 z {\mathcal O}_{D_{\mathrm{ram}}} \otimes \overline{W}_0 @>>> \overline{W}_{r-1} \\
 @V \mathrm{id}\otimes\tau_0 V V @V \tau_{r-1} VV \\
 z {\mathcal O}_{D_{\mathrm{ram}}} \otimes \overline{V}_0 @>>> \overline{V}_{r-1}
 \end{CD}
\end{equation}
and for
$(\xi_k)\in \bigoplus_{k=0}^{r-1} \Hom\big(\overline{V}_k,\overline{W}_k\big)$,
consider the diagrams
\begin{equation} \label{equation: commutativity condition of symmetric tensor 2}
 \begin{CD}
 \overline{V}_k @>>> \overline{V}_{k-1} \\
 @V \xi_k VV @V \xi_{k-1} VV \\
 \overline{W}_k @>>> \overline{W}_{k-1}
 \end{CD}
 \quad (1\leq k\leq r-1),
 \qquad
 \begin{CD}
 z{\mathcal O}_{D_{\mathrm{ram}}} \otimes \overline{V}_0 @>>> \overline{V}_{r-1} \\
 @V \mathrm{id}\otimes\xi_0 VV @V \xi_{r-1} VV \\
 z{\mathcal O}_{D_{\mathrm{ram}}} \otimes \overline{W}_0 @>>> \overline{W}_{r-1}.
 \end{CD}
\end{equation}
Then we put
\begin{gather*}
 \Sym^2\big(\overline{W}\big)
 =
 \left\{ (\tau_k)\in \bigoplus_{k=0}^{r-1} \Hom\big(\overline{W}_k,\overline{V}_k\big)
 \, \middle| \,
 \begin{array}{@{}l@{}}
 \text{the diagrams (\ref{equation: commutativity condition of symmetric tensor 1})
 are commutative} \\
 \text{and $^t\tau_{r-k-1}=\tau_k$ for $0\leq k\leq r-1$}
 \end{array}
 \right\},
 \\
 \Sym^2\big(\overline{V}\big)
 =
 \left\{
 (\xi_k)\in \bigoplus_{k=0}^{r-1} \Hom\big(\overline{V}_k,\overline{W}_k\big)
 \, \middle| \,
 \begin{array}{@{}l@{}}
 \text{the diagrams (\ref{equation: commutativity condition of symmetric tensor 2})
 are commutative} \\
 \text{and $^t\xi_{r-k-1}=\xi_k$ for $0\leq k\leq r-1$}
 \end{array}
 \right\}
\end{gather*}
and put
\begin{gather*}
 A^0 =
 \left\{ (a_k(w))\in \bigoplus_{x\in D_{\mathrm{ram}}}\prod_{k=0}^{r-1} \mathbb{C}[w]/\big(w^{m_xr-r+1}\big)
 \, \middle| \,
 \begin{array}{@{}l@{}}
 \text{$w \, (a_k(w)-a_{k+1}(w))=0$} \\
 \text{for $0\leq k\leq r-2$}
 \end{array}
 \right\},
 \\
 A^1 =
 \Hom_{{\mathcal O}_{D_{\mathrm{ram}}}} \big(A^0,{\mathcal O}_{D_{\mathrm{ram}}}\big).
\end{gather*}

We need the following lemma which is similar to Lemma~\ref{lemma: extension of endomorphisms}.

\begin{Lemma} \label{lemma: extension of symmetric pairings}
Assume that $(\tau_k)\in \Sym^2\big(\overline{W}\big)$
and $(\xi_k)\in \Sym^2\big(\overline{V}\big)$ are given.
Then there are homomorphisms
$\tau\colon E|_{D_{\mathrm{ram}}}^{\vee}\longrightarrow E|_{D_{\mathrm{ram}}}$,
$\xi\colon E|_{D_{\mathrm{ram}}} \longrightarrow E|_{D_{\mathrm{ram}}}^{\vee}$
satisfying $^t\tau=\tau$, $^t\xi=\xi$,
$\tau(W_k)\subset V_k$, $\xi(V_k)\subset W_k$ and the commutative diagrams
\[
 \begin{CD}
 W_k @> \tau|_{W_k} >> V_k \\
 @VVV @VVV \\
 \overline{W}_k @>\tau_k>> \overline{V}_k,
 \end{CD}
 \qquad
 \begin{CD}
 V_k @> \xi|_{V_k} >> W_k \\
 @VVV @VVV \\
 \overline{V}_k @>\xi_k>> \overline{W}_k
 \end{CD}
\]
for $k=0,1\dots,r-1$,
where
$W_k=\bigoplus_{x\in D_{\mathrm{ram}}} \ker(z^{m_x-1}(\,^tN)^{r-k})
\subset E|_{D_{\mathrm{ram}}}^{\vee}$.
\end{Lemma}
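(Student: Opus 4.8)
The plan is to imitate the proof of Lemma~\ref{lemma: extension of endomorphisms}, keeping careful track of the symmetry. Since everything is a direct sum over $x\in D_{\mathrm{ram}}$, I would fix a single point $x$ and write $m=m_x$. I would start from the basis $e_0,\dots,e_{r-1}$ of $E|_{mx}$ constructed in the proof of Proposition~\ref{prop:correspondence-factorized}, together with its dual basis $e_0^\ast,\dots,e_{r-1}^\ast$ of $E|_{mx}^\vee$. Since $N(e_k)=e_{k+1}$ for $k\le r-2$ and $N(e_{r-1})=ze_0$, one has $^tN(e_j^\ast)=e_{j-1}^\ast$ for $j\ge1$ and $^tN(e_0^\ast)=ze_{r-1}^\ast$; a direct computation then identifies the filtrations in these coordinates as $V_k=\langle e_k,\dots,e_{r-1},ze_0,\dots,ze_{k-1}\rangle$ and, from $W_k=\ker\big(z^{m-1}(\,^tN)^{r-k}\big)$, $W_k=\langle e_0^\ast,\dots,e_{r-k-1}^\ast,ze_{r-k}^\ast,\dots,ze_{r-1}^\ast\rangle$.

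Next I would translate the conditions on $\tau$ into matrix form. Writing $\tau(e_j^\ast)=\sum_i T_{ij}e_i$, the condition $^t\tau=\tau$ is exactly the symmetry $T_{ij}=T_{ji}$ of the matrix $T=(T_{ij})$ (with respect to the bases $e_\bullet^\ast$ and $e_\bullet$). Using the descriptions of $V_k$ and $W_k$ above, the requirement $\tau(W_k)\subset V_k$ for all $k$ translates into $T_{ij}\in(z)\mathcal{O}_{mx}$ whenever $i+j\le r-2$; this is itself a symmetric condition, so it is compatible with $T$ being symmetric. The induced map $\overline{W}_k\to\overline{V}_k$ is read off from the appropriate block of $T$ modulo $z^{m-1}$, and prescribing it to equal $\tau_k$ fixes the corresponding entries of $T$ only up to the $z^{m-1}$-ambiguity, exactly as the lower-triangular entries were free in Lemma~\ref{lemma: extension of endomorphisms}.

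The key point, and the step I expect to require the most care, is to verify that a symmetric matrix $T$ realizing all the $\tau_k$ actually exists. Here the hypothesis $(\tau_k)\in\Sym^2(\overline{W})$, that is $^t\tau_{r-k-1}=\tau_k$ together with commutativity of the diagrams~(\ref{equation: commutativity condition of symmetric tensor 1}), is decisive: under the identifications $\overline{W}_k=(\overline{V}_{r-k-1})^\vee$ the pairing of indices $k\leftrightarrow r-k-1$ matches the transpose $T_{ij}\mapsto T_{ji}$, so the entries prescribed by $\tau_k$ and those prescribed by $\tau_{r-k-1}$ agree under transposition modulo the $z^{m-1}$-ambiguity. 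I would therefore first choose any lift realizing the $\tau_k$ (possible by the argument of Lemma~\ref{lemma: extension of endomorphisms}), then symmetrize it by setting the still-ambiguous below-diagonal entries equal to their above-diagonal counterparts; the compatibility just described guarantees that this symmetric choice still induces each $\tau_k$ and preserves the filtration, which yields the desired $\tau$.

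Finally, the construction of $\xi\colon E|_{mx}\to E|_{mx}^\vee$ with $^t\xi=\xi$, $\xi(V_k)\subset W_k$ and inducing the $\xi_k$ is entirely parallel, obtained by interchanging the roles of $V$ and $W$ (equivalently of $e_\bullet$ and $e_\bullet^\ast$) and of the diagrams~(\ref{equation: commutativity condition of symmetric tensor 1}) and~(\ref{equation: commutativity condition of symmetric tensor 2}); no new idea is needed once $\tau$ has been treated.
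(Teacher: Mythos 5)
Your proposal is correct and follows essentially the same route as the paper: the paper's proof likewise works in the basis $e_0,\dots,e_{r-1}$ from Proposition \ref{prop:correspondence-factorized} and its dual, writes out $\tau_k(e^*_{r-k-1})$ to see that the entries with index sum $\le r-2$ carry a factor of $z$ and are determined while those with index sum $\ge r$ are only determined modulo $z^{m_x-1}$, uses $^t\tau_{r-k-1}=\tau_k$ to get $b_{l,r-k-1}=b_{r-k-1,l}$ exactly for the determined entries and modulo $z^{m_x-1}$ for the ambiguous ones, and then adjusts the lifts of the ambiguous entries to produce a symmetric matrix. Your explicit coordinate description of $W_k$ and the reformulation of $\tau(W_k)\subset V_k$ as $T_{ij}\in(z)$ for $i+j\le r-2$ is just a more explicit rendering of what the paper encodes in its displayed matrix, and the treatment of $\xi$ by symmetry is the same in both.
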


\begin{proof}
Choose the basis $e_0,\dots,e_{r-1}$ of $E|_{D_{\mathrm{ram}}}$ taken in the proof of
Proposition~\ref{prop:correspondence-factorized}
and its dual basis $e^*_0,\dots,e^*_{r-1}$.
Since $\tau_k\big(\overline{W}_k\big)\subset \overline{V}_k$, we can write
\begin{gather*}
 \tau_k(e^*_{r-k-1}) =
 zb_{0,r-k-1}e_0+\cdots +zb_{k-1,r-k-1}e_{k-1}+b_{k,r-k-1}e_k\\
 \hphantom{\tau_k(e^*_{r-k-1}) =}{}
 +
 \overline{b}_{k+1,r-k-1}e_{k+1}+\cdots+\overline{b}_{r-1,r-k-1}e_{r-1},
\end{gather*}
where $\overline{b}_{l,r-k-1}\in{\mathcal O}_{D'_{\mathrm{ram}}}$ for $l\geq k+1$
and $b_{l,r-k-1}\in {\mathcal O}_{D_{\mathrm{ram}}}$for $l\leq k$.
Take a lift $b_{l,r-k-1}\in{\mathcal O}_{D_{\mathrm{ram}}}$ of $\overline{b}_{l,r-k-1}$
for $l\geq k+1$.
Then we have
\begin{gather*}
 zb_{l,r-k-1}=\tau_k(e^*_{r-k-1})(e^*_l) =
 \tau_{r-k-1}(e^*_l)(e^*_{r-k-1})
 \\
 \hphantom{zb_{l,r-k-1}}{}
 =\tau_{r-l-1}(e^*_l)(e^*_{r-k-1})
 =zb_{r-k-1,l}
 \qquad (\text{for $l\leq k-1$}),
 \\
 b_{k,r-k-1}=\tau_k(e^*_{r-k-1})(e^*_k)
 =
 \tau_{r-k-1}(e^*_k)(e^*_{r-k-1})
 =b_{r-k-1,k},
 \\
 zb_{l,r-k-1}
 =\tau_k(e^*_{r-k-1})(ze^*_l)
=\tau_{r-k-1}(ze^*_l)(e^*_{r-k-1})
 \\
\hphantom{zb_{l,r-k-1}}{}
=\tau_{r-l-1}(ze^*_l)(e^*_{r-k-1})=zb_{r-k-1,l}
 \qquad (\text{for $l\geq k+1$}).
\end{gather*}
After replacing $b_{r-k-1,l}$ for $l\geq k+1$,
we may assume $b_{l,r-k-1}=b_{r-k-1,l}$
for $l\geq k+1$.
Let
$\tau\colon E|_{D_{\mathrm{ram}}}^{\vee} \longrightarrow E|_{D_{\mathrm{ram}}}$
be the homomorphism given by the matrix
\[
 \begin{pmatrix}
 zb_{00}(z) & & \cdots & & b_{0,r-1}(z) \\
 \vdots & & & \rotatebox{80}{$\ddots$} \ & \vdots \\
 zb_{0,k}(z) & \cdots & b_{k,r-k-1}(z) & \cdots & b_{k,r-1}(z) \\
 \vdots & \rotatebox{80}{$\ddots$} & & & \vdots \\
 b_{0,r-1}(z) & & \cdots & & b_{r-1,r-1}(z)
 \end{pmatrix}
\]
with respect to the bases $(e^*_0,\dots,e^*_{r-1})$
and
$(e_0,\dots,e_{r-1})$.
Then we have $^t\tau=\tau$ and
$\tau$ also satisfies the other required conditions of the lemma.
The same statement holds for $(\xi_k)$.
\end{proof}

We define a complex
${\mathcal S}^{\bullet}_{\mathrm{ram}}$ by setting
\begin{gather} \label{equation: definition of S^0,S^1,S^2}
 {\mathcal S}^0_{\mathrm{ram}}
 =
 A^0,
 \qquad
 {\mathcal S}^1_{\mathrm{ram}}
 =
 \Sym^2\big(\overline{W}\big) \oplus \Sym^2\big(\overline{V}\big),
\qquad
 {\mathcal S}^2_{\mathrm{ram}}
 =
 G^1\oplus A^1
\end{gather}
and by setting the homomorphisms
\begin{gather}
 d_{{\mathcal S}^{\bullet}}^0\colon \
 {\mathcal S}^0_{\mathrm{ram}} \ni (a_k(w)) \mapsto
\big( \big( \theta_k\circ a_k(\kappa_k\circ\theta_k) \big) ,
 \big( {-}a_k(\kappa_k\circ\theta_k)\circ\kappa_k \big) \big)
 \in {\mathcal S}^1_{\mathrm{ram}},\nonumber \\
 d_{{\mathcal S}^{\bullet}}^1\colon \
 {\mathcal S}^1_{\mathrm{ram}}
 \ni ( (\tau_k), (\xi_k))
 \mapsto
\big( {-}(\delta_{(\tau_k,\xi_k)}) , \Theta_{(\tau_k,\xi_k)} \big)
 \in {\mathcal S}^2_{\mathrm{ram}},\label{equation: definition of complex S^dot}
\end{gather}
where $\delta_{(\tau_k,\xi_k)}\in G^1$
and $\Theta_{(\tau_k,\xi_k)}\in A^1$
are defined by
\begin{gather*}
 \delta_{(\tau_k,\xi_k)}
 =
 \bigg( \sum_{p=1}^{r-1}\sum_{l=1}^p \nu_p(z)
 N_k^{p-l} \circ\big(\theta_k\circ\xi_k+\tau_k\circ\kappa_k \big) \circ N_k^{l-1} \bigg),
 \\
 \Theta_{(\tau_k,\xi_k)} \big( \big(\overline{f_k(w)} \big) \big)
 =
 \Tr \big( f\circ (\theta\circ\xi+\tau\circ\kappa) \big),
\end{gather*}
where $\theta,\kappa$ are lifts of $(\theta_k),(\kappa_k)$
chosen as in the proof of Proposition \ref{prop:correspondence-factorized},
$\tau$, $\xi$ are lifts of $(\tau_k)$, $(\xi_k)$
given by Lemma \ref{lemma: extension of symmetric pairings}
and $f\in \End(E|_{D_{\mathrm{ram}}})$ is a lift of $(f_k(\theta_k\circ\kappa_k))$
given by Lemma \ref{lemma: extension of endomorphisms}.
By virtue of Lemma \ref{lemma: extension of endomorphisms},
we can see that $\Theta_{(\tau_k,\xi_k)}$ is independent of the choices of
$\theta$, $\kappa$, $\tau$, $\xi$ and $f$.
We can also check $d^1_{{\mathcal S}^{\bullet}}\circ d^0_{{\mathcal S}^{\bullet}}=0$.
The meaning of the cohomology
$H^1({\mathcal S}^{\bullet}_{\rm ram})$
is the first order deformation of factorized ramified structure.

We define a homomorphism of complexes
$\gamma^{\bullet} \colon {\mathcal G}^{\bullet}
\longrightarrow {\mathcal S}^{\bullet}_{\mathrm{ram}}[1]$
by
\begin{gather}
 \gamma^0 \colon \ {\mathcal G}^0 \ni u
 \mapsto
\big(
 \big( \overline { u|_{D_{\mathrm{ram}}} }\circ\theta_k
 + \theta_k\circ {}^t \overline { u|_{D_{\mathrm{ram}}} } \big) ,
 \big( {-}\kappa_k\circ \overline { u|_{D_{\mathrm{ram}}} }
 - {} ^t \overline { u|_{D_{\mathrm{ram}}} } \circ \kappa_k \big)
 \big)
 \in
 {\mathcal S}^1_{\mathrm{ram}},\nonumber
 \\
 \gamma^1 \colon \ {\mathcal G}^1 \ni v
 \mapsto
 (-\varpi_G(v),0)
 \in G^1\oplus A^1={\mathcal S}^2_{\mathrm{ram}}, \label{equation: chain map from G to S}
\end{gather}
where
$\overline{u|_{D_{\mathrm{ram}}}} \colon \overline{V}_k
\longrightarrow \overline{V}_k$
is the homomorphism induced by $u|_{D_{\mathrm{ram}}}$.

For $u\in {\mathcal G}^0$,
we have
\begin{gather*}
 \delta_{\gamma^0(u)}
 =
 \bigg( \sum_{p=1}^{r-1} \sum_{l=1}^p \nu_p(z)
 N_k^{p-l}\big(\overline{u|_{D_{\mathrm{ram}}}} N_k-N_k \overline{u|_{D_{\mathrm{ram}}}} \big) N_k^{l-1} \bigg)\\
 \hphantom{\delta_{\gamma^0(u)}}{}
 =
 \big( \overline{u|_{D_{\mathrm{ram}}}} \nu(N_k)-\nu(N_k) \overline{u|_{D_{\mathrm{ram}}}} \big).
\end{gather*}
On the other hand, the restriction
$\nabla|_{D_{\mathrm{ram}}}$ induces the homomorphism
$\nu(N_k)+\frac{k\,{\rm d}z}{rz}\mathrm{id}$ on $\overline{V}_k$.
So we have
$\delta_{\gamma^0(u)}=-\varpi_G(\nabla u-u\nabla)$.
Thus we have
$d_{{\mathcal S}^{\bullet}[1]}^0\gamma^0=\gamma^1 d_{{\mathcal G}^{\bullet}}^0$,
where $d_{{\mathcal S}^{\bullet}[1]}^0=-d^1_{{\mathcal S}^{\bullet}}$.
Set
\begin{equation} \label{equation: definition of tangent complex}
 {\mathcal F}^{\bullet}
 :=
 \mathrm{Cone} \big( {\mathcal G}^{\bullet} \xrightarrow {\gamma^{\bullet}}
 {\mathcal S}^{\bullet}_{\mathrm{ram}}[1] \big) [-1].
\end{equation}
So we have
\begin{gather*}
 {\mathcal F}^0
=
 {\mathcal G}^0\oplus A^0,
 \qquad
 {\mathcal F}^1
 =
 {\mathcal G}^1\oplus \Sym^2\big(\overline{V}\big)\oplus \Sym^2\big(\overline{W}\big),
 \qquad
 {\mathcal F}^2
=
 G^1\oplus A^1
\end{gather*}
and $d^0_{{\mathcal F}^{\bullet}}\colon {\mathcal F}^0 \longrightarrow {\mathcal F}^1$,
$d^1_{{\mathcal F}^{\bullet}}\colon {\mathcal F}^1 \longrightarrow {\mathcal F}^2$
are defined by
\begin{gather*}
 d^0_{{\mathcal F}^{\bullet}} ( u, (a_k(w)) )
 =
 \big(\nabla\circ u-(u\otimes\mathrm{id})\circ \nabla ,
 -\gamma^0(u)+d^0_{{\mathcal S}^{\bullet}}((a_k(w))) \big)
 \\
 d^1_{{\mathcal F}^{\bullet}} ( v, ( (\tau_k),(\xi_k) ) )
 =
 \big(\varpi_G(v)-(\delta_{(\tau_k,\xi_k)}) , \Theta_{(\tau_k,\xi_k)} \big).
\end{gather*}

Consider the complexes
${\mathcal F}_0^{\bullet}=
\big[ {\mathcal G}^0 \oplus {\mathcal S}^0_{\mathrm{ram}}
 \longrightarrow \Sym^2\big(\overline{W}\big) \big]$,
${\mathcal F}_1^{\bullet}=
 \big[
 {\mathcal G}^1 \oplus \Sym^2\big(\overline{V}\big)
 \longrightarrow
 {\mathcal S}^2_{\mathrm{ram}} \big]$
defined by
\begin{gather*}
 d_{{\mathcal F}_0^{\bullet}}^0
 \colon \
 {\mathcal G}^0 \oplus A^0
 \ni (u,(\overline{a_k(w)})) \mapsto
 \big({-}\overline { u|_{D_{\mathrm{ram}}} } \!\circ\theta_k\!
 - \theta_k\!\circ \!\! {}^t \overline { u|_{D_{\mathrm{ram}}} }\!
 + \theta_k\circ a_k(\kappa_k\circ\theta_k) \big)
 \!\in\! \Sym^2\big(\overline{W}\big),
 \\
 d^0_{{\mathcal F}_1^{\bullet}}
 \colon \
 {\mathcal G}^1 \oplus \Sym^2\big(\overline{V}\big)
 \ni (v,(\xi_k)) \mapsto
 (\varpi_G(v)-(\delta_{(0,\xi_k)}), (\Theta_{(0,\xi_k)}) )
 \in G^1\oplus A^1.
\end{gather*}
Then there is an exact sequence of complexes
\begin{equation*}
 0 \longrightarrow {\mathcal F}^{\bullet}_1[-1]
 \longrightarrow {\mathcal F}^{\bullet}
 \longrightarrow {\mathcal F}^{\bullet}_0
 \longrightarrow 0
\end{equation*}
which is expressed by the diagram
\[
 \begin{CD}
 0 @>>> {\mathcal G}^0\oplus A^0 @>>> {\mathcal G}^0\oplus A^0 \\
 @VVV @V d^0_{{\mathcal F}^{\bullet}} VV @V d^0_{{\mathcal F}_0^{\bullet}} V V \\
 {\mathcal G}^1\oplus \Sym^2\big(\overline{V}\big) @>>>
 {\mathcal G}^1\oplus \Sym^2\big(\overline{W}\big)\oplus \Sym^2\big(\overline{V}\big)
 @>>> \Sym^2(\overline{W}) \\
 @V d^0_{{\mathcal F}_1^{\bullet}} VV @V d^1_{{\mathcal F}^{\bullet}} VV @VVV \\
 G^1\oplus A^1 @>>> G^1\oplus A^1 @>>> 0.
 \end{CD}
\]
So we get the following exact sequence of hyper cohomologies:
\begin{gather}
 0\rightarrow
 \mathbf{H}^0({\mathcal F}^{\bullet})
 \rightarrow \mathbf{H}^0({\mathcal F}^{\bullet}_0)
 \rightarrow \mathbf{H}^0({\mathcal F}^{\bullet}_1)
 \rightarrow \mathbf{H}^1({\mathcal F}^{\bullet})
 \rightarrow \mathbf{H}^1({\mathcal F}^{\bullet}_0)
 \rightarrow \mathbf{H}^1({\mathcal F}^{\bullet}_1)\nonumber\\
 \hphantom{0}{}
 \rightarrow \mathbf{H}^2({\mathcal F}^{\bullet})
 \rightarrow 0.\label{equation: hyper cohomology exact sequence}
\end{gather}

\begin{Proposition} \label{prop: tangent space of the moduli space}
The relative tangent space
of $M^{\balpha}_{{\mathcal C},{\mathcal D}}(\lambda,\tilde{\mu},\tilde{\nu})$
over ${\mathcal T}$ at
$(E,\nabla,\{ l,\ell,{\mathcal V}\})$
is isomorphic to $\mathbf{H}^1({\mathcal F}^{\bullet})$.
\end{Proposition}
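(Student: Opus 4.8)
The plan is to identify the relative tangent space with the space of first-order deformations over the dual numbers $\mathbb{C}[\epsilon]/(\epsilon^2)$ and to show that these are classified, up to isomorphism, by $\mathbf{H}^1({\mathcal F}^{\bullet})$. Because the point $t\in{\mathcal T}$ is fixed, the curve $C$, the divisor $D$ and the exponent data $\lambda,\mu,\nu$ remain constant, so a relative tangent vector is a flat deformation $(E_\epsilon,\nabla_\epsilon,l_\epsilon,\ell_\epsilon,{\mathcal V}_\epsilon)$ over $\mathbb{C}[\epsilon]/(\epsilon^2)$ restricting to $(E,\nabla,l,\ell,{\mathcal V})$ at $\epsilon=0$. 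First I would manufacture a cocycle from such a deformation by choosing an affine open cover $\{U_\alpha\}$ of $C$ on which $E$ is trivial and such that each point of $D$ lies in a single chart, and then reading off the \v{C}ech data.

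I would treat the underlying bundle, connection and filtrations first. Trivialising $E_\epsilon|_{U_\alpha}\cong E|_{U_\alpha}\otimes\mathbb{C}[\epsilon]/(\epsilon^2)$, the transition functions take the form $\mathrm{id}+\epsilon\,u_{\alpha\beta}$, and compatibility with the parabolic filtrations $l,\ell,V$ forces $u_{\alpha\beta}\in{\mathcal G}^0$. Writing $\nabla_\epsilon|_{U_\alpha}=\nabla+\epsilon\,\psi_\alpha$, the requirement that the deformation keep the fixed exponents $\lambda,\mu,\nu$ puts $\psi_\alpha\in{\mathcal G}^1$, and the gluing condition on overlaps yields $\psi_\alpha-\psi_\beta=d^0_{{\mathcal G}^{\bullet}}(u_{\alpha\beta})$ from \eqref{equation: homomorphism of complex G}. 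Thus $(\{u_{\alpha\beta}\},\{\psi_\alpha\})$ is a $1$-hypercocycle defining a class in $\mathbf{H}^1({\mathcal G}^{\bullet})$, which by the interpretation recorded after \eqref{equation: homomorphism of complex G} is exactly the tangent direction of the bundle-connection-filtration data.

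Next I would encode the deformation of the factorized ramified structure. Writing $\theta_{k,\epsilon}=\theta_k+\epsilon\,\tau_k$ and $\kappa_{k,\epsilon}=\kappa_k+\epsilon\,\xi_k$, the symmetry relations $^t\theta_k=\theta_{r-k-1}$ and $^t\kappa_k=\kappa_{r-k-1}$ of Definition~\ref{def-fac-connection}(ii),(iii) force $(\tau_k)\in\Sym^2(\overline{W})$ and $(\xi_k)\in\Sym^2(\overline{V})$. Differentiating the identity $N_k=\theta_k\circ\kappa_k$ of Definition~\ref{def-fac-connection}(iv), together with the fact that $N_k$ is induced by $\nabla|_{D_{\mathrm{ram}}}$ through $\nu(N_k)+\frac{k}{r}\frac{{\rm d}z}{z}$, reproduces precisely the operator $\delta_{(\tau_k,\xi_k)}$ and the compatibility $\varpi_G(\psi_\alpha|_{D_{\mathrm{ram}}})=\delta_{(\tau_k,\xi_k)}$, while differentiating the normalization $N_k^r=z\,\mathrm{id}$ and condition (v) of Definition~\ref{def-fac-connection} yields the trace-type constraint $\Theta_{(\tau_k,\xi_k)}=0$; together these are exactly the vanishing of the two components of $d^1_{{\mathcal F}^{\bullet}}$. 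A change of local trivialisation by $u_{\alpha\beta}$ alters $((\tau_k),(\xi_k))$ by $\gamma^0(u_{\alpha\beta})$ of \eqref{equation: chain map from G to S}, whereas an infinitesimal equivalence $\varsigma_k=\mathrm{id}+\epsilon\,a_k(w)$ of the structure contributes an $A^0$-coboundary. Assembling the ${\mathcal G}$-data with $((\tau_k),(\xi_k))$ therefore gives a $1$-hypercocycle for the cone ${\mathcal F}^{\bullet}=\mathrm{Cone}(\gamma^{\bullet})[-1]$ of \eqref{equation: definition of tangent complex}, isomorphic deformations differ by a coboundary, and this produces a well-defined map from first-order deformations to $\mathbf{H}^1({\mathcal F}^{\bullet})$.

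The main work, and the delicate point, is to invert this construction and to verify that the two directions agree on the nose. The essential tools are Lemma~\ref{lemma: extension of endomorphisms} and Lemma~\ref{lemma: extension of symmetric pairings}, which lift the ``barred'' data on $\overline{V}_k,\overline{W}_k$ to genuine endomorphisms and symmetric pairings on $E|_{D_{\mathrm{ram}}}$: the former guarantees that $\Theta_{(\tau_k,\xi_k)}$ is independent of the chosen lifts and, via the surjectivity of $\varpi_G$, that every abstract cocycle is realised by an actual deformation of $\nabla$; the latter realises the symmetric deformation parameters. I expect the principal obstacle to be the bookkeeping of these compatibilities at $D_{\mathrm{ram}}$, namely checking that the differentiated forms of all the commutative diagrams in Definition~\ref{def-fac-connection} are precisely the differentials $d^0_{{\mathcal F}^{\bullet}}$ and $d^1_{{\mathcal F}^{\bullet}}$, so that deforming the whole structure becomes equivalent to solving the ${\mathcal F}^{\bullet}$-cocycle conditions. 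Once this is established, stability of $(E,\nabla,l,\ell,{\mathcal V})$ (automatic when $D_{\mathrm{ram}}\neq\varnothing$) forces the only infinitesimal automorphisms to be scalars, so first-order deformations modulo isomorphism coincide with $\mathbf{H}^1({\mathcal F}^{\bullet})$; the exact sequence \eqref{equation: hyper cohomology exact sequence} provides a convenient way to organise the final verification.
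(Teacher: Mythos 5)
Your proposal follows essentially the same route as the paper: a tangent vector is a flat deformation over $\mathbb{C}[\epsilon]/(\epsilon^2)$, read off on a trivialising cover as \v{C}ech data $(\{u_{\alpha\beta}\},\{v_\alpha,\eta_\alpha\})$ with $\eta_\alpha=(\tau_k,\xi_k)$ the first-order variation of $(\theta_k,\kappa_k)$, and the conditions of Definition~\ref{def-fac-connection} differentiate exactly to the cocycle conditions for ${\mathcal F}^{\bullet}$, with Lemmas~\ref{lemma: extension of endomorphisms} and~\ref{lemma: extension of symmetric pairings} supplying the lifts. The one substantive ingredient you leave implicit in the converse direction is the characterization $\im(\mathrm{ad}(N))=\{f\mid \Tr\big(f\circ N^l\big)=0 \text{ for all } l\}$, which the paper uses to produce $f$ with $\theta\circ\xi+\tau\circ\kappa=f\circ N-N\circ f$ and thereby verify $(N+\epsilon(\theta\circ\xi+\tau\circ\kappa))^r=z\,\mathrm{id}$ for the deformed structure.
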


\begin{proof}
Take a point $t\in{\mathcal T}$ and
a point $y\in M^{\balpha}_{{\mathcal C},{\mathcal D}}(\lambda,\tilde{\mu},\tilde{\nu})$
over $t$
corresponding to a connection
$(E,\nabla,\{ l,\ell,{\mathcal V}\})$
with $(\lambda,\mu,\nu)$-structure.
Giving a tangent vector $v$ of the fiber
$M^{\balpha}_{{\mathcal C},{\mathcal D}}(\lambda,\tilde{\mu},\tilde{\nu})_t$
of the moduli space at $y$
is equivalent to giving
a flat family
$(\tilde{E},\tilde{\nabla},\{\tilde{l},\tilde{\ell},\tilde{\mathcal V}\})$
of connections with $(\lambda,\mu,\nu)$-structure
on $C\times\Spec\mathbb{C}[\epsilon]$
satisfying
$\big(\tilde{E},\tilde{\nabla},\big\{\tilde{l},\tilde{\ell},\tilde{\mathcal V}\big\}\big)
\otimes\mathbb{C}[\epsilon]/(\epsilon)
\cong (E,\nabla,\{ l,\ell,{\mathcal V}\})$,
where $\mathbb{C}[\epsilon]=\mathbb{C}[\epsilon]/\big(\epsilon^2\big)$.
Take an affine open covering
$\{U_{\alpha}\}$ of $C$
such that $E|_{U_{\alpha}}\cong{\mathcal O}_{U_{\alpha}}^{\oplus r}$ for any~$\alpha$.
Put $U_{\alpha}[\epsilon]:=U_{\alpha}\times\Spec\mathbb{C}[\epsilon]$.
We may assume that for each $x\in D$,
there exists exactly one index $\alpha$ satisfying
$x\in U_{\alpha}$ and that
each $U_{\alpha}$ contains at most one point in $D$.
We can take a~lift
$\varphi_{\alpha}\colon
E\otimes\mathbb{C}[\epsilon]|_{U_{\alpha}[\epsilon]} \stackrel{\sim}\longrightarrow
\tilde{E}|_{U_{\alpha}[\epsilon]}$
of the given isomorphism
$E|_{U_{\alpha}}\stackrel{\sim}\longrightarrow
\tilde{E}\otimes\mathbb{C}[\epsilon]/(\epsilon)|_{U_{\alpha}}$.
We may assume that $\varphi_{\alpha}$ preserves $l$
if ${\mathcal D}_{\mathrm{log}}\cap U_{\alpha}\neq\varnothing$
and preserves $\ell$
if ${\mathcal D}_{\mathrm{un}}\cap U_{\alpha}\neq\varnothing$.
If ${\mathcal D}_{\mathrm{ram}}\cap U_{\alpha}\neq \varnothing$,
then we may assume that
$\varphi_{\alpha}$ sends the filtration $\big\{V_k\otimes\mathbb{C}[\epsilon]\big\}$
to the filtration $\{\tilde{V}_k\}$.
Set
\begin{gather*}
 \epsilon u_{\alpha\beta}= \varphi_{\alpha}^{-1}\circ\varphi_{\beta}-\mathrm{id}, \\
 \epsilon v_{\alpha}=
 (\varphi_{\alpha}\otimes\mathrm{id})^{-1}
 \circ\tilde{\nabla}\circ\varphi_{\alpha}-\nabla\otimes\mathbb{C}[\epsilon], \\
 \epsilon \eta_{\alpha}
 =
 \big(
 \varphi_{\alpha}|_{D_{\mathrm{ram}}}^{-1}\circ\tilde{\theta}_k
 \circ \,^t\varphi_{\alpha}|_{D_{\mathrm{ram}}}^{-1}
 -\theta_k ,
 {}^t\varphi_{\alpha}|_{D_{\mathrm{ram}}}\circ\tilde{\kappa}_k
 \circ\varphi_{\alpha}|_{D_{\mathrm{ram}}}
 -\kappa_k \big).
\end{gather*}
Then we get a cohomology class
$[ \{u_{\alpha\beta}\}, \{ v_{\alpha} , (\eta_{\alpha}) \} ]
\in \mathbf{H}^1({\mathcal F}^{\bullet})$,
which can be checked to be independent of the choice of
$\{U_{\alpha},\varphi_{\alpha}\}$.

Conversely, assume that a cohomology class
$[ \{u_{\alpha\beta}\}, \{ v_{\alpha} \} , \{ \eta_{\alpha} \} ]
\in \mathbf{H}^1({\mathcal F}^{\bullet})$
is given.
We define
\begin{gather}
 \sigma_{\beta\alpha}
=
 \mathrm{id}+\epsilon u_{\alpha\beta}
 \colon \
 {\mathcal O}_{U_{\alpha\beta}[\epsilon]}^{\oplus r}
 \stackrel{\sim}\longrightarrow
 {\mathcal O}_{U_{\alpha\beta}[\epsilon]}^{\oplus r},\nonumber \\
 \nabla_{\alpha}
 =\nabla+\epsilon v_{\alpha}
 \colon \
 {\mathcal O}_{U_{\alpha}[\epsilon]}^{\oplus r}
 \longrightarrow
 {\mathcal O}_{U_{\alpha}[\epsilon]}^{\oplus r}
 \otimes\Omega^1_C(D).\label{equation: local patching of bundle isomorphism}
\end{gather}
If $U_{\alpha}\cap D_{\mathrm{log}}\neq\varnothing$
we put
$l_{\alpha}:=l|_{U_{\alpha}}\otimes\mathbb{C}[\epsilon]$
and we put
$\ell_{\alpha}:=\ell|_{U_{\alpha}\otimes\mathbb{C}[\epsilon]}$
if $U_{\alpha}\cap D_{\mathrm{un}}\neq\varnothing$.
Then we can see that
$\nabla_{\alpha}$ preserves $l_{\alpha}$
if $U_{\alpha}\cap D_{\mathrm{log}}\neq\varnothing$
and preserves $\ell_{\alpha}$
if $U_{\alpha}\cap D_{\mathrm{un}}\neq\varnothing$,
because $v_{\alpha}$ preserves $l|_{U_{\alpha}}$ and $\ell|_{U_{\alpha}}$ by the definition of
${\mathcal G}^1$.

Consider the case
 $U_{\alpha}\cap D_{\mathrm{ram}}=\{x\}$.
We can write
$\eta_{\alpha}=(\tau_k,\xi_k)_{0\leq k\leq r-1}$.
By the choice of $\eta_{\alpha}$, we have
$\delta_{(\tau_k,\xi_k)}=\overline{v_{\alpha}|_{D_{\mathrm{ram}}}}$ and
$\Theta_{(\tau_k,\xi_k)}=0$,
which yield the equalities
\begin{gather}
 \Tr \left( (\theta\circ\xi+\tau\circ\kappa)\circ N^j \right)=0,
 \qquad 0\leq j\leq r-1, \nonumber
 \\
 \sum_{p=1}^{r-1}\sum_{j=1}^p \nu_p(z) N_k^{p-j}(\theta_k\xi_k+\tau_k\kappa_k)N_k^{j-1}
 =\overline { v_{\alpha}|_{D_{\mathrm{ram}}} }, \qquad 0\leq k\leq r-1, \label{condition in G^1}
\end{gather}
where $N$, $\theta$ and $\kappa$ are lifts of $(N_k)$, $(\theta_k)$ and $(\kappa_k)$
chosen as in the proof of Proposition~\ref{prop:correspondence-factorized}
and~$\tau$,~$\xi$ are lifts of $(\tau_k)$, $(\xi_k)$
given by Lemma~\ref{lemma: extension of symmetric pairings}.

Since the minimal polynomial $w^r$ of $N|_x$ is of degree~$r$, we can see
from \cite[Lemma~1.4]{Inaba-3}. that
\begin{equation*}
 \im ( \mathrm{ad}(N) )
 =
\big\{ f\in\End_{{\mathcal O}_{m_xx}}(E|_{m_xx}) \, \big| \,
 \text{$\Tr\big(f\circ N^l\big)=0$ for any $l\geq 0$} \big\}.
\end{equation*}
So we can find an endomorphism
$f\in\End(E|_{m_xx})$ satisfying
$\theta\circ\xi+\tau\circ\kappa=f\circ N-N\circ f$.

Now we will construct a factorized ramified structure
on $\big({\mathcal O}_{U_{\alpha}[\epsilon]}^{\oplus r} , \nabla_{\alpha}\big)$.
We take $(V_k\otimes\mathbb{C}[\epsilon])_{0\leq k\leq r-1}$
as the relative version of the filtration in Definition~\ref{def-fac-connection}\,(i).
The homomorphisms
\begin{gather*}
 \theta_{k,\epsilon}
 :=
 \theta_k+\epsilon\tau_k
 \colon \
 \overline{W}_k\otimes\mathbb{C}[\epsilon]
 \longrightarrow \overline{V}_k\otimes\mathbb{C}[\epsilon],
 \\
 \kappa_{k,\epsilon}
:=
 \kappa_k+\epsilon\xi_k
 \colon \
 \overline{V}_k\otimes\mathbb{C}[\epsilon]
 \longrightarrow \overline{W}_k\otimes\mathbb{C}[\epsilon]
\end{gather*}
become lifts of $\theta_k$ and $\kappa_k$, respectively.
They determine bilinear pairings
\begin{gather*}
\begin{split}
& \vartheta_{k,\epsilon}
 \colon \
 \big(\overline{W}_k\otimes\mathbb{C}[\epsilon]\big)
 \times \big(\overline{W}_{r-k-1}\otimes\mathbb{C}[\epsilon] \big)
 \longrightarrow {\mathcal O}_{m_xx}\otimes\mathbb{C}[\epsilon],
 \\
& \varkappa_{k,\epsilon}
 \colon \
 \big( \overline{V}_k\otimes\mathbb{C}[\epsilon] \big)
 \times \big( \overline{V}_{r-k-1}\otimes\mathbb{C}[\epsilon] \big)
 \longrightarrow {\mathcal O}_{m_xx}\otimes\mathbb{C}[\epsilon],
 \end{split}
\end{gather*}
which satisfy the commutative diagrams in (ii), (iii)
of Definition \ref{def-fac-connection}.
Since $N^r=z\cdot\mathrm{id}_{E|_{m_xx}}$, the equality
\begin{align*}
 (N+\epsilon (\theta\circ\xi+\tau\circ\kappa))^r
 &=
 N^r+\epsilon\sum_{j=0}^{r-1} N^j\circ (\theta\circ\xi+\tau\circ\kappa)\circ N^{r-j-1}
 \\
 &=
 N^r+\epsilon\sum_{j=0}^{r-1} N^j\circ (f\circ N-N\circ f) \circ N^{r-j-1}
 \\
 &=
 N^r+f\circ N^r-N^r\circ f
 =z\, \mathrm{id}_{{\mathcal O}_{U_{\alpha}[\epsilon]}^{\oplus r}}
\end{align*}
holds.
By the equality (\ref{condition in G^1}),
\begin{gather*}
 \nu ( N_k+\epsilon (\theta_k\circ\xi_k+\tau_k\circ\kappa_k) )
 +k\,{\rm d}z/rz\,\mathrm{id}
 \\
\qquad{} =
 \nu(N_k)+k\,{\rm d}z/rz\,\mathrm{id}
 +\epsilon \sum_{p=1}^{r-1}\sum_{j=1}^p \nu_p(z) N_k^{p-j}
 (\theta_k\circ\xi_k+\tau_k\circ\kappa_k) N_k^{j-1}
 \\
\qquad{} =
 \nu(N_k)+k\,{\rm d}z/rz\,\mathrm{id}
 +\epsilon \overline { v_{\alpha}|_{D_{\rm ram} } }
\end{gather*}
coincides with the map induced by
$\nabla_{\alpha}$.
So the relative version of the condition (iv) of Definition~\ref{def-fac-connection}
is satisfied.
The endomorphism $N_k+\epsilon (\theta_k\circ\xi_k+\tau_k\circ\kappa_k)$ defines a
$\mathbb{C}[w]\otimes_{\mathbb{C}}\mathbb{C}[\epsilon]$-module structure
on $\overline{V}_k\otimes\mathbb{C}[\epsilon]$.
Define an isomorphism
\[
 \psi_{k,\epsilon} \colon \ \overline{V}_k\otimes\mathbb{C}[\epsilon]
 \stackrel{\sim} \longrightarrow
 (w)/\big(w^{m_xr-r+2}\big)\otimes \overline{V}_{k-1}\otimes\mathbb{C}[\epsilon]
\]
of $\mathbb{C}[w]\otimes_{\mathbb{C}}\mathbb{C}[\epsilon]$-modules
by setting
\[
 \psi_{k,\epsilon} \big( \pi_k\big( (N+\epsilon (\theta\circ\xi+\tau\circ\kappa))^ke_0 \big)\big)
 =
 w\otimes \pi_{k-1} \big( (N+\epsilon (\theta\circ\xi+\tau\circ\kappa))^{k-1}e_0 \big) ,
\]
where $\pi_k$ means $\pi_k\otimes\mathbb{C}[\epsilon]$.
Then the image of
$z\otimes \pi_0(e_0)$ via the composition
\begin{gather}
 (z)\otimes \overline{V}_0\otimes\mathbb{C}[\epsilon]
 \rightarrow \overline{V}_{r-1}\otimes\mathbb{C}[\epsilon]
 \xrightarrow[\sim]{\psi_{r-1,\epsilon}}
 (w)\otimes\overline{V}_{r-2}\otimes\mathbb{C}[\epsilon]
\nonumber\\
\hphantom{(z)\otimes \overline{V}_0\otimes\mathbb{C}[\epsilon]
 \rightarrow \overline{V}_{r-1}\otimes\mathbb{C}[\epsilon]}{}
 \xrightarrow[\sim]{\psi_{r-2,\epsilon}}\cdots \xrightarrow[\sim]{\psi_{1,\epsilon}}
 (w^{r-1})\otimes\overline{V}_0\otimes\mathbb{C}[\epsilon]\label{equation: deformation of composition}
\end{gather}
coincides with
$(\psi_{1,\epsilon}\circ\cdots\circ\psi_{r-1,\epsilon})
\big(\pi_{r-1}\big( (N+\epsilon (\theta\circ\xi+\tau\circ\kappa))^re_0\big)\big)
=w^r\otimes \pi_0(e_0)$.
Thus the composition~(\ref{equation: deformation of composition})
coincides with the homomorphism
$(z)\otimes\overline{V}_0\longrightarrow \big(w^{r-1}\big)\otimes L_0$
obtained by tensoring $\overline{V}_0\otimes\mathbb{C}[\epsilon]$ to
the canonical homomorphism $(z)\longrightarrow \big(w^{r-1}\big)$.

If we put
${\mathcal V}_{\alpha}:=
\big(\overline{V}_k\otimes\mathbb{C}[\epsilon],\vartheta_{k,\epsilon},\kappa_{k,\epsilon}\big)_{0\leq k\leq r-1}$,
then we can see from the above arguments that
$\big({\mathcal O}_{U_{\alpha}[\epsilon]}^{\oplus r} ,
\nabla_{\alpha} , {\mathcal V}_{\alpha}\big)$
is a flat family of local connections with $\nu(w)_x$-ramified structure which is a lift of
$(E,\nabla,{\mathcal V})|_{U_{\alpha}}$.
We can patch all the local connections
$\big({\mathcal O}_{U_{\alpha}[\epsilon]}^{\oplus r} ,l_{\alpha},\ell_{\alpha}
\nabla_{\alpha} , {\mathcal V}_{\alpha}\big)$
with $(\lambda,\mu,\nu)$-structure
via the isomorphisms $\sigma_{\beta\alpha}$
defined in~(\ref{equation: local patching of bundle isomorphism}).
Then we obtain a global flat family of connections
$\big(\tilde{E},\tilde{\nabla},\tilde{l},\tilde{\ell},\tilde{\mathcal V}\big)$
with $(\lambda,\mu,\nu)$-structure
which gives a tangent vector
$v\in T_{M^{\balpha}_{{\mathcal C},{\mathcal D}}(\lambda,\tilde{\mu},\tilde{\nu})/{\mathcal T}}(y)$
at $y$.
We can see from its construction that
the map
$[\{u_{\alpha\beta}\},\{v_{\alpha},\eta_{\alpha}\}]\mapsto v$
gives the desired inverse.
\end{proof}

\section{Smoothness of the moduli space}\label{section: smoothness of the moduli}

In this section, we assume the same notations as in
Sections~\ref{section: construction of the moduli space}
and~\ref{section: tangent space}.
Take a connection
$(E,\nabla,\{ l,\ell,{\mathcal V}\})
\in M^{\balpha}_{{\mathcal C},{\mathcal D}}(\lambda,\tilde{\mu},\tilde{\nu})_t$
with $(\lambda,\mu,\nu)$-structure.
We define a pairing
\[
 \Xi_{\mathrm{ram}} \colon \
 {\mathcal S}^1_{\mathrm{ram}} \times {\mathcal S}^1_{\mathrm{ram}}
 \longrightarrow \Omega^1_C(D)|_{D_{\mathrm{ram}}}
\]
by setting
\begin{gather}
 \Xi_{\mathrm{ram}} ( (\tau_k,\xi_k), (\tau'_k,\xi'_k) )\nonumber\\
\qquad{} :=
 \sum_{p=1}^{r-1}\sum_{j=1}^p
 \frac{\nu_p(z)}{2}
 \Tr\big( \tau' \circ \,^t\!N^{p-j} \circ \xi \circ N^{j-1}
 -N^{p-j} \circ \tau \circ \,^t\!N^{j-1} \circ \xi'
\big)\label{equation: definition of Xi}
\end{gather}
for $(\tau_k),(\tau'_k)\in \Sym^2\big(\overline{W}\big)$ and
$(\xi_k),(\xi'_k)\in \Sym^2\big(\overline{V}\big)$,
where $\tau,\tau'\in \Hom(E|_{D_{\mathrm{ram}}}^{\vee},E|_{D_{\mathrm{ram}}})$
are lifts of $(\tau_k),(\tau'_k)$
and $\xi,\xi'\in \Hom\big(E|_{D_{\mathrm{ram}}},E|_{D_{\mathrm{ram}}}^{\vee}\big)$
are lifts of $(\xi_k),(\xi'_k)$
given by Lemma \ref{lemma: extension of symmetric pairings},
respectively.

Take an affine open covering
$C=\bigcup_{\alpha} U_{\alpha}$
for the calculation of the hypercohomologies in \v{C}ech cohomology.
We define a bilinear pairing
\begin{gather} \label{equation: definition of symplectic form}
 \omega_{ (E,\nabla,\{ l,\ell,{\mathcal V}\}) } \colon \
 \mathbf{H}^1({\mathcal F}^{\bullet}) \times \mathbf{H}^1({\mathcal F}^{\bullet})
 \longrightarrow
 \mathbf{H}^2\big({\mathcal O}_C \rightarrow \Omega^1_C(D_{\mathrm{ram}})
 \rightarrow \Omega^1_C(D_{\mathrm{ram}})|_{D_{\mathrm{ram}}} \big)
 = \mathbb{C}
\end{gather}
by setting
\begin{gather}
 \omega_{ (E,\nabla,\{ l,\ell,{\mathcal V}\}) }
\big( \big[ \{ u_{\alpha\beta}\},
 \{ v_{\alpha}, \eta_{\alpha} \} \big] ,
 \big[ \{ u'_{\alpha\beta}\},
 \{ v'_{\alpha}, \eta'_{\alpha} \} \big]
\big)
 \nonumber\\
\qquad{}=
\big[
 \{ \Tr(u_{\alpha\beta}\circ u'_{\beta\gamma})\},
 \{ {-}\Tr(u_{\alpha\beta}\circ v'_{\beta}-v_{\alpha}\circ u'_{\alpha\beta})\},
 \{\Xi_{\mathrm{ram}}( \eta_{\alpha} , \eta'_{\alpha} )
 \}
\big] \label{expression of symplectic form}
\end{gather}
for
$u_{\alpha\beta},u'_{\alpha\beta}\in {\mathcal G}^0|_{U_{\alpha\beta}}$,
$v_{\alpha},v'_{\alpha}\in {\mathcal G}^1|_{U_{\alpha}}$,
$\eta_{\alpha},\eta'_{\alpha}
\in {\mathcal S}^1_{\mathrm{ram}}|_{U_{\alpha}}$
satisfying the cocycle conditions
\begin{alignat*}{3}
& \nabla u_{\alpha\beta}-u_{\alpha\beta}\nabla
=
 v_{\beta}-v_{\alpha} ,
 \qquad&&
 \gamma^1(v_{\alpha}) = d^1_{{\mathcal S}^{\bullet}}(\eta_{\alpha}) ,&
 \\
 &\nabla u'_{\alpha\beta}-u'_{\alpha\beta}\nabla
=
 v'_{\beta}-v'_{\alpha} ,
 \qquad&&
 \gamma^1(v'_{\alpha})
 =d^1_{{\mathcal S}^{\bullet}}(\eta'_{\alpha}),&
\end{alignat*}
where $d^1_{{\mathcal S}^{\bullet}}$ and $\gamma^1$ are defined in
(\ref{equation: definition of complex S^dot}) and
(\ref{equation: chain map from G to S}).
From the following lemma,
we can see that the pairing
$\omega_{ (E,\nabla,\{ l,\ell,{\mathcal V}\}) } \big( \big[ \{ (u_{\alpha\beta}\},
 \{ v_{\alpha}, \eta_{\alpha} \} \big] ,
 \big[ \{ (u'_{\alpha\beta}\},
 \{ v'_{\alpha}, \eta'_{\alpha} \} \big]
 \big)$
in (\ref{expression of symplectic form})
depends only on the cohomology classes
$ \big[ \{ (u_{\alpha\beta}\},
 \{ v_{\alpha}, \eta_{\alpha} \} \big] ,
 \big[ \{ (u'_{\alpha\beta}\},
 \{ v'_{\alpha}, \eta'_{\alpha} \} \big]
\in \mathbf{H}^1({\mathcal F}^{\bullet})$.

\begin{Lemma}
The equality
\[
 \omega_{ (E,\nabla,\{ l,\ell,{\mathcal V}\}) } \big(
 \big[ \{ u_{\alpha\beta}\},
 \{ v_{\alpha} , \eta_{\alpha} \} \big] ,
 \big[ \{ u'_{\alpha\beta}\},
 \{ v'_{\alpha} , \eta'_{\alpha} \} \big]
\big)=0
\]
holds
if there exists
$\{u_{\alpha},(a_{k,\alpha}(w))\}\in C^0\big(\{U_{\alpha}\},{\mathcal F}^0\big)$
which satisfies the equalities
\begin{gather*}
 u_{\alpha\beta} = u_{\beta}-u_{\alpha},
 \\
 v_{\alpha}= \nabla\circ u_{\alpha}-(u_{\alpha}\otimes\mathrm{id})\circ\nabla,
 \\
 \eta_{\alpha}=
 -\gamma^0(u_{\alpha})+d^0_{{\mathcal S}^{\bullet}}((a_{k,\alpha}(w))),
\end{gather*}
where $\gamma^0\colon {\mathcal G}^0\longrightarrow {\mathcal S}^1_{\mathrm{ram}}$
is defined in \eqref{equation: chain map from G to S} and
$d^0_{{\mathcal S}^{\bullet}} \colon {\mathcal S}^0_{\mathrm{ram}}
\longrightarrow {\mathcal S}^1_{\mathrm{ram}}$
is defined in~\eqref{equation: definition of complex S^dot}.
\end{Lemma}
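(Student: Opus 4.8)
The plan is to show that the \v{C}ech cocycle in \eqref{expression of symplectic form} representing $\omega_{(E,\nabla,\{l,\ell,\mathcal V\})}$ is a coboundary in the total complex computing $\mathbf H^2\big(\mathcal O_C\to\Omega^1_C(D_{\mathrm{ram}})\to\Omega^1_C(D_{\mathrm{ram}})|_{D_{\mathrm{ram}}}\big)=\mathbb C$, where the first arrow is the exterior derivative $d$ and the second is restriction to $D_{\mathrm{ram}}$. Writing $D=\delta\pm d$ for the total differential of the \v{C}ech bicomplex, it suffices to produce a total $1$-cochain $(\{b_{\alpha\beta}\},\{c_\alpha\})$, with $b_{\alpha\beta}\in\mathcal O_C(U_{\alpha\beta})$ and $c_\alpha\in\Omega^1_C(D_{\mathrm{ram}})(U_\alpha)$, whose coboundary is that cocycle. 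I would take
\[
 b_{\alpha\beta}:=\Tr(u_\alpha\circ u'_{\alpha\beta}),\qquad c_\alpha:=\Tr(u_\alpha\circ v'_\alpha),
\]
substitute the coboundary relations $u_{\alpha\beta}=u_\beta-u_\alpha$, $v_\alpha=\nabla\circ u_\alpha-(u_\alpha\otimes\mathrm{id})\circ\nabla$, $\eta_\alpha=-\gamma^0(u_\alpha)+d^0_{{\mathcal S}^{\bullet}}((a_{k,\alpha}(w)))$ for the first class, and verify the three graded pieces separately, using the cocycle conditions for the second class throughout.

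For the $\mathcal O_C$-valued piece, the \v{C}ech cocycle identity $u'_{\alpha\gamma}=u'_{\alpha\beta}+u'_{\beta\gamma}$ together with cyclicity of $\Tr$ gives $\delta b=\{\Tr(u_{\alpha\beta}\circ u'_{\beta\gamma})\}$ exactly. For the $\Omega^1_C(D_{\mathrm{ram}})$-valued piece, I apply the Leibniz identity $d\,\Tr(u_\alpha u'_{\alpha\beta})=\Tr\big((\nabla u_\alpha-u_\alpha\nabla)u'_{\alpha\beta}\big)+\Tr\big(u_\alpha(\nabla u'_{\alpha\beta}-u'_{\alpha\beta}\nabla)\big)$ together with $v_\alpha=\nabla u_\alpha-u_\alpha\nabla$ and the relation $\nabla u'_{\alpha\beta}-u'_{\alpha\beta}\nabla=v'_\beta-v'_\alpha$; this yields $d\,b-\delta c=\{\Tr(u_{\alpha\beta}v'_\beta-v_\alpha u'_{\alpha\beta})\}$, which is the prescribed term up to the fixed total-complex sign. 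Here I must first observe that $c_\alpha$ really lands in $\Omega^1_C(D_{\mathrm{ram}})$ and not merely $\Omega^1_C(D)$: since $u_\alpha\in{\mathcal G}^0$ preserves the filtrations while $v'_\alpha\in{\mathcal G}^1$ strictly lowers them at $D_{\mathrm{log}}$ and $D_{\mathrm{un}}$, the polar parts of $\Tr(u_\alpha v'_\alpha)$ at those points are traces of filtration-nilpotent operators and so vanish.

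The decisive piece is the restriction to $D_{\mathrm{ram}}$, i.e.\ the identity $c_\alpha|_{D_{\mathrm{ram}}}=\Xi_{\mathrm{ram}}(\eta_\alpha,\eta'_\alpha)$ up to the same sign. Splitting $\eta_\alpha=-\gamma^0(u_\alpha)+d^0_{{\mathcal S}^{\bullet}}((a_{k,\alpha}(w)))$ by bilinearity of $\Xi_{\mathrm{ram}}$ in \eqref{equation: definition of Xi}, the $d^0_{{\mathcal S}^{\bullet}}$-contribution collapses: commuting $a_{k,\alpha}(\kappa_k\theta_k)$ past $N_k$ via $N_k=\theta_k\circ\kappa_k$ and the intertwining relations $\kappa_k N_k={}^tN_k\kappa_k$, $\theta_k\,{}^tN_k=N_k\theta_k$ (which follow from $^t\theta_k=\theta_{r-k-1}$, $^t\kappa_k=\kappa_{r-k-1}$), it becomes a multiple of $\Tr\big(f\circ(\theta\circ\xi'+\tau'\circ\kappa)\big)=\Theta_{(\tau'_k,\xi'_k)}(f)$, which is $0$ because $\gamma^1(v'_\alpha)=d^1_{{\mathcal S}^{\bullet}}(\eta'_\alpha)$ forces $\Theta_{(\tau'_k,\xi'_k)}=0$. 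For the $\gamma^0(u_\alpha)$-contribution, I expand $\gamma^0$ from \eqref{equation: chain map from G to S} and apply $^t\theta=\theta$, $^t\kappa=\kappa$, the intertwining relations and cyclicity of $\Tr$ repeatedly; each of the two trace terms in \eqref{equation: definition of Xi} reduces (after the index shift $j\mapsto p-l+1$) to $\Tr\big(\overline{u_\alpha|_{D_{\mathrm{ram}}}}\circ\delta_{(\tau'_k,\xi'_k)}\big)$, so the factor $\tfrac12$ in $\Xi_{\mathrm{ram}}$ reproduces exactly $\Tr\big(\overline{u_\alpha|_{D_{\mathrm{ram}}}}\circ\delta_{(\tau'_k,\xi'_k)}\big)$. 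The remaining cocycle condition $\varpi_G(v'_\alpha)=\delta_{(\tau'_k,\xi'_k)}$ identifies this with $\Tr\big(\overline{u_\alpha|_{D_{\mathrm{ram}}}}\circ\overline{v'_\alpha|_{D_{\mathrm{ram}}}}\big)=\Tr(u_\alpha v'_\alpha)|_{D_{\mathrm{ram}}}=c_\alpha|_{D_{\mathrm{ram}}}$, the trace being well defined by Lemma~\ref{lemma: extension of endomorphisms}. This last bookkeeping — folding the antisymmetric pairing $\Xi_{\mathrm{ram}}$ back onto a single trace by exploiting both cocycle conditions of the second class and the symmetry of $\theta_k,\kappa_k$ — is the main obstacle; the $\mathcal O_C$- and $\Omega^1_C(D_{\mathrm{ram}})$-pieces are routine trace manipulations.
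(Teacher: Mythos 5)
Your proposal is correct and follows essentially the same route as the paper: the paper's proof also exhibits the cocycle as the total coboundary of the $1$-cochain $\big(\{\Tr(u_{\alpha}\circ u'_{\alpha\beta})\},\{\Tr(u_{\alpha}\circ v'_{\alpha})\}\big)$ and then defers the verification to a computation carried out elsewhere. You have simply written out the three graded checks (including the key identification of $\Tr(u_{\alpha}v'_{\alpha})|_{D_{\mathrm{ram}}}$ with $\Xi_{\mathrm{ram}}(\eta_{\alpha},\eta'_{\alpha})$ via the cocycle conditions $\varpi_G(v'_{\alpha})=\delta_{(\tau'_k,\xi'_k)}$ and $\Theta_{(\tau'_k,\xi'_k)}=0$) that the paper leaves to the cited reference.
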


\begin{proof}We put $c_{\alpha\beta}:=\Tr(u_{\alpha}\circ u'_{\alpha\beta})$
and $b_{\alpha}:=\Tr(u_{\alpha}\circ v'_{\alpha})$.
It is sufficient to prove the equality
\[
 d (\{c_{\alpha\beta}\},\{b_{\alpha}\})
 =
 \big(
 \{ \Tr(u_{\alpha\beta}\circ u'_{\beta\gamma})\},
 \{ {-}\Tr(u_{\alpha\beta}\circ v'_{\beta}-v_{\alpha}\circ u'_{\alpha\beta})\},
 \{\Xi_{\mathrm{ram}}( \eta_{\alpha} , \eta'_{\alpha} )
 \}\big).
\]
We need a certain amount of calculations for checking the above equality,
but we can do it in the same way as that of
\cite[pp.~37--39]{Inaba-3}.
\end{proof}

\begin{Proposition}\label{proposition: nondegenerate pairing on tangent space}
The bilinear pairing
$\omega_{ (E,\nabla,\{ l,\ell,{\mathcal V}\}) }\colon \mathbf{H}^1({\mathcal F}^{\bullet})
\times \mathbf{H}^1({\mathcal F}^{\bullet})
\longrightarrow \mathbb{C}$,
defined by the equality \eqref{expression of symplectic form}
in \eqref{equation: definition of symplectic form},
is a nondegenerate pairing.
\end{Proposition}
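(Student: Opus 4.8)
The plan is to recognise $\omega_{(E,\nabla,\{l,\ell,{\mathcal V}\})}$ as the cup-product pairing attached to a self-duality of the tangent complex ${\mathcal F}^{\bullet}$ and to deduce nondegeneracy from the duality for hypercohomology of the de Rham-type target. First I would identify that target. In the three-term complex ${\mathcal C}^{\bullet}=[{\mathcal O}_C\to\Omega^1_C(D_{\mathrm{ram}})\to\Omega^1_C(D_{\mathrm{ram}})|_{D_{\mathrm{ram}}}]$ (in degrees $0,1,2$) the last arrow is the surjective restriction, whose kernel is $\Omega^1_C$, so ${\mathcal C}^{\bullet}$ is quasi-isomorphic to the de Rham complex $[{\mathcal O}_C\to\Omega^1_C]$ and $\mathbf{H}^2({\mathcal C}^{\bullet})=H^1(C,\Omega^1_C)=\mathbb{C}$, the receptacle appearing in \eqref{equation: definition of symplectic form}.

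Next I would assemble the three terms of \eqref{expression of symplectic form} into a single pairing of complexes ${\mathcal F}^{\bullet}\otimes{\mathcal F}^{\bullet}\to{\mathcal C}^{\bullet}$, equivalently its adjoint $\Phi^{\bullet}\colon{\mathcal F}^{\bullet}\to R{\mathcal H}{\rm om}({\mathcal F}^{\bullet},{\mathcal C}^{\bullet})$. The components are the trace pairing $\Tr(u\circ u')$ on the ${\mathcal G}^0$-part, the trace pairing $\Tr(u\circ v'-v\circ u')$ between ${\mathcal G}^0$ and ${\mathcal G}^1$, the pairing $\Xi_{\mathrm{ram}}$ of \eqref{equation: definition of Xi} on the $\Sym^2(\overline{W})\oplus\Sym^2(\overline{V})$-part, together with the evident duality pairings $A^0\times A^1\to{\mathcal O}_{D_{\mathrm{ram}}}$ and ${\mathcal G}^0\times G^1\to\Omega^1_C(D_{\mathrm{ram}})|_{D_{\mathrm{ram}}}$ filling the remaining bidegrees. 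That $\Phi^{\bullet}$ is a morphism of complexes is exactly the cocycle identity proved in the Lemma preceding this Proposition (which shows $\omega$ descends to $\mathbf{H}^1$), combined with the symmetry built into the factorized structure, namely $^t\theta_k=\theta_{r-k-1}$ and $^t\kappa_k=\kappa_{r-k-1}$ (equivalently $\vartheta_k(v,v')=\vartheta_{r-k-1}(v',v)$ and $\varkappa_k(v,v')=\varkappa_{r-k-1}(v',v)$), which is what makes $\Xi_{\mathrm{ram}}$ symmetric.

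The heart of the proof is to show that $\Phi^{\bullet}$ is a quasi-isomorphism, i.e.\ that ${\mathcal F}^{\bullet}$ is self-dual under $R{\mathcal H}{\rm om}(-,{\mathcal C}^{\bullet})$. Since $R{\mathcal H}{\rm om}(-,{\mathcal C}^{\bullet})$ is exact, I would verify this on the associated graded of the filtration $0\to{\mathcal F}^{\bullet}_1[-1]\to{\mathcal F}^{\bullet}\to{\mathcal F}^{\bullet}_0\to 0$. On the interior part the trace furnishes the isomorphism $\ker\varpi_G\xrightarrow{\sim}({\mathcal G}^0)^{\vee}\otimes\Omega^1_C$ already recorded before the Proposition, and $A^1=\Hom_{{\mathcal O}_{D_{\mathrm{ram}}}}(A^0,{\mathcal O}_{D_{\mathrm{ram}}})$ by definition, so the ${\mathcal G}$- and $A$-blocks are carried isomorphically onto their duals. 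On the $D_{\mathrm{ram}}$-part the factorized structure does the essential work: the symmetric perfect pairings $\vartheta_k,\varkappa_k$ and the isomorphisms $\theta_k,\kappa_k$ identify $\overline{W}_k=(\overline{V}_{r-k-1})^{\vee}$ and thereby exhibit $\Sym^2(\overline{V})$ and $\Sym^2(\overline{W})$ as mutually dual ${\mathcal O}_{D_{\mathrm{ram}}}$-modules under which $\Xi_{\mathrm{ram}}$ is a perfect pairing — here the insertions of $N_k$ and $\nu_p(z)$, together with the genericity hypothesis that the leading term of $\nu_1$ is a unit, guarantee the relevant matrix is invertible exactly as in the final step of Lemma~\ref{lemma: extension of symmetric pairings}. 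Matching these identifications across the cone, and using Lemma~\ref{lemma: extension of endomorphisms} and Lemma~\ref{lemma: extension of symmetric pairings} to pass between the quotients $\overline{V}_k=V_k/z^{m_x-1}V_{k+1}$ and genuine endomorphisms/pairings, shows $\Phi^{\bullet}$ is a graded isomorphism, hence a quasi-isomorphism. This self-duality step is the main obstacle: it is precisely the point where the factorized ramified structure, and its symmetry conditions absent from \cite{Inaba-2}, is indispensable, since it is what renders the ramified contribution to ${\mathcal F}^{\bullet}$ self-dual.

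Granting that $\Phi^{\bullet}$ is a quasi-isomorphism, the induced pairing $\mathbf{H}^1({\mathcal F}^{\bullet})\times\mathbf{H}^1({\mathcal F}^{\bullet})\to\mathbf{H}^2({\mathcal C}^{\bullet})=\mathbb{C}$ is identified with the perfect duality pairing between $\mathbf{H}^1({\mathcal F}^{\bullet})$ and $\mathbf{H}^1(R{\mathcal H}{\rm om}({\mathcal F}^{\bullet},{\mathcal C}^{\bullet}))$ furnished by the duality for hypercohomology valued in the de Rham complex ${\mathcal C}^{\bullet}$, exactly as in the logarithmic and unramified cases treated in \cite{Inaba-1,Inaba-Saito,Inaba-3}. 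Therefore $\omega_{(E,\nabla,\{l,\ell,{\mathcal V}\})}$ is nondegenerate.
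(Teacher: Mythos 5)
Your overall architecture is the same as the paper's: the paper also reduces nondegeneracy to the short exact sequence $0\to{\mathcal F}^{\bullet}_1[-1]\to{\mathcal F}^{\bullet}\to{\mathcal F}^{\bullet}_0\to 0$, handles the interior blocks by the trace isomorphism $\ker\varpi_G\cong({\mathcal G}^0)^{\vee}\otimes\Omega^1_C$ together with Serre duality, handles the $A^0$/$A^1$ blocks by their defining duality, and then concludes by the five lemma applied to the maps $\sigma_1,\dots,\sigma_4$ between the two hypercohomology long exact sequences (which is the cohomological shadow of your ``$\Phi^{\bullet}$ is a quasi-isomorphism'' formulation). So the skeleton is sound.

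The gap is in the step you yourself flag as the heart of the matter. You assert that $\Xi_{\mathrm{ram}}$ gives a perfect pairing between $\Sym^2\big(\overline{V}\big)$ and $\Sym^2\big(\overline{W}\big)$ ``exactly as in the final step of Lemma~\ref{lemma: extension of symmetric pairings}'', but that lemma contains no such argument --- it only lifts pairings from the $\overline{V}_k$ to $E|_{D_{\mathrm{ram}}}$ --- and the claim as stated is not what is needed. What must be shown (and what the paper proves) is that the \emph{induced} pairing on $\ker\big(\Sym^2\big(\overline{V}\big)\to A^1\big)\times\coker\big(A^0\to\Sym^2\big(\overline{W}\big)\big)$, i.e., the map $\eta_2$, is perfect. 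The argument is a genuinely nontrivial computation that uses membership in the kernel in an essential way: one tests a putative null vector $(\xi_k)$ against elements of the form $z\big(h\circ\theta+\theta\circ{}^th\big)$ for arbitrary $h$ to deduce $z\sum_{p,j}\nu_p(z)\,{}^tN^{p-j}\circ\xi\circ N^{j-1}=0$; one then tests against $E_{ij}+E_{ji}$ with $i+j\geq r$ and uses the symmetry of the resulting matrix to upgrade this to $\sum_{p,j}\nu_p(z)N_k^{p-j}\circ\theta_k\circ\xi_k\circ N_k^{j-1}=0$; the condition $(\xi_k)\in\ker(\cdots\to A^1)$, i.e., $\Tr\big(\theta\circ\xi\circ N^l\big)=0$ for all $l$, is then used to write $\theta\circ\xi=Nf-fN$, and finally the invertibility of the leading coefficient of $\nu_1$ forces $[N_k,f_k]=0$ and hence $\xi_k=0$; a length count completes perfectness. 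None of this is supplied by the symmetry conditions ${}^t\theta_k=\theta_{r-k-1}$, ${}^t\kappa_k=\kappa_{r-k-1}$ alone, and without it your claimed self-duality of ${\mathcal F}^{\bullet}$ is unproved. To repair the proposal you would need to carry out this computation (or an equivalent one) explicitly; everything else in your outline then goes through.
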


\begin{proof}The bilinear pairing
$\omega_{ (E,\nabla,\{ l,\ell,{\mathcal V}\}) }$ corresponds to a homomorphism
$\sigma\colon \mathbf{H}^1({\mathcal F}^{\bullet})
\longrightarrow\mathbf{H}^1({\mathcal F}^{\bullet})^{\vee}$
which induces the exact commutative diagram
\[
 \begin{CD}
 \mathbf{H}^0({\mathcal F}_0^{\bullet}) @>>> \mathbf{H}^0({\mathcal F}_1^{\bullet})
 @>>> \mathbf{H}^1({\mathcal F}^{\bullet}) @>>> \mathbf{H}^1({\mathcal F}_0^{\bullet})
 @>>> \mathbf{H}^1({\mathcal F}_1^{\bullet}) \\
 @V\sigma_1 VV @V\sigma_2 VV @V\sigma VV @V\sigma_3 VV @V\sigma_4 VV \\
 \mathbf{H}^1({\mathcal F}_1^{\bullet})^{\vee} @>>> \mathbf{H}^1({\mathcal F}_0^{\bullet})^{\vee}
 @>>> \mathbf{H}^1({\mathcal F}^{\bullet})^{\vee} @>>> \mathbf{H}^0({\mathcal F}_1^{\bullet})^{\vee}
 @>>> \mathbf{H}^0({\mathcal F}_0^{\bullet})^{\vee}.
 \end{CD}
\]
The homomorphism
$\sigma_2 \colon \mathbf{H}^0({\mathcal F}_1^{\bullet})
\longrightarrow
\mathbf{H}^1({\mathcal F}_0^{\bullet})^{\vee}$
is given by the pairing
\begin{gather*}
 \mathbf{H}^0({\mathcal F}_1^{\bullet})
 \times \mathbf{H}^1({\mathcal F}_0^{\bullet})
 \longrightarrow
 \mathbf{H}^2({\mathcal O}_C\rightarrow
 \Omega^1_C(D_{\mathrm{ram}})\rightarrow\Omega^1_C(D_{\mathrm{ram}})|_{D_{\mathrm{ram}}})
 \cong\mathbb{C}, \\
\big( [\{(v_{\alpha},(\xi_{k,\alpha}))\} ],
 [\{(u'_{\alpha\beta},(\tau'_{k,\alpha}))\} ] \big)
 \mapsto
 \big[\{ \Tr(v_{\alpha}\circ u'_{\alpha\beta})\},
 \{
 \Xi_{\mathrm{ram}} ( (0,\xi_{k,\alpha}) , (\tau'_{k,\alpha},0) )
 \}\big]
\end{gather*}
and $\sigma_3$ is defined similarly.
There is an exact commutative diagram
\begin{gather*}
 \begin{CD}
 0\longrightarrow & H^0\big(\!\ker\!\big({\mathcal G}^1\!\rightarrow G^1\big)\big) & \longrightarrow &
 \mathbf{H}^0({\mathcal F}_1^{\bullet}) & \longrightarrow &
 \ker\big(\!\Sym^2\!\big(\overline{V}\big)\!\rightarrow A^1\big) & \longrightarrow &
 H^1\big(\!\ker\!\big({\mathcal G}^1\!\rightarrow G^1\big)\big) \\
 & @V \eta_1 VV @V \sigma_2 VV
 @V \eta_2 VV @V \eta_3 VV \\
 0 \longrightarrow & H^1\big({\mathcal G}^0\big)^{\vee} & \longrightarrow &
 \mathbf{H}^1({\mathcal F}_0^{\bullet})^{\vee} & \longrightarrow &
 \ \coker\big(A^0\!\rightarrow \Sym^2\big(\overline{W}\big)\big)^{\vee}
 & \longrightarrow & H^0\big({\mathcal G}^0\big)^{\vee}
 \end{CD}
\end{gather*}
whose horizontal sequences are induced by
the exact sequences
\begin{gather*}
 0\longrightarrow \big[{\mathcal G}^1\rightarrow G^1\big]
 \longrightarrow {\mathcal F}^{\bullet}_1 \longrightarrow
 \big[\Sym^2\big(\overline{V}\big)\rightarrow A^1\big] \longrightarrow 0,
 \\
 0\longrightarrow \big[A^0\rightarrow \Sym^2\big(\overline{W}\big)\big] \longrightarrow
{\mathcal F}^{\bullet}_0 \longrightarrow {\mathcal G}^0\longrightarrow 0.
\end{gather*}
Since the trace pairing induces an isomorphism
$\ker\big({\mathcal G}^1\rightarrow G^1\big)\xrightarrow{\sim}
\big({\mathcal G}^0\big)^{\vee}\otimes\Omega^1_C$,
we can see by the Serre duality that $\eta_1$ and $\eta_3$ are isomorphisms.

The map
$\eta_2$ is induced by the trace pairing
\begin{align}
 \ker\big(\Sym^2\big(\overline{V}\big)\rightarrow A^1\big)\times \coker \big(A^0\rightarrow \Sym^2\big(\overline{W}\big)\big)
 & \longrightarrow \Omega^1_C(D_{\mathrm{ram}})|_{D_{\mathrm{ram}}},\nonumber
 \\
 ( (\xi_{k}) , (\tau_{k}) ) & \mapsto
 \Xi_{\mathrm{ram}} \left((0,\xi_k) , (\tau_k,0) \right)\label{equation: trace pairing on the symmetric vectors}
 \end{align}
composed with
$\Omega^1_C(D_{\mathrm{ram}})|_{D_{\mathrm{ram}}}
\longrightarrow
\mathbf{H}^2\big({\mathcal O}_C\rightarrow\Omega^1_C(D_{\mathrm{ram}})\to
\Omega^1_C(D_{\mathrm{ram}})|_{D_{\mathrm{ram}}}\big)$.

Assume that $(\xi_k)\in \ker\big(\Sym^2(\overline{V})\rightarrow A^1\big)$ satisfies
$\Xi_{\mathrm{ram}}((0,\xi_k),(\tau_k,0))=0$
for any $(\tau_k)\in \Sym^2\big(\overline{W}\big)$.
We can take a lift $\xi$ of $(\xi_k)$ given by Lemma \ref{lemma: extension of symmetric pairings}.
For any endomorphism $h\in \End(E|_{D_{\mathrm{ram}}})$,
$\psi:=z\big(h\circ\theta+\theta\circ {}^th\big)\colon E|_{D_{\mathrm{ram}}}^{\vee}\longrightarrow
E|_{D_{\mathrm{ram}}}$
is a homomorphism satisfying
$^t\psi=\psi$ and $\psi(W_k)\subset V_k$.
So $\psi$ induces $(\psi_k)\in \Sym^2\big(\overline{W}\big)$ and the equality
\[
 0=2\Xi_{\mathrm{ram}}((0,\xi_k),(\psi_k,0))=
 \sum_{p=1}^{r-1}\sum_{j=1}^p
 \nu_p(z)\Tr\big(
 z\big(h\circ\theta+\theta\circ{}^th\big)\circ{}^tN^{p-j}\circ\xi\circ N^{j-1}\big).
\]
holds by the assumption.
Since
\begin{align*}
 \sum_{j=1}^p \Tr\big(z \theta\circ{}^th\circ{}^tN^{p-j}\circ\xi\circ N^{j-1}\big)
 &=
 \sum_{j=1}^p \Tr\big(z {}^tN^{j-1}\circ\xi\circ N^{p-j}\circ h\circ\theta\big)
 \\
 &=\sum_{j=1}^p \Tr (z h\circ\theta\circ{}^tN^{p-j}\circ\xi\circ N^{j-1}\big),
\end{align*}
we can deduce
$\Tr\big( h\circ z\sum_{p=1}^{r-1}\sum_{j=1}^p
\nu_p(z) \theta\circ{}^tN^{p-j}\circ\xi\circ N^{j-1}\big)=0$.
Since the usual trace pairing is nondegenerate, we have
$z\sum_{p=1}^{r-1}\sum_{j=1}^p
\nu_p(z) {}^tN^{p-j}\circ\xi\circ N^{j-1}=0$.
Let
\[
 U=\begin{pmatrix}
 z^{m-1}a_{0,0} & \cdots & z^{m-1}a_{0,r-1} \\
 \vdots & & \vdots \\
 z^{m-1}a_{0,r-1,0} & \cdots & z^{m-1}a_{r-1,r-1}
 \end{pmatrix}
\]
be the symmetric matrix representing
$\sum_{p=1}^{r-1}\sum_{j=1}^p
\nu_p(z) \,^tN^{p-j}\circ\xi\circ N^j$
with respect to the bases $(e_0,\dots,e_{r-1})$ and $(e^*_0,\dots,e^*_{r-1})$.
Consider the trace pairing $\Tr(U(E_{ij}+E_{ji}))$
for $i+j>r-1$,
where $E_{ij}$ is the matrix whose $(i,j)$ entry is $1$ and the other entries are zero.
Then $E_{ij}+E_{ji}$ becomes a lift of an element of $\Sym^2\big(\overline{W}\big)$.
So we have
$\Tr(U(E_{ij}+E_{ji}))=z^{m-1}(a_{ij}+a_{ji})=0$.
Since $U$ is symmetric, we have $z^{m-1}a_{ij}=0$ for $i+j\geq r$.
So we have
\[
\sum_{p=1}^{r-1}\sum_{j=1}^p \nu_p(z) N_k^{p-j}\circ\theta_k\circ\xi_k\circ N_k^{j-1}=0
\]
for each $k$.
By the way, $(\xi_k)\in\ker\big(\Sym^2\big(\overline{V}\big)\rightarrow A^1\big)$ implies
$\Tr\big(\theta\circ\xi\circ N^l\big)=0$ for any $0\leq l\leq r-1$.
So there is an endomorphism $f\in \End(E|_{D_{\mathrm{ram}}})$
satisfying $\theta\circ\xi=Nf-fN$.
Moreover, we have $f(V_k)\subset V_k$ for any $k$.
Thus we have
\[
 0=\sum_{p=1}^{r-1}\sum_{j=1}^p \nu_p(z) N_k^{p-j}\circ
 (N_k\circ f_k-f_k\circ N_k)\circ N_k^{j-1}
 =\nu(N_k)f_k-f_k\nu(N_k)
\]
for each $0\leq k\leq r-1$, where $f_k$ is the endomorphism of $\overline{V}_k$
induced by~$f$.
Since the $w\frac{{\rm d}z}{z^m}$-coefficient of $\nu(w)$ does not vanish, we can deduce
$N_k\circ f_k-f_k\circ N_k=0$ from the above equality.
Thus we have $(\xi_k)=0$.
Hence the pairing
(\ref{equation: trace pairing on the symmetric vectors}) is a perfect pairing
of ${\mathcal O}_{D_{\mathrm{ram}}}$-modules, since
$\length \big( \ker\big(\Sym^2\big(\overline{V}\to A^1\big) \big)\big)
=\length \big( \coker \big(A^0\to\Sym^2\big(\overline{W}\big)\big)\big)$.
Note that the map
\[
 \Omega^1_C(D_{\mathrm{ram}})|_{D_{\mathrm{ram}}}
 \longrightarrow
 \mathbf{H}^2({\mathcal O}_C\rightarrow\Omega^1_C(D_{\mathrm{ram}})\to
 \Omega^1_C(D_{\mathrm{ram}})|_{D_{\mathrm{ram}}})\cong\mathbb{C}
\]
is identified with the residue map.
So we can see that the pairing
\[
 \ker\big(\Sym^2\big(\overline{V}\big)\rightarrow A^1\big)\times \coker \big(A^0\rightarrow \Sym^2\big(\overline{W}\big)\big)
 \longrightarrow \mathbb{C}
\]
induced by (\ref{equation: trace pairing on the symmetric vectors}) is a perfect pairing
of vector spaces, which means that
$\eta_2$ is an isomorphism.

Since $\eta_1$, $\eta_3$ and $\eta_2$ are isomorphic,
$\sigma_2\colon \mathbf{H}^0({\mathcal F}_1^{\bullet})
\xrightarrow{\sim} \mathbf{H}^1({\mathcal F}_0^{\bullet})^{\vee}$
is an isomorphism by the five lemma.
Then
$\sigma_3\colon
\mathbf{H}^1({\mathcal F}_0^{\bullet})\xrightarrow{\sim} \mathbf{H}^0({\mathcal F}_1^{\bullet})^{\vee}$
is also isomorphic because it is the dual of $\sigma_2$.

On the other hand,
$\sigma_1\colon \mathbf{H}^0({\mathcal F}_0^{\bullet})
\longrightarrow \mathbf{H}^1({\mathcal F}_1^{\bullet})^{\vee}$
is given by the pairing
\begin{gather*}
 \mathbf{H}^0({\mathcal F}_0^{\bullet}) \times
 \mathbf{H}^1({\mathcal F}_1^{\bullet})
 \longrightarrow
 \mathbf{H}^2\big({\mathcal O}_C\rightarrow
 \Omega^1_C(D_{\mathrm{ram}})\rightarrow\Omega^1_C(D_{\mathrm{ram}})|_{D_{\mathrm{ram}}}\big),
 \\
 \big( [\{u_{\alpha}, (a_{k,\alpha}\!(w)\!)\}] ,
 [\{v_{\alpha\beta}\},\{ (\overline{v}_{k,\alpha},b_{\alpha})\}] \big)\\
 \qquad{}
 \mapsto
 \big[ \{ \Tr(v_{\alpha\beta}\circ u_{\beta}) \} ,
 \big\{ \Tr(\bar{v}_{\alpha}\circ u_{\alpha})
+\tfrac{b_{\alpha}}{2}( \nu'(w) a_{k,\alpha}(w) ) \big\} \big],
\end{gather*}
where $\nu'(w):=\sum_{k=0}^{r-1}k\nu_k(z)w^{k-1}$
and $\overline{v}_{\alpha}\in\End(E|_{D_{\mathrm{ram}}})$
is a lift of $(\overline{v}_{k,\alpha})$ given by
Lemma \ref{lemma: extension of endomorphisms}.
We have the exact commutative diagram
\[
 \begin{CD}
 0=\ker\!\big(A^0\!\rightarrow \Sym^2\!\big(\big(\overline{W}\big)\big)\big) &\rightarrow
 \mathbf{H}^0({\mathcal F}_0^{\bullet}) & \rightarrow & H^0\!\big({\mathcal G}^0\big)
 & \rightarrow & \coker\!\big(A^0\!\rightarrow \Sym^2\!\big(\big(\overline{W}\big)\big)\big) \\
 & @V \sigma_1 VV @V ^t\eta_3 VV
 @V ^t\eta_2 VV \\
 0=\coker\!\big(\!\Sym^2\!\big(\big(\overline{V}\big)\big)\!\rightarrow A^1\big)^{\vee}\! &\rightarrow
 \mathbf{H}^1({\mathcal F}_1^{\bullet})^{\vee} & \rightarrow &
 H^1\!\big(\!\ker\big({\mathcal G}^1\!\rightarrow G^1\big)\big)^{\vee}\! & \rightarrow &
 \ker\!\big(\!\Sym^2\!\big(\big(\overline{V}\big)\big)\!\rightarrow A^1\big)^{\vee}
 \end{CD}
\]
and the five lemma implies that
$\sigma_1\colon \mathbf{H}^0({\mathcal F}^{\bullet}_0)\longrightarrow
\mathbf{H}^1({\mathcal F}^{\bullet}_1)^{\vee}$ is isomorphic because
$^t\eta_3$ and $^t\eta_2$ are isomorphic.

We can see that
$\sigma_4\colon
\mathbf{H}^1({\mathcal F}_1^{\bullet})\longrightarrow\mathbf{H}^0({\mathcal F}_0^{\bullet})^{\vee}$
is also isomorphic since it is the dual of $\sigma_1$.
Since $\sigma_1$, $\sigma_2$, $\sigma_3$, $\sigma_4$ are all isomorphic,
$\sigma\colon\mathbf{H}^1({\mathcal F}^{\bullet})
\longrightarrow\mathbf{H}^1({\mathcal F}^{\bullet})^{\vee}$
is isomorphic by the five lemma.
\end{proof}

We define a complex $\tilde{\Omega}^{\bullet}$ by setting
$\tilde{\Omega}^0={\mathcal O}_C$,
$\tilde{\Omega}^1=\Omega^1_C(D_{\mathrm{ram}})\oplus A^1$,
$\tilde{\Omega}^2=\Omega^1_C(D_{\mathrm{ram}})|_{D_{\mathrm{ram}}}\oplus A^1$
and
\begin{gather*}
 d_{\tilde{\Omega}^{\bullet}}^0
 \colon \
 {\mathcal O}_C \ni f \mapsto (df,0) \in \Omega^1_C(D_{\mathrm{ram}})\oplus A^1,
 \\
 d_{\tilde{\Omega}^{\bullet}}^1
 \colon \
 \Omega^1_C(D_{\mathrm{ram}}) \oplus A^1
 \ni (\eta, b) \mapsto ( (\eta|_{D_{\mathrm{ram}}}-b(\nu'(w))) , b)
 \in \Omega^1_C(D_{\mathrm{ram}})|_{D_{\mathrm{ram}}} \oplus A^1,
\end{gather*}
where the $k$-th component of
$(\nu'(w))\in A^0\otimes\Omega^1_C(D_{\mathrm{ram}})|_{D_{\mathrm{ram}}}$
is given by
$\nu'(w)=\sum_{j=0}^{r-1} j \nu_j(z)w^{j-1}$.
Then we can define a homomorphism of complexes
$\Tr^{\bullet}\colon
{\mathcal F}^{\bullet} \longrightarrow \tilde{\Omega}^{\bullet}$
by
\begin{gather*}
 \Tr^0
 \colon \
 {\mathcal F}^0={\mathcal G}^0 \oplus A^0
 \ni (u,(f_k(w))) \mapsto \Tr(u)\in {\mathcal O}_C,
 \\
 \Tr^1
 \colon \ {\mathcal F}^1={\mathcal G}^1\oplus \Sym^2\big(\overline{W}\big)\oplus \Sym^2\big(\overline{V}\big)
 \ni (v,(\tau_k),(\xi_k))\\
 \hphantom{\Tr^1 \colon \
 {\mathcal F}^1=}{} \mapsto (\Tr(v), (\Theta_{(\tau_k,\xi_k\!)})) \in
 \Omega^1_C(D_{\mathrm{ram}}) \oplus A^1,
 \\
 \Tr^2
 \colon \
 {\mathcal F}^2
 =G^1\oplus A^1
 \ni ((\overline{v}_k), b)) \mapsto ( \Tr(\overline{v}) , b )
 \in \Omega^1_C(D_{\mathrm{ram}})|_{D_{\mathrm{ram}}} \oplus A^1,
\end{gather*}
where $\overline{v}\in \Hom\big(E|_{D_{\mathrm{ram}}},E\otimes\Omega^1_D(D)|_{D_{\mathrm{ram}}}\big)$
is a lift of $(\overline{v}_k)$ given by
Lemma~\ref{lemma: extension of endomorphisms}.

\begin{Lemma}\label{lemma: trace map is isomorphic}
Assume that the endomorphism ring of $E$, preserving
$l$, $\ell$, ${\mathcal V}$ and commuting with~$\nabla$,
consists of the scalar multiplications~$\mathbb{C}\mathrm{id}_E$.
Then the map
\[
 \mathbf{H}^2(\Tr)\colon \
 \mathbf{H}^2({\mathcal F}^{\bullet})
 \longrightarrow \mathbf{H}^2\big(\tilde{\Omega}^{\bullet}\big)
 \cong \mathbf{H}^2(\Omega^{\bullet}_C)\cong\mathbb{C}
\]
is an isomorphism.
\end{Lemma}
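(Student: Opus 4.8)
The plan is to realize $\Tr^{\bullet}$ as a termwise surjective morphism of complexes and to analyze its kernel. First I would check that each of $\Tr^0,\Tr^1,\Tr^2$ is surjective: for $\Tr^0$ this is the ordinary trace on ${\mathcal E}{\rm nd}(E)$, and for $\Tr^1,\Tr^2$ it follows from Lemma~\ref{lemma: extension of endomorphisms} and Lemma~\ref{lemma: extension of symmetric pairings}, which produce lifts realizing any prescribed trace datum. Writing ${\mathcal K}^{\bullet}:=\ker(\Tr^{\bullet})$, I obtain a short exact sequence of complexes
\[
 0 \longrightarrow {\mathcal K}^{\bullet} \longrightarrow {\mathcal F}^{\bullet}
 \xrightarrow{\ \Tr^{\bullet}\ } \tilde{\Omega}^{\bullet} \longrightarrow 0,
\]
and hence a long exact sequence of hypercohomology. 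Since the lemma already identifies $\mathbf{H}^2\big(\tilde{\Omega}^{\bullet}\big)\cong\mathbb{C}$, exactness reduces the assertion that $\mathbf{H}^2(\Tr)$ is an isomorphism to the two vanishings $\mathbf{H}^2({\mathcal K}^{\bullet})=0$ (giving injectivity) and $\mathbf{H}^3({\mathcal K}^{\bullet})=0$ (giving surjectivity). The second is essentially automatic: the term ${\mathcal K}^2\subset{\mathcal F}^2=G^1\oplus A^1$ is a skyscraper sheaf supported on $D_{\mathrm{ram}}$, so $H^1(C,{\mathcal K}^2)=0$, and this is the only group contributing to total degree $3$ in the hypercohomology spectral sequence.

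The heart of the proof is $\mathbf{H}^2({\mathcal K}^{\bullet})=0$. Here I would use that ${\mathcal K}^{\bullet}$ is the traceless analogue of ${\mathcal F}^{\bullet}$, and that the pairing underlying the nondegenerate form $\omega_{(E,\nabla,\{l,\ell,{\mathcal V}\})}$ of Proposition~\ref{proposition: nondegenerate pairing on tangent space} is induced by a pairing of complexes ${\mathcal K}^{\bullet}\otimes{\mathcal K}^{\bullet}\to [{\mathcal O}_C\to\Omega^1_C(D_{\mathrm{ram}})\to\Omega^1_C(D_{\mathrm{ram}})|_{D_{\mathrm{ram}}}]$, whose target is quasi-isomorphic via the residue to the truncated de Rham complex $\Omega^{\bullet}_C$. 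Verifying, as in the computation of \cite[pp.~37--39]{Inaba-3}, that this pairing is perfect at the level of complexes, Serre duality then yields $\mathbf{H}^2({\mathcal K}^{\bullet})\cong\mathbf{H}^0({\mathcal K}^{\bullet})^{\vee}$, so it remains to show $\mathbf{H}^0({\mathcal K}^{\bullet})=0$.

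For the latter I would identify $\mathbf{H}^0({\mathcal F}^{\bullet})$ with the space of infinitesimal automorphisms of $(E,\nabla,l,\ell,{\mathcal V})$. An element is a pair $(u,(f_k(w)))\in{\mathcal G}^0\oplus A^0$ with $\nabla\circ u-(u\otimes{\rm id})\circ\nabla=0$ and $\gamma^0(u)=d^0_{{\mathcal S}^{\bullet}}((f_k(w)))$; the first condition says $u$ commutes with $\nabla$ and preserves $l,\ell$ and the filtration $(V_k)$, while the second encodes compatibility with the factorized ramified structure. Thus $u$ is precisely an endomorphism preserving $l,\ell,{\mathcal V}$ and commuting with $\nabla$, so by hypothesis $u=c\,\mathrm{id}_E$; since $\theta_k,\kappa_k$ are isomorphisms one checks that $f_k\equiv 2c$ is then forced, whence $\mathbf{H}^0({\mathcal F}^{\bullet})\cong\mathbb{C}$. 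The traceless condition cutting out ${\mathcal K}^{\bullet}$ imposes $\Tr(u)=rc=0$, so $c=0$ and $\mathbf{H}^0({\mathcal K}^{\bullet})=0$. Combining this with the two vanishings above proves that $\mathbf{H}^2(\Tr)$ is an isomorphism.

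The main obstacle I anticipate is the step asserting the self-duality of ${\mathcal K}^{\bullet}$: Proposition~\ref{proposition: nondegenerate pairing on tangent space} only provides nondegeneracy on $\mathbf{H}^1$, whereas here I need the same pairing of complexes to be perfect so that the induced $\mathbf{H}^0$--$\mathbf{H}^2$ duality holds. This requires a careful chain-level analysis, exactly of the kind carried out in the cited reference, together with the bookkeeping that matches $\mathbf{H}^0({\mathcal K}^{\bullet})$ with traceless infinitesimal automorphisms, including the auxiliary $A^0$-component. Everything else is formal homological algebra built on the short exact sequence and the skyscraper nature of the top term.
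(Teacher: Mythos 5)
Your overall strategy---realize $\Tr^{\bullet}$ as a termwise surjective map of complexes, pass to the kernel complex ${\mathcal K}^{\bullet}$, and kill $\mathbf{H}^2({\mathcal K}^{\bullet})$ and $\mathbf{H}^3({\mathcal K}^{\bullet})$---is a genuinely different route from the paper's, and its bookkeeping at the two ends is sound: the identification $\mathbf{H}^0({\mathcal F}^{\bullet})=\mathbb{C}\cdot(\mathrm{id}_E,(2))$ under the simplicity hypothesis, the resulting vanishing $\mathbf{H}^0({\mathcal K}^{\bullet})=0$, and the vanishing $\mathbf{H}^3({\mathcal K}^{\bullet})=0$ from the skyscraper nature of ${\mathcal K}^2$ are all correct. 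The gap is the central step: the assertion that the pairing of Proposition~\ref{proposition: nondegenerate pairing on tangent space} restricts to a duality on ${\mathcal K}^{\bullet}$ giving $\mathbf{H}^2({\mathcal K}^{\bullet})\cong\mathbf{H}^0({\mathcal K}^{\bullet})^{\vee}$. That proposition only establishes nondegeneracy on $\mathbf{H}^1({\mathcal F}^{\bullet})$, and ``perfect at the level of complexes'' is not the right notion here: what you actually need is that the chain-level pairing induces isomorphisms between $\mathbf{H}^i$ and $\mathbf{H}^{2-i,\vee}$ for the traceless complex, and verifying this is not a formality---it is essentially equivalent in content and difficulty to the proof of Proposition~\ref{proposition: nondegenerate pairing on tangent space} itself (the isomorphisms $\eta_1,\eta_2,\eta_3$ and $\sigma_1,\dots,\sigma_4$), redone for ${\mathcal K}^{\bullet}$. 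As written, the pivotal inequality $\mathbf{H}^2({\mathcal K}^{\bullet})=0$ rests entirely on this unproved claim, so the argument is incomplete.

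The paper closes exactly this gap by \emph{reusing} the dualities already established, rather than constructing a new one: from the exact sequence~(\ref{equation: hyper cohomology exact sequence}) one has $\mathbf{H}^1({\mathcal F}_0^{\bullet})\to\mathbf{H}^1({\mathcal F}_1^{\bullet})\to\mathbf{H}^2({\mathcal F}^{\bullet})\to 0$, which maps via $\sigma_3$, $\sigma_4$ and $\mathbf{H}^2(\Tr^{\bullet})$ to the dual sequence $\mathbf{H}^0({\mathcal F}_1^{\bullet})^{\vee}\to\mathbf{H}^0({\mathcal F}_0^{\bullet})^{\vee}\to\mathbf{H}^0({\mathcal F}^{\bullet})^{\vee}\to 0$; since $\sigma_3$ and $\sigma_4$ are isomorphisms by Proposition~\ref{proposition: nondegenerate pairing on tangent space} and $\mathbf{H}^0({\mathcal F}^{\bullet})^{\vee}\cong\mathbb{C}\cong\mathbf{H}^2\big(\tilde{\Omega}^{\bullet}\big)$ by simplicity, the five lemma gives the result in two lines. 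If you want to keep your kernel-complex framing, the efficient repair is the same observation: rather than proving a self-duality of ${\mathcal K}^{\bullet}$ from scratch, deduce $\mathbf{H}^2({\mathcal K}^{\bullet})=0$ from the surjectivity of $\sigma_4$ together with the identification of $\coker\big(\mathbf{H}^0({\mathcal F}_1^{\bullet})^{\vee}\to\mathbf{H}^0({\mathcal F}_0^{\bullet})^{\vee}\big)$ with $\mathbf{H}^0({\mathcal F}^{\bullet})^{\vee}$. Without some such appeal to the already-proven $\sigma_i$, the duality step remains an open claim, not a proof.
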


\begin{proof}
First note that $\mathbf{H}^0({\mathcal F}^{\bullet})=\mathbb{C}$
because there are only scalar endomorphisms of $E$
commuting with $\nabla$ and preserving the
$(\lambda,\mu,\nu)$-structure.
Under the identification
$\mathbf{H}^0({\mathcal F}^{\bullet})\cong\mathbb{C}
\cong \mathbf{H}^2\big(\tilde{\Omega}^{\bullet}\big)^{\vee}$,
there is an exact commutative diagram
\[
 \begin{CD}
 \mathbf{H}^1({\mathcal F}^{\bullet}_0) @>>>
 \mathbf{H}^1({\mathcal F}^{\bullet}_1) @>>>
 \mathbf{H}^2({\mathcal F}^{\bullet}) & \longrightarrow 0
 \\
 @V \sigma_3 VV @V\sigma_4 VV @V \mathbf{H}^2(\Tr^{\bullet}) VV
 \\
 \mathbf{H}^0({\mathcal F}_1^{\bullet})^{\vee} @>>>
 \mathbf{H}^0({\mathcal F}_0^{\bullet})^{\vee}
 @>>> \mathbf{H}^0({\mathcal F}^{\bullet})^{\vee} & \longrightarrow 0.
 \end{CD}
\]
Since $\sigma_3$ and $\sigma_4$ are isomorphisms,
$\mathbf{H}^2(\Tr^{\bullet})$ is also an isomorphism.
\end{proof}

\begin{Remark} \rm
If $(E,\nabla,l,\ell,{\mathcal F})$
is $\balpha$-stable,
then the assumption of Lemma \ref{lemma: trace map is isomorphic} holds.
\end{Remark}

\begin{Theorem} \label{thm: smoothness of the moduli space}
The moduli space
$M^{\balpha}_{{\mathcal C},{\mathcal D}}(\lambda,\tilde{\mu},\tilde{\nu})$
of connections with $(\lambda,\tilde{\mu},\tilde{\nu})$-structure
is smooth over ${\mathcal T}$.
The dimension of the fiber
$M^{\balpha}_{{\mathcal C},{\mathcal D}}(\lambda,\tilde{\mu},\tilde{\nu})_t$
over $t\in {\mathcal T}$ is
$2r^2(g({\mathcal C}_t)-1)+2+r(r-1)\deg {\mathcal D}_t$
if it is non-empty.
\end{Theorem}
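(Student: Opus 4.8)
The plan is to prove smoothness by the standard deformation-theoretic criterion, using the complex $\mathcal{F}^\bullet$ as the deformation complex. Proposition \ref{prop: tangent space of the moduli space} already identifies the relative tangent space at an $\balpha$-stable point with $\mathbf{H}^1(\mathcal{F}^\bullet)$, so the first step is to place the obstructions in $\mathbf{H}^2(\mathcal{F}^\bullet)$. I would do this by repeating the local patching construction in the proof of Proposition \ref{prop: tangent space of the moduli space}, now carried out over an arbitrary small square-zero extension of Artinian bases rather than over $\mathbb{C}[\epsilon]/(\epsilon^2)$: the failure of the local lifts of the connection, the filtration and the factorized pairings to glue produces a canonical \v{C}ech $2$-cocycle whose class in $\mathbf{H}^2(\mathcal{F}^\bullet)$ is the obstruction to extending the deformation. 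Smoothness over $\mathcal{T}$ then reduces to showing that this obstruction class always vanishes.

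The vanishing is where the trace homomorphism $\Tr^\bullet\colon\mathcal{F}^\bullet\to\tilde{\Omega}^\bullet$ is decisive. At an $\balpha$-stable point the only endomorphisms of $E$ commuting with $\nabla$ and preserving the structure are the scalars, so Lemma \ref{lemma: trace map is isomorphic} applies and $\mathbf{H}^2(\Tr^\bullet)\colon\mathbf{H}^2(\mathcal{F}^\bullet)\to\mathbf{H}^2(\tilde{\Omega}^\bullet)\cong\mathbf{H}^2(\Omega^\bullet_C)\cong\mathbb{C}$ is an isomorphism. I would check that $\mathbf{H}^2(\Tr^\bullet)$ carries the obstruction class to the obstruction for deforming the induced determinant data $(\det E,\Tr\nabla)$ with its prescribed trace of local exponents; under the identification with $\mathbf{H}^2(\Omega^\bullet_C)$ this latter class measures the infinitesimal variation of $\deg\det E$, which is locked to the fixed integer $d$ and is therefore automatically zero. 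Since $\mathbf{H}^2(\Tr^\bullet)$ is injective, the original obstruction class vanishes, and hence $M^{\balpha}_{{\mathcal C},{\mathcal D}}(\lambda,\tilde{\mu},\tilde{\nu})$ is smooth over $\mathcal{T}$.

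For the dimension, smoothness gives $\dim M^{\balpha}_{{\mathcal C},{\mathcal D}}(\lambda,\tilde{\mu},\tilde{\nu})_t=\dim\mathbf{H}^1(\mathcal{F}^\bullet)$. Using $\mathbf{H}^0(\mathcal{F}^\bullet)=\mathbb{C}$ (only scalars) together with $\mathbf{H}^2(\mathcal{F}^\bullet)\cong\mathbb{C}$ from Lemma \ref{lemma: trace map is isomorphic}, I obtain $\dim\mathbf{H}^1(\mathcal{F}^\bullet)=2-\chi(\mathcal{F}^\bullet)$, so the task is to compute the Euler characteristic. Because $\mathcal{F}^\bullet=\mathrm{Cone}\big(\gamma^\bullet\colon\mathcal{G}^\bullet\to\mathcal{S}^\bullet_{\mathrm{ram}}[1]\big)[-1]$, additivity of $\chi$ along the associated distinguished triangle yields $\chi(\mathcal{F}^\bullet)=\chi(\mathcal{G}^\bullet)+\chi(\mathcal{S}^\bullet_{\mathrm{ram}})$. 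The global term $\chi(\mathcal{G}^\bullet)=\chi(\mathcal{G}^0)-\chi(\mathcal{G}^1)$ is a parabolic Riemann–Roch computation for the endomorphism sheaf modified along $D$, contributing $2r^2(g-1)$ together with the flag-jump corrections, while $\chi(\mathcal{S}^\bullet_{\mathrm{ram}})$ is a finite-length count at the ramified points; assembling the contributions from $D_{\mathrm{log}}$, $D_{\mathrm{un}}$ and $D_{\mathrm{ram}}$ collects into the term $r(r-1)\deg{\mathcal D}_t$, giving $\dim\mathbf{H}^1(\mathcal{F}^\bullet)=2r^2(g({\mathcal C}_t)-1)+2+r(r-1)\deg{\mathcal D}_t$.

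I expect the main obstacle to be the identification of the trace of the obstruction class with the determinant obstruction: this requires tracking how $\Tr^\bullet$ intertwines the cup-product obstruction pairing on $\mathcal{F}^\bullet$ with that on $\tilde{\Omega}^\bullet$, and then confirming that the fixed degree $d$ and the fixed trace of the local exponents genuinely annihilate the one-dimensional image $\mathbf{H}^2(\tilde{\Omega}^\bullet)\cong\mathbb{C}$. The Euler-characteristic computation, although lengthy because of the several boundary contributions and the finite-length pieces $\Sym^2(\overline{V})$, $\Sym^2(\overline{W})$, $A^0$, $A^1$ and $G^1$, is routine bookkeeping once the cone decomposition is recorded.
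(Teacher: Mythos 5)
Your overall strategy coincides with the paper's: the obstruction to lifting over a small extension is a \v{C}ech class in $\mathbf{H}^2({\mathcal F}^{\bullet})$, the trace isomorphism of Lemma \ref{lemma: trace map is isomorphic} reduces its vanishing to the determinant problem, and the dimension is an Euler-characteristic count. Two steps, however, are asserted rather than proved, and one of them does not work as written.

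First, the vanishing of the determinant obstruction. You argue that the image class in $\mathbf{H}^2\big(\tilde{\Omega}^{\bullet}\big)\cong\mathbb{C}$ ``measures the infinitesimal variation of $\deg\det E$, which is locked to $d$ and is therefore automatically zero.'' That cannot be the argument: the degree is constant in \emph{every} flat family, yet $\mathbf{H}^2(\Omega^{\bullet}_C)\cong\mathbb{C}$ is a genuinely nonzero obstruction space, so constancy of the degree by itself proves nothing. What is actually needed is that the moduli problem of pairs $(L,\nabla_L)$ with the prescribed polar parts $\sum\lambda_k$, $\sum\mu_k$, $(r-1){\rm d}z/2+r\nu_0$ is unobstructed; the paper gets this by exhibiting that moduli space as an affine-space bundle over the relative Jacobian of ${\mathcal C}/{\mathcal T}$, hence smooth over ${\mathcal T}$. (Your idea can be repaired via the residue-theorem criterion for the existence of a connection with prescribed polar parts on a line bundle of degree $d$, but that is a different statement from ``the degree does not vary.'') Second, your plan to ``repeat the local patching construction over an arbitrary square-zero extension'' presupposes that local lifts of the factorized ramified structure exist over Artinian bases; this is not automatic, and the paper proves it by writing down the explicit factorization $N=\theta\circ\kappa$ into anti-diagonal symmetric matrices with respect to the basis $e_0,\dots,e_{r-1}$ and lifting those matrices verbatim. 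Both points are repairable, but they are precisely where the proof has content; the Euler-characteristic bookkeeping you defer does go through as you describe, using $\ker\big({\mathcal G}^1\to G^1\big)\cong\big({\mathcal G}^0\big)^{\vee}\otimes\Omega^1_C$ together with the finite-length counts of $\Sym^2\big(\overline{V}\big)$, $\Sym^2\big(\overline{W}\big)$, $A^0$ and $A^1$.
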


\begin{proof}
For the proof of the smoothness, take an Artinian local ring
$A$ over ${\mathcal T}$ with the maximal ideal $\mathfrak{m}$
and an ideal $I$ of $A$ satisfying $\mathfrak{m}I=0$.
Assume that a flat family
$(E,\nabla,l,\ell,{\mathcal V})$ of connections
on ${\mathcal C}\otimes A/I$ is given.
Consider the complex
${\mathcal F}^{\bullet}$ determined from
$(E,\nabla,l,\ell,{\mathcal V})\otimes A/\mathfrak{m}$
by (\ref{equation: definition of tangent complex}).
We take an affine open covering
$\{U_{\alpha}\}$ of ${\mathcal C}\otimes A$
as in the proof of Proposition \ref{prop: tangent space of the moduli space}.
If $U_{\alpha}\cap ({\mathcal D}_{\mathrm{ram}})_A=\varnothing$,
we can easily take a lift
$(E_{\alpha},\nabla_{\alpha},\{l_{\alpha},\ell_{\alpha},{\mathcal V}_{\alpha}\})$
of $(E,\nabla,\{l,\ell,{\mathcal V}\})|_{U_{\alpha}\otimes A/I}$.
If $U_{\alpha}\cap ({\mathcal D}_{\mathrm{ram}})_A\neq\varnothing$,
then we may assume that
${\mathcal V}\cap U_{\alpha}$ is given by a factorized $\tilde{\nu}$-ramified structure
$(V_k,\vartheta_k,\varkappa_k)$.
As in the proof of Proposition~\ref{prop:correspondence-factorized},
we can choose an endomorphism
$N$ on $E|_{({\mathcal D}_{\mathrm{ram}})_{A/I}}$
inducing $\theta_k\circ\kappa_k$ on $\overline{V}_k$
for $0\leq k\leq r-1$.
The representation matrix of $N$ is given~by
\[
 \begin{pmatrix}
 0 & 0 & \cdots & 0 & z \\
 1 & 0 & \cdots & 0 & 0 \\
 \vdots & \ddots & \ddots & \vdots & \vdots \\
 0 & \cdots & 1 & 0 & 0 \\
 0 & \cdots & 0 & 1 & 0
 \end{pmatrix}.
\]
with respect to the basis $e_0,\dots,e_{r-1}$
chosen as in the proof of Proposition \ref{prop:correspondence-factorized}.
Then we can give a factorization
$N=\theta\circ\kappa$
by the matrix factorization
\[
\begin{pmatrix}
 0 & 0 & \cdots & 0 & z \\
 1 & 0 & \cdots & 0 & 0 \\
 \vdots & \ddots & \ddots & \vdots & \vdots \\
 0 & \cdots & 1 & 0 & 0 \\
 0 & \cdots & 0 & 1 & 0
 \end{pmatrix}
 =
 \begin{pmatrix}
 0 & 0 & \cdots & 0 & 1 \\
 0 & 0 & \cdots & 1 & 0 \\
 \vdots & \vdots & \iddots & \iddots & \vdots \\
 0 & 1 & 0 & \cdots & 0 \\
 1 & 0 & 0 & \cdots & 0
 \end{pmatrix}
 \begin{pmatrix}
 0 & 0 & \cdots & 0 & 1 & 0 \\
 0 & 0 & \cdots & 1 & 0 & 0 \\
 \vdots \vspace{2pt}& \vdots & \iddots & \iddots & \vdots & \vdots \\
 0 & 1 & 0 & \cdots & 0 & 0 \\
 1 & 0 & 0 & \cdots & 0 & 0 \\
 0 & 0 & 0 & \cdots & 0 & z
 \end{pmatrix}
\]
with respect to the basis
$e_0,\dots,e_{r-1}$ of $E|_{({\mathcal D}_{\mathrm{ram}})_{A/I}}$
and its dual basis
$e_0^*,\dots,e_{r-1}^*$.
Let $E_{\alpha}$ be a free ${\mathcal O}_{U_{\alpha}}$-module with
$E_{\alpha}\otimes A/I=E|_{U_{\alpha}\otimes A/I}$.
Define
$\tilde{N}\colon E_{\alpha}|_{({\mathcal D}_{\mathrm{ram}})_A\cap U_{\alpha}}
\longrightarrow E_{\alpha}|_{({\mathcal D}_{\mathrm{ram}})_A\cap U_{\alpha}}$,
$\tilde{\theta}\colon E_{\alpha}|_{({\mathcal D}_{\mathrm{ram}})_A\cap U_{\alpha}}^{\vee}
\longrightarrow E_{\alpha}|_{({\mathcal D}_{\mathrm{ram}})_A\cap U_{\alpha}}$
and
$\tilde{\kappa}\colon E_{\alpha}|_{({\mathcal D}_{\mathrm{ram}})_A\cap U_{\alpha}}
\longrightarrow E_{\alpha}|_{({\mathcal D}_{\mathrm{ram}})_A\cap U_{\alpha}}^{\vee}$
by the same representation matrices as $N$, $\theta$ and $\kappa$ respectively.
Then $\tilde{N}$, $\tilde{\theta}$ and $\tilde{\kappa}$ are lifts of $N$, $\theta$ and $\kappa$
and they induce a lift
${\mathcal V}_{\alpha}=(\tilde{V}_k,\tilde{\vartheta}_k.\tilde{\varkappa}_k)$
of $(V_k,\vartheta_k,\varkappa_k)$ over $A$.
We can easily take a relative connection
$\nabla_{\alpha}$ on $E_{\alpha}$ which is a lift of
$\nabla|_{U_{\alpha}}$ and which is compatible with
${\mathcal V}_{\alpha}$.
So we obtain a lift
$(E_{\alpha},\nabla_{\alpha},\{l_{\alpha},\ell_{\alpha},{\mathcal V}_{\alpha}\})$
of $(E,\nabla,\{l,\ell,{\mathcal V}\})|_{U_{\alpha}\otimes A/I}$
when $U_{\alpha}\cap ({\mathcal D}_{\mathrm{ram}})_A\neq\varnothing$.

Take an isomorphism
$\theta_{\beta\alpha}\colon E_{\alpha}|_{U_{\alpha\beta}}
\stackrel{\sim}\longrightarrow E_{\beta}|_{U_{\alpha\beta}}$,
where $U_{\alpha\beta}=U_{\alpha}\cap U_{\beta}$.
If we put
\begin{gather*}
 u_{\alpha\beta\gamma}
 =
 \theta_{\gamma\alpha}^{-1}\circ\theta_{\gamma\beta}\circ\theta_{\beta\alpha}-\mathrm{id},
 \\
 v_{\alpha\beta}
 =
 (\theta_{\beta\alpha}\otimes\mathrm{id})^{-1}\circ\nabla_{\beta}\circ\theta_{\beta\alpha}
 -\nabla_{\alpha},
\end{gather*}
then the class
$[\{u_{\alpha\beta\gamma}\},\{v_{\alpha\beta}\}]
\in \mathbf{H}^2({\mathcal F}^{\bullet})\otimes I$
is nothing but the obstruction for the lifting of
$(E,\nabla,\{l,\ell,{\mathcal V}\})$
to a flat family of connections on ${\mathcal C}\otimes A$ over $A$.
We can see that the image
$\mathbf{H}^2(\Tr^{\bullet})([\{u_{\alpha\beta\gamma}\},\{v_{\alpha\beta}\}])$
under the isomorphism
$\mathbf{H}^2(\Tr^{\bullet})\colon
\mathbf{H}^2({\mathcal F}^{\bullet})\stackrel{\sim}\longrightarrow
\mathbf{H}^2(\Omega^{\bullet}_{{\mathcal C}\otimes A/\mathfrak{m}})$
is nothing but the obstruction for the lifting of
the determinant line bundle $\det (E,\nabla)$
with the induced connection.
Consider the moduli space
$M(\sum\lambda_k,\sum\mu_k,(r-1)dz/2+r\nu_0)$
of pairs $(L,\nabla_L)$
of a line bundle $L$ on the fibers of ${\mathcal C}$ over ${\mathcal T}$ and a connection
$\nabla_L$ on $L$ admitting poles along ${\mathcal D}$
whose residue along ${\mathcal D}_{\mathrm{log}}$ is
$\sum_{1\leq k \leq r}\lambda_k$,
whose restriction to ${\mathcal D}_{\mathrm{un}}$ is
$\sum_{1\leq k\leq r}\mu_k$
and whose restriction to ${\mathcal D}_{\mathrm{ram}}$ is
$\sum((r-1)dz/2+ r\nu_0)$.
Then $M(\sum\lambda_k,\sum\mu_k,(r-1)dz/2+r\nu_0)$ is smooth over ${\mathcal T}$,
since it is an affine space bundle over the relative Jacobian of
${\mathcal C}$ over ${\mathcal T}$.
In particular, we have
$\mathbf{H}^2(\Tr^{\bullet})([\{u_{\alpha\beta\gamma}\},\{v_{\alpha\beta}\}])=0$
which is equivalent to
$[\{u_{\alpha\beta\gamma}\},\{v_{\alpha\beta}\}]=0$.
Thus $M^{\balpha}_{{\mathcal C},{\mathcal D}}(\lambda,\tilde{\mu},\tilde{\nu})$
is smooth over ${\mathcal T}$.

By Proposition \ref{prop: tangent space of the moduli space},
the dimension of the moduli space at
$(E,\nabla,l,\ell,{\mathcal V})\otimes A/\mathfrak{m}$
is given by
$\dim \mathbf{H}^1({\mathcal F}^{\bullet})$.
We write ${\mathcal D}\otimes A/\mathfrak{m}=D$,
${\mathcal D}_{\mathrm{log}}\otimes A/\mathfrak{m}=D_{\mathrm{log}}$ and so on.
Using the exact sequence~(\ref{equation: hyper cohomology exact sequence})
and the equality
$\dim\mathbf{H}^0({\mathcal F}^{\bullet})=
\dim\mathbf{H}^2({\mathcal F}^{\bullet})=1$
by Lemma \ref{lemma: trace map is isomorphic}, we have
\begin{gather}
 \dim \mathbf{H}^1({\mathcal F}^{\bullet}) =
 \chi({\mathcal F}_1^{\bullet})-\chi({\mathcal F}_0^{\bullet})+2\nonumber \\
 \hphantom{\dim \mathbf{H}^1({\mathcal F}^{\bullet})}{}
 =\chi\big({\mathcal G}^1\big)-\dim_{\mathbb{C}} G^1
 +\dim_{\mathbb{C}} \Sym^2\big(\overline{V}\big) -\dim_{\mathbb{C}} A^1\nonumber \\
\hphantom{\dim \mathbf{H}^1({\mathcal F}^{\bullet})=}{}
 -\chi\big({\mathcal G}^0\big)-\dim_{\mathbb{C}} A^0
 +\dim_{\mathbb{C}} \Sym^2\big(\overline{W}\big) +2.\label{equation: formula of H^1-cohomology}
\end{gather}
Since
$\ker\big({\mathcal G}^1\rightarrow G^1\big)\cong \big({\mathcal G}^0\big)^{\vee}\otimes\Omega^1_C$,
we have
\begin{gather}
 \chi\big({\mathcal G}^1\big)-\dim_{\mathbb{C}}G^1 =
 -\chi\big({\mathcal G}^0\big)\nonumber \\
\hphantom{\chi\big({\mathcal G}^1\big)-\dim_{\mathbb{C}}G^1}{}=
 r^2(g-1)+(\deg D_{\mathrm{log}}+\deg D_{\mathrm{un}})r(r-1)/2
 +\sum_{x\in D_{\mathrm{ram}}}r(r-1)/2.\label{equation: chi of G^0, G^1}
\end{gather}
By the same method as in the proof of Lemma \ref{lemma: extension of symmetric pairings},
we can see that the elements of $\Sym^2\big(\overline{V}\big)$ are given by the data
\begin{gather*}
\begin{split}
& (a_{r-k-1,k}(z))_{0\leq k\leq r-1}
 \in \big(\mathbb{C}[z]/(z^{m_x})\big)^r \quad
 \text{such that $za_{r-k-1,k}=za_{k,r-k-1}$},
 \\
 & (\bar{a}_{ij}(z))_{0\leq i,j\leq r-1, i+j\neq r-1}
 \in \big(\mathbb{C}[z]/\big(z^{m_x-1}\big) \big)^{r^2-r}
 \quad \text{such that $\bar{a}_{ji}=\bar{a}_{ij}$} \quad (x\in D_{\mathrm{ram}})
\end{split}
\end{gather*}
and each $\xi_k\in\Hom\big(\overline{V}_k,\overline{W}_k\big)|_{m_xx}$ is given by the matrix
\[
 \begin{pmatrix}
 \overline{a}_{00}(z) & & \cdots & & za_{0,r-1}(z) \\
 \vdots & & & \rotatebox{80}{$\ddots$} \ & \vdots \\
 \overline{a}_{r-k-1,0}(z) & \cdots & a_{r-k-1,k}(z) & \cdots & za_{k,r-1}(z) \\
 \vdots & \rotatebox{80}{$\ddots$} & & & \vdots \\
 \overline{a}_{r-1,0}(z) & & \cdots & & za_{r-1,r-1}(z)
 \end{pmatrix},
\]
where $za_{i,j}$ is the image of $z\otimes \overline{a}_{i,j}$ by
$(z)\otimes{\mathcal O}_{(m_x-1)x}\xrightarrow{\sim}
z{\mathcal O}_{m_xx}$.
So we can see that
\begin{equation} \label{equation: tangent of factorized data}
 \dim_{\mathbb{C}} \Sym^2\big(\overline{V}\big)
 =\dim_{\mathbb{C}} \Sym^2\big(\overline{W}\big)
 =\sum_{x\in D_{\mathrm{ram}}} \left( r+\frac{1}{2}(m_x-1)r(r+1) \right).
\end{equation}
Finally note that
\begin{equation} \label{equation: tangent of automorphism of factorized structure}
 \dim_{\mathbb{C}}A^0=\dim_{\mathbb{C}}A^1=\sum_{x\in D_{\mathrm{ram}}} m_x r.
\end{equation}
Substituting (\ref{equation: chi of G^0, G^1}), (\ref{equation: tangent of factorized data})
and (\ref{equation: tangent of automorphism of factorized structure})
to (\ref{equation: formula of H^1-cohomology}),
we get the desired equality
$\dim \mathbf{H}^1({\mathcal F}^{\bullet})
=2r^2(g-1)+2+r(r-1)\deg D$.
\end{proof}

\section {Symplectic structure on the moduli space}
\label{section: symplectic structure}

In this section, we assume again the same notations as in
Section \ref{section: construction of the moduli space},
Section \ref{section: tangent space}
and
Section \ref{section: smoothness of the moduli}.

There is an \'etale surjective morphism
$M'\longrightarrow M^{\balpha}_{{\mathcal C},{\mathcal D}}\big(\lambda,\tilde{\mu},\tilde{\nu}\big)$,
such that there is a universal family of connections
$\big(\tilde{E},\tilde{\nabla},\{\tilde{l},\tilde{\ell},\tilde{\mathcal V}\}\big)$
on ${\mathcal C}_{M'}$ over $M'$.
We can define
a complex
${\mathcal G}^{\bullet}_{M'}$ on ${\mathcal C}_{M'}$
from
$(\tilde{E},\tilde{\nabla},\{\tilde{l},\tilde{\ell},\tilde{\mathcal V}\})$
in the same way as ${\mathcal G}^{\bullet}$
given by (\ref{equation: definition of G^0,G^1}), (\ref{equation: homomorphism of complex G}).
We can also define a complex
${\mathcal S}^{\bullet}_{\mathrm{ram},M'}$
on ${\mathcal C}_{M'}$
in the same way as
${\mathcal S}^{\bullet}_{\mathrm{ram}}$
given by (\ref{equation: definition of S^0,S^1,S^2}),
(\ref{equation: definition of complex S^dot}).
Then we can define a complex
\[
 \tilde{\mathcal F}^{\bullet}_{M'}
 :=
 \mathrm{Cone}\big( {\mathcal G}^{\bullet}_{M'} \rightarrow
 {\mathcal S}^{\bullet}_{\mathrm{ram},M'}[1] \big) [-1]
\]
in the same way as
${\mathcal F}^{\bullet}$ defined in (\ref{equation: definition of tangent complex}).

Let $p_{M'}\colon {\mathcal C}_{M'}\longrightarrow M'$ be the projection.
Then we can see by Proposition~\ref{prop: tangent space of the moduli space}
that the relative tangent bundle
$T_{M'/{\mathcal T}}$ of $M'$ over ${\mathcal T}$ is isomorphic to
$\mathbf{R}^1p_{M' *}\big(\tilde{\mathcal F}^{\bullet}_{M'}\big)$.
We can define a pairing
$\Xi_{\mathrm{ram}}\colon
{\mathcal S}^1_{\mathrm{ram},M'}\times {\mathcal S}^1_{\mathrm{ram},M'}
\longrightarrow
\Omega^1_{{\mathcal C}_{M'}/M'}({\mathcal D}_{M'})|_{({\mathcal D}_{\mathrm{ram}})_{M'}}$
in the same way as
(\ref{equation: definition of Xi}).
Consider the pairing
\begin{gather}
 \omega_{M'} \colon \
 \mathbf{R}^1 p_{M' *}\big(\tilde{\mathcal F}^{\bullet}_{M'}\big) \times
 \mathbf{R}^1p_{M' *}\big(\tilde{\mathcal F}^{\bullet}_{M'}\big)\longrightarrow \nonumber\\
 \hphantom{\omega_{M'} \colon} \
 \mathbf{R}^2 p_{M' *}\big[ {\mathcal O}_{\mathcal C}\to
 \Omega^1_{{\mathcal C}/{\mathcal T}} ({\mathcal D}_{\mathrm{ram}})
 \to \Omega^1_{{\mathcal C}/{\mathcal T}}({\mathcal D}_{\mathrm{ram}})|
 _{ {\mathcal D}_{\mathrm{ram}} } \big]_{M'}  \cong
 \mathbf{R}^2 p_{M' *} \Omega^{\bullet}_{{\mathcal C}_{M'}/M'}
 \cong {\mathcal O}_{M'}\label{equation: definition of symplectic form over etale cover}
\end{gather}
defined by
\begin{gather*}
 \omega_{M'}\big( \big[ \{ u_{\alpha\beta}\},
 \{ v_{\alpha}, \eta_{\alpha} \} \big] ,
 \big[ \{ u'_{\alpha\beta}\},
 \{ v'_{\alpha}, \eta'_{\alpha} \} \big]
\big) \\
 \qquad{}=
\big[
 \{ \Tr(u_{\alpha\beta}\circ u'_{\beta\gamma})\},
 \{ {-}\Tr(u_{\alpha\beta}\circ v'_{\beta}-v_{\alpha}\circ u'_{\alpha\beta})\},
 \{\Xi_{\mathrm{ram}}( \eta_{\alpha} , \eta'_{\alpha} )
 \}
\big]
\end{gather*}
in the same way as (\ref{expression of symplectic form}).
We can check $\omega_{M'}(v,v)=0$ for
$v\in \mathbf{R}^1p_{M' *}(\tilde{\mathcal F}^{\bullet}_{M'})$
and $\omega_{M'}$ descends to a ${\mathcal T}$-relative $2$-form
$\omega_{M^{\balpha}_{{\mathcal C},{\mathcal D}}(\lambda,\tilde{\mu},\tilde{\nu})}$ on
$M^{\balpha}_{{\mathcal C},{\mathcal D}}(\lambda,\tilde{\mu},\tilde{\nu})$.

\begin{Theorem}
\label{theorem: existence of symplectic form and d-closedness}
The $2$-form
$\omega_{M^{\balpha}_{{\mathcal C},{\mathcal D}}(\lambda,\tilde{\mu},\tilde{\nu})}$
defined by
\eqref{equation: definition of symplectic form over etale cover}
is a ${\mathcal T}$-relative symplectic form
on the moduli space
$M^{\balpha}_{{\mathcal C},{\mathcal D}}(\lambda,\tilde{\mu},\tilde{\nu})$
of $\balpha$-stable connections
on $({\mathcal C},{\mathcal D})$ with $(\lambda,\tilde{\mu},\tilde{\nu})$-structure.
\end{Theorem}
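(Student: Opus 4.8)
The plan is to verify the two defining properties of a relative symplectic form separately. Skew-symmetry and well-definedness of $\omega_{M^{\balpha}_{{\mathcal C},{\mathcal D}}(\lambda,\tilde{\mu},\tilde{\nu})}$ have already been recorded just before the statement, so what remains is nondegeneracy and $d$-closedness.

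Nondegeneracy is essentially in hand. At each closed point $(E,\nabla,\{l,\ell,{\mathcal V}\})$ of a fiber over ${\mathcal T}$, the relative tangent space is $\mathbf{H}^1({\mathcal F}^{\bullet})$ by Proposition~\ref{prop: tangent space of the moduli space}, and the value of the relative form there is the pairing $\omega_{(E,\nabla,\{l,\ell,{\mathcal V}\})}$ of \eqref{equation: definition of symplectic form}. That this pairing is nondegenerate is exactly Proposition~\ref{proposition: nondegenerate pairing on tangent space}, whose proof exploits the duality $\theta_k,\kappa_k$ furnished by the factorized ramified structure together with the five lemma. Since the relative tangent bundle has constant rank by the smoothness statement of Theorem~\ref{thm: smoothness of the moduli space}, pointwise nondegeneracy upgrades to nondegeneracy of the relative $2$-form.

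For $d$-closedness I follow the unfolding strategy announced in the introduction. The idea is to deform the ramified irregular singular data along ${\mathcal D}_{\mathrm{ram}}$ so as to split it into logarithmic (simple pole) data, an unfolding of the irregular singularity in the spirit of confluence. The factorized ramified structure makes this deformation transparent: the endomorphism $N_k=\langle\vartheta_k,\varkappa_k\rangle$ obeys $N_k^r=z\,\mathrm{id}$, and one perturbs the characteristic relation $T^r-z$ into a separable polynomial by deforming the symmetric pairings $(\vartheta_k,\varkappa_k)$ within the framework of Definition~\ref{def-fac-connection}. Carrying this out in families produces a smooth morphism ${\mathcal M}^{\mathrm{unf}}\to B$ over an auxiliary base $B$, whose central fiber is $M^{\balpha}_{{\mathcal C},{\mathcal D}}(\lambda,\tilde{\mu},\tilde{\nu})$ and whose generic fibers are moduli spaces of parabolic logarithmic connections. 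The same trace pairing \eqref{expression of symplectic form}, applied to the relative analogue of the complex ${\mathcal F}^{\bullet}$ of \eqref{equation: definition of tangent complex}, defines a relative $2$-form $\omega^{\mathrm{unf}}$ on ${\mathcal M}^{\mathrm{unf}}$ that restricts to $\omega_{M^{\balpha}_{{\mathcal C},{\mathcal D}}(\lambda,\tilde{\mu},\tilde{\nu})}$ on the central fiber and to the canonical logarithmic symplectic forms on the generic fibers.

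The closedness of $\omega$ then follows by specialization. On the generic fibers the relative form agrees with the symplectic form on a moduli space of parabolic logarithmic connections, which is known to be $d$-closed by \cite{Inaba-1,IIS-1}. Hence the fiberwise exterior derivative $d\omega^{\mathrm{unf}}$ vanishes over a Zariski-dense subset of $B$; as $d$-closedness is a closed condition and ${\mathcal M}^{\mathrm{unf}}$ is smooth over $B$, it vanishes identically, so $d\omega_{M^{\balpha}_{{\mathcal C},{\mathcal D}}(\lambda,\tilde{\mu},\tilde{\nu})}=0$ on the central fiber. The main obstacle will be the rigorous construction of the unfolding together with the uniformity of the hypercohomological description of $\omega^{\mathrm{unf}}$ across $B$: one must check that the deformed factorized data remain legitimate factorized (resp.\ logarithmic parabolic) structures and that the trace-pairing formula continues to compute a well-defined skew form, matching the known logarithmic form in the limit. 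It is precisely the symmetry conditions $\vartheta_k(v,v')=\vartheta_{r-k-1}(v',v)$ and $\varkappa_k(v,v')=\varkappa_{r-k-1}(v',v)$ of Definition~\ref{def-fac-connection} and their deformations that guarantee this compatibility.
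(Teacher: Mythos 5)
Your proposal follows essentially the same route as the paper: nondegeneracy is exactly Proposition~\ref{proposition: nondegenerate pairing on tangent space} combined with smoothness over ${\mathcal T}$, and ${\rm d}$-closedness is proved by constructing an unfolding over an auxiliary base whose generic fibers are moduli spaces of parabolic logarithmic connections, invoking the known closedness from \cite{Inaba-1}, and specializing to the central fiber. The paper's concrete realization of the perturbation you sketch is to split each multiple point $m_x x$ of the divisor into $m_x$ simple poles via $(z-h^rq^x_1)\cdots(z-h^rq^x_{m_x-1})(z-h^r)=0$ while deforming the relation $N^r=z\,\mathrm{id}$ to $N^r=(z-h^r)\mathrm{id}$, so that the residue acquires $r$ distinct eigenvalues at the first $m_x-1$ points and the filtration $(V_k)$ becomes a logarithmic parabolic structure at $z=h^r$.
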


The restriction
$\omega_{M^{\balpha}_{{\mathcal C},{\mathcal D}}(\lambda,\tilde{\mu},\tilde{\nu})}|_p$
at each point
$p\in M^{\balpha}_{{\mathcal C},{\mathcal D}}(\lambda,\tilde{\mu},\tilde{\nu})$
is nondegenerate by
Proposition~\ref{proposition: nondegenerate pairing on tangent space}.
It remains to prove that
$d\omega_{M^{\balpha}_{{\mathcal C},{\mathcal D}}(\lambda,\tilde{\mu},\tilde{\nu})}=0$.
Since
$M^{\balpha}_{{\mathcal C},{\mathcal D}}(\lambda,\tilde{\mu},\tilde{\nu})$
is smooth over ${\mathcal T}$,
we only have to show the vanishing
$d\omega_{M^{\balpha}_{{\mathcal C},{\mathcal D}}(\lambda,\tilde{\mu},\tilde{\nu})_t}=0$
of the restriction to the fiber
$M^{\balpha}_{{\mathcal C},{\mathcal D}}(\lambda,\tilde{\mu},\tilde{\nu})_t$
over $t\in{\mathcal T}$.
For its proof we use a construction of an unfolding of
the moduli space.

Put ${\mathcal C}_t=C$,
${\mathcal D}_t=D$, $({\mathcal D}_{\mathrm{un}})_t=D_{\mathrm{un}}$,
$({\mathcal D}_{\mathrm{ram}})_t=D_{\mathrm{ram}}$ and
$(\lambda,\mu,\nu)=(\lambda,\tilde{\mu},\tilde{\nu})_t$.
For each ${x\in D}$, choose a~defining equation $z$ of
$D_{\mathrm{red}}$ on an affine open neighborhood of
$x$, which is a lift of $\overline{z}$.
Take distinct complex numbers
$s^x_1,\dots,s^x_{m_x-1},s^x_{m_x}\in\mathbb{C}$.
Let $D^x_{\mathrm{un},h}$
be the divisor on $C\times \Spec\mathbb{C}[h]$
defined by the equation
$(z-hs^x_1)\cdots(z-hs^x_{m_x})=0$
and put $D_{\mathrm{un},h}=\sum_{x\in D_{\mathrm{un}}} D^x_{\mathrm{un},h}$.
For each $x\in D_{\mathrm{ram}}$, take distinct complex numbers
$q^x_1,\dots,q^x_{m_x-1},q^x_{m_x}\in\mathbb{C}$
with $q^x_{m_x}=1$.
Let $D^x_{\mathrm{ram},h}$ be the divisor on $C\times\Spec\mathbb{C}[h]$
defined by the equation
$(z-h^rq^x_1)\cdots(z-h^rq^x_{m_x-1})(z-h^r)=0$
and put
$D_{\mathrm{ram},h}:=\sum_{x\in D_{\mathrm{ram}}} D^x_{\mathrm{ram},h}$.
We set
\[
 D_h:=D_{\mathrm{log}}+D_{\mathrm{un},h}+D_{\mathrm{ram},h}.
\]
Note that $D_h$ is a reduced divisor for generic $h$ and
it coincides with $D$ if $h=0$.
So we can take a Zariski open subset
$H^{\circ}$ of $\Spec\mathbb{C}[h]$ containing $0$ such that
$D_h$ is a reduced divisor for any
$h\in H^{\circ}\setminus\{0\}$.

For $x\in D_{\mathrm{un}}$, we can write
\[
 \mu_k|_{m_xx}
 =
 \big(b_{k,0}+b_{k,1}z+\cdots+b_{k,m_x-1}z^{m_x-1}\big)\frac{{\rm d}z}{z^{m_x}},
 \qquad k=0,\dots,r-1.
\]
We define
$\mu_{k,h}\in\Omega^1_{C\times\Spec\mathbb{C}[h]/\Spec\mathbb{C}[h]}
(D_{\mathrm{un},h})|_{D_{\mathrm{un},h}}$ by
\[
 \mu_{k,h}|_{D_{\mathrm{un},h}}
 =
 \frac{b_{k,0}+b_{k,1}z+\cdots+b_{k,m_x-1}z^{m_x-1}} {(z-hs^x_1)\cdots(z-hs^x_{m_x})} {\rm d}z,
 \qquad k=0,\dots,r-1.
\]
We can write
\begin{gather*}
 \nu^x_0(z)
 =
 \big(a^x_{0,0}+a^x_{0,1}z+\cdots+a^x_{0,m_x-2}z^{m_x-2}+a^x_{0,m_x-1}z^{m_x-1}\big)\frac{{\rm d}z}{z^{m_x}},
 \\
 \nu^x_k(z)
 =
 \big(a^x_{k,0}+a^x_{k,1}z+\cdots+a^x_{k,m_x-2}z^{m_x-2}\big)\frac{{\rm d}z}{z^{m_x}},
 \qquad k=1,\dots,r-1 .
\end{gather*}
Then we define
$\nu_{k,h}(z)\in\Omega^1_{C\times\Spec\mathbb{C}[h]/\Spec\mathbb{C}[h]}
(D_{\mathrm{ram},h}) |_{D_{\mathrm{ram},h}}$
for $0\leq k\leq r-1$ by
\begin{gather*}
 \nu_{0,h}(z) |_{D_{\mathrm{ram},h}}
 =
 \frac{a^x_{0,0}+a^x_{0,1}z+\cdots+a^x_{0,m_x-2}z^{m_x-2}+a^x_{0,m_x-1}z^{m_x-1}}
 {(z-h^rq^x_1)\cdots(z-h^rq^x_{m_x-1})(z-h^r)} {\rm d}z,
 \\
 \nu_{k,h}(z) |_{D_{\mathrm{ram},h}}
 =
 \frac{a^x_{k,0}+a^x_{k,1}z+\cdots+a^x_{k,m_x-2}z^{m_x-2}}
 {(z-h^rq^x_1)\cdots(z-h^rq^x_{m_x-1})(z-h^r)} {\rm d}z,
 \qquad k=1,\dots,r-1,
\end{gather*}
and we set
\[
 \nu_h(w)
 :=
 \nu_{0,h}(z)+\nu_{1,h}(z)w+\cdots+\nu_{r-1,h}(z)w^{r-1}.
\]
Consider the moduli space
\[
 {\mathcal M}_{H^{\circ}}
 =\left\{
 (E,\nabla, l,(\ell_k)_{0\leq k\leq r-1},(V_k,\vartheta_k,\varkappa_k)_{0\leq k\leq r-1})
 \right\}
 \longrightarrow H^{\circ},
\]
where
\begin{itemize}\itemsep=0pt
\item[(i)] $E$ is an algebraic vector bundle on $C$ of rank $r$ and degree $d$,
\item[(ii)] $\nabla\colon E\longrightarrow E\otimes\Omega^1_C(D_h)$
is a connection admitting poles along $D_h$,
\item[(iii)] $l$ is a logarithmic $\lambda$-parabolic structure on $(E,\nabla)$
along $D_{\mathrm{log}}$,
\item[(iv)] $E|_{D_{\mathrm{un},h}}=\ell_0\supset\cdots\supset \ell_{r-1}\supset\ell_r=0$
is a filtration
such that
$\ell_k/\ell_{k+1}\cong{\mathcal O}_{D_{\mathrm{un},h}}$ for any $k$
and that
$(\nabla|_{D_{\mathrm{un},h}}-\mu_{k,h}\mathrm{id})(\ell_k)\subset
\ell_{k+1}\otimes\Omega^1_C(D_{\mathrm{un},h})$
for any $k$,
\item[(v)]
$E|_{D_{\mathrm{ram},h}}=V_0\supset V_1\supset\cdots\supset V_{r-1}\supset
V_r= (z-h^r)V_0$
is a filtration by ${\mathcal O}_{D_{\mathrm{ram},h}}$-submodules
such that $V_j/V_{j+1}\cong{\mathcal O}_{D_{\mathrm{ram},h}}/(z-h^r)$ and
$\nabla|_{D^x_{\mathrm{ram},h}}(V_k)\subset
V_k\otimes\Omega^1_C(D_{\mathrm{ram},h})$
for $0\leq k\leq r-1$,
\item[(vi)]
for
$\overline{V}^x_k:V_k|_{D^x_{\mathrm{ram},h}}
\big/\prod_{j=1}^{m_x-1}(z-h^rq^x_j)V_{k+1}|_{D^x_{\mathrm{ram},h}}$
and
$\overline{W}^x_k=\Hom_{{\mathcal O}_{D^x_{\mathrm{ram},h}}}
(\overline{V}^x_{r-k-1},{\mathcal O}_{D^x_{\mathrm{ram},h}})$,
\[
 \vartheta^x_k\colon \ \overline{W}^x_k\times\overline{W}^x_{r-k-1}
 \longrightarrow {\mathcal O}_{D^x_{\mathrm{ram}}},
 \qquad 0\leq k\leq r-1,
\]
are ${\mathcal O}_{D^x_{\mathrm{ram},h}}$-bilinear pairings
such that the homomorphisms
$\theta^x_k\colon \overline{W}^x_k\longrightarrow
\big(\overline{W}^x_{r-k-1}\big)^{\vee}=\overline{V}^x_k$
induced by $\vartheta^x_k$ are isomorphisms,
the equalities $\vartheta^x_k(v,v')=\vartheta^x_{r-k-1}(v',v)$
hold for $v\in \overline{W}^x_k$, $v'\in\overline{W}^x_{r-k-1}$
and that
the equalities
$\vartheta^x_{k-1}\big( v_1|_{\overline{V}^x_{r-k}},v_2 \big)
=\vartheta_k \big( v_1,v_2|_{\overline{V}^x_k} \big)$
hold for
$v_1\in \overline{W}^x_k=\Hom\big(\overline{V}^x_{r-k-1},{\mathcal O}_{D^x_{\mathrm{ram},h}}\big)$,
$v_2\in \overline{W}^x_{r-k}=\Hom\big(\overline{V}^x_{k-1},{\mathcal O}_{D^x_{\mathrm{ram},h}}\big)$
when $1\leq k\leq r-1$
and the equality
$\vartheta_{r-1}((z-h^r)v_1,v_2)=\vartheta_0(v_1,(z-h^r)v_2)$
holds for $v_1,v_2\in \overline{W}^x_0$,
\item[(vii)]
$
 \varkappa^x_k\colon \overline{V}^x_k\times\overline{V}^x_{r-k-1}
 \longrightarrow {\mathcal O}_{D^x_{\mathrm{ram},h}}
$
are ${\mathcal O}_{D^x_{\mathrm{ram},h}}$-bilinear pairings
for $0\leq k\leq r-1$ such that
the equalities $\varkappa^x_k(v,v')=\varkappa^x_{r-k-1}(v',v)$
hold for $v\in\overline{V}^x_k$, $v'\in\overline{V}^x_{r-k-1}$,
the equalities $\varkappa^x_{k-1}(\overline{v_1},v_2)=\varkappa^x_k(v_1,\overline{v_2})$
hold for $v_1\in \overline{V}_k$, $v_2\in \overline{V}_{r-k}$
and for the image
$\overline{v_1}$ (resp.\ $\overline{v_2}$)
of $v_1$ (resp.\ $v_2$)
via the canonical map
$\overline{V}_k^x \rightarrow \overline{V}_{k-1}^x$
(resp.\ $\overline{V}_{r-k}^x \rightarrow \overline{V}_{r-k-1}^x$),
the equality
$\varkappa_{r-1}((z-h^r)v_1,v_2)=\varkappa_0(v_1,(z-h^r)v_2)$
holds for $v_1,v_2\in \overline{V}^x_0$
and that the equalities
$(\theta^x_k\circ\kappa^x_k)^r=(z-h^r)\cdot\mathrm{id}_{\overline{V}^x_k}$
hold for the homomorphisms
$\kappa^x_k\colon \overline{V}^x_k\longrightarrow
\big(\overline{V}_{r-k-1}^x\big)^{\vee}=\overline{W}^x_k$
induced by $\varkappa^x_k$,
\item[(viii)]
the homomorphism
\begin{align*}
 {\mathcal O}_{D^x_{\mathrm{ram},h}}[w]\big/\big(w^r-z+h^r, (z-h^rq^x_1)\cdots(z-h^rq^x_{m_x-1})w\big)
 & \longrightarrow
 \End_{{\mathcal O}_{D^x_{\mathrm{ram},h}}}\big(\overline{V}^x_k\big),
 \\
 \overline{f(w)}
 & \mapsto f\big(\theta^x_k\circ\kappa^x_k\big)
\end{align*}
is injective
and the diagrams
\[
 \begin{CD}
 V_k|_{D^x_{\mathrm{ram},h}} @> \nabla|_{D^x_{\mathrm{ram},h}} >>
 V_k|_{D^x_{\mathrm{ram},h}}\otimes\Omega^1_C(D_{\mathrm{ram},h}) \\
 @VVV @VVV \\
 \overline{V}^x_k @> \nu_h(\theta^x_k\circ\kappa^x_k)+\frac{k}{r}\frac{dz}{z-h^r} >>
 \overline{V}^x_k\otimes\Omega^1_C(D_{\mathrm{ram},h})
 \end{CD}
\]
are commutative for $k=0,1,\dots,r-1$,
\item[(ix)]
there is an isomorphism
$\psi_k\colon \overline{V}^x_k\xrightarrow {\sim}
(w)\big/\big(w^2(z-h^rq^x_1)\cdots(z-h^rq^x_{m_x-1})\big) \otimes \overline{V}^x_{k-1}$
which is a lift of $\overline{V}^x_k\longrightarrow \overline{V}^x_{k-1}$
such that the composition
\begin{gather*}
 (z-h^r)\big/\big(w(z-h^rq_1^x)\cdots(z-h^rq^x_{m_x-1})(z-h^r)\big) \otimes \overline{V}^x_0
 \longrightarrow \overline{V}^x_{r-1},
 \\
 \quad
 \xrightarrow[\sim]{\psi_{r-1}}\cdots\cdots\xrightarrow[\sim]{\psi_1}
 \big(w^{r-1}\big)\big/\big((z-h^rq^x_1)\cdots(z-h^rq^x_{m_x-1})(z-h^r)\big)\otimes \overline{V}^x_0
\end{gather*}
coincides with the homomorphism obtained by tensoring
$\overline{V}^x_0$ to
\begin{gather*}
 (w^r)\big/\big(w(z-h^rq_1^x)\cdots(z-h^rq^x_{m_x-1})(z-h^r)\big)\\
 \qquad{} \rightarrow
 (w^{r-1})\big/\big((z-h^rq^x_1)\cdots(z-h^rq^x_{m_x-1})(z-h^r)\big)
\end{gather*}
for $1\leq k\leq r-1$ and
\item[(x)]
the ring of endomorphisms of $E$ preserving $l$, $(\ell_k)$, $(V_k,\vartheta_k,\varkappa_k)$
and commuting with $\nabla$ consists of scalar endomorphisms
$\mathbb{C}\mathrm{id}_E$.
\end{itemize}

We can prove that the moduli space ${\mathcal M}_{H^{\circ}}$
exists as an algebraic space,
by modifying the proof of Theorem \ref{theorem: existence of the moduli space}.
The proof is rather easier
because we do not need a GIT construction.
So we omit the proof of the following proposition.

\begin{Proposition}
There exists a relative moduli space
${\mathcal M}_{H^{\circ}}\longrightarrow H^{\circ}$
as an algebraic space.
\end{Proposition}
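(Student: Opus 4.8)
The plan is to prove existence of the moduli space $\mathcal{M}_{H^{\circ}}$ by mimicking the construction of Theorem~\ref{theorem: existence of the moduli space}, observing that the family-over-$H^{\circ}$ setting and the ``unfolded'' divisors $D_{\mathrm{un},h}$, $D_{\mathrm{ram},h}$ are handled by exactly the same machinery, with the crucial simplification that $D_h$ is \emph{reduced} for $h\neq 0$. First I would work over the base $H^{\circ}$ and set up the relative universal data: the defining polynomials $(z-h^r q^x_1)\cdots(z-h^r q^x_{m_x-1})(z-h^r)$ and their unramified analogues determine flat families of divisors $({\mathcal D}_{\mathrm{un},h})_{H^{\circ}}$, $({\mathcal D}_{\mathrm{ram},h})_{H^{\circ}}$ over $C\times H^{\circ}$, and the forms $\mu_{k,h}$, $\nu_h(w)$ are regular sections of the appropriate relative one-form sheaves restricted to these divisors. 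Because stability is built into condition~(x) (the endomorphism ring is scalar), every object is automatically simple, so there is no need for a GIT quotient.

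The key steps, in order, are as follows. First I would construct a parameter space for the underlying bundle-with-connection-and-parabolic data $(E,\nabla,l,(\ell_k))$ by embedding into the moduli of parabolic triples of~\cite{IIS-1}, exactly as in the proof of Theorem~\ref{theorem: existence of the moduli space}; this yields a quasi-projective scheme $M'_{H^{\circ}}$ carrying a universal family, after passing to an \'etale cover trivializing the relevant bundles. Second, over $M'_{H^{\circ}}$ I would build the parameter space of the factorized ramified data $(V_k,\vartheta_k,\varkappa_k)$ along $D_{\mathrm{ram},h}$. Here I would first parametrize the $\nabla$-stable filtration $(V_k)$ by a locally closed subscheme of a product of relative Quot-schemes, then realize the bilinear pairings $\vartheta_k,\varkappa_k$ (equivalently the factorization $N_k=\theta_k\circ\kappa_k$ of the endomorphism) as sections of an affine-space bundle cut out by the closed conditions in~(vi)--(ix): symmetry of the pairings, the compatibility diagrams with the filtration inclusions, the relation $(\theta^x_k\circ\kappa^x_k)^r=(z-h^r)\,\mathrm{id}$, the commutativity with $\nabla$ in~(viii), and the lifting condition~(ix). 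Each of these is representable by a locally closed or closed subscheme, exactly as the conditions in Definition~\ref{definition of ramified structure} were handled by the schemes $\Sigma$, $\mathbf{V}_k$, $c_k^{-1}(\tilde\phi_k)$, $Y$, $Z$ in the proof of Theorem~\ref{theorem: existence of the moduli space}. Third, I would check that the resulting space is independent of the \'etale trivialization and descends to an algebraic space $\mathcal{M}_{H^{\circ}}$ over $M'_{H^{\circ}}$, hence over $H^{\circ}$; the descent works because the factorized data is defined intrinsically up to the equivalence of Definition~\ref{def-fac-connection}, and the gauge ambiguity is governed by a group-scheme torsor just as in Theorem~\ref{theorem: existence of the moduli space}.

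The main obstacle is bookkeeping rather than conceptual: the factorized data in~(vi)--(ix) is stated with the modified quotient $\overline{V}^x_k = V_k|_{D^x_{\mathrm{ram},h}}/\prod_{j=1}^{m_x-1}(z-h^r q^x_j)V_{k+1}|_{D^x_{\mathrm{ram},h}}$ and with $w^r-z$ replaced by $w^r-z+h^r$, so one must verify that the defining relations degenerate correctly at $h=0$ to recover the $h=0$ fiber, which is the moduli space $M^{\balpha}_{{\mathcal C},{\mathcal D}}(\lambda,\tilde\mu,\tilde\nu)_t$. I would confirm that for generic $h$, where $D_h$ is reduced, the ramified structure genuinely decouples into the logarithmic-type data dictated by the factorization of $(z-h^r q^x_1)\cdots(z-h^r)$, so that the construction reduces to the much simpler logarithmic case and no GIT is required. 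Since the construction is a direct adaptation of an already-established one and stability is automatic, I would, as the authors indicate, omit the routine verifications and simply assert the existence of $\mathcal{M}_{H^{\circ}}$ as an algebraic space over $H^{\circ}$.
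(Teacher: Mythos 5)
Your proposal matches the paper's intent exactly: the paper itself omits the proof, stating only that one modifies the construction of Theorem \ref{theorem: existence of the moduli space} and that the argument is easier because the simplicity condition (x) makes a GIT quotient unnecessary, yielding an algebraic space rather than a quasi-projective scheme. Your elaboration — embedding into the moduli of parabolic triples, parametrizing the factorized data by locally closed subschemes of Quot-schemes and affine-space bundles, and descending along the \'etale cover — is precisely the intended adaptation.
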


Note that the fiber
${\mathcal M}_{H^{\circ},0}$
of the moduli space ${\mathcal M}_{H^{\circ}}$ over $h=0$
is the moduli space of simple connections
on $(C,D)$ with $(\lambda,\mu,\nu)$-structure.

There is a scheme $\tilde{\mathcal M}_{H^{\circ}}$
of finite type over $H^{\circ}$
with an \'etale surjective morphism
$\tilde{\mathcal M}_{H^{\circ}}\longrightarrow {\mathcal M}_{H^{\circ}}$
such that a universal family
$\big(\tilde{E}_{\tilde{\mathcal M}_{H^{\circ}}},\tilde{\nabla}_{\tilde{\mathcal M}_{H^{\circ}}},
\tilde{l}_{\tilde{\mathcal M}_{H^{\circ}}},\tilde{\ell}_{\tilde{\mathcal M}_{H^{\circ}}},
\tilde{\mathcal V}_{\tilde{\mathcal M}_{H^{\circ}}}\big)$
exists over $\tilde{\mathcal M}_{H^{\circ}}$.
We can define a complex
\begin{gather*}
 {\mathcal F}_{\tilde{\mathcal M}_{H^{\circ}}}^{\bullet}
 =
\big[
 {\mathcal G}^0_{\tilde{\mathcal M}_{H^{\circ}}}\oplus A^0_{\tilde{\mathcal M}_{H^{\circ}}}
 \rightarrow
 {\mathcal G}^1_{\tilde{\mathcal M}_{H^{\circ}}}\oplus
 \Sym^2\big(\big(\overline{W}_{\tilde{\mathcal M}_{H^{\circ}}}\big)\big)
 \oplus \Sym^2\big(\big(\overline{V}_{\tilde{\mathcal M}_{H^{\circ}}}\big)\big)\\
 \hphantom{{\mathcal F}_{\tilde{\mathcal M}_{H^{\circ}}}^{\bullet} =\big[}{}
 \rightarrow G^1_{\tilde{\mathcal M}_{H^{\circ}}} \oplus A^1_{\tilde{\mathcal M}_{H^{\circ}}}
\big]
\end{gather*}
from
$\big(\tilde{E}_{\tilde{\mathcal M}_{H^{\circ}}},\tilde{\nabla}_{\tilde{\mathcal M}_{H^{\circ}}},
\tilde{l}_{\tilde{\mathcal M}_{H^{\circ}}},\tilde{\ell}_{\tilde{\mathcal M}_{H^{\circ}}},
\tilde{\mathcal V}_{\tilde{\mathcal M}_{H^{\circ}}}\big)$
in a similar way to (\ref{equation: definition of tangent complex}).
We can see by the same argument as
Proposition \ref{prop: tangent space of the moduli space}
and
Theorem \ref{thm: smoothness of the moduli space}
that ${\mathcal M}_{H^{\circ}}$ is smooth over $H^{\circ}$
and
$\mathbf{R}^1 \big(p_{\tilde{\mathcal M }_{H^{\circ}}}\big)_*
\big({\mathcal F}^{\bullet}_{\tilde{\mathcal M}_{H^{\circ}}}\big)$
is the $H^{\circ}$-relative tangent bundle of $\tilde{\mathcal M}_{H^{\circ}}$.
We can define a pairing
\begin{gather}
 \omega_{\tilde{\mathcal M}_{H^{\circ}}}\colon \
 \mathbf{R}^1 \big(p_{\tilde{\mathcal M}_{H^{\circ}}}\big)_*
 \big({\mathcal F}^{\bullet}_{\tilde{\mathcal M}_{H^{\circ}}}\big)
 \times \mathbf{R}^1 \big(p_{\tilde{\mathcal M}_{H^{\circ}}}\big)_ *
 \big({\mathcal F}^{\bullet}_{\tilde{\mathcal M}_{H^{\circ}}}\big)
 \nonumber\\
\hphantom{\omega_{\tilde{\mathcal M}_{H^{\circ}}}\colon}{} \ \longrightarrow
 \mathbf{R}^2 \big(p_{\tilde{\mathcal M}_{H^{\circ}}}\big)_ *
 \big[ {\mathcal O}_{C\times H^{\circ}}\!
 \to
 \Omega^1_{C\times H^{\circ}/H^{\circ}} (D_{\mathrm{ram},h})\!
 \to \Omega^1_{C\times H^{\circ}/H^{\circ}} (D_{\mathrm{ram},h})
 \big| _{ D_{\mathrm{ram},h} } \big]_{\tilde{\mathcal M}_{H^{\circ}}}\nonumber
 \\
\hphantom{\omega_{\tilde{\mathcal M}_{H^{\circ}}}\colon}{} \ \cong
 \mathbf{R}^2 (p_{\tilde{\mathcal M}_{H^{\circ}}})_ *
 \Omega^{\bullet}_{C\times \tilde{\mathcal M}_{H^{\circ}}/\tilde{\mathcal M}_{H^{\circ}}}\!
 \cong {\mathcal O}_{\tilde{\mathcal M}_{H^{\circ}}}\label{equation: definition of unfolded relative 2-form}
\end{gather}
by the same formula as
(\ref{equation: definition of symplectic form over etale cover}).
We can see that it defines a relative
$2$-form $\omega_{{\mathcal M}_{H^{\circ}}}$ on ${\mathcal M}_{H^{\circ}}$ over~$H^{\circ}$.
The moduli space
$M^{\balpha}_{{\mathcal C},{\mathcal D}}(\lambda,\tilde{\mu},\tilde{\nu})_t$
is a Zariski open subset of the fiber
$({\mathcal M}_{H^{\circ}})_{0}$ over $h=0$
and the restriction
$\omega_{{\mathcal M}_{H^{\circ}}}\big|
_{M^{\balpha}_{{\mathcal C},{\mathcal D}}(\lambda,\tilde{\mu},\tilde{\nu})_t}$
is nothing but the $2$-form
$\omega_{M^{\balpha}_{{\mathcal C},{\mathcal D}}(\lambda,\tilde{\mu},\tilde{\nu})_t}$
on $M^{\balpha}_{{\mathcal C},{\mathcal D}}(\lambda,\tilde{\mu},\tilde{\nu})_t$
defined by~(\ref{equation: definition of symplectic form over etale cover}).
So Theorem~\ref{theorem: existence of symplectic form and d-closedness}
follows from the following proposition.

\begin{Proposition}
The relative $2$-form
$\omega_{{\mathcal M}_{H^{\circ}}}$ on ${\mathcal M}_{H^{\circ}}$
defined by \eqref{equation: definition of unfolded relative 2-form}
is $d$-closed:
$d\omega_{{\mathcal M}_{H^{\circ}}}=0$.
\end{Proposition}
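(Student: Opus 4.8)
The plan is to use the unfolding ${\mathcal M}_{H^\circ}\to H^\circ$ to reduce the assertion to the already-known $d$-closedness of the symplectic form in the logarithmic case. The relative $3$-form $d\omega_{{\mathcal M}_{H^\circ}}$ is an algebraic section of the locally free sheaf $\Omega^3_{{\mathcal M}_{H^\circ}/H^\circ}$. Since ${\mathcal M}_{H^\circ}$ is smooth over $H^\circ$---which follows by the argument of Proposition~\ref{prop: tangent space of the moduli space} and Theorem~\ref{thm: smoothness of the moduli space} applied in the relative setting---the structure morphism ${\mathcal M}_{H^\circ}\to H^\circ$ is flat, so over the one-dimensional base $H^\circ$ every irreducible component of the reduced scheme ${\mathcal M}_{H^\circ}$ dominates $H^\circ$. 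Hence the preimage of $H^\circ\setminus\{0\}$ is dense, and a section of a locally free sheaf vanishing there vanishes identically. It therefore suffices to prove $d\omega_{({\mathcal M}_{H^\circ})_h}=0$ on each fiber with $h\neq 0$.

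First I would identify the fibers over $h\neq 0$ with moduli of logarithmic connections. For such $h$ the divisor $D_h=D_{\mathrm{log}}+D_{\mathrm{un},h}+D_{\mathrm{ram},h}$ is reduced, so $D_{\mathrm{un},h}^x$ and $D_{\mathrm{ram},h}^x$ are each a union of $m_x$ distinct points and the connections acquire only simple poles there. On $D_{\mathrm{ram},h}$, where $\prod_{j=1}^{m_x-1}(z-h^rq_j^x)$ becomes invertible away from the point $z=h^r$ and $(z-h^r)$ becomes invertible elsewhere, the relation $(\theta_k^x\circ\kappa_k^x)^r=(z-h^r)\,\mathrm{id}$ together with condition (viii) in the definition of ${\mathcal M}_{H^\circ}$ forces $\theta_k^x\circ\kappa_k^x$ to be regular semisimple at the points $z=h^rq_j^x$ and to vanish at $z=h^r$. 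Consequently the filtration and the pairings $(\vartheta_k^x,\varkappa_k^x)$ collapse to a flag in each fiber $E|_y$ together with residue data dictated by $\nu_h$, i.e.\ to a logarithmic parabolic structure; the same degeneration turns the $\mu_{k,h}$-data on $D_{\mathrm{un},h}$ into logarithmic residue eigenvalues. Thus $({\mathcal M}_{H^\circ})_h$ is an open subscheme of the moduli space of parabolic logarithmic connections on $(C,D_h)$ of \cite{Inaba-1,IIS-1}.

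Next I would match the two $2$-forms. Over $h\neq 0$ the complex ${\mathcal F}^{\bullet}$ specializes to the complex computing the tangent space of the logarithmic moduli space, the pairing $\Xi_{\mathrm{ram}}$ of \eqref{equation: definition of unfolded relative 2-form} becomes the residue pairing at the simple poles, and the remaining trace terms are unchanged; hence $\omega_{({\mathcal M}_{H^\circ})_h}$ coincides with the canonical symplectic form on the logarithmic moduli space. The $d$-closedness of the latter is the known result, established by the \v{C}ech cocycle computation of \cite[pp.~37--39]{Inaba-3} (see also \cite{Inaba-1,Inaba-Saito}). Therefore $d\omega_{({\mathcal M}_{H^\circ})_h}=0$ for every $h\neq 0$, and by the density argument above $d\omega_{{\mathcal M}_{H^\circ}}=0$ throughout ${\mathcal M}_{H^\circ}$, in particular on the fiber over $h=0$.

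The main obstacle will be the two identifications in the second and third steps: verifying that, for reduced $D_h$, the factorized ramified structure specializes precisely to the parabolic logarithmic data and that $\Xi_{\mathrm{ram}}$ specializes to the residue pairing, so that no new global closedness computation is needed. The explicit design of the unfolding---splitting each singular point into $m_x$ simple poles via $(z-h^rq_1^x)\cdots(z-h^r)$ and $(z-hs_1^x)\cdots(z-hs_{m_x}^x)$---is precisely what should make these specializations transparent and let the argument rest on the logarithmic case.
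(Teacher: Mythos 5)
Your proposal follows essentially the same route as the paper: reduce to the generic fibers over $h\neq 0$ using smoothness of ${\mathcal M}_{H^{\circ}}$ over $H^{\circ}$, identify those fibers with moduli of parabolic logarithmic connections (the residues at $z=h^rq^x_j$ acquiring $r$ distinct eigenvalues and the filtration at $z=h^r$ becoming a logarithmic parabolic structure), match the $2$-form with the known logarithmic one, and invoke its ${\rm d}$-closedness from \cite{Inaba-1}. The only technical point you defer---that the tangent complex ${\mathcal F}^{\bullet}$ on a generic fiber is quasi-isomorphic to the parabolic complex ${\mathcal F}^{\bullet}_{\mathrm{par}}$---is exactly what the paper supplies via the intermediate complex ${\mathcal F}^{\mathrm{diag},\bullet}$, so your outline is correct and complete in structure.
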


\begin{proof}
Let
${\mathcal M}_{H^{\circ},h}$
be the fiber of the moduli space ${\mathcal M}_{H^{\circ}}$ over
generic $h\in H^{\circ}\setminus\{0\}$.

Consider the point $z=hs^x_j$ in $D^x_{\mathrm{un},h}$ for generic $h\in H^{\circ}$.
Then $\nabla$ is logarithmic at $z=hs^x_j$ and the filtration
$\ell|_{z=hs^x_j}$ is a logarithmic
$(\res_{z=hs^x_j}(\mu^x_{k,h})_{0\leq k\leq r-1})$-parabolic structure
at the point $z=hs^x_j$.

Consider the point $z=h^rq^x_j$ in $D^x_{\mathrm{ram},h}$
for generic $h\in H^{\circ}$.
Then the restriction
of $\theta^x_k\circ\kappa^x_k$ to
$\overline{V}^x_k|_{z=h^rq^x_j}=E|_{z=h^rq^x_j}$ satisfies
the equalities
$(\theta^x_k\circ\kappa^x_k|_{z=h^rq^x_j})^r-h^r(q^x_j-1)=0$
for $1\leq j \leq m_x-1$.
So it has $r$ distinct eigenvalues
$\zeta_r^sh\sqrt[r]{q^x_j-1}$ ($s=0,1,\dots,r-1$),
where $\zeta_r$ is a primitive $r$-th root of unity.
Then
\begin{gather*}
 \res_{z=h^rq^x_j}(\nabla)=
 \res_{z=h^rq^x_j}(\nu_0(z))
 + \res_{z=h^rq^x_j}(\nu_1(z))(\theta^x_k\circ\kappa^x_k)|_{z=h^rq^x_j}
 +\cdots
 \\
\hphantom{\res_{z=h^rq^x_j}(\nabla)=}{}
 + \res_{z=h^rq^x_j}(\nu_{r-1}(z))((\theta^x_k\circ\kappa^x_k)|_{z=h^rq^x_j})^{r-1}
\end{gather*}
also has $r$ distinct eigenvalues if $h$ is sufficiently generic.
The data of filtration $\{V_k\}$ given in (v) is equivalent to
the filtration
$E|_{z=h^r}=V^x_0|_{z=h^r}\supset\cdots\supset V^x_{r-1}|_{z=h^r}
\supset V^x_r|_{z=h^r}=0$
satisfying
$\big(\res_{z=h^r}(\nabla)-\big(\res_{z=h^r}(\nu_0)+\frac{k}{r}\big)\mathrm{id}\big)
\big(V^x_k\big|_{z=h^r}\big)\subset V^x_{k+1}\big|_{z=h^r}$
for $0\leq k\leq r-1$ at each $x$.
So the restriction
$(V^x_k|_{z=h^r})_{0\leq k\leq r-1}$
is a logarithmic parabolic structure on $(E,\nabla)$.

For generic $h$,
we define a complex ${\mathcal F}_{\tilde{\mathcal M}_{H^{\circ},h}}^{\mathrm{diag} \bullet}$
on the fiber $\tilde{\mathcal M}_{H^{\circ},h}$
by setting
\begin{gather*}
 {\mathcal F}^{\mathrm{diag},0}_{\tilde{\mathcal M}_{H^{\circ},h}}
 =
 \ker \big( {\mathcal G}^0_{\tilde{\mathcal M}_{H^{\circ},h}}
 \longrightarrow
 \coker \big( A^0_{\tilde{\mathcal M}_{H^{\circ},h}} \rightarrow
 \Sym^2\big(\big( \overline{W}_{\tilde{\mathcal M}_{H^{\circ},h}} \big)\big)
 \oplus \Sym^2\big( \big(\overline{V}_{\tilde{\mathcal M}_{H^{\circ},h},h}\big)\big)
\big),
 \\
 {\mathcal F}^{\mathrm{diag},1}_{\tilde{\mathcal M}_{H^{\circ},h}}
 =
 \ker \big( {\mathcal G}^1_{\tilde{\mathcal M}_{H^{\circ},h}}
 \rightarrow G^1_{\tilde{\mathcal M}_{H^{\circ},h}} \big),
 \\
 d^0_{{\mathcal F}^{\mathrm{diag},\bullet}_{\tilde{\mathcal M}_{H^{\circ},h}}}
 =
 d^0_{{\mathcal F}^{\bullet}_{\tilde{\mathcal M}_{H^{\circ},h}}}
 \big|_{{\mathcal F}^{\mathrm{diag},0}_{\tilde{\mathcal M}_{H^{\circ},h}}}
 \colon \
 {\mathcal F}^{\mathrm{diag},0}_{\tilde{\mathcal M}_{H^{\circ},h}} \longrightarrow
 {\mathcal F}^{\mathrm{diag},1}_{\tilde{\mathcal M}_{H^{\circ},h}}.
\end{gather*}
Note that ${\mathcal F}^{\mathrm{diag},0}_{\tilde{\mathcal M}_{H^{\circ},h}}$
is the sheaf of endomorphisms of $E$ preserving the eigen decomposition
of $\res_{z=h^r q^x_j}(\nabla)$
at $z=h^r q^x_j$ in $D^x_{\mathrm{ram},h}$ for $1\leq j\leq m_x-1$,
preserving the parabolic structure $l^x$ at each
$x\in D_{\mathrm{log}}$,
preserving the parabolic structure
$\big(\ell_k^x|_{z=hs^x_j}\big)_{0\leq k\leq r-1}$
at $z=hs^x_j$ in $D^x_{\mathrm{un},h}$ for $1\leq j\leq hs^x_{n_x}$
and preserving the parabolic structure
$\big(\overline{V}^x_k|_{z=h^r}\big)_{0\leq k\leq r-1}$
at $z=h^r$ in $D^x_{\mathrm{ram},h}$.
We can see that the canonical map
\[
 {\mathcal F}^{\mathrm{diag},\bullet}_{\tilde{\mathcal M}_{H^{\circ},h}}
 \longrightarrow {\mathcal F}^{\bullet}_{\tilde{\mathcal M}_{H^{\circ},h}}
\]
is a quasi-isomorphism.
On the other hand, we can define a complex
${\mathcal F}_{\mathrm{par}}^{\bullet}$ on $C\times \tilde{\mathcal M}_{H^{\circ},h}$
in the same way as
in the proof of \cite[Proposition 7.2]{Inaba-1}
by associating the parabolic structure induced by the eigen decomposition
at each point defined by $z=h^rq^x_j$ in $D^x_{\mathrm{ram},h}$
for $1\leq j\leq m^x-1$.
Then the canonical map
\[
 {\mathcal F}^{\mathrm{diag},\bullet}_{\tilde{\mathcal M}_{H^{\circ},h}}
 \longrightarrow {\mathcal F}^{\bullet}_{\mathrm{par}}
\]
is a quasi-isomorphism.
We can see
that the restriction $\omega_{\tilde{\mathcal M}_{H^{\circ},h}}$
to a generic fiber $\tilde{\mathcal M}_{H^{\circ},h}$
of the $2$-form $\omega_{\tilde{\mathcal M}_{H^{\circ}}}$
coincides with the $2$-form
constructed in \cite[Proposition~7.2]{Inaba-1},
because it is expressed by the same formula as~(\ref{equation: definition of symplectic form over etale cover}).
Since the $2$-form in \cite[Proposition~7.2]{Inaba-1} is ${\rm d}$-closed
by \cite[Proposition~7.3]{Inaba-1},
we have ${\rm d}\omega_{{\mathcal M}_{H^{\circ},h}}=0$ for generic $h$.
Thus we can deduce ${\rm d}\omega_{{\mathcal M}_{H^{\circ}}}=0$,
because ${\mathcal M}_{H^{\circ}}$ is smooth over $H^{\circ}$.
\end{proof}

\section[Local generalized isomonodromic deformation on a ramified covering]{Local generalized isomonodromic deformation\\ on a ramified covering}
\label{section: local analytic theory}

In this section, we will consider the pullback of a generic ramified connection
via a local analytic ramified covering map.
Furthermore, we will give a brief sketch of the Stokes data of the pullback
and its generalized isomonodromic deformation
established by Jimbo, Miwa and Ueno in \cite{Jimbo-Miwa-Ueno}.

Let $\Delta_z$ and $\Delta_w$ be unit disks
equipped with the variables $z$ and $w$, respectively.
Consider the ramified covering map
\begin{equation} \label{ramified covering map}
 p \colon \ \Delta_w \ni w \mapsto w^r=z \in \Delta_z.
\end{equation}
There is a canonical action of the Galois group
$\mathrm{Gal}(\Delta_w/\Delta_z)=\{ \sigma^k \mid| 0\leq k\leq r-1 \}$
which is generated by the automorphism
$\sigma\colon \Delta_w \ni w\mapsto \zeta_r w\in \Delta_w$,
where $\zeta_r=\exp\big(2\pi\sqrt{-1}/r\big)$ is a~primitive root of unity.

Take
$\nu_0(z)\in(\mathbb{C}+\mathbb{C}z+\cdots+\mathbb{C}z^{mr-r}){\rm d}z/z^m$,
$\nu_1(z)\in (\mathbb{C}^{\times}+\mathbb{C}z+\cdots+\mathbb{C}z^{mr-r-1}){\rm d}z/z^m$
and
$\nu_2(z),\dots,\nu_{r-1}(z)\in(\mathbb{C}+\mathbb{C}z+\cdots+\mathbb{C}z^{mr-r-1}){\rm d}z/z^m$.
Then we put
\[
 \nu(w):=\nu_0(z)+\nu_1(z)w+\cdots+\nu_{r-1}(z)w^{r-1},
\]
which is said to be a ramified exponent.
We define a formal connection
$\nabla_{\nu}$ on $\mathbb{C}[[w]]$ by
\[
 \nabla_{\nu} \colon \ \mathbb{C}[[w]] \ni f(w)
 \mapsto {\rm d}f(w)+f(w)\nu(w) \in \mathbb{C}[[w]]\otimes\frac{{\rm d}z}{z^m}.
\]

Let $(E,\nabla)$ be a meromorphic connection on $\Delta_z$
with a formal isomorphism
\begin{equation} \label{equation: formal isomorphism in single version}
 \big(\widehat{E},\widehat{\nabla}\big)
 :=
 (E,\nabla)\otimes\widehat{\mathcal O}_{\Delta_z,0}
 \stackrel{\sim}\longrightarrow (\mathbb{C}[[w]], \nabla_{\nu}).
\end{equation}
Consider the pullback
$(p^*E,p^*\nabla)$ of the meromorphic connection
$(E,\nabla)$ by the ramified cover~$p$ given in (\ref{ramified covering map}).
The formal isomorphism (\ref{equation: formal isomorphism in single version})
induces a canonical surjection
\begin{equation*}
 \pi \colon \ p^*E\otimes\widehat{\mathcal O}_{\Delta_w,0}
 =\widehat{E}\otimes_{\mathbb{C}[[z]]}\mathbb{C}[[w]] \longrightarrow \mathbb{C}[[w]]
\end{equation*}
which makes the diagram
\[
 \begin{CD}
 \widehat{E}\otimes\mathbb{C}[[w]] @>\pi>> \mathbb{C}[[w]] \\
 @V\widehat{\nabla}\otimes\mathrm{id}VV @VV \nabla_{\nu} V \\
 \widehat{E}\otimes\mathbb{C}[[w]]\otimes\frac{{\rm d}z}{z^m}
 @>\pi\otimes\mathrm{id}>> \mathbb{C}[[w]]\otimes \frac{{\rm d}z}{z^m}
 \end{CD}
\]
commutative.
The Galois transform of $\pi$ by the element
$\sigma^k$ of $\mathrm{Gal}(\Delta_w/\Delta_z)$ is given by
\[
 \sigma^k\circ\pi\circ\sigma^{-k} \colon \
 \widehat{E}\otimes\mathbb{C}[[w]] \xrightarrow[\sim]{\mathrm{id}_{\widehat{E}}\otimes\sigma^{-k}}
 \widehat{E}\otimes\mathbb{C}[[w]] \stackrel {\pi} \longrightarrow
 \mathbb{C}[[w]]\xrightarrow[\sim]{\sigma^k}\mathbb{C}[[w]],
\]
which makes the diagram
\[
 \begin{CD}
 \widehat{E}\otimes\mathbb{C}[[w]] @>\sigma^k\circ\pi\circ\sigma^{-k}>> \mathbb{C}[[w]] \\
 @V\widehat{\nabla}\otimes\mathrm{id}VV @VV \nabla_{\sigma^k\nu} V \\
 \widehat{E}\otimes\mathbb{C}[[w]]\otimes\frac{{\rm d}z}{z^m}
 @>(\sigma^k\circ\pi\circ\sigma^{-k})\otimes\mathrm{id}>> \mathbb{C}[[w]]\otimes \frac{{\rm d}z}{z^m}
 \end{CD}
\]
commutative,
where we put $\sigma^k\nu(w):=\nu\big(\zeta_r^kw\big)$.
So we get a morphism
\begin{equation} \label{comparison morphism over ramified cover}
 \varpi \colon \
 \big(p^*\widehat{E},p^*\widehat{\nabla}\big)
 \xrightarrow{ \bigoplus_{k=0}^{r-1} \sigma^k\circ\pi\circ\sigma^{-k} }
 \ \bigoplus_{k=0}^{r-1} (\mathbb{C}[[w]],\nabla_{\sigma^k\nu(w)}),
\end{equation}
whose underlying homomorphism on vector bundles over $\mathbb{C}[[w]]$
is generically isomorphic.
Choose a generator $e_0$ of the underlying bundle $\mathbb{C}[[w]]$
of $(\mathbb{C}[[w]],\nabla_{\nu})$ (we may choose $e_0=1$).
We denote the same element of the underlying bundle of
$(\mathbb{C}[[w]],\nabla_{\sigma^k\nu})$ by $\sigma^k(e_0)$.
Then we can define an action of
$\mathrm{Gal}(\Delta_w/\Delta_z)$ on the right-hand side of
(\ref{comparison morphism over ramified cover})
by setting
\[
 \sigma^l\cdot\sum_{k=0}^{r-1}f_k(w)\sigma^k(e_0)
 :=\sum_{k=0}^{r-1}f_k\big(\zeta_r^lw\big) \sigma^{k+l}(e_0).
\]
The connection
$\bigoplus_{k=0}^{r-1}\nabla_{\sigma^k\nu}$
on the right-hand side of~(\ref{comparison morphism over ramified cover})
commutes with the Galois action.
The morphism $\varpi$ in~(\ref{comparison morphism over ramified cover})
is a $\mathbb{C}[[w]]$-homomorphism, which
commutes with the connections
and with the Galois actions on the both sides.

We can see that the image
$\im\varpi$ of the homomorphism (\ref{comparison morphism over ramified cover})
is generated by
\[
 \left\{ \sum_{l=0}^{r-1} \zeta_r^{kl}w^k \sigma^l(e_0) \, \bigg| \, k=0,1,\dots,r-1 \right\}
\]
as a $\mathbb{C}[[w]]$-module.
Then we can check the inclusion
$w^{r-1}\cdot\bigoplus_{k=0}^{r-1}\mathbb{C}[[w]]\sigma^k(e_0)\subset\im\varpi$.
Consider the restriction
\begin{gather*}
 \varpi|_{w^{r-1}=0}
 \colon \
 \widehat{E}|_{w^{r-1}=0}\otimes\mathbb{C}[w]/\big(w^{r-1}\big)
 \xrightarrow {\varpi|_{w^{r-1}=0}}
 \im(\varpi|_{w^{r-1}=0})
 \subset \bigoplus_{k=0}^{r-1} \mathbb{C}[w]/\big(w^{r-1}\big) \cdot \sigma^k(e_0)
\end{gather*}
of the morphism $\varpi$ in (\ref{comparison morphism over ramified cover})
to the divisor on $\Delta_w$ defined by $w^{r-1}=0$.
Then the composition
\begin{equation*}
 \varphi \colon \
 p^*(E) \longrightarrow p^*(E)|_{w^{r-1}=0}
 =\widehat{E}\otimes\mathbb{C}[w]/\big(w^{r-1}\big)
 \xrightarrow{\varpi|_{w^{r-1}=0}} \im(\varpi|_{w^{r-1}=0})
\end{equation*}
commutes with $p^*(\nabla)$ and $\bigoplus_{k=0}^{r-1}\nabla_{\sigma^k\nu}|_{w^{r-1}=0}$.
So we have
\[
 (p^*\nabla)(\ker\varphi)\subset \ker\varphi\otimes \frac{{\rm d}w}{w^{mr-r+1}}.
\]
Consider the line bundle
${\mathcal O}_{\Delta_w}\big((r-1)\cdot\{0\}\big)$
on $\Delta_w$ with the connection
\begin{gather*}
 \nabla_{-\nu_0(z)}\colon \ {\mathcal O}_{\Delta_w}\big((r-1)\cdot\{0\}\big)
 \ni f(w) \mapsto {\rm d}f(w)-f(w) \nu_0(z) \\
 \hphantom{ \nabla_{-\nu_0(z)}\colon \ {\mathcal O}_{\Delta_w}\big((r-1)\cdot\{0\}\big)
 \ni f(w) \mapsto}{} \in
 {\mathcal O}_{\Delta_w}\big((r-1)\cdot\{0\}\big)\otimes\frac{{\rm d}w}{w^{mr-r+1}}.
\end{gather*}
If we modify $(\ker\varphi,p^*\nabla|_{\ker\varphi})$
by setting
\begin{equation} \label{equation: unramified connection by elementary transform}
 (E',\nabla')
 :=
 \big( \ker\varphi,(p^*\nabla)|_{\ker\varphi} \big)
 \otimes \big( {\mathcal O}_{\Delta_w}\big((r-1)\cdot\{0\}\big), \nabla_{-\nu_0(z)} \big),
\end{equation}
then the order of pole of $\nabla'$ at $w=0$ is $mr-r$.
Indeed, the morphism $\varpi$
in (\ref{comparison morphism over ramified cover})
induces a~formal isomorphism
\begin{equation*}
 \big(\widehat{E'},\widehat{\nabla'}\big)
 \stackrel{\sim}\longrightarrow
 \bigoplus_{k=0}^{r-1} \big(\mathbb{C}[[w]],\nabla_{\nu(\zeta_r^kw)-\nu_0(z)}\big)
\end{equation*}
and the matrix of the connection $\nabla_{\nu(\zeta_r^kw)-\nu_0(z)}$
of the right-hand side is
\[
 \begin{pmatrix}
 \displaystyle \sum_{k=1}^{r-1}\nu_k(z)w^k & 0 & \cdots & 0 \\
 0 & \displaystyle \sum_{k=1}^{r-1}\nu_k(z)\zeta_r^kw^k & & 0 \\
 \vdots & & \ddots & \vdots \\
 0 & 0 & \cdots & \displaystyle \sum_{k=1}^{r-1}\nu_k(z)\zeta_r^{k(r-1)}w^k
 \end{pmatrix}.
\]
Since the leading terms of the diagonal entries of the above matrix
are distinct,
$(E',\nabla')$ is a generic unramified connection.
Furthermore,
there is a canonical action of
$\mathrm{Gal}(\Delta_w/\Delta_z)$
on $(E',\nabla')$,
since $\varphi$ and
$\otimes \big( {\mathcal O}_{\Delta_w}\big((r-1)\cdot\{0\}\big), \nabla_{-\nu_0(z)} \big)$
preserve the Galois action.

\begin{Proposition}
\label{proposition: local correspondence between ramified and unramified connection}
The correspondence
$(E,\nabla)\mapsto
(E',\nabla')$
given by the formula
\eqref{equation: unramified connection by elementary transform}
is a bijection between the meromorphic $\nu$-ramified connections
$(E,\nabla)$ on $\Delta_z$ equipped with
a formal isomorphism
$\big(\widehat{E},\widehat{\nabla}\big)\xrightarrow{\sim}(\mathbb{C}[[w]],\nabla_{\nu})$
and the $\mathrm{Gal}(\Delta_w/\Delta_z)$-equivariant
$\big(\nu\big(\zeta_r^kw\big)-\nu_0(z)\big)_{0\leq k\leq r-1}$-unramified
meromorphic connections
$(E',\nabla')$ on $\Delta_w$ equipped with a Galois equivariant formal isomorphism
$\big(\widehat{E'},\widehat{\nabla'}\big)
\xrightarrow{\sim}
\bigoplus (\mathbb{C}[[w]],\nabla_{\sigma^k\nu})$.
\end{Proposition}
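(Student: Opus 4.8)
The plan is to exhibit an explicit inverse to the correspondence $(E,\nabla)\mapsto(E',\nabla')$ by reversing, one at a time, the three analytic operations out of which $(E',\nabla')$ is built: the pullback $p^{*}$, the elementary modification $(E,\nabla)\rightsquigarrow(\ker\varphi,p^{*}\nabla|_{\ker\varphi})$, and the twist by $\big(\mathcal{O}_{\Delta_w}((r-1)\cdot\{0\}),\nabla_{-\nu_0(z)}\big)$. Since each of these is a manifestly analytic operation, the composite is again a correspondence between honest meromorphic (convergent) germs, and producing a two-sided inverse yields injectivity and surjectivity simultaneously.

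First I would dispose of the outer operations. The twist is inverted by tensoring with $\big(\mathcal{O}_{\Delta_w}(-(r-1)\cdot\{0\}),\nabla_{\nu_0(z)}\big)$; since $\nu_0(z)$ is a form in $z$ alone it is $\mathrm{Gal}(\Delta_w/\Delta_z)$-invariant, so this step preserves Galois equivariance. For the pullback, note that on the punctured disk $\Delta_z^{*}$ the map $p$ is a genuine cyclic Galois cover, so ordinary Galois descent gives an equivalence between connections on $\Delta_z^{*}$ and $\mathrm{Gal}(\Delta_w/\Delta_z)$-equivariant connections on $\Delta_w^{*}$. Moreover $\varphi$ and the twist only alter the bundle at the origin, so on $\Delta_w^{*}$ one has $\ker\varphi=p^{*}E$, and descent of the untwisted germ recovers $(E,\nabla)|_{\Delta_z^{*}}$ together with its connection.

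It then remains to reinstall the correct lattice at the ramification point, which is the inverse of the elementary modification. Here I would use the explicit formal picture already established in the excerpt: via $\varpi$ the pullback $p^{*}\widehat{E}$ is identified with the lattice $\mathrm{im}\,\varpi\subset M:=\bigoplus_{k=0}^{r-1}\mathbb{C}[[w]]\,\sigma^{k}(e_0)$ generated by $\big\{\sum_{l=0}^{r-1}\zeta_r^{kl}w^{k}\sigma^{l}(e_0)\big\}_{0\le k\le r-1}$, and the inclusion $w^{r-1}M\subset\mathrm{im}\,\varpi$ shows that formally $\ker\varphi=w^{r-1}M$. Consequently the equivariant formal isomorphism attached to $E'$ identifies $\widehat{\ker\varphi}$ with $w^{r-1}M$ and thereby singles out, canonically and independently of the analytic (Stokes) data, the intermediate lattice $\mathrm{im}\,\varpi$. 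I then set $\widehat{E}:=(\mathrm{im}\,\varpi)^{\mathrm{Gal}(\Delta_w/\Delta_z)}$ and define $(E,\nabla)$ as the unique meromorphic germ on $\Delta_z$ whose restriction to $\Delta_z^{*}$ is the descended connection above and whose formal completion at $0$ is $\widehat{E}$. Running the formal computation of the forward construction in reverse then shows that $\nabla$ has a pole of order exactly $m$ and formal type $\nabla_\nu$, with the induced formal isomorphism $(\widehat{E},\widehat{\nabla})\xrightarrow{\sim}(\mathbb{C}[[w]],\nabla_\nu)$.

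Finally I would check that the two composites are the identity on marked germs: starting from $(E,\nabla)$ the forward construction produces $(E',\nabla')$ whose inverse, by the lattice identification above, returns the same $(E,\nabla)$ with the same formal marking, and symmetrically. The main obstacle is precisely this last lattice bookkeeping at the ramification point: unlike on $\Delta_z^{*}$, the elementary modification genuinely changes the $\mathcal{O}$-lattice at $0$, since $\ker\varphi=w^{r-1}M\neq p^{*}E=\mathrm{im}\,\varpi$, so the naive Galois-invariant pushforward of $\ker\varphi$ does \emph{not} recover $E$. One must instead use the equivariant formal marking to pin down $\mathrm{im}\,\varpi$, and then verify that $(\mathrm{im}\,\varpi)^{\mathrm{Gal}(\Delta_w/\Delta_z)}$ is a $\nabla$-stable $\mathbb{C}[[z]]$-lattice of the correct pole order whose reconstruction is compatible with the Galois action, so that descent yields a bona fide $\nu$-ramified connection rather than merely a formal one.
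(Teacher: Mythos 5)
Your inverse is correct, but it is built differently from the paper's. The paper never passes through the punctured disk or the formal/convergent lattice correspondence: it writes the inverse as a single sheaf-level formula, namely the downward elementary modification $\tilde{E}':=\ker\big(E'\to\coker\big((E'|_{w^{mr-r}=0})^{\mathrm{Gal}(\Delta_w/\Delta_z)}\otimes\mathbb{C}[w]/(w^{mr-r})\to E'|_{w^{mr-r}=0}\big)\big)$, followed by the Galois-invariant pushforward $\big(\tilde{E}'\big)^{\mathrm{Gal}(\Delta_w/\Delta_z)}$ and the untwist by $\nabla_{\nu_0(z)}$; this recipe is intrinsic to $(E',\nabla')$ with its Galois action and does not use the formal marking to locate the lattice. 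You instead descend over $\Delta_z^{*}$ and then reinstall the lattice at the origin by identifying $\im\varpi$ (sitting between $w^{r-1}M$ and $M$, with its Galois-invariant generators $\sum_{l}\zeta_r^{kl}w^k\sigma^l(e_0)$) through the given formal isomorphism, which forces you to import the standard but nontrivial bijection between $\mathbb{C}[[z]]$-lattices and convergent lattices of a germ of meromorphic connection in order to turn the formal lattice $(\im\varpi)^{\mathrm{Gal}(\Delta_w/\Delta_z)}$ into an actual subsheaf. What your route buys is transparency: it isolates where the content lies (the choice of intermediate lattice, and your correct observation that the naive invariant pushforward of $\ker\varphi$ does not return $E$), and it makes the verification of the formal type immediate, since $(\im\varpi)^{\mathrm{Gal}(\Delta_w/\Delta_z)}$ is visibly $(\mathbb{C}[[w]],\nabla_{\nu})$ as a $\mathbb{C}[[z]]$-module with connection. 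The paper's route buys economy and stays entirely within elementary sheaf operations, at the cost of leaving the check that its explicit $\tilde{E}'$ yields a $\nu$-ramified connection as an unelaborated ``we can check''.
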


\begin{proof}
We have to give the inverse correspondence.
If $(E',\nabla')$ is a $\big(\nu\big(\zeta_r^kw\big)-\nu_0(z)\big)_{0\leq k\leq r-1}$-unramified
meromorphic connection on $\Delta_w$
compatible with an action of
$\mathrm{Gal}(\Delta_w/\Delta_z)$,
we put
\[
 \tilde{E}':=
 \ker\big( E' \longrightarrow
 \coker \big(
 \big( E'|_{w^{mr-r}=0} \big)^{ \mathrm{Gal}(\Delta_w/\Delta_z) }
 \otimes \mathbb{C}[w]/(w^{mr-r})
 \rightarrow E'|_{w^{mr-r}=0} \big) \big),
\]
where $\big( E'|_{w^{mr-r}=0} \big)^{ \mathrm{Gal}(\Delta_w/\Delta_z) }$
is the submodule of $E'|_{w^{mr-r}=0}$
consisting of the $\mathrm{Gal}(\Delta_w/\Delta_z)$-invariant sections.
Let $\big( \tilde{E}' \big)^{\mathrm{Gal}(\Delta_w/\Delta_z)}$
be the subsheaf of
$p_*\big(\tilde{E}'\big)$
consisting of $\mathrm{Gal}(\Delta_w/\Delta_z)$-invariant sections.
Then $\big( \tilde{E}' \big)^{\mathrm{Gal}(\Delta_w/\Delta_z)}$
becomes a locally free sheaf on $\Delta_z$ of rank $r$
and the connection
$\nabla'\big|_{\tilde{E}'}\otimes\nabla_{\nu_0(z)}$
on $\tilde{E}'$ descends to a connection
$\big( \nabla' \big|_{ \tilde{E}' } \otimes\nabla_{\nu_0(z)} \big) ^{ \mathrm{Gal}(\Delta_w/\Delta_z) }$
on $\big(\tilde{E}'\big)^{ \mathrm{Gal}(\Delta_w/\Delta_z) }$.
We can check that
$
 \big( \big(\tilde{E}'\big)^{ \mathrm{Gal}(\Delta_w/\Delta_z) } ,
 \big( \nabla' \big|_{ \tilde{E}' } \otimes\nabla_{\nu_0(z)} \big) ^{ \mathrm{Gal}(\Delta_w/\Delta_z) }
 \big)
$
is a meromorphic $\nu$-ramified connection on $\Delta_z$.
From the construction,
\[
 (E',\nabla')
 \mapsto
\big( \big(\tilde{E}'\big)^{ \mathrm{Gal}(\Delta_w/\Delta_z) } ,
 \big( \nabla' \big|_{ \tilde{E}' } \otimes\nabla_{\nu_0(z)}\big)^{ \mathrm{Gal}(\Delta_w/\Delta_z)}
\big)
\]
gives the inverse to
$(E,\nabla)\mapsto (E',\nabla')$.
\end{proof}

\begin{Remark}\rm
The process of getting the vector bundle $\ker\varphi$ or $E'$
from $p^*E$ is called an elementary transform
or a Hecke modification.
The construction of $(E',\nabla')$
from $(E,\nabla)$
is known \cite[Section 19.3]{Wasow}
as a shearing transformation method.
\end{Remark}

We will apply Proposition~\ref{proposition: local correspondence between ramified and unramified connection}
to a family of connections.
From now on,
let the notations
${\mathcal T}$, ${\mathcal C}$, $\lambda$, $\tilde{\mu}$, $\tilde{\nu}$
and
$M^{\balpha}_{{\mathcal C},{\mathcal D}}(\lambda,\tilde{\mu},\tilde{\nu})$
be as in Section~\ref{section: construction of the moduli space}.

We take a point $x=(\tilde{x}_i)_t\in ({\mathcal D}_{\mathrm{ram}})_t$
in the fiber over $t\in {\mathcal T}$.
We can take an analytic open neighborhood
${\mathcal T}^{\circ}$ of $t$ such that
$\bar{z}_{\mathcal T^{\circ}}$ can be extended to a local holomorphic function
$z\in{\mathcal O}_{{\mathcal C}_{\mathcal T^{\circ}}}^{\mathrm{hol}}$
whose zero set coincides with the section $\tilde{x}=(\tilde{x}_i)_{\mathcal T^{\circ}}$.
Precisely, there is an analytic open immersion
\[
 \Delta_z\times{\mathcal T^{\circ}}\hookrightarrow{\mathcal C}_{\mathcal T^{\circ}}
\]
for a unit disk $\Delta_z$, such that the coordinate of $\Delta_z$ corresponds to $z$.
We can assume the existence of a universal family
$\big(\!\tilde{E},\tilde{\nabla},\tilde{l},\tilde{\ell},\tilde{\mathcal V}\big)$
on some analytic open neighborhood
${M^{\circ}\!\subset\! M^{\balpha}_{{\mathcal C},{\mathcal D}}(\lambda,\tilde{\mu},\tilde{\nu})
\!\times_{\mathcal T} \!{\mathcal T^{\circ}}}\!.\!$
By Corollary~\ref{corollary: formal structure},
we may further assume that
there is an isomorphism
\begin{equation} \label{equation: family of formal isomorphisms}
 \big(\tilde{E},\tilde{\nabla}\big)\otimes\widehat{\mathcal O}_{\tilde{C}_{M^{\circ}},\tilde{x}}
 \stackrel{\sim}\longrightarrow
 \big({\mathcal O}_{M^{\circ}}^{\mathrm{hol}}[[w]],\nabla_{\tilde{\nu}}\big),
\end{equation}
where
$\displaystyle\widehat{\mathcal O}_{{\mathcal C}_{M^{\circ}},\tilde{x}}=
\lim_{\longleftarrow}{\mathcal O}^{\mathrm{hol}}_{{\mathcal C}_{M^{\circ}}}/I_{\tilde{x}}^j
\cong{\mathcal O}_{M^{\circ}}^{\mathrm{hol}}[[w]]$.
Consider a family of ramified covering maps (\ref{ramified covering map})
\[
 p_{M^{\circ}} \colon \ \Delta_w\times M^{\circ} \ni (w,y) \mapsto (w^r,y)
 \in \Delta_z\times M^{\circ}.
\]
We write $m:=m^{\mathrm{ram}}_i$ for simplicity.
As in the former argument,
the isomorphism (\ref{equation: family of formal isomorphisms})
induces a canonical surjection
\begin{equation*}
 \pi_{M^{\circ}} \colon \
 p_{M^{\circ}}^*\tilde{E}\otimes
 \widehat{\mathcal O}_{{\mathcal C}_{M^{\circ}},\tilde{x}}
 \longrightarrow
 {\mathcal O}_{M^{\circ}}^{\mathrm{hol}}[[w]],
\end{equation*}
which also induces a morphism
\begin{equation} \label{equation:comparison with diagonal on the ramified cover}
 \varpi_{M^{\circ}} \colon \
 \big(p_{M^{\circ}}^*\tilde{E},p_{M^{\circ}}^*\tilde{\nabla}\big)\otimes
 \widehat{\mathcal O}_{{\mathcal C}_{M^{\circ}},\tilde{x}}
 \xrightarrow{\bigoplus_{k=0}^{r-1}\sigma^k\circ\pi_{M^{\circ}}\circ\sigma^{-k}}
 \bigoplus_{k=0}^{r-1}
 \big({\mathcal O}_{M^{\circ}}^{\mathrm{hol}}[[w]],\nabla_{\sigma^k\tilde{\nu}}\big)
\end{equation}
between rank~$r$ connections over
${\mathcal O}^{\mathrm{hol}}_{M^{\circ}}[[w]]$.
Let $\tilde{x}'$ be the divisor on $\Delta_w\times M^{\circ}$
defined by the equation $w=0$.
The composition
\[
 \varphi_{M^{\circ}} \colon \
 p_{M^{\circ}}^*\big(\tilde{E}|_{\Delta_z\times M^{\circ}}\big)
 \longrightarrow
 p_{M^{\circ}}^*\big(\tilde{E}|_{\Delta_z\times M^{\circ}}\big)|_{(r-1)\tilde{x}'}
 \longrightarrow
 \im \big( \varpi_{M^{\circ}}|_{(r-1)\tilde{x}'} \big)
\]
is a surjective homomorphism
and we have
$\big(p_{M^{\circ}}^*\tilde{\nabla}\big) (\ker\varphi)
\subset \ker\varphi\otimes \Omega^1_{\Delta_w\times M^o/M^o}((mr-r+1)\tilde{x}')$.
Setting
\begin{equation} \label{equation: local connection on ramified cover}
 \big(\tilde{E}',\tilde{\nabla}'\big):=\big(\ker\varphi,p_{M^{\circ}}^*\tilde{\nabla}|_{\ker\varphi}\big)
 \otimes \big( {\mathcal O}_{\Delta_w\times M^{\circ}}^{\mathrm{hol}}((r-1)\tilde{x}') ,
 \nabla_{-\tilde{\nu}_0}\big),
\end{equation}
we get a connection
\[
 \tilde{\nabla}' \colon \
 \tilde{E}' \longrightarrow
 \tilde{E}' \otimes
\Omega^1_{\Delta_w\times M^{\circ}/M^{\circ}}\big((mr-r)\tilde{x}'\big).
\]

The morphism $\varpi_{M^{\circ}}$ in (\ref{equation:comparison with diagonal on the ramified cover})
induces an isomorphism
\begin{equation} \label{equation: relative ramified unramified correspondence}
 \big(\tilde{E}',\tilde{\nabla}'\big)\otimes\widehat{\mathcal O}_{\tilde{C}_{M^{\circ}},\tilde{x}}
 \stackrel{\sim}\longrightarrow
 \bigoplus_{k=0}^{r-1} \big( {\mathcal O}_{M^{\circ}}^{\mathrm{hol}}[[w]],
 \nabla_{\tilde{\nu}(\zeta_r^kw)-\tilde{\nu}_0(z)} \big).
\end{equation}
The connection $\nabla_{\tilde{\nu}(\zeta_r^kw)-\tilde{\nu}_0(z)} $
of the right-hand side is given by
$d+\Lambda(w,t)$ with
\begin{equation} \label{equation: diagonal matrix of normal form}
 \Lambda(w,t):=
 \begin{pmatrix}
 \displaystyle \sum_{k=1}^{r-1}\tilde{\nu}_k(z,t)w^k & 0 & \cdots & 0 \\
 0 & \displaystyle\sum_{k=1}^{r-1}\tilde{\nu}_k(z,t)\zeta_r^kw^k & & 0 \\
 \vdots & & \ddots & \vdots \\
 0 & 0 & \cdots & \displaystyle \sum_{k=1}^{r-1}\tilde{\nu}_k(z,t)\zeta_r^{k(r-1)}w^k
 \end{pmatrix}.
\end{equation}

Now we will see the corresponding Stokes data.
We set
$E'_0 :=
\big( {\mathcal O}_{\Delta_w\times {\mathcal T^{\circ}}}^{\mathrm{hol}}\big)^{\oplus r}$
and fix a connection
$\nabla'_0 \colon
E'_0 \longrightarrow E'_0 \otimes
\Omega^1_{\Delta_w\times {\mathcal T^{\circ}}/{\mathcal T^{\circ}}} ((mr-r)\tilde{x}')$
defined by
\[
 \begin{pmatrix} f_1 \\ \vdots \\ f_r \end{pmatrix}
 \mapsto
 \begin{pmatrix} {\rm d} f_1 \\ \vdots \\ {\rm d} f_r \end{pmatrix}
 +
 \Lambda(w,t)
 \begin{pmatrix} f_1 \\ \vdots \\ f_r \end{pmatrix}.
\]
We call $(E'_0,\nabla'_0)$ a normal form.

It is a general fact \cite[Proposition 2.2]{Jimbo-Miwa-Ueno} that
there is a matrix
$P(w,t)$ of formal power series in $w$ with coefficients
in ${\mathcal O}_{M^{\circ}}^{\mathrm{hol}}$,
which gives a formal isomorphism
\begin{equation} \label{equation: formal transform in w}
 (E'_0,\nabla'_0)\otimes\widehat{\mathcal O}_{\tilde{C}_{M^{\circ}},\tilde{x}}
 \xrightarrow[\sim]{P(w,t)}
 \big(\tilde{E}',\tilde{\nabla}'\big)\otimes\widehat{\mathcal O}_{\tilde{C}_{M^{\circ}},\tilde{x}}.
\end{equation}
If $\tilde{\nabla}'$ is given by
${\rm d}+A'(w,t){\rm d}w/w^{mr-r}$ for a matrix $A'(w,t)$ of holomorphic functions in~$w$,~$t$,
then we have
\[
 P(w,t)^{-1} {\rm d}P(w,t)
 +P(w,t)^{-1} A'(w,t)\frac{{\rm d}w}{w^{mr-r}} P(w,t)
 =
 \Lambda(w,t).
\]
In fact, we can give the formal transform $P(w,t)$
as the inverse of (\ref{equation: relative ramified unramified correspondence}),
which is induced by the formal transform
(\ref{equation: family of formal isomorphisms})
over ${\mathcal O}_{M^{\circ}}^{\mathrm{hol}}[[z]]$.
Indeed, if we denote the inverse formal transform of
(\ref{equation: family of formal isomorphisms})
by
\begin{equation} \label{equation: setting of Q(z)}
 Q(z,t) \colon \
 \big({\mathcal O}_{M^{\circ}}^{\mathrm{hol}}[[w]],\nabla_{\tilde{\nu}}\big)
 \stackrel{\sim}\longrightarrow
 \big(\tilde{E},\tilde{\nabla}\big)\otimes\widehat{\mathcal O}_{\tilde{C}_{M^{\circ}},\tilde{x}}
\end{equation}
and if we denote the rational gauge transform
$p_{M'}^*\big(\tilde{E}|_{\Delta_z\times M'}\big)
\hookrightarrow \tilde{E}'$ by $S(w)$,
then we can give~$P(w)$ by
\begin{equation} \label{equation: formal transform using z parameter}
 P(w,t)
 =
 S(w,t) Q(z,t)
 \begin{pmatrix}
 1 & w & \cdots & w^{r-1} \\
 1 & \zeta_rw & \cdots & \zeta_r^{r-1}w^{r-1} \\
 \vdots & \vdots & \ddots & \vdots \\
 1 & \zeta_r^{r-1}w & \cdots & \zeta_r^{(r-1)^2}w^{r-1}
 \end{pmatrix}^{-1}.
\end{equation}

\begin{Remark}
The above procedure
is explained in~\cite[Proposition~10]{Diarra-Loray} for the explicit case of rank $2$
connections on~$\mathbb{P}^1$.
\end{Remark}

Take any point
$u\in (\Delta_w\setminus\{0\})\times M^{\circ}$.
By the fundamental existence theorem \cite[Theorem~12.1]{Wasow}
of asymptotic solution,
there are a sector $\Gamma_u=\{w\in\Delta_w \mid a<\mathrm{arg}(w)<b\}$
in $\Delta_w\setminus\{0\}$ for some $a,b\in\mathbb{R}$
and an open subset $M_u\subset M^{\circ}$
satisfying $u \in \Gamma_u\times M_u$
such that there exists
a~fundamental solution
$Y_{\Sigma}(w,t)=(y_1(w,t),\dots,y_r(w,t))$
of $\tilde{\nabla}'$ on $\Sigma=\Gamma_u\times M_u$
satisfying the asymptotic property
\begin{equation} \label{equation: asymptotic property of fundamental solution}
 Y_{\Sigma}(w,t)
 \exp \left(\int\Lambda(w)\right)
 \sim
 P(w,t)
 \qquad
 \text{as $w\to 0$ on $\Sigma=\Gamma_u\times M_u$},
\end{equation}
where the path integral of $\Lambda(w)$, which is defined in~(\ref{equation: diagonal matrix of normal form}),
is with respect to the $w$-variable.
If we put $P(w,t)=\sum_{j=0}^{\infty} P_j(t) w^j$,
the asymptotic relation~(\ref{equation: asymptotic property of fundamental solution}) means
\begin{equation} \label{equation: uniform asymptotic}
 \lim_{w\to 0, w\in\Gamma_u}
 \frac{ \big\| Y_{\Sigma}(w,t)\exp \big( \int\Lambda(w) \big)
 -\sum_{j=0}^N P_j(t) w^j \big\| } { |w|^N }
 =0
\end{equation}
for any positive integer $N$
and the convergence in (\ref{equation: uniform asymptotic})
is uniform in $t\in M_u$.

Fix a point $t'\in M^{\circ}$.
Taking a finite subcover of $\{\Sigma=\Gamma_u\times M_u\}$,
we can choose an open neighborhood $U_{t'}$ of $t'$ in $M^{\circ}$
and a covering
$\{\Sigma\}$ of $(\Delta_w\setminus\{0\})\times U_{t'}$
such that each $\Sigma$ is of the form
$\Sigma=\Gamma_u\times U_{t'}$
for a sector $\Gamma_u$ in $\Delta_w\setminus\{0\}$.

If we take another $\Sigma'=\Gamma_{u'}\times U_{t'}$
in the above covering,
and if we choose a fundamental solution $Y_{\Sigma'}(w,t)$
on $\Sigma'$
with the same asymptotic property as~(\ref{equation: asymptotic property of fundamental solution})
on $\Sigma'$,
we can write
\begin{equation} \label{equation: comparison of fundamental solutions with asymptotic}
 Y_{\Sigma'}(w,t)=Y_{\Sigma}(w,t) C_{\Sigma,\Sigma'}(t)
\end{equation}
for a matrix $C_{\Sigma,\Sigma'}(t)$
constant in $w$.
We call $C_{\Sigma,\Sigma'}(t)$ a Stokes matrix.

\begin{Definition} \label{definition: local generalized isomonodromic deformation}
We say that a family of connections
$(\tilde{E}',\tilde{\nabla}')|_{\Delta_w\times {\mathcal L}}$
over a submanifold ${\mathcal L}\subset M^{\circ}$
is a local generalized isomonodromic deformation,
if for each $t'\in{\mathcal L}$,
we can take an open neighborhood ${\mathcal L}_{t'}$ of $t'$ in ${\mathcal L}$,
a replacement of the formal transform $P(w,t)$ in (\ref{equation: formal transform in w})
and a~covering $\{\Sigma=\Gamma_u\times {\mathcal L}_{t'}\}$
of $(\Delta_w\setminus\{0\})\times {\mathcal L}_{t'}$ for sectors
$\Gamma_u$ in $\Delta_w\setminus\{0\}$
such that
\begin{itemize}\itemsep=0pt
\item[(i)]
there is a fundamental solution
$Y_{\Sigma}(w,t)$ of $\tilde{\nabla}'|_{\Sigma}$
with the asymptotic property~(\ref{equation: asymptotic property of fundamental solution})
and
\item[(ii)]
all the Stokes matrices $C_{\Sigma,\Sigma'}(t)$
defined by~(\ref{equation: comparison of fundamental solutions with asymptotic})
are constant in $t\in {\mathcal L}_{t'}$.
\end{itemize}
\end{Definition}

\begin{Remark}\quad
\begin{itemize}\itemsep=0pt
\item[(1)]
 The ambiguity of the path integral $\int\Lambda(w)$
 in (\ref{equation: asymptotic property of fundamental solution})
 is included in the replacement of the formal transform $P(w,t)$ in
 Definition \ref{definition: local generalized isomonodromic deformation}.
\item[(2)]
In our definition of Stokes matrices $C_{\Sigma,\Sigma'}(t)$,
there is an ambiguity in the choice of the fundamental solution $Y_{\Sigma}(w,t)$.
On the other hand, \cite[Proposition 2.4]{Jimbo-Miwa-Ueno}
requires $\Sigma$ to be taken sufficiently large so that
there is no ambiguity in $Y_{\Sigma}(w,t)$.
Due to this difference, we will need an additional argument later
in Proposition \ref{proposition: extendability to integrable connection in w variable}.
\end{itemize}
\end{Remark}

Let us recall the argument in the proof of \cite[Theorem 3.1]{Jimbo-Miwa-Ueno}.
Assume that
${\mathcal L}\subset M^{\circ}$
is a~submanifold,
$\{ \Sigma\}$ is a covering of $(\Delta_w\setminus\{0\})\times{\mathcal L}$
as in Definition \ref{definition: local generalized isomonodromic deformation}
and that $Y_{\Sigma}(w,t)$ is a~fundamental solution of
$\tilde{\nabla}'|_{\Delta_w\times{\mathcal L}}$ on each $\Sigma$
such that all the matrices $C_{\Sigma,\Sigma'}(t)$ are constant in $t\in {\mathcal L}$.
We choose a local coordinate system
$(t_1,\dots,t_n)$ of ${\mathcal L}$
around $t'\in{\mathcal L}$.
Rewriting~(\ref{equation: comparison of fundamental solutions with asymptotic}),
we have
$ Y_{\Sigma}(w,t)^{-1} \, Y_{\Sigma'}(w,t)
=C_{\Sigma,\Sigma'}$,
which is constant in~$t$.
Differentiate it in
$t_1,\dots,t_n$, we have
\[
 -Y_{\Sigma}(w,t)^{-1}\, \frac{\partial Y_{\Sigma}(w,t)}{\partial t_j} \: Y_{\Sigma}(w,t)^{-1}\,Y_{\Sigma'}(w,t)
 +Y_{\Sigma}(w,t)^{-1} \, \frac{\partial Y_{\Sigma'}(w,t)}{\partial t_j} =0,
\]
which is equivalent to the equality
\begin{equation} \label{equation: patching condition of dt coefficient}
 -\frac{\partial Y_{\Sigma}(w,t)}{\partial t_j} Y_{\Sigma}(w,t)^{-1}
 =
 -\frac{\partial Y_{\Sigma'}(w,t)}{\partial t_j} Y_{\Sigma'}(w,t)^{-1}
\end{equation}
in
$\End({\mathcal O}_{\Sigma\cap\Sigma'}^{\oplus r})
\otimes\Omega^1_{{\Sigma}\cap{\Sigma'}}$.
So we get a matrix~$B_j(w,t)$ of single valued functions on $(\Delta_w\setminus\{0\})\times{\mathcal L}$
by patching the matrices~(\ref{equation: patching condition of dt coefficient}).

On the other hand, since the convergence in~(\ref{equation: uniform asymptotic})
is uniform in $t\in{\mathcal L}$,
the differentiation of~(\ref{equation: asymptotic property of fundamental solution})
in $t_j$ provides the asymptotic relation
\[
 \frac{\partial Y_{\Sigma}}{\partial t_j} \exp\left( \int\Lambda(w) \right)
 +Y_{\Sigma} \exp\left( \int\Lambda(w) \right) \int\frac{\partial \Lambda}{\partial t_j}
 \sim
 \frac{\partial P}{\partial t_j}
 \qquad \text{as $w\to 0$ on $\Sigma$}.
\]
Multiplying $w^{mr-r-1}P^{-1}\sim w^{mr-r-1}\exp\big({-}\int\Lambda(w)\big) Y_{\Sigma}^{-1}$
from the right to the above, we get
\begin{equation} \label{equation: asymptotic property of B}
 -w^{mr-r-1}B_j
 =
 w^{mr-r-1}\frac{\partial Y_{\Sigma}}{\partial t_j} Y_{\Sigma}^{-1}
 \sim
 w^{mr-r-1}
 \left( \frac{\partial P}{\partial t_j} P^{-1}
 - P \left( \int\frac{\partial \Lambda}{\partial t_j} \right) P^{-1}
\right)
\end{equation}
on $\Sigma$.
Note that the right-hand side of the above is a matrix of formal power series in $w$
without pole.
So the left-hand side of~(\ref{equation: asymptotic property of B})
is bounded on any~$\Sigma$.
Since $-w^{mr-r-1}B_j$ is also a matrix of single valued functions on
$(\Delta_w\setminus\{0\})\times{\mathcal L}$,
it is holomorphic on $\Delta_w\times{\mathcal L}$.
In other words,
$B_j(w,t)$ is a matrix of meromorphic functions on $\Delta_w\times{\mathcal L}$,
whose pole is of order at most~$mr-r-1$.

Recall that the matrix of $\tilde{\nabla}'$ is given by
\[
 -\frac {\partial Y_{\Sigma}(w,t)} {\partial w} Y_{\Sigma}(w,t)^{-1} {\rm d}w
 =
 A'(w,t)\frac{{\rm d}w}{w^{mr-r}}
\]
since $Y_{\Sigma}$ is a fundamental solution of $\tilde{\nabla}'$.
So we obtain a matrix of differential forms
\[
 A'(w,t)\frac{{\rm d}w}{w^{mr-r}}
 +\sum_{j=1}^N B_j(w,t){\rm d}t_j
\]
which determines a meromorphic connection
\begin{equation*}
 \big(\tilde{\nabla}'\big)^{\mathrm{flat}}
 \colon \
 \tilde{E}'|_{\Delta_w\times {\mathcal L}} \longrightarrow
 \tilde{E}'|_{\Delta_w\times{\mathcal L}} \otimes
 \Omega^1_{\Delta_w\times{\mathcal L}}
 ({\mathcal D}_{\mathcal L}\cap(\Delta_w\times{\mathcal L})).
\end{equation*}
By the definition, $(\tilde{\nabla}')^{\mathrm{flat}}$
is an extension of the relative connection
$\tilde{\nabla}'|_{\Delta_w\times{\mathcal L}}$.

The curvature form
of $\big(\tilde{\nabla}'\big)^{\mathrm{flat}}$ is
\begin{gather*}
 {\rm d} \left({-}\frac{\partial Y_{\Sigma}} {\partial w} Y_{\Sigma}^{-1} {\rm d}w
 -\sum_{j=1}^N \frac{\partial Y_{\Sigma}} {\partial t_j} Y_{\Sigma}^{-1} {\rm d}t_j \right)
 \\
\qquad\quad{} +
\left({-}\frac{\partial Y_{\Sigma}} {\partial w} Y_{\Sigma}^{-1} {\rm d}w
 -\sum_{j=1}^N \frac{\partial Y_{\Sigma}} {\partial t_j} Y_{\Sigma}^{-1} {\rm d}t_j \right)
 \wedge
\left( {-}\frac{\partial Y_{\Sigma}} {\partial w} Y_{\Sigma}^{-1} {\rm d}w
 -\sum_{j=1}^N \frac{\partial Y_{\Sigma}} {\partial t_j} Y_{\Sigma}^{-1} {\rm d}t_j \right)
 \\
\qquad{}=
 -\sum_{j=1}^N \left( \frac{\partial^2 Y_{\Sigma}} {\partial t_j\partial w}
 -\frac{\partial Y_{\Sigma}} {\partial w} Y_{\Sigma}^{-1}
 \frac{\partial Y_{\Sigma}} {\partial t_j} \right) Y_{\Sigma}^{-1} {\rm d}t_j \wedge {\rm d}w\\
 \qquad\quad{}
 -\sum_{j=1}^N \left( \frac{\partial^2 Y_{\Sigma}} {\partial w\partial t_j}
 -\frac{\partial Y_{\Sigma}}{\partial t_j}Y_{\Sigma}^{-1}\frac{\partial Y_{\Sigma}}{\partial w}
 \right) Y_{\Sigma}^{-1} {\rm d}w \wedge {\rm d}t_j
 \\
\qquad\quad{}
 -\sum_{j=1}^N
 \sum_{j'=1}^N \left( \frac{\partial^2Y_{\Sigma}}{\partial t_{j'}\partial t_j}{\rm d}t_{j'}
 -\frac{\partial Y_{\Sigma}} {\partial t_j}Y_{\Sigma}^{-1}
 \frac{\partial Y_{\Sigma}}{\partial t_{j'}} {\rm d}t_{j'}
 \right) \wedge Y_{\Sigma}^{-1} {\rm d}t_j\\
 \qquad\quad{}
 +\frac{\partial Y_{\Sigma}} {\partial w} Y_{\Sigma}^{-1} {\rm d}w \wedge
 \sum_{j=1}^N \frac{\partial Y_{\Sigma}} {\partial t_j} Y_{\Sigma}^{-1} {\rm d}t_j
 \\
\qquad\quad{}
 +\sum_{j=1}^N\frac{\partial Y_{\Sigma}} {\partial t_j} Y_{\Sigma}^{-1} {\rm d}t_j\wedge
 \frac{\partial Y_{\Sigma}} {\partial w} Y_{\Sigma}^{-1} {\rm d}w
 +\sum_{j=1}^N\sum_{j'=1}^N \frac{\partial Y_{\Sigma}}{\partial t_j}Y_{\Sigma}^{-1}
 \frac{\partial Y_{\Sigma}}{\partial t_{j'}} Y_{\Sigma}^{-1} {\rm d}t_j\wedge {\rm d}t_{j'}
=0.
\end{gather*}
So $\big(\tilde{\nabla}'\big)^{\mathrm{flat}}$
is an integrable connection which is an extension of
$\tilde{\nabla}'|_{\Delta_w\times{\mathcal L}}$.

The following proposition
is in fact included in a more general framework by T.~Mochizuki in \cite[Section~20.3]{Mochizuki},
which provides the existence of flat solution with asymptotic property
in a~general setting.

\begin{Proposition} \label{proposition: extendability to integrable connection in w variable}
Let ${\mathcal L}\subset M^{\circ}$ be a submanifold and
let $\big(\tilde{E}',\nabla'\big)|_{\Delta_w\times{\mathcal L}}$ be the restriction of
the family of connections constructed in \eqref{equation: local connection on ramified cover}.
Assume that for each point $t'\in{\mathcal L}$, there is an open neighborhood
${\mathcal L'}$ of $t'$ in ${\mathcal L}$ and a meromorphic integrable connection
\[
\big(\tilde{\nabla}'\big)^{\rm flat} \colon \
\tilde{E}'|_{\Delta_w\times{\mathcal L'}} \longrightarrow
\tilde{E}'|_{\Delta_w\times{\mathcal L'}} \otimes
\Omega^1_{\Delta_w\times{\mathcal L'}}((mr-r)\tilde{x}'),
\]
whose associated relative connection coincides with
$\tilde{\nabla}'|_{\Delta_w\times{\mathcal L'}}$.
Then $\big(\tilde{E}',\nabla'\big)|_{\Delta_w\times{\mathcal L}}$
is a local generalized isomonodromic deformation.
\end{Proposition}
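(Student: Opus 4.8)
The plan is to exhibit, for each $t'\in{\mathcal L}$, a concrete choice of neighborhood ${\mathcal L}'$, of covering $\{\Sigma=\Gamma_u\times{\mathcal L}'\}$, and of fundamental solutions $Y_{\Sigma}(w,t)$ that verify the two conditions of Definition~\ref{definition: local generalized isomonodromic deformation}. The guiding idea is that the integrable extension $\big(\tilde{\nabla}'\big)^{\mathrm{flat}}$ singles out, among the asymptotic solutions produced by the fundamental existence theorem, those that are simultaneously \emph{flat} for the full connection on $\Delta_w\times{\mathcal L}'$; for such a distinguished family the Stokes matrices are automatically independent of $t$, and this is exactly what is needed.

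First I would address the analytic input. The fundamental existence theorem \cite[Theorem~12.1]{Wasow} produces only fundamental solutions of the \emph{relative} connection $\tilde{\nabla}'$ carrying the asymptotic property~(\ref{equation: asymptotic property of fundamental solution}), and on a sector that is not taken maximal these solutions are genuinely ambiguous; this is precisely the discrepancy with \cite[Proposition~2.4]{Jimbo-Miwa-Ueno} flagged in the Remark following Definition~\ref{definition: local generalized isomonodromic deformation}. To resolve the ambiguity I would invoke the refined existence result of Mochizuki \cite[Section~20.3]{Mochizuki}, applied to the integrable connection $\big(\tilde{\nabla}'\big)^{\mathrm{flat}}$ whose existence is assumed in the statement: for each $t'$ one obtains a neighborhood ${\mathcal L}'$ and, on suitable sectors $\Gamma_u$, a fundamental solution $Y_{\Sigma}(w,t)$ of the \emph{full} connection $\big(\tilde{\nabla}'\big)^{\mathrm{flat}}|_{\Sigma}$ whose $w$-asymptotics are given by~(\ref{equation: asymptotic property of fundamental solution}) uniformly in $t\in{\mathcal L}'$. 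Being in particular a fundamental solution of $\tilde{\nabla}'|_{\Sigma}$ with the required asymptotics, such $Y_{\Sigma}$ is admissible for condition~(i).

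With these flat solutions fixed, condition~(ii) reduces to a brief computation. Writing the matrix of $\big(\tilde{\nabla}'\big)^{\mathrm{flat}}$ as $A'(w,t)\,{\rm d}w/w^{mr-r}+\sum_{j}B_j(w,t)\,{\rm d}t_j$, flatness of $Y_{\Sigma}$ in the base directions gives $\partial Y_{\Sigma}/\partial t_j=-B_j Y_{\Sigma}$ with the \emph{same} single-valued matrices $B_j$ for every sector. Since $C_{\Sigma,\Sigma'}(t)=Y_{\Sigma}(w,t)^{-1}Y_{\Sigma'}(w,t)$, substituting $\partial_{t_j}Y_{\Sigma}=-B_jY_{\Sigma}$ and $\partial_{t_j}Y_{\Sigma'}=-B_jY_{\Sigma'}$ yields
\[
 \frac{\partial}{\partial t_j}\big(Y_{\Sigma}^{-1}Y_{\Sigma'}\big)
 = Y_{\Sigma}^{-1}B_j\,Y_{\Sigma'}-Y_{\Sigma}^{-1}B_j\,Y_{\Sigma'}=0 ,
\]
so each Stokes matrix $C_{\Sigma,\Sigma'}(t)$ is constant on ${\mathcal L}'$. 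This is condition~(ii), and together with~(i) it shows that $\big(\tilde{E}',\nabla'\big)|_{\Delta_w\times{\mathcal L}}$ is a local generalized isomonodromic deformation.

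The main obstacle is the analytic step of the second paragraph: the whole argument rests on producing fundamental solutions that are flat for the full connection \emph{and} retain the prescribed asymptotic expansion uniformly in $t$. The naive recipe of solving in $w$ at one base point and flatly transporting in $t$ need not preserve the asymptotics on the given sectors, and it is exactly here that the uniform asymptotic theory of \cite[Chapter~20]{Mochizuki} is indispensable. Once this existence is granted, the remainder is formal, and the ambiguity in the choice of $Y_{\Sigma}$ noted in the Remark is disposed of by the observation that flatness for the full connection pins down a canonical family whose Stokes data is $t$-independent.
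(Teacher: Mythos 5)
Your overall strategy is the one the paper itself endorses -- the proposition is credited to Mochizuki's framework, and your closing computation $\partial_{t_j}\big(Y_{\Sigma}^{-1}Y_{\Sigma'}\big)=Y_{\Sigma}^{-1}B_jY_{\Sigma'}-Y_{\Sigma}^{-1}B_jY_{\Sigma'}=0$ for flat solutions is exactly how constancy of the Stokes matrices is obtained there (the paper phrases it as $Y^{\mathrm{flat}}_{\Sigma'}=Y^{\mathrm{flat}}_{\Sigma}K$ with $K$ constant). The difference is that the paper does not take the existence of flat sectorial solutions with uniform asymptotics as a black box: it starts from Wasow's \emph{relative} sectorial solutions $Y_{\Sigma}$, defines $Y^{\mathrm{flat}}_{\Sigma}=Y_{\Sigma}C(t)$ with the normalization $C(t')=I_r$, and then proves by growth estimates along rays in the sector that $C(t)$ is block upper triangular with diagonal blocks, so that $Y^{\mathrm{flat}}_{\Sigma}\exp\big(\int\Lambda\big)\sim P(w,t)C_{\mathrm{diag}}(t)$.

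The concrete point your proposal skips is the one most of the paper's proof is devoted to: condition~(i) of Definition~\ref{definition: local generalized isomonodromic deformation} requires a \emph{single} replacement of the formal transform $P(w,t)$ of~(\ref{equation: formal transform in w}), valid for every sector of the covering, and the flat solutions are in general \emph{not} asymptotic to the original $P(w,t)$ but only to $P(w,t)C_{\mathrm{diag}}(t)$ for some diagonal correction. You must therefore show (a) that the correction is diagonal (so that $P\,C_{\mathrm{diag}}$ is still a formal transform to the normal form, $C_{\mathrm{diag}}$ commuting with $\Lambda$), and (b) that the \emph{same} $C_{\mathrm{diag}}(t)$ works for every sector $\Sigma$. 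The paper proves (a) by the boundedness argument on $w^N\exp\big({-}\int\Lambda\big)\tfrac{\partial C}{\partial t_j}C^{-1}\exp\big(\int\Lambda\big)$ and (b) by comparing $C_{\mathrm{diag}}$ and $C'_{\mathrm{diag}}$ through the constant matrix $K$ and the common normalization at $t'$. If the version of Mochizuki's theorem you invoke genuinely supplies flat solutions on all sectors asymptotic to one and the same (flat) formal transform, uniformly in $t$, then your argument closes; but as written you have delegated precisely the nontrivial content of the proposition to that citation, and you should at least state explicitly that the asymptotic expansion obtained is sector-independent, since without that the Stokes matrices you compute are not the ones the Definition refers to.
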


\begin{proof}
We have
$\tilde{E}'|_{\Delta_w\times{\mathcal L'}}\cong{\mathcal O}_{\Delta_w\times{\mathcal L'}}^{\oplus r}$
and we can write
\[
 \big(\tilde{\nabla}'\big)^{\rm flat}
 =
 {\rm d}+ A'(w,t)\frac{{\rm d}w}{w^{mr-r}} +\sum_{j=1}^N B_j(w,t) {\rm d}t_j .
\]
After shrinking ${\mathcal L'}$ if necessary,
we can take a covering
$\{\Sigma=\Gamma_u\times{\mathcal L'}\}$ of $(\Delta_w\setminus\{0\})\times{\mathcal L'}$
with $\Gamma_u$ a sector in $\Delta_w\setminus\{0\}$ and
we can take a fundamental solution
$Y_{\Sigma}(w,t)$ of $\tilde{\nabla'}|_{\Sigma}$
with the uniform asymptotic relation
\begin{equation} \label{equation: family of asymptotic relation}
 Y_{\Sigma}(w,t)
 \exp\left(\int\Lambda(w)\right)
 \sim P(w,t),
 \qquad
 w\to 0, \ w\in\Sigma.
\end{equation}
Since $(\nabla')^{\mathrm{flat}}$ is an integrable connection
extending $\tilde{\nabla}'|_{\Delta_w\times{\mathcal L'}}$,
we can take a fundamental solution
$Y^{\mathrm{flat}}_{\Sigma}(w,t)$ of $(\nabla')^{\mathrm{flat}}$
on $\Sigma$
satisfying
$Y^{\mathrm{flat}}_{\Sigma}(w,t')=Y_{\Sigma}(w,t')$.
We can write
\begin{equation}
\label{equation: comparison between isomonodromic solution and relative asymptotic solution}
 Y^{\mathrm{flat}}_{\Sigma}(w,t)
 =
 Y_{\Sigma}(w,t)
 C(t),
 \qquad
 (w,t)\in \Sigma,
\end{equation}
for a matrix $C(t)=(c_{ij}(t))$ of holomorphic functions in $t\in {\mathcal L'}$ such that
$C(t')=I_r$ is the identity matrix.
Differentiating
(\ref{equation: comparison between isomonodromic solution and relative asymptotic solution})
in $t_j$, we have
\[
 \frac{\partial Y^{\rm flat}_{\Sigma}} {\partial t_j}
 =\frac{\partial Y_{\Sigma}} {\partial t_j} C(t)
 + Y_{\Sigma} \frac{\partial C(t)} {\partial t_j},
 \]
from which we have
\begin{gather} \label{equation: difference of dt-coefficient with asymptotic}
 Y_{\Sigma}(w,t) \frac{\partial C(t)} {\partial t_j}
 C(t)^{-1} Y_{\Sigma}(w,t)^{-1}
 =
 \frac{\partial Y^{\rm flat}_{\Sigma}(w,t)} {\partial t_j} Y^{\mathrm{flat}}(w,t)^{-1}
 -\frac{\partial Y_{\Sigma}(w,t)} {\partial t_j} Y_{\Sigma}(w,t)^{-1}.
\end{gather}
Since $Y^{\mathrm{flat}}(w,t)$ is a fundamental solution matrix of
$(\nabla')^{\mathrm{flat}}$, we have
\begin{equation} \label{equation: integrability induce meromorphic}
 \frac{\partial Y^{\rm flat}_{\Sigma}(w,t)} {\partial t_j}
 \, Y^{\rm flat}_{\Sigma}(w,t)^{-1}
 =-B_j(w,t).
\end{equation}
On the other hand,
since the asymptotic relation (\ref{equation: family of asymptotic relation})
is uniform in $t\in{\mathcal L}$,
we have the asymptotic relation
\[
 \frac {\partial Y_{\Sigma}} {\partial t_j} \,
 \exp \left( \hbox{$\int\Lambda(w)$} \right)
 +Y_{\Sigma} \, \exp \left( \hbox{$\int\Lambda(w)$} \right) \,
 \frac { \partial } {\partial t_j} \left( \hbox{$\int\Lambda(w)$} \right)
 \ \sim \ \frac {\partial P} {\partial t_j}
\]
on $\Sigma$.
Multiplying
$\left( Y_{\Sigma} \, \exp \left( \hbox{$\int\Lambda(w)$} \right) \right)^{-1}\sim P^{-1}$
from the right to the above, we have
\begin{equation}
\label{equation: differentiation of asymptotic relation}
 \frac{\partial Y_{\Sigma}} {\partial t_j} Y_{\Sigma}^{-1}
 \ \sim \
 \frac{\partial P}{\partial t_j} P^{-1}
 -P \: \frac{\partial}{\partial t_j} \left( \hbox{$\int\Lambda(w)$} \right) P^{-1}
 \qquad
 (w\to 0)
\end{equation}
on $\Sigma$.
Using the equality (\ref{equation: difference of dt-coefficient with asymptotic})
and substituting (\ref{equation: integrability induce meromorphic})
and (\ref{equation: differentiation of asymptotic relation}),
we have the asymptotic relation
\begin{gather*}
 \exp \left( \int\Lambda(w) \right) ^{-1} \frac{\partial C(t)} {\partial t_j} C(t)^{-1}
 \exp \left( \int\Lambda(w) \right)
\sim
 P^{-1} Y_{\Sigma} \frac{\partial C(t)} {\partial t_j}
 C(t)^{-1} Y_{\Sigma}^{-1} P
 \\
 \qquad{}=
 P^{-1} \left(
 \frac{\partial Y^{\rm flat}_{\Sigma}} {\partial t_j} (Y^{\mathrm{flat}}_{\Sigma})^{-1}
 -\frac{\partial Y_{\Sigma}} {\partial t_j} Y_{\Sigma}^{-1} \right) P
 \sim
 -P^{-1} B_j P
 -P^{-1} \frac{\partial P}{\partial t_j}
 +\frac{\partial }{\partial t_j} \left( \int\Lambda(w) \right)
\end{gather*}
on $\Sigma$.
So
$
w^N \exp \big( \int\Lambda(w) \big)^{-1}
\frac{\partial C(t)} {\partial t_j} C(t)^{-1}
\exp \big( \int\Lambda(w)\big)$
is bounded on $\Sigma$
for a large $N$,
because~$B_j$ is a matrix of meromorphic functions in $w$,.

Choose a point $(w_0,t')\in \Sigma$.
After replacing a frame of $E'_0$,
we can write
\begin{equation*}
 \int\Lambda(w)
 =
 \begin{pmatrix}
 A_1(w) & 0 & 0 \\
 0 & \ddots & 0 \\
 0 & 0 & A_s(w)
 \end{pmatrix}\frac{1}{w^{mr-r-1}},
 \qquad
 A_k(w)
 =
 \begin{pmatrix}
 a^{(k)}_1(w) & 0 & 0 \\
 0 & \ddots & 0 \\
 0 & 0 & a^{(k)}_{m_k}(w)
 \end{pmatrix},
\end{equation*}
such that
$a^{(k)}_p(w)=a^{(k)}_p(0)+b^{(k)}_{p,1}w+\cdots
 +b^{(k)}_{p,mr-r-2}w^{mr-r-2}+b^{(k)}_{p,mr-r-1} w^{mr-r-1}\log w$
satisfies $a^{(k)}_p(0)\neq a^{(l)}_q(0)$ for $(k,p)\neq (l,q)$
and that
$
 \rho_k=\mathrm{Re}\big( w_0^{-mr+r+1} a^{(k)}_p(0) \big)
$
holds for $1\leq p\leq m_k$ at $t'$
with $\rho_1>\rho_2>\cdots>\rho_s$.
Write
\begin{equation} \label{equation: differential of transition matrix}
 \frac{\partial C(t)} {\partial t_j} C(t)^{-1}
 =:\tilde{C}(t)=
 \begin{pmatrix}
 \tilde{C}_{11}(t) & \cdots & \tilde{C}_{1s}(t) \\
 \vdots & \ddots & \vdots \\
 \tilde{C}_{s1}(t) & \cdots & \tilde{C}_{ss}(t)
 \end{pmatrix},
\end{equation}
where $\tilde{C}_{kl}(t)$ is a matrix of size $(m_k,m_l)$.
Then we have
\begin{gather} \label{equation: adjoint of differential of transition matrix}
w^N\exp \left( \int\Lambda(w) \right)^{-1} \frac{\partial C(t)} {\partial t_j}
 C(t)^{-1} \exp \left( \int\Lambda(w) \right)
 \\
=
 w^N
 \begin{pmatrix}
 \exp\big(\frac{-A_1(w)}{w^{mr-r-1}}\big)\tilde{C}_{11}(t)\exp\big(\frac{A_1(w)}{w^{mr-r-1}}\big)
 & \cdots &
 \exp\big(\frac{-A_1(w)}{w^{mr-r-1}}\big)\tilde{C}_{1s}(t)\exp\big(\frac{A_s(w)}{w^{mr-r-1}}\big)
 \\
 \vdots & \ddots & \vdots \\
 \exp\big(\frac{-A_s(w)}{w^{mr-r-1}}\big)\tilde{C}_{1s}(t)\exp\big(\frac{A_1(w)}{w^{mr-r-1}}\big)
 & \cdots &
 \exp\big(\frac{-A_s(w)}{w^{mr-r-1}}\big)\tilde{C}_{ss}(t)\exp\big(\frac{A_s(w)}{w^{mr-r-1}}\big)
 \end{pmatrix},
\nonumber
\end{gather}
which is bounded on $\Sigma$.

Suppose that
$\tilde{C}_{kl}(t)\neq 0$ for $k>l$.
Then the growth order of the $(k,l)$ minor of (\ref{equation: adjoint of differential of transition matrix})
along the ray $\{ \theta w_0 \mid 0<\theta\leq 1\}$
is the same as
\[
 (\theta w_0)^N\exp \left( \mathrm{Re}
 \big( (\theta w_0)^{-mr+r+1}(A_k(0)-A_l(0)) \big) \right) \tilde{C}_{k,l}(t)
 =\theta^N w_0^N {\rm e}^{\frac{\rho_k-\rho_l}{\theta^{mr-r-1}}}
 \tilde{C}_{kl}(t).
\]
Since $\rho_k-\rho_l>0$, it is divergent as $\theta\to 0$, which is a contradiction.
If we write
\[
 \tilde{C}_{kk}(t)=
 \begin{pmatrix}
 \tilde{c}^{(k)}_{11}(t) & \cdots & \tilde{c}^{(k)}_{1m_k}(t) \\
 \vdots & \ddots & \vdots \\
 \tilde{c}^{(k)}_{m_k1} & \cdots & \tilde{c}^{(k)}_{m_km_k}(t)
 \end{pmatrix},
\]
then we have
\begin{gather}
 w^N\exp\big({-}w^{-mr+r+1}A_k(w)\big)\tilde{C}_{kk}(t)\exp\big(w^{-mr+r+1}A_k(w)\big)
\nonumber \\
 =
 w^N
 \begin{pmatrix}
 \tilde{c}^{(k)}_{11}(t) & \cdots &
 {\rm e}^{w^{-mr+r+1} (a^{(k)}_1(w)-a^{(k)}_{m_k}(w)) } \tilde{c}^{(k)}_{1m_k}(t)
 \\
 \vdots & \ddots & \vdots \\
 {\rm e}^{w^{-mr+r+1} (a^{(k)}_{m_k}(w)-a^{(k)}_1(w)) } \tilde{c}^{(k)}_{m_k1}(t)
 & \cdots & \tilde{c}_{m_km_k}(t)
 \end{pmatrix}.\label{equation: diagonal block of the differential of transition matrix}
\end{gather}
Suppose that $\tilde{c}^{(k)}_{pq}(t)\neq 0$ for $p\neq q$.
Since $a^{(k)}_p(0)\neq a^{(k)}_q(0)$,
we can find $\delta\neq 0$ with
$|\delta|$ small such that
$\big\{ \theta {\rm e}^{\sqrt{-1}\delta} w_0 \mid 0<\theta \leq 1 \big\}$
is contained in $\Gamma_u$
and that either
$\mathrm{Re}\big( \frac {a^{(k)}_p(0)-a^{(k)}_q(0)}
{(w_0{\rm e}^{\sqrt{-1}\delta})^{mr-r-1}} \big)>0$
or
$\mathrm{Re}\big( \frac {a^{(k)}_p(0)-a^{(k)}_q(0)}
{(w_0{\rm e}^{\sqrt{-1}\delta})^{mr-r-1}} \big)<0$ holds.
After replacing $\delta$ with $\pm\delta$, we may assume the inequality
$\mathrm{Re}\big( \frac {a^{(k)}_p(0)-a^{(k)}_q(0)}
{(w_0{\rm e}^{\sqrt{-1}\delta})^{mr-r-1}} \big)>0$.
Then the growth order of the $(p,q)$-entry of~(\ref{equation: diagonal block of the differential of transition matrix})
is the same as
$(w_0\theta)^N \exp\big( \frac {a^{(k)}_p(0)-a^{(k)}_q(0)}
{(w_0{\rm e}^{\sqrt{-1}\delta})^{mr-r-1}\theta^{mr-r-1}} \big)$,
which is divergent along
$\big\{ \theta {\rm e}^{\sqrt{-1}\delta} w_0 \mid 0<\theta \leq 1 \big\}$
as $\theta\to 0$.
Since~(\ref{equation: diagonal block of the differential of transition matrix})
is bounded on $\Gamma_u\times{\mathcal L}$, it is a contradiction.
So $\tilde{C}_{kk}(t)$ is a diagonal matrix for any~$k$.

Thus we have proved that
the matrix $\tilde{C}(t)$ given in (\ref{equation: differential of transition matrix})
is a block upper triangular matrix in the sense that
$\tilde{C}_{kl}(t)=0$ for $k>l$
and that $\tilde{C}_{kk}(t)$ is are diagonal matrices for $1\leq k\leq s$.
We will show that $C(t)$ is also a block upper triangular matrix.
Consider the Taylor expansion
\begin{equation} \label{equation: Taylor expansion of transition matrix}
 C(t)=
 \sum_{i_1,\dots,i_N} C_{i_1,\dots,i_n}t_1^{i_1}\cdots t_n^{i_n}
\end{equation}
around $t=t'$.
Suppose that one of $C_{i_1,\dots,i_n}$ is not block upper triangular
and put
\[
 l=\min \big\{i_1+\cdots+i_n \mid
 \text{$C_{i_1,\dots,i_n}$ is not a block upper triangular matrix}\big\}.
\]
By the minimality of $l$,
$C(t) \pmod{(t_1,\dots,t_n)^{l-1}}$ is a block upper triangular matrix
and so is $C(t)^{-1} \pmod{(t_1,\dots,t_n)^{l-1}}$.
Differentiating (\ref{equation: Taylor expansion of transition matrix}),
$\frac{\partial C(t)} {\partial t_j} \pmod{(t_1,\dots,t_n)^{l-1}}$
is not a block upper triangular matrix for some $j$.
So we can see that
$\frac{\partial C(t)} {\partial t_j}
C(t)^{-1} \pmod{(t_1,\dots,t_n)^{l-1}}$
is not a block upper triangular matrix of the above form, which is a contradiction.

Thus $C(t)$ is also a block upper triangular matrix of the above form.
Let $C_{\mathrm{diag}}(t)$ be the diagonal part of $C(t)$.
Then we have
\begin{gather}
 Y^{\mathrm{flat}}_{\Sigma}(w,t) \exp\left( \int\Lambda \right)
 =Y_{\Sigma}(w,t) \exp \left( \int\Lambda \right)
 \exp \left(- \int\Lambda \right) C(t) \exp\left( \int\Lambda \right)\nonumber\\
 \hphantom{Y^{\mathrm{flat}}_{\Sigma}(w,t) \exp\left( \int\Lambda \right)}{}
 \sim P(w,t) C_{\mathrm{diag}}(t)\label{equation: asymptotic relation for flat solution}
\end{gather}
on $\Sigma$.
If we take another sector
$\Sigma'=\Gamma_{u'}\times{\mathcal L}$
and a fundamental solution $Y^{\mathrm{flat}}_{\Sigma'}$ of $\big(\tilde{\nabla}'\big)^{\mathrm{flat}}$
satisfying $Y^{\mathrm{flat}}_{\Sigma'}=Y_{\Sigma'}C'(t)$ with $C'(t')=I_r$,
we have
\begin{equation} \label{equation: asymptotic relation for another flat solution}
 Y^{\mathrm{flat}}_{\Sigma'}(w,t) \exp\left( \int\Lambda \right)
 \sim P(w,t) C'_{\mathrm{diag}}(t)
\end{equation}
on $\Sigma'$.
Since both of $Y^{\mathrm{flat}}_{\Sigma}$ and $Y^{\mathrm{flat}}_{\Sigma'}$
are fundamental solutions of the integrable connection
$\big(\tilde{\nabla}'\big)^{\mathrm{flat}}$,
we can write
$Y^{\mathrm{flat}}_{\Sigma'}=Y^{\mathrm{flat}}_{\Sigma}K$ for a constant matrix~$K$.
Combining~(\ref{equation: asymptotic relation for flat solution}) and
(\ref{equation: asymptotic relation for another flat solution}),
we have
\begin{gather*}
 C_{\mathrm{diag}}(t)^{-1}C'_{\mathrm{diag}}(t)
 \sim
 \exp\left({-} \int\Lambda \right) (Y^{\mathrm{flat}}_{\Sigma})^{-1}
 Y^{\mathrm{flat}}_{\Sigma'} \exp\left( \int\Lambda \right)\\
 \hphantom{C_{\mathrm{diag}}(t)^{-1}C'_{\mathrm{diag}}(t)}{}
 =
 \exp\left({-} \int\Lambda \right) K \exp\left( \int\Lambda \right)
\end{gather*}
on $\Sigma\cap\Sigma'$.
Since the diagonal entries of the right-hand side of the above are those of
$K$, which are constant in $t$,
we can see that the left-hand side of the above is a constant matrix.
Since $C'_{\mathrm{diag}}(t')=C'(t')=I_r=C(t')=C_{\mathrm{diag}}(t')$,
we have $C'_{\mathrm{diag}}(t)=C_{\mathrm{diag}}(t)$.

Thus, the replacement of the formal transform $P(w,t)$ with
$P(w,t)C_{\mathrm{diag}}(t)$ is independent of~$\Sigma$.
So the replacement of $Y_{\Sigma}$ with $Y^{\mathrm{flat}}_{\Sigma}$ on each $\Sigma$
satisfies the condition of
Definition~\ref{definition: local generalized isomonodromic deformation}.
\end{proof}

Summarizing the above arguments, we get the following theorem,
which is the local version of a main consequence of the Jimbo--Miwa--Ueno theory.
It is the significance of
the formulation of generalized isomonodromic deformation
introduced in Section \ref{section: generalized isomonodromy equation} later.

\begin{Theorem}[Jimbo--Miwa--Ueno {\cite[Theorems 3.1 and 3.3]{Jimbo-Miwa-Ueno}}]
\label{theorem: Jimbo-Miwa-Ueno equation}
For a submanifold ${\mathcal L}$ of~$M^{\circ}$,
the restriction
$\big(\tilde{E}',\tilde{\nabla}'\big)|_{\Delta_w\times{\mathcal L}}$
of the family of connections to $\Delta_w\times{\mathcal L}$
is a local generalized isomonodromic deformation
if and only if
for each point $t'$ of ${\mathcal L}$, there is a neighborhood ${\mathcal L'}$
of $t'$ in ${\mathcal L}$ and a meromorphic integrable connection
$\big(\tilde{\nabla}'\big)^{\rm flat}\colon \tilde{E}'|_{\Delta_w\times{\mathcal L'}} \longrightarrow
\tilde{E}'|_{\Delta\times{\mathcal L'}}\otimes\Omega_{\Delta_w\times{\mathcal L'}}((mr-r)\tilde{x}')$
whose associated relative connection coincides with
$\tilde{\nabla}'|_{\Delta_w\times{\mathcal L'}}$.
\end{Theorem}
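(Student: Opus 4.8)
The plan is to prove the two implications separately, assembling material already developed in this section; both directions are local on ${\mathcal L}$, so it suffices to work on a neighborhood ${\mathcal L}'$ of an arbitrary point $t'\in{\mathcal L}$.

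For the ``only if'' direction, I would assume that $\big(\tilde{E}',\tilde{\nabla}'\big)|_{\Delta_w\times{\mathcal L}}$ is a local generalized isomonodromic deformation and carry out the construction recalled just before Proposition~\ref{proposition: extendability to integrable connection in w variable}. By Definition~\ref{definition: local generalized isomonodromic deformation} we have, on a covering $\{\Sigma=\Gamma_u\times{\mathcal L}'\}$ of $(\Delta_w\setminus\{0\})\times{\mathcal L}'$, fundamental solutions $Y_{\Sigma}(w,t)$ of $\tilde{\nabla}'|_{\Sigma}$ with the asymptotic property~(\ref{equation: asymptotic property of fundamental solution}) whose Stokes matrices $C_{\Sigma,\Sigma'}(t)$ are constant in $t$. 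Differentiating $Y_{\Sigma}^{-1}Y_{\Sigma'}=C_{\Sigma,\Sigma'}$ in each $t_j$ gives the gluing condition~(\ref{equation: patching condition of dt coefficient}), so the matrices $-\frac{\partial Y_{\Sigma}}{\partial t_j}Y_{\Sigma}^{-1}$ patch to single-valued matrices $B_j(w,t)$ on $(\Delta_w\setminus\{0\})\times{\mathcal L}'$. The key analytic point is to promote $B_j$ to a meromorphic matrix with pole of order at most $mr-r-1$; this follows by differentiating the uniform asymptotic relation~(\ref{equation: uniform asymptotic}) in $t_j$ and observing, as in~(\ref{equation: asymptotic property of B}), that $w^{mr-r-1}B_j$ is asymptotic to a formal power series in $w$ without pole, hence bounded and thus holomorphic across $w=0$. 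The matrix of differential forms $A'(w,t)\frac{{\rm d}w}{w^{mr-r}}+\sum_j B_j(w,t)\,{\rm d}t_j$ then defines a connection $\big(\tilde{\nabla}'\big)^{\mathrm{flat}}$ whose associated relative connection is $\tilde{\nabla}'$ by construction, and the explicit curvature computation displayed in the text verifies flatness, so $\big(\tilde{\nabla}'\big)^{\mathrm{flat}}$ is the desired integrable extension with pole order $mr-r$.

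For the ``if'' direction, I would invoke Proposition~\ref{proposition: extendability to integrable connection in w variable} directly: assuming that near each $t'$ there is an integrable meromorphic connection $\big(\tilde{\nabla}'\big)^{\mathrm{flat}}$ with pole order $mr-r$ restricting to $\tilde{\nabla}'$, that proposition produces a replacement $P(w,t)C_{\mathrm{diag}}(t)$ of the formal transform and flat fundamental solutions $Y^{\mathrm{flat}}_{\Sigma}$ whose Stokes matrices are constant in $t$, which is precisely the condition required by Definition~\ref{definition: local generalized isomonodromic deformation}.

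The main obstacle lies in the ``only if'' direction, precisely in the meromorphic extension of $B_j$ across $w=0$ with the correct pole bound. This rests on being able to differentiate the asymptotic expansion of $P(w,t)$ term-by-term in $t$, which is licensed only because the convergence in~(\ref{equation: uniform asymptotic}) is uniform in $t\in{\mathcal L}'$; without this uniformity the patched matrices $B_j$ could not be controlled near the singularity. A secondary subtlety, flagged in the Remark following Definition~\ref{definition: local generalized isomonodromic deformation}, is that our Stokes matrices depend on a non-unique choice of $Y_{\Sigma}$, so the two directions match only up to a diagonal factor $C_{\mathrm{diag}}(t)$; this discrepancy is absorbed into the replacement of the formal transform and has already been handled inside Proposition~\ref{proposition: extendability to integrable connection in w variable}.
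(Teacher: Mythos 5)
Your proposal is correct and follows essentially the same route as the paper: the ``only if'' direction is exactly the computation the paper carries out just before Proposition~\ref{proposition: extendability to integrable connection in w variable} (differentiating the constant Stokes relations to patch the $B_j$, using the uniformity of the asymptotic expansion in $t$ to bound $w^{mr-r-1}B_j$ and conclude meromorphy with pole order $mr-r-1$, then checking flatness), and the ``if'' direction is precisely Proposition~\ref{proposition: extendability to integrable connection in w variable}, with the diagonal ambiguity $C_{\mathrm{diag}}(t)$ absorbed into the replacement of the formal transform as you note. The paper itself presents the theorem as a summary of these two pieces, so nothing is missing.
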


\begin{Remark}\quad
\begin{itemize}\itemsep=0pt
\item[(i)]
In the precise setting of \cite{Jimbo-Miwa-Ueno}, each sector is taken sufficiently large
so that the asymptotic solution $Y_{\Sigma}$ is determined uniquely.
Furthermore, the choice of formal transforms is also included in the system of
differential equation in \cite[Theorems 3.1 and~3.3]{Jimbo-Miwa-Ueno}.
\item[(ii)]
In our setting of Theorem~\ref{theorem: Jimbo-Miwa-Ueno equation},
there are ambiguities in the choice of asymptotic solutions~$Y_{\Sigma}(w,t)$
and in the choice of the formal transforms~$P(w,t)$.
Our statement of Theorem~\ref{theorem: Jimbo-Miwa-Ueno equation}
is a consequence of Proposition~\ref{proposition: extendability to integrable connection in w variable},
which is essentially the result by T.~Mochizuki in~\cite{Mochizuki}.
\item[(iii)] We introduce Definition \ref{definition: local generalized isomonodromic deformation}
based on the naive meaning of Stokes data,
but it will be better to explain the Stokes data by
using the notion of local system with Stokes filtration
as in \cite[Section 4.6]{Babbitt-Varadarajan}
or \cite[Chapter 3]{Mochizuki}.
\item[(iv)]
Theorem \ref{theorem: Jimbo-Miwa-Ueno equation}
is also mentioned in the appendix of \cite{Boalch-1}.
\item[(v)]
We can see from (\ref{equation: asymptotic property of B})
that the $dt_j$-coefficient of $\big(\tilde{\nabla}'\big)^{\mathrm{flat}}$
has a pole of order $mr-r-1$.
\end{itemize}
\end{Remark}

\begin{Proposition}\label{proposition: ramified local generalized isomonodromic deformation}
For the family of connections $\big(\tilde{E}',\tilde{\nabla}'\big)$ on $\Delta_w\times M^{\circ}$
which is constructed from $\big(\tilde{E},\tilde{\nabla}\big)|_{\Delta_z\times M^{\circ}}$
in (\ref{equation: local connection on ramified cover})
and for a submanifold ${\mathcal L}$ of $M^{\circ}$,
$\big(\tilde{E}',\tilde{\nabla}'\big)|_{\Delta_w\times{\mathcal L}}$
can be extended to an integrable connection if and only if
$\big(\tilde{E},\tilde{\nabla}\big)|_{\Delta_z\times{\mathcal L}}$
can be extended to an integrable meromorphic connection on $\Delta_z\times{\mathcal L}$.
\end{Proposition}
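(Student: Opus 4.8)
The plan is to exploit that the correspondence $(\tilde E,\tilde\nabla)\mapsto(\tilde E',\tilde\nabla')$ of Proposition~\ref{proposition: local correspondence between ramified and unramified connection}, in its relative form~\eqref{equation: local connection on ramified cover}, is assembled from three elementary operations: the pullback along the Galois ramified cover $\tilde p\colon\Delta_w\times M^{\circ}\to\Delta_z\times M^{\circ}$, the elementary transform $\ker\varphi_{M^{\circ}}\subset p_{M^{\circ}}^*\tilde E$ cut out by the comparison morphism $\varpi_{M^{\circ}}$ in~\eqref{equation:comparison with diagonal on the ramified cover}, and the twist by the fixed rank-one connection $\nabla_{-\tilde\nu_0}$. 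Each of these has an evident absolute analogue over the two-dimensional bases $\Delta_z\times\mathcal L'$ and $\Delta_w\times\mathcal L'$, and the strategy is to check that the absolute operations preserve integrability and are mutually inverse, thereby matching integrable extensions on the two sides. Since integrability is local on the base, it suffices to work near a fixed point $t'\in\mathcal L$ on a small neighborhood $\mathcal L'$, as in the statement.

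First I would treat the implication that integrability on $\Delta_z\times\mathcal L'$ forces integrability on $\Delta_w\times\mathcal L'$. Given an integrable meromorphic extension $\nabla^{\mathrm{flat}}$ of $\tilde\nabla|_{\Delta_z\times\mathcal L'}$, its pullback $\tilde p^*\nabla^{\mathrm{flat}}$ is again integrable, since the pullback of a flat connection along $\tilde p$ is flat. I then restrict $\tilde p^*\nabla^{\mathrm{flat}}$ to the subsheaf $\ker\varphi_{M^{\circ}}$; granting that $\ker\varphi_{M^{\circ}}$ is horizontal for the absolute connection (discussed below), this yields an integrable connection on $\ker\varphi_{M^{\circ}}$ extending $p_{M^{\circ}}^*\tilde\nabla|_{\ker\varphi_{M^{\circ}}}$. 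Finally I twist by an integrable rank-one extension of $(\mathcal O_{\Delta_w}((r-1)\tilde x'),\nabla_{-\tilde\nu_0})$; such an extension exists because a rank-one relative connection always extends to an integrable one, obtained by choosing a meromorphic total $1$-form on $\Delta_w\times\mathcal L'$ that restricts to $-\tilde\nu_0$ in the fiber direction and is closed, which is possible since the residue of $\tilde\nu_0$ is constant in $t$. Applying~\eqref{equation: local connection on ramified cover} verbatim in this absolute setting produces an integrable extension of $(\tilde E',\tilde\nabla')|_{\Delta_w\times\mathcal L'}$.

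Conversely, starting from an integrable extension on $\Delta_w\times\mathcal L'$, I would run the inverse construction from the proof of Proposition~\ref{proposition: local correspondence between ramified and unramified connection} in the absolute setting: perform the inverse Hecke modification, twist back by an integrable rank-one extension of $\nabla_{\tilde\nu_0}$, push forward along $\tilde p$, and pass to $\mathrm{Gal}(\Delta_w/\Delta_z)$-invariants. Here $\tilde p_*$ of an integrable connection is integrable, and the invariants of an integrable connection on which the Galois group acts through horizontal automorphisms again carry an integrable connection, so this recovers an integrable extension of $(\tilde E,\tilde\nabla)|_{\Delta_z\times\mathcal L'}$. The one point to justify is that the $w$-side extension may be chosen compatibly with the Galois action: an arbitrary integrable extension need not be Galois-equivariant, but by Theorem~\ref{theorem: Jimbo-Miwa-Ueno equation} its existence is equivalent to $(\tilde E',\tilde\nabla')|_{\Delta_w\times\mathcal L'}$ being a local generalized isomonodromic deformation, and this property of the (Galois-equivariant) relative family is manifestly stable under $\mathrm{Gal}(\Delta_w/\Delta_z)$. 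Hence the construction of the integrable extension (as in the proof establishing Theorem~\ref{theorem: Jimbo-Miwa-Ueno equation}) can be carried out Galois-equivariantly, producing a Galois-equivariant $(\tilde\nabla')^{\mathrm{flat}}$ to which the invariants construction applies.

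The hard part will be the horizontality of the modification, namely that the lattice $\ker\varphi_{M^{\circ}}\subset p_{M^{\circ}}^*\tilde E$ is preserved by the total connection $\tilde p^*\nabla^{\mathrm{flat}}$ and not merely by the relative connection $p_{M^{\circ}}^*\tilde\nabla$. I plan to establish this by upgrading the relative formal isomorphism~\eqref{equation: family of formal isomorphisms} to a flat formal isomorphism for $\nabla^{\mathrm{flat}}$: the formal normal form $\nabla_{\tilde\nu}$ extends to an integrable formal connection $\nabla_{\hat\nu}$ on $\mathcal O^{\mathrm{hol}}_{\mathcal L'}[[w]]$, with $\hat\nu$ a closed total $1$-form restricting to $\tilde\nu$ in the fiber direction, and $\nabla^{\mathrm{flat}}$ becomes formally gauge-equivalent to $\nabla_{\hat\nu}$ by a horizontal formal transform. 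This is the integrable, family refinement of Corollary~\ref{corollary: formal structure}, and it is where the genuine work lies; granting it, the morphism $\varpi_{M^{\circ}}$, its image, and the resulting $\ker\varphi_{M^{\circ}}$ are all defined by horizontal data, so they are automatically preserved by $\tilde p^*\nabla^{\mathrm{flat}}$, and the same applies to the twist. I expect this flat formal normalization step, rather than the bookkeeping of the three operations, to be the main obstacle.
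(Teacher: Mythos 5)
Your route is genuinely different from the paper's, and both of the places where you locate the work are away from where the paper puts it. The paper's proof is a short matrix computation built on the single rational gauge transformation $S(w)\colon p_{\mathcal L}^*\big(\tilde E|_{\Delta_z\times{\mathcal L}}\big)\hookrightarrow\tilde E'$: starting from $(\tilde\nabla')^{\rm flat}={\rm d}+A'(w)\frac{{\rm d}w}{w^{mr-r}}+\sum_j B'_j(w)\,{\rm d}t_j$ on the $w$-side, it pulls back via $S(w)$, observes that the ${\rm d}w$-part descends (it equals $\frac{A(z){\rm d}z}{z^m}-\tilde\nu_0(z)I_r$, which is Galois invariant), and then simply \emph{averages} the ${\rm d}t_j$-coefficients over $\mathrm{Gal}(\Delta_w/\Delta_z)$ (after subtracting $\int\partial\tilde\nu_0/\partial t_j$) to force Galois invariance and hence descent to $\Delta_z\times{\mathcal L}$; the converse is dismissed as immediate, because conjugating the $z$-side integrable extension by the meromorphic gauge transform $S(w)$ yields a meromorphic integrable connection on $\tilde E'$ regardless of whether any lattice is preserved.

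Measured against this, two of your steps deserve comment. First, in the $w\Rightarrow z$ direction you need a \emph{Galois-equivariant} integrable extension, and "this property is manifestly stable under $\mathrm{Gal}(\Delta_w/\Delta_z)$, hence the construction can be carried out Galois-equivariantly" hides the actual content: to produce an equivariant $(\tilde\nabla')^{\rm flat}$ you must choose the covering $\{\Sigma\}$, the formal transform $P(w,t)$ and the asymptotic solutions $Y_\Sigma$ compatibly with the permutation action of $\sigma$ on sectors and on the diagonal entries of $\Lambda(w,t)$ — precisely the $P_\sigma$-bookkeeping the paper carries out later in Corollary \ref{corollary: rigorous ramified local generalized isomonodromic deformation}. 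This is fixable but not "manifest", and the averaging trick makes it unnecessary: the chosen extension need not be equivariant, only its Galois average, and the ${\rm d}z\wedge{\rm d}t_j$-component of the curvature is affine in the $B_j$ so averaging is harmless. Second, the step you single out as "where the genuine work lies" — a flat formal normalization showing that $\ker\varphi_{M^{\circ}}$ is horizontal for $\tilde p^*\nabla^{\rm flat}$ — is not needed for the statement as posed: since only a \emph{meromorphic} integrable extension of $\tilde\nabla'$ is required, you can transport $p^*\nabla^{\rm flat}\otimes\nabla_{-\tilde\nu_0}$ to $\tilde E'$ through $S(w)$ off the divisor and note that $S^{-1}{\rm d}S+S^{-1}(\cdots)S$ is meromorphic across $w=0$ with relative part $\tilde\nabla'$. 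Horizontality of the lattice would only matter if you wanted to bound the pole order of the ${\rm d}t_j$-coefficients, which Proposition \ref{proposition: ramified local generalized isomonodromic deformation} does not ask for.
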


\begin{proof}
Assume that $\tilde{\nabla}'|_{\Delta_w\times{\mathcal L}}$
can be extended to an integrable connection
$\big(\tilde{\nabla}'\big)^{\rm flat}$.
Note that there is a canonical inclusion
$S(w)\colon p_{\mathcal L}^*\big(\tilde{E}|_{\Delta_z\times{\mathcal L}}\big)\hookrightarrow
\tilde{E}'|_{\Delta_w\times{\mathcal L}}$
which is Galois equivariant and compatible with the connections.
Consider the pullback
$S(w)^*\big(\tilde{\nabla}'\big)^{\mathrm{flat}}$.
If we write
\[
 \big(\tilde{\nabla}'\big)^{\mathrm{flat}}
 =
 {\rm d}+A'(w)\frac{{\rm d}w}{w^{mr-r}}
 +\sum_{j=1}^N B'_j(w){\rm d}t_j,
\]
then the connection $S(w)^*\big(\tilde{\nabla}'\big)^{\mathrm{flat}}$
on $p_{\mathcal L}^*\big(\tilde{E}|_{\Delta_z\times{\mathcal L}}\big)$ is given by
\begin{equation}
\label{equation: matrix of integrable connection to be descend}
 {\rm d}+S(w)^{-1}\left( \frac{\partial S(w)}{\partial w}+\frac{A'(w)S(w)}{w^{mr-r}}\right){\rm d}w
 +\sum_{j=1}^N S(w)^{-1} \left(\frac {\partial S(w)} {\partial t_j}+B'_j(w)S(w) \right){\rm d}t_j.
\end{equation}
Note that there is a canonical action of
$\mathrm{Gal}(\Delta_w/\Delta_z)$ on
$p_{\mathcal L}^*\big(\tilde{E}|_{\Delta_z\times{\mathcal L}}\big)\cong
p_{\mathcal L}^*\big({\mathcal O}^{\mathrm{hol}}_{\Delta_z\times{\mathcal L}}\big)^{\oplus r}$,
which induces a canonical Galois action on
$\End\big(p_{\mathcal L}^*\big(\tilde{E}|_{\Delta_z\times{\mathcal L}}\big)\big)\otimes
p_{\mathcal L}^*\Omega^1_{\Delta_z\times{\mathcal L}}(m\tilde{x})$.
If we denote the matrix of $\tilde{\nabla}|_{\Delta_z\times{\mathcal L}}$
by $\frac{A(z){\rm d}z}{z^m}$, then we have
\[
 \frac{A(z){\rm d}z}{z^m}-\tilde{\nu}_0(z)I_r=
 S(w)^{-1}\left( \frac{\partial S(w)}{\partial w}+\frac{A'(w)S(w)}{w^{mr-r}}\right){\rm d}w,
\]
which is Galois invariant.
On the other hand, the ${\rm d}t_j$-coefficient of
(\ref{equation: matrix of integrable connection to be descend})
may not be Galois invariant.
So we put
\[
 B_j: =
 -\int\frac {\partial \tilde{\nu}_0(z)} {\partial t_j} I_r
 \ + \ \frac{1}{r}\sum_{\sigma\in\mathrm{Gal}(\Delta_w/\Delta_z)}
 \left[ S(w)^{-1} \left(\frac {\partial S(w)} {\partial t_j}+B'_j(w)S(w) \right) \right]^{\sigma}.
\]
Then $B_j$ is $\mathrm{Gal}(\Delta_w/\Delta_z)$-invariant and
becomes a matrix of meromorphic functions on $\Delta_z\times{\mathcal L}$.
If we put
\[
 \tilde{\nabla}^{\mathrm{flat}}:=
 {\rm d}+\frac{A(z){\rm d}z}{z^m}+\sum_{j=1}^N B_j {\rm d}t_j,
\]
then $\tilde{\nabla}^{\mathrm{flat}}$ defines a meromorphic integrable connection on
$\tilde{E}|_{\Delta_z\times{\mathcal L}}$.
The converse is im\-me\-diate.
\end{proof}

We can see by a calculation that
\begin{equation} \label{equation: formal fundamental solution of ramified connection}
 \Psi(z,t):=
 \begin{pmatrix}
 1 & z^{\frac{1}{r}} & \cdots & z^{\frac{r-1}{r}} \\
 1 & \zeta_r z^{\frac{1}{r}} & \cdots & \zeta_r^{r-1}z^{\frac{r-1}{r}} \\
 \vdots & \vdots & \ddots & \vdots \\
 1 & \zeta_r^{r-1}z^{\frac{1}{r}} & \cdots & \zeta_r^{(r-1)^2}z^{\frac{r-1}{r}}
 \end{pmatrix}^{-1}
 {\rm e}^{\int\tilde{\nu}_0(z,t)}
 \exp\left({-}\int\Lambda\big(z^{\frac{1}{r}},t\big)\right)
\end{equation}
becomes a fundamental solution of
\begin{gather*}
{\rm d}+
 \begin{pmatrix}
 \tilde{\nu}_0(z) & z\tilde{\nu}_{r-1}(z) & \cdots & z\tilde{\nu}_1(z) \\
 \tilde{\nu}_1(z) & \tilde{\nu}_0(z)+\frac{dz}{rz} & \cdots & z\tilde{\nu}_2(z) \\
 \vdots & \vdots & \ddots & \vdots \\
 \tilde{\nu}_{r-1}(z) & \tilde{\nu}_{r-2}(z) & \cdots & \tilde{\nu}_0(z)+\frac{(r-1){\rm d}z}{rz}
 \end{pmatrix}
 \\
{} +
 \sum_{j=1}^N
 \begin{pmatrix}
 \frac{\partial}{\partial t_j} \int\tilde{\nu}_0(z) &
\! zw^{-r+1}\frac{\partial}{\partial t_j} \int w^{r-1}\tilde{\nu}_{r-1}(z)\!
 & \cdots & zw^{-1}\frac{\partial}{\partial t_j} \! \int w\tilde{\nu}_1(z) \vspace{1mm}\\
 w^{-1}\frac{\partial}{\partial t_j} \int w\tilde{\nu}_1(z) &
 \frac{\partial}{\partial t_j} \int\tilde{\nu}_0(z)
 & \cdots & \!zw^{-2}\frac{\partial}{\partial t_j} \int w^2\tilde{\nu}_2(z) \\
 \vdots & \vdots & \ddots & \vdots \\
 w^{-r+1}\frac{\partial}{\partial t_j} \int w^{r-1}\tilde{\nu}_{r-1}(z) &
 w^{-r+2}\frac{\partial}{\partial t_j} \int w^{r-2}\tilde{\nu}_{r-2}(z) & \cdots
 & \frac{\partial}{\partial t_j} \int\tilde{\nu}_0(z)
 \end{pmatrix} {\rm d}t_j
\end{gather*}
which is a matrix form of the integrable formal connection
\begin{gather*}
 \nabla_{\tilde{\nu}(w)+\sum\frac{\partial} {\partial t_j}(\int \tilde{\nu}) {\rm d}t_j}
 \colon \
 {\mathcal O}_{\mathcal L}[[w]] \longrightarrow
 {\mathcal O}_{\mathcal L}[[w]]\otimes \Omega_{\Delta_z\times{\mathcal L}}(m \tilde{x})
 \\
 f(w) \mapsto {\rm d}f(w)+
 \left( \tilde{\nu}(w)+\sum_{j=1}^N\frac{\partial} {\partial t_j}
 \left(\int \tilde{\nu}(w)\right) {\rm d}t_j \right)f(w)
\end{gather*}
with respect to the basis
$1,w,\dots,w^{r-1}$
of the free module
$ {\mathcal O}_{\mathcal L}[[w]]$
over $ {\mathcal O}_{\mathcal L}[[z]]$.
On the other hand,
recall that the elementary transform
$p_{\mathcal L}^*\big(\tilde{E},\tilde{\nabla}\big)|_{\Delta_w\times{\mathcal L}}
\mapsto \big(\tilde{E}',\tilde{\nabla}'\big)$
is given by the rational gauge transform
$S(w)\colon p_{\mathcal L}^*\big(\tilde{E}|_{\Delta_z\times {\mathcal L}}\big)
\hookrightarrow \tilde{E}'$.
If we put
$\overline{\Sigma}:=p_{M^{\circ}}(\Sigma)$, then
$p_{M^{\circ}}|_{\Sigma}\colon \Sigma \stackrel{\sim}\longrightarrow
\overline{\Sigma}$
is an isomorphism if $\Sigma$ is sufficiently small.
Substituting $z=w^{\frac{1}{r}}$ in the solution
$Y_{\Sigma}(w)$,
we can get a~fundamental solution
\[
 Z_{\overline{\Sigma}}(z,t):=S\big(z^{\frac{1}{r}},t\big)^{-1}Y_{\Sigma}\big(z^{\frac{1}{r}},t\big) {\rm e}^{\int\tilde{\nu}_0(z,t)}
\]
of $\tilde{\nabla}|_{\Delta_z\times{\mathcal L}}$.
Using the asymptotic property
$Y_{\Sigma}\exp\big( \int\Lambda(w) \big)\sim P(w)$
and the equality~(\ref{equation: formal transform using z parameter}),
we get the asymptotic relation
\begin{gather}
 Z_{\overline{\Sigma}}(z) \Psi(z)^{-1}
 =
 S\big(z^{\frac{1}{r}}\big)^{-1}Y_{\Sigma}\big(z^{\frac{1}{r}}\big)\nonumber\\
 \hphantom{ Z_{\overline{\Sigma}}(z) \Psi(z)^{-1}=}{}
 \times
 \exp\left( \int\Lambda(z^{\frac{1}{r}}) \right)
 \begin{pmatrix}
 1 & z^{\frac{1}{r}} & \cdots & z^{\frac{r-1}{r}} \\
 1 & \zeta_r z^{\frac{1}{r}} & \cdots & \zeta_r^{r-1} z^{\frac{r-1}{r}} \\
 \vdots & \vdots & \ddots & \vdots \\
 1 & \zeta_r^{r-1} z^{\frac{1}{r}} & \cdots & \zeta_r^{(r-1)^2} z^{\frac{r-1}{r}}
 \end{pmatrix}
 \sim Q(z)\label{equation: asymptotic solution on z-space}
\end{gather}
on $(z,t)\in \overline{\Sigma}$.
For another $\overline{\Sigma}'$, we have
\[
 Z_{\overline{\Sigma}'}(z,t)=Z_{\overline{\Sigma}}(z,t)C_{\overline{\Sigma},\overline{\Sigma}'}(t),
\]
where $C_{\overline{\Sigma},\overline{\Sigma}'}(t)=C_{\Sigma,\Sigma'}(t)$.
So we can in fact describe the Stokes data on
$\Delta_z$, without using a~ramified cover,
in the sense of patching data in \cite[Theorem 4.5.1]{Babbitt-Varadarajan}.

\begin{Definition}\label{definition: ramified local generalized isomonodromic deformation}
We say that a family of connections
$\big(\tilde{E},\tilde{\nabla}\big)|_{\Delta_z\times {\mathcal L}}$
over a submanifold ${\mathcal L}\subset M^{\circ}$
is a local generalized isomonodromic deformation,
if for each $t'\in{\mathcal L}$,
we can take an open neighborhood ${\mathcal L}_{t'}$ of $t'$ in ${\mathcal L}$,
a replacement of the formal transform $Q(z,t)$
given in (\ref{equation: setting of Q(z)})
and a~replacement of the covering $\{\overline{\Sigma}\}$
of $(\Delta_z\setminus\{0\})\times {\mathcal L}_{t'}$
such that
\begin{itemize}\itemsep=0pt
\item[(i)]
there is a fundamental solution
$Z_{\overline{\Sigma}}(z,t)$ of $\tilde{\nabla}$
on each $\overline{\Sigma}$
with the asymptotic property~(\ref{equation: asymptotic solution on z-space})
and
\item[(ii)]
all the Stokes matrices $C_{\overline{\Sigma},\overline{\Sigma}'}(t)$ are constant in
$t\in {\mathcal L}_{t'}$.
\end{itemize}
\end{Definition}

\begin{Corollary}\label{corollary: rigorous ramified local generalized isomonodromic deformation}
 For a submanifold ${\mathcal L}$ of $M^{\circ}$,
 the family $\big(\tilde{E},\tilde{\nabla}\big)|_{\Delta_z\times {\mathcal L}}$
 is a local isomonodromic deformation in the sense of
 Definition~{\rm \ref{definition: ramified local generalized isomonodromic deformation}}
 if and only if
 for each point $t'$ of ${\mathcal L}$, there is a~neighborhood~${\mathcal L'}$ of $t'$
 in ${\mathcal L}$ and an integrable meromorphic connection
 $\tilde{\nabla}\colon \tilde{E}|_{\Delta_z \times{\mathcal L'}}
 \longrightarrow \tilde{E}|_{\Delta_z\times{\mathcal L'}}\otimes
 \Omega^1_{\Delta_z\times{\mathcal L'}}(m\tilde{x})$
 whose associated relative connection coincides with
 $\big(\tilde{E},\tilde{\nabla}\big)|_{\Delta_z\times {\mathcal L'}}$.
\end{Corollary}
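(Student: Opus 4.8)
The plan is to reduce the statement to the $w$-variable version established in Theorem~\ref{theorem: Jimbo-Miwa-Ueno equation} and to the equivalence of integrable extensions proved in Proposition~\ref{proposition: ramified local generalized isomonodromic deformation}. Concretely, I would prove the chain of equivalences: $\big(\tilde{E},\tilde{\nabla}\big)|_{\Delta_z\times{\mathcal L}}$ is a local isomonodromic deformation in the sense of Definition~\ref{definition: ramified local generalized isomonodromic deformation} if and only if the associated unramified family $\big(\tilde{E}',\tilde{\nabla}'\big)|_{\Delta_w\times{\mathcal L}}$ is a local generalized isomonodromic deformation in the sense of Definition~\ref{definition: local generalized isomonodromic deformation}; this in turn is equivalent, by Theorem~\ref{theorem: Jimbo-Miwa-Ueno equation}, to the extendability of $\big(\tilde{E}',\tilde{\nabla}'\big)|_{\Delta_w\times{\mathcal L'}}$ to an integrable connection near each $t'$; and this, by Proposition~\ref{proposition: ramified local generalized isomonodromic deformation}, is equivalent to the extendability of $\big(\tilde{E},\tilde{\nabla}\big)|_{\Delta_z\times{\mathcal L'}}$ to an integrable meromorphic connection. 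Since the first and last conditions in this chain are exactly the two sides of the corollary, the result follows once the first equivalence is established.

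The heart of the matter is therefore this first equivalence, identifying the $z$-picture and $w$-picture notions of isomonodromy. Here I would rely on the explicit dictionary between the two Stokes pictures set up in the paragraphs preceding Definition~\ref{definition: ramified local generalized isomonodromic deformation}. Given a covering $\{\Sigma=\Gamma_u\times{\mathcal L}_{t'}\}$ of $(\Delta_w\setminus\{0\})\times{\mathcal L}_{t'}$ with $\Sigma$ small enough that $p_{M^{\circ}}|_{\Sigma}$ is an isomorphism onto $\overline{\Sigma}:=p_{M^{\circ}}(\Sigma)$, and fundamental solutions $Y_{\Sigma}(w,t)$ of $\tilde{\nabla}'$ with the asymptotic property~(\ref{equation: asymptotic property of fundamental solution}), the substitution $w=z^{1/r}$ together with the rational gauge transform $S(w)$ and the factor ${\rm e}^{\int\tilde{\nu}_0(z,t)}$ produces fundamental solutions $Z_{\overline{\Sigma}}(z,t)=S(z^{1/r},t)^{-1}Y_{\Sigma}(z^{1/r},t){\rm e}^{\int\tilde{\nu}_0(z,t)}$ of $\tilde{\nabla}$ having the asymptotic property~(\ref{equation: asymptotic solution on z-space}), with $\Psi(z,t)$ and $Q(z,t)$ as in~(\ref{equation: formal fundamental solution of ramified connection}) and~(\ref{equation: setting of Q(z)}). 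The computation recorded just before Definition~\ref{definition: ramified local generalized isomonodromic deformation} shows that under this substitution the two families of Stokes matrices literally coincide, $C_{\overline{\Sigma},\overline{\Sigma}'}(t)=C_{\Sigma,\Sigma'}(t)$. Hence condition~(ii) of Definition~\ref{definition: ramified local generalized isomonodromic deformation} (all $C_{\overline{\Sigma},\overline{\Sigma}'}(t)$ constant in $t$) holds precisely when condition~(ii) of Definition~\ref{definition: local generalized isomonodromic deformation} (all $C_{\Sigma,\Sigma'}(t)$ constant in $t$) holds. Because the ramified cover $p$ and the elementary transform $S(w)$ are invertible up to the stated identifications, this passage between admissible data is bijective, so the existence of a suitable choice on one side is equivalent to its existence on the other.

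The main obstacle, and the point requiring the most care, is matching the allowed ambiguities on the two sides: both definitions permit a replacement of the formal transform ($Q(z,t)$ in the $z$-picture, $P(w,t)$ in the $w$-picture) and a replacement of the covering, and I must check that these ambiguities correspond to each other under $w=z^{1/r}$. The key identity~(\ref{equation: formal transform using z parameter}), expressing $P(w,t)$ through $S(w,t)$, $Q(z,t)$ and the Vandermonde-type matrix, shows that a replacement of $Q$ induces a replacement of $P$ and conversely, so the dictionary is compatible with the freedom in choosing formal transforms; similarly, requiring $\Sigma$ small enough for $p_{M^{\circ}}|_{\Sigma}$ to be invertible guarantees that coverings of $(\Delta_w\setminus\{0\})\times{\mathcal L}_{t'}$ and of $(\Delta_z\setminus\{0\})\times{\mathcal L}_{t'}$ correspond. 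With these compatibilities verified the first equivalence holds, and chaining it with Theorem~\ref{theorem: Jimbo-Miwa-Ueno equation} and Proposition~\ref{proposition: ramified local generalized isomonodromic deformation} completes the proof.
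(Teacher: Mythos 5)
Your overall architecture (reduce to Theorem~\ref{theorem: Jimbo-Miwa-Ueno equation} and Proposition~\ref{proposition: ramified local generalized isomonodromic deformation}, and supply a dictionary between the $z$-picture and $w$-picture Stokes data) matches the paper's strategy, and the direction ``$z$-picture GIM $\Rightarrow$ $w$-picture GIM'' is indeed handled exactly as you describe: set $Y_{\Sigma}=S(w)Z_{\overline{\Sigma}}{\rm e}^{-\int\nu_0}$ and observe $C_{\Sigma,\Sigma'}(t)=C_{\overline{\Sigma},\overline{\Sigma}'}(t)$. However, your claimed first equivalence has a genuine gap in the other direction, precisely at the point you flag as ``requiring the most care.'' The assertion that a replacement of $P(w,t)$ induces, conversely, a replacement of $Q(z,t)$ via~(\ref{equation: formal transform using z parameter}) is false in general: the admissible replacements of $P(w,t)$ in Definition~\ref{definition: local generalized isomonodromic deformation} include right multiplication by an arbitrary invertible diagonal matrix $D(t)$ (the centralizer of the diagonal normal form $\Lambda$), whereas the admissible replacements of $Q(z,t)$ are constrained to automorphisms of the rank-one formal model $(\mathbb{C}[[w]],\nabla_{\nu})$, i.e., essentially scalars. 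Writing $Q'=Q\,V^{-1}D(t)V$ with $V$ the Vandermonde matrix does not yield a Galois-compatible formal transform unless $D(t)$ is scalar. So the passage between admissible data is \emph{not} bijective, and the implication ``$w$-picture GIM $\Rightarrow$ $z$-picture GIM'' does not follow from the dictionary alone.

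This is exactly why the paper does not prove your first equivalence as a standalone statement. Instead, for the implication ``integrable extension $\Rightarrow$ $z$-picture GIM,'' it passes to the flat fundamental solution $Y^{\rm flat}$ constructed in the proof of Proposition~\ref{proposition: extendability to integrable connection in w variable}, whose comparison with $Y_{\Sigma}$ involves a diagonal matrix $C_{\rm diag}(t)$, and then runs a Galois-equivariance argument: since the ${\rm d}t_j$-coefficients of $\tilde{\nabla}^{\rm flat}$, the formal solution $\Psi(z,t)$ of~(\ref{equation: formal fundamental solution of ramified connection}), and $Q(z,t)$ are all $\mathrm{Gal}(\Delta_w/\Delta_z)$-invariant, one deduces $P_{\sigma}\,\frac{\partial C_{\rm diag}}{\partial t_j}C_{\rm diag}^{-1}\,P_{\sigma}^{-1}=\frac{\partial C_{\rm diag}}{\partial t_j}C_{\rm diag}^{-1}$ for all cyclic permutation matrices $P_{\sigma}$, forcing all diagonal entries of $C_{\rm diag}(t)$ to coincide. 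Only then is $Q(z)\mapsto Q(z)C_{\rm diag}(t)$ a legitimate replacement and the asymptotic relation~(\ref{equation: asymptotic solution on z-space}) achievable with constant Stokes matrices. To repair your proof you would need to supply this Galois argument (or an equivalent control on the diagonal ambiguity); without it the chain of equivalences breaks at its first link.
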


\begin{proof}Assume that there is an integrable connection
$\tilde{\nabla}^{\mathrm{flat}}$ on $\tilde{E}|_{\Delta_z\times{\mathcal L'}}$
which is an extension of $\tilde{\nabla}|_{\Delta_z\times{\mathcal L'}}$
as in Proposition~\ref{proposition: ramified local generalized isomonodromic deformation}.
Then there is a canonically induced integrable connection
$\big(\tilde{\nabla}'\big)^{\mathrm{flat}}$ on $\tilde{E}'|_{\Delta_w\times{\mathcal L'}}$.
If we take a fundamental solution
$Y^{\mathrm{flat}}(w,t)$ of $\big(\tilde{\nabla}'\big)^{\mathrm{flat}}$ as in the proof of
Proposition~\ref{proposition: extendability to integrable connection in w variable},
then
\[
 Z^{\mathrm{flat}}_{\overline{\Sigma}}(z):=
 S\big(z^{\frac{1}{r}}\big)^{-1}Y^{\mathrm{flat}}\big(z^{\frac{1}{r}}\big){\rm e}^{\int\tilde{\nu}_0(z)}
\]
is a fundamental solution of $\tilde{\nabla}^{\mathrm{flat}}$.
Since
$Y_{\Sigma} \exp\big( \int\Lambda(w) \big)
\sim Y^{\mathrm{flat}} \exp\big( \int\Lambda(w) \big)
C_{\mathrm{diag}}(t)^{-1}$
as in the proof of Proposition~\ref{proposition: extendability to integrable connection in w variable},
we can see from~(\ref{equation: asymptotic solution on z-space})
that the asymptotic relation
\[
 Z^{\mathrm{flat}}_{\overline{\Sigma}}(z,t)
 C_{\mathrm{diag}}(t)^{-1}
 \Psi(z,t)^{-1}
 \sim Q(z,t)
\]
holds on $(z,t)\in \overline{\Sigma}$.
Differentiating the above in~$t_j$, we have
\begin{gather} \label{equation: differentiation of asymptotic on z-space}
 \frac{\partial Z^{\mathrm{flat}}_{\overline{\Sigma}}} {\partial t_j}
 \big(Z^{\mathrm{flat}}_{\overline{\Sigma}}\big)^{-1} Q(z)
 -Q(z)\Psi(z)
 \frac {\partial C_{\mathrm{diag}}} {\partial t_j}
 C_{\mathrm{diag}}^{-1} \Psi(z)^{-1}
 +Q(z) \frac{\partial \Psi(z)} {\partial t_j} \Psi(z)^{-1}
 \sim
 \frac{\partial Q(z)}{\partial t_j}
\end{gather}
on $(z,t)\in \overline{\Sigma}$.
Note that
$-\frac{\partial Z^{\mathrm{flat}_{\overline{\Sigma}}}} {\partial t_j }
\big(Z^{\mathrm{flat}}_{\overline{\Sigma}}\big)^{-1}$
is $\mathrm{Gal}(\Delta_w/\Delta_z)$-invariant
because it is the ${\rm d}t_j$-co\-ef\-fi\-cient of $\tilde{\nabla}^{\mathrm{flat}}$.
We can see that
$-\frac {\partial \Psi(z)} {\partial t_j} \Psi(z)^{-1}$ is also
$\mathrm{Gal}(\Delta_w/\Delta_z)$-invariant
because it is the ${\rm d}t_j$-coefficient of the formal connection
$\nabla_{\nu(w)+\sum\int\frac{\partial \nu_0}{\partial t_j}{\rm d}t_j}$.
The transform $Q(z)$ is also $\mathrm{Gal}(\Delta_w/\Delta_z)$-invariant
as a matrix of formal power series.
So, from the asymptotic relation~(\ref{equation: differentiation of asymptotic on z-space}),
we can see that
$\Psi(z) \frac {\partial C_{\mathrm{diag}}} {\partial t_j}
C_{\mathrm{diag}}^{-1} \Psi(z)^{-1}$
is $\mathrm{Gal}(\Delta_w/\Delta_z)$-invariant.
If the Galois transform
$\sigma\in \mathrm{Gal}(\Delta_w/\Delta_z)$
is given by
$\sigma(w)=\zeta_r^kw$,
then the Galois transform by $\sigma$ on
$\Psi(z,t)$ in
(\ref{equation: formal fundamental solution of ramified connection})
is given by
\begin{gather*}
 \Psi(z,t)^{\sigma}
 =
 \begin{pmatrix}
 1 & \zeta^k z^{\frac{1}{r}} & \cdots & \zeta^{(r-1)k}z^{\frac{r-1}{r}} \\
 1 & \zeta_r^{k+1} z^{\frac{1}{r}} & \cdots & \zeta_r^{(r-1)(k+1)}z^{\frac{r-1}{r}} \\
 \vdots & \vdots & \ddots & \vdots \\
 1 & \zeta_r^{r-1+k}z^{\frac{1}{r}} & \cdots & \zeta_r^{(r-1)(r-1+k)}z^{\frac{r-1}{r}}
 \end{pmatrix}^{-1}
 {\rm e}^{\int\nu_0(z,t)}
 \exp\left({-} \int\Lambda(\zeta_r^k z^{\frac{1}{r}},t) \right)
 \\
\hphantom{\Psi(z,t)^{\sigma}=}{} =
 \begin{pmatrix}
 1 & z^{\frac{1}{r}} & \cdots & z^{\frac{r-1}{r}} \\
 1 & \zeta_r z^{\frac{1}{r}} & \cdots & \zeta_r^{r-1}z^{\frac{r-1}{r}} \\
 \vdots & \vdots & \ddots & \vdots \\
 1 & \zeta_r^{r-1}z^{\frac{1}{r}} & \cdots & \zeta_r^{(r-1)^2}z^{\frac{r-1}{r}}
 \end{pmatrix}^{-1}
 P_{\sigma}
 {\rm e}^{\int\nu_0(z,t)}
 P_{\sigma}^{-1}
 \exp\left({-} \int\Lambda(z^{\frac{1}{r}},t) \right)
 P_{\sigma},
\end{gather*}
where $P_{\sigma}$ is the permutation matrix
defined by
$P_{\sigma}=(e_{k+1},e_{k+2},\dots,e_r,e_1,\dots,e_k)$
for the canonical basis
$e_1,\dots,e_r$ of $\mathbb{C}^r$.
So the equation of Galois invariance
\[
 \Psi(z)^{\sigma} \frac{\partial C_{\mathrm{diag}}}{\partial t_j} C_{\mathrm{diag}}^{-1}
 \big(\Psi(z)^{\sigma}\big)^{-1}
 =
 \Psi(z) \frac {\partial C_{\mathrm{diag}}} {\partial t_j}
 C_{\mathrm{diag}}^{-1} \Psi(z)^{-1}
\]
deduces the equalities
\[
 P_{\sigma} \, \frac {\partial C_{\mathrm{diag}}} {\partial t_j}
 C_{\mathrm{diag}}^{-1} \, P_{\sigma}^{-1}
 =
\frac {\partial C_{\mathrm{diag}}} {\partial t_j}
 C_{\mathrm{diag}}^{-1}
\]
for cyclic permutation matrices
$P_{\sigma}$ corresponding to $\sigma\in \mathrm{Gal}(\Delta_w/\Delta_z)$.
Thus all the diagonal entries of
$\frac {\partial C_{\mathrm{diag}}} {\partial t_j}
C_{\mathrm{diag}}^{-1}$
are the same,
which implies that all the diagonal entries of $C_{\mathrm{diag}}(t)$ are the same.
After replacing $Q(z)$ with $Q(z) C_{\mathrm{diag}}(t)$,
we have the asymptotic relation
 \[
 Z^{\mathrm{flat}}_{\Sigma}(z)
 \Psi(z)^{-1}
 \sim \ Q(z)
 \qquad
 \text{as $z\to 0$ on $\overline{\Sigma}$}
\]
for all $\overline{\Sigma}$.
After replacing
$Z_{\overline{\Sigma}}(z,t)$ with
$Z^{\mathrm{flat}}_{\Sigma}(z,t)$
and shrinking ${\mathcal L}$ if necessary,
all the Stokes matrices
$\big\{ C_{\overline{\Sigma},\overline{\Sigma}'} \big\}$
become constant.
So $\big(\tilde{E},\tilde{\nabla}\big)|_{\Delta_z\times {\mathcal L}}$
becomes a local generalized isomonodromic deformation.

Conversely, assume that $\big(\tilde{E},\tilde{\nabla}\big)|_{\Delta_z\times {\mathcal L}}$
is a~local generalized isomonodromic deformation.
For the fundamental solution
$Z_{\overline{\Sigma}}(z,t)$ of $\tilde{\nabla}|_{\overline{\Sigma}}$
given in Definition~\ref{definition: ramified local generalized isomonodromic deformation},
\[
 Y_{\Sigma}(z,t)=S(w,t)Z_{\overline{\Sigma}}(z,t){\rm e}^{-\int \nu_0(z,t)}
\]
becomes a fundamental solution of
$\tilde{\nabla}'|_{\Sigma}$.
So we have
$C_{\Sigma,\Sigma'}(t)=C_{\overline{\Sigma},\overline{\Sigma}'}$
which is constant in~$t$.
Thus
$\tilde{\nabla}'|_{\Delta_w\times{\mathcal L}}$ is a local generalized isomonodromic deformation.
By Theorem~\ref{theorem: Jimbo-Miwa-Ueno equation},
we can extend $\tilde{\nabla}'|_{\Delta_w\times{\mathcal L}}$
to an integrable connection after shrinking ${\mathcal L}$ at each point.
So $\big(\tilde{E},\tilde{\nabla}\big)|_{\Delta_z\times {\mathcal L}}$
can be extended to an integrable connection by
Proposition~\ref{proposition: ramified local generalized isomonodromic deformation}.
\end{proof}

\begin{Remark}The achievement of the construction of the generalized isomonodromic deformation
by Bremer and Sage in \cite{Bremer-Sage-2} is based on
the Jimbo--Miwa--Ueno theory, which becomes
Corollary~\ref{corollary: rigorous ramified local generalized isomonodromic deformation}
in our setting.
\end{Remark}

\section {Horizontal lift of a universal family of connections}\label{section: horizontal lift}

We will extend the notion of local generalized isomonodromic deformation
in Section~\ref{section: local analytic theory}
to a global setting on the moduli space of connections.
Its differential equation is given as a subbundle of the tangent bundle of the moduli space,
which satisfies the integrability condition.
For its construction, we introduce the notion of horizontal lift of
a universal family of connections.

Let the notations
${\mathcal T}$, ${\mathcal C}$, $\lambda$, $\tilde{\mu}$, $\tilde{\nu}$,
$M^{\balpha}_{{\mathcal C},{\mathcal D}}(\lambda,\tilde{\mu},\tilde{\nu})$
be as in Section~\ref{section: construction of the moduli space}.
There is an \'etale surjective morphism
$\tilde{M}\longrightarrow M^{\balpha}_{{\mathcal C},{\mathcal D}}(\lambda,\tilde{\mu},\tilde{\nu})$
such that there is a universal family of
connections
$\big(\tilde{E},\tilde{\nabla},\tilde{l},\tilde{\ell},\tilde{\mathcal V}\big)$
on ${\mathcal C}_{\tilde{M}}$.
We may assume that
the generic $\tilde{\nu}$-ramified structure
$\tilde{\mathcal V}$ is given by a factorized $\tilde{\nu}$-ramified structure
$\big(\tilde{V}_k,\tilde{\vartheta}_k,\tilde{\varkappa}_k\big)_{0\leq k\leq r-1}$.

For a Zariski open subset
${\mathcal T'}\subset {\mathcal T}$,
we put $\tilde{M}':=\tilde{M}\times_{\mathcal T}{\mathcal T'}$.
Take a vector field
$v\in H^0({\mathcal T}',T_{\mathcal T}|_{\mathcal T'})$.
If we put
${\mathcal T}'[v]:={\mathcal T}'\times\Spec\mathbb{C}[\epsilon]$
with $\epsilon^2=0$,
then $v$ is characterized by a morphism
$I_v\colon {\mathcal T}'[v]\longrightarrow{\mathcal T'}$
whose restriction to ${\mathcal T'}$ is the identity.
Put
$\tilde{M}'[v]:=\tilde{M}'\times\Spec\mathbb{C}[\epsilon]$
and consider the fiber product
${\mathcal C}_{\tilde{M}'[v]}:=
{\mathcal C}\times_{\mathcal T} (\tilde{M}'\times\Spec\mathbb{C}[\epsilon])$
with respect to the projection
${\mathcal C}\longrightarrow {\mathcal T}$
and the composition
$\tilde{M}'\times\Spec\mathbb{C}[\epsilon]
\longrightarrow {\mathcal T'}\times\Spec\mathbb{C}[\epsilon]
\xrightarrow{I_v} {\mathcal T'}\hookrightarrow {\mathcal T}$.

We define a divisor ${\mathcal D}'$ on ${\mathcal C}$
by setting
\[
 {\mathcal D}':=\sum_{i=1}^{n_{\mathrm{un}}} \big(m^{\mathrm{un}}_i-1\big)\tilde{x}_i^{\mathrm{un}}
 +\sum_{i=1}^{n_{\mathrm{ram}}} \big(m^{\mathrm{ram}}_i-1\big)\tilde{x}_i^{\mathrm{ram}}.
\]
Consider the sheaf of differential forms
$\Omega^1_{{\mathcal C}_{\tilde{M}'[v]/\tilde{M}'}}$
with respect to the composition of trivial projections
\[
 {\mathcal C}_{\tilde{M}'[v]}
 ={\mathcal C}\times_{\mathcal T}\big(\tilde{M}'\times\Spec\mathbb{C}[\epsilon]\big)
 \longrightarrow
 \tilde{M}'\times\Spec\mathbb{C}[\epsilon]\longrightarrow M'.
\]
Take a local section $z_i^{\mathrm{log}}$
(resp.\ $z_i^{\mathrm{un}}$, $z_i^{\mathrm{ram}}$)
of ${\mathcal O}_{{\mathcal C}_{\mathcal T'}}$
which is a local defining equation of
$\tilde{x}_i^{\mathrm{log}}$
(resp.\ $\tilde{x}_i^{\mathrm{un}}$, $\tilde{x}_i^{\mathrm{ram}}$).
We write the induced local section of
${\mathcal O}_{{\mathcal C}_{\tilde{M}'[v]}}$
by the same symbol $z_i^{\mathrm{log}}$
(resp.\ $z_i^{\mathrm{un}}$, $z_i^{\mathrm{ram}}$).
Let
$\tilde{\Omega}_v$
be the coherent subsheaf of
$\Omega^1_{{\mathcal C}_{\tilde{M'}[v]}/\tilde{M'}}({\mathcal D}_{\tilde{M'}[v]})$
which is locally defined by
\begin{gather}
 \tilde{\Omega}_v
 =
 {\mathcal O}_{{\mathcal C}_{\tilde{M}'[v]}} \frac{dz_i^{\mathrm{reg}}} {z_i^{\mathrm{log}}}
 +{\mathcal O}_{{\mathcal C}_{\tilde{M}'}} d\epsilon
 \qquad \text{around $(\tilde{x}_i^{\mathrm{log}})_{\tilde{M}'[v]}$},
 \nonumber\\
 \tilde{\Omega}_v
=
 {\mathcal O}_{{\mathcal C}_{\tilde{M}'[v]}}
 \frac{dz_i^{\mathrm{un}}} {(z_i^{\mathrm{un}})^{m^{\mathrm{un}}_i}}
 +{\mathcal O}_{{\mathcal C}_{\tilde{M}'}}
 \frac{d\epsilon} {(z_i^{\mathrm{un}})^{m^{\mathrm{un}}_i-1}}
 \qquad
\text{around $(\tilde{x}_i^{\mathrm{un}})_{\tilde{M}'[v]}$},
 \nonumber\\
 \tilde{\Omega}_v
 =
 {\mathcal O}_{{\mathcal C}_{\tilde{M}'[v]}}
 \frac{dz_i^{\mathrm{ram}}} {(z_i^{\mathrm{ram}})^{m^{\mathrm{ram}}_i}}
 +{\mathcal O}_{{\mathcal C}_{\tilde{M}'}}
 \frac{d\epsilon} {(z_i^{\mathrm{ram}})^{m^{\mathrm{ram}}_i-1}}
 \qquad
\text{around $(\tilde{x}_i^{\mathrm{ram}})_{\tilde{M}'[v]}$}.\label{equation: definition of total differential}
\end{gather}

\begin{Definition}\label{definition: horizontal lift in one vector case}
We say that
$\big({\mathcal E}^v,\nabla^v, l^v,\ell^v,{\mathcal V}^v\big)$
is a global horizontal lift of
$\big(\tilde{E},\tilde{\nabla},\tilde{l},\tilde{\ell},\tilde{\mathcal V}\big)_{\tilde{M}'}$
with respect to $v$, if
\begin{itemize}\itemsep=0pt
\item[(i)]
${\mathcal E}^v$ is a vector bundle on
${\mathcal C}_{\tilde{M}'[v]}$ of rank $r$,
\item[(ii)]
$\nabla^v \colon {\mathcal E}^v \longrightarrow
{\mathcal E}^v \otimes \tilde{\Omega}_v$
is a morphism such that
$\nabla^v(fa)=a\otimes {\rm d}f+f\nabla^v(a)$ for
$f\in{\mathcal O}_{{\mathcal C}_{\tilde{M'}[v]}}$, $a\in {\mathcal E}^v$
and that the matrix
$\Gamma^v=\tilde{A}(z){\rm d}z+B(z) \, {\rm d}\epsilon$
corresponding to $\nabla^v$ with respect to
a local frame $e_0,\dots,e_{r-1}$ of ${\mathcal E}^v|_{U[v]}$
defined by
$
 (\nabla^v(e_0),\dots,\nabla^v(e_{r-1}))
 =
 (e_0,\dots,e_{r-1})\Gamma^v
$
satisfies
$\tilde{A}(z)\in M_r({\mathcal O}_{U[v]}({\mathcal D}_{\tilde{M}'[v]}\cap U))$
and
$B(z)\in M_r ( {\mathcal O}_U ({\mathcal D}'_{\tilde{M}'}\cap U))$
\item[(iii)]
$\nabla^v$ satisfies the integrability condition
${\rm d}\Gamma^v+\Gamma^v\wedge\Gamma^v=0$,
which means that
the equality
$\frac{\partial \tilde{A}}{\partial \epsilon}{\rm d}z\wedge {\rm d}\epsilon
={\rm d}B(z)\wedge d\epsilon+[\tilde{A}(z),B(z)]{\rm d}z\wedge {\rm d}\epsilon$
holds,
\item[(iv)]
for the relative connection
$\overline{\nabla^v}\colon
{\mathcal E}^v\longrightarrow{\mathcal E}^v\otimes
\Omega^1_{{\mathcal C}_{\tilde{M}'[v]/\tilde{M}'[v]}}\big({\mathcal D}_{\tilde{M}'[v]}\big)$
induced by $\nabla^v$,
\begin{enumerate}\itemsep=0pt
\item[(a)]
$l^v=(l^v_k)_{0\leq k\leq r-1}$ is a logarithmic $\lambda$-parabolic structure
on $\big({\mathcal E}^v,\overline{\nabla^v}\big)$
such that the subsheaf $\ker\big( {\mathcal E}^v\to
{\mathcal E}^v|_{({\mathcal D}_{\mathrm{log}})_{\tilde{M'}[v]}}/l^v_k\big)$
of ${\mathcal E}^v$ is preserved by $\nabla^v$ for $0\leq k\leq r-1$,
\item[(b)]
$\ell^v=(\ell^v_k)_{0\leq k\leq r-1}$ is a generic unramified
$I_v^*\tilde{\mu}$-parabolic structure on
$\big({\mathcal E}^v,\overline{\nabla^v}\big)$
such that the subsheaf
$\ker\big( {\mathcal E}^v\to
{\mathcal E}^v|_{({\mathcal D}_{\mathrm{un}})_{\tilde{M'}[v]}}/\ell^v_k\big)$
of ${\mathcal E}^v$
is preserved by $\nabla^v$ for $0\leq k\leq r-1$,
\item[(c)]
${\mathcal V}^v=\big(V^v_k,\vartheta^v_k,\varkappa^v_k\big)_{0\leq k\leq r-1}$
is a factorized $I_v^*\tilde{\nu}$-ramified structure on
$\big({\mathcal E}^v,\overline{\nabla^v}\big)$
such that the subsheaf
$\ker \big( {\mathcal E}^v \to
{\mathcal E}^v|_{({\mathcal D}_{\mathrm{ram}})_{\tilde{M}'[v]}}/V^v_k \big)$
of ${\mathcal E}^v$
is preserved by $\nabla^v$,
\end{enumerate}
\item[(v)]
$\big({\mathcal E}^v,\overline{\nabla^v}, l^v,\ell^v,{\mathcal V}^v\big)
\otimes{\mathcal O}_{\tilde{M'}[v]}/(\epsilon)
\cong\big(\tilde{E},\tilde{\nabla},\tilde{l},\tilde{\ell},\tilde{\mathcal V}\big)_{\tilde{M}'}$
holds.
\end{itemize}
\end{Definition}

We will prove the existence and uniqueness of the horizontal lift
in the above definition.
For its proof, we will show the local existence and the uniqueness of
the horizontal lift.

\begin{Definition}
Let $U$ be an open subset of ${\mathcal C}_{\tilde{M'}}$
such that $\tilde{E}|_U\cong{\mathcal O}_U^{\oplus r}$
and let $U[v]$ be the open subscheme of ${\mathcal C}_{\tilde{M}'[v]}$
whose underlying set is the same as $U$.
We say that
$\big({\mathcal E}^v_U,\nabla^v_U, l^v_U,\ell^v_U,{\mathcal V}^v_U\big)$
is a local horizontal lift of
$\big(\tilde{E}|_U,\tilde{\nabla}|_U,\tilde{l}|_U,\tilde{\ell}|_U,\tilde{\mathcal V}|_U\big)$
with respect to $v$, if
\begin{enumerate}\itemsep=0pt
\item[(i)]
${\mathcal E}^v_U$ is a vector bundle on $U[v]$ of rank $r$,
\item[(ii)]
$\nabla^v_U\colon {\mathcal E}^v_U\longrightarrow
{\mathcal E}^v_U\otimes\tilde{\Omega}_v|_{U[v]}$
is an integrable connection in the sense of
Definition \ref{definition: horizontal lift in one vector case} (ii) and (iii),
\item[(iii)] $\big(l^v_U,\ell^v_U,{\mathcal V}^v_U\big)$ satisfies the same condition
as (a), (b), (c) of Definition \ref{definition: horizontal lift in one vector case} and
for the induced relative connection $\overline{\nabla^v}$ on ${\mathcal E}^v$,
\[
\big({\mathcal E}^v_U,\overline{\nabla^v}_U, l^v_U,\ell^v_U,{\mathcal V}^v_U\big)
\otimes{\mathcal O}_{\tilde{M'}[v]}/(\epsilon)
\cong\big(\tilde{E}|_U,\tilde{\nabla}|_U,\tilde{l}|_U,\tilde{\ell}|_U,\tilde{\mathcal V}|_U\big)
\]
holds.
\end{enumerate}
\end{Definition}

\begin{Lemma}[logarithmic local horizontal lift]
\label{lemma: local horizontal lift of regular singular connection}
Let $U$ be an affine open subset of ${\mathcal C}_{\tilde{M}'}$
such that
$\tilde{E}|_U\cong{\mathcal O}_U^{\oplus r}$
and that ${\mathcal D}_{\tilde{M'}}\cap U=\big(\tilde{x}^{\mathrm{log}}_i\big)_{\tilde{M}'}\cap U$
for some $i$, which is defined by the equation $z_U=0$
for a section $z$ of ${\mathcal O}_{{\mathcal C}_{\mathcal T'}}$
on a Zariski open subset of ${\mathcal C}_{\mathcal T'}$.
Then there exists a local horizontal lift
$\big({\mathcal E}^v_U,\nabla^v_U, l^v_U\big)$
of
$\big(\tilde{E}|_U,\tilde{\nabla}|_U,\tilde{l}|_U\big)$
with respect to~$v$,
which is unique up to an isomorphism.
\end{Lemma}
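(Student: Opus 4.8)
The plan is to construct the horizontal lift explicitly in a local frame and then verify uniqueness by reducing the integrability condition to a first-order linear problem. Since $U$ is affine with $\tilde{E}|_U \cong {\mathcal O}_U^{\oplus r}$, the relative connection $\tilde{\nabla}|_U$ is given by a matrix $\tilde{A}(z)\,{\rm d}z$ with $\tilde{A}(z) \in M_r({\mathcal O}_U(D_{\log}\cap U))$, so $\tilde{A}(z)$ has at most a simple pole along $z_U = 0$. To build the lift, I would take ${\mathcal E}^v_U := {\mathcal O}_{U[v]}^{\oplus r}$ and seek a connection matrix $\Gamma^v = \tilde{A}(z)\,{\rm d}z + B(z)\,{\rm d}\epsilon$ with $B(z) \in M_r({\mathcal O}_U({\mathcal D}'_{\tilde{M}'}\cap U))$; note that since ${\mathcal D}' $ has no component at the logarithmic point $\tilde{x}^{\mathrm{log}}_i$, the constraint forces $B(z)$ to be \emph{holomorphic} along $z_U = 0$. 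The key equation to solve is the integrability condition from Definition~\ref{definition: horizontal lift in one vector case}(iii),
\[
 \frac{\partial \tilde{A}}{\partial \epsilon}
 = \frac{\partial B(z)}{\partial z} + [\tilde{A}(z), B(z)],
\]
where $\partial\tilde{A}/\partial\epsilon$ encodes how the relative connection matrix varies as the base point moves along $v$ through the morphism $I_v$.

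The first substantive step is to make sense of $\partial\tilde{A}/\partial\epsilon$: because the divisor and the local exponents $\lambda$ are encoded in ${\mathcal T}$, moving along $v$ changes the defining section $z$ and the residue data, and I would use the logarithmic $\lambda$-parabolic structure $\tilde{l}|_U$ together with the constancy of $\lambda$ (the $\lambda^x_k$ are fixed in the setup of Section~\ref{section: definition regular singular, unramified or ramified structure}) to control the singular part of $\partial\tilde{A}/\partial\epsilon$ at $z_U = 0$. Concretely, I expect that the residue $\res_{z=0}(\tilde{A})$ deforms only by conjugation as $v$ varies, so that $\partial\tilde{A}/\partial\epsilon$ has a simple pole whose residue is a commutator $[\tilde{A}, \text{(something holomorphic)}]$ up to the contribution coming from the variation of the coordinate $z$ itself. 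This is exactly the structure needed for the displayed equation to admit a holomorphic solution $B(z)$: matching the polar parts on both sides determines the principal part of $B$, and the holomorphicity of $B$ along the logarithmic locus is what must be checked. The parabolic compatibility condition in Definition~\ref{definition: horizontal lift in one vector case}(iv)(a), namely that $\ker({\mathcal E}^v \to {\mathcal E}^v|_{(\mathcal D_{\log})}/l^v_k)$ be $\nabla^v$-preserved, then pins down $B(z)$ uniquely modulo the freedom of choosing the lift $l^v_k$ of the filtration, and I would take $l^v_k := \tilde{l}_k \otimes \mathbb{C}[\epsilon]$ as the natural horizontal lift of the filtration.

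The uniqueness statement is where the real content lies, and I expect it to be the main obstacle. Two local horizontal lifts differ by a gauge transformation $g = \mathrm{id} + \epsilon g_1$ with $g_1 \in M_r({\mathcal O}_U)$, and the requirement that both lifts reduce to $(\tilde{E}, \tilde{\nabla}, \tilde{l})|_U$ modulo $\epsilon$ means $g_1$ must commute with $\tilde{A}(z)$ in an appropriate sense and preserve the filtration $\tilde{l}$. The difference of the two $\epsilon$-coefficients $B - B'$ then equals $\frac{\partial g_1}{\partial z} + [\tilde{A}, g_1]$, i.e. $d^0$ applied to $g_1$ in the complex governing infinitesimal deformations; the obstruction to uniqueness is precisely the existence of a nonzero horizontal $g_1$ preserving all the structure, which is controlled by $\mathbf{H}^0$ of the relevant complex. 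The hard part will be showing that on the \emph{affine} piece $U$ the only ambiguity is an honest coboundary, so that after fixing the filtration lift and normalizing the gauge, $B(z)$ is forced to be unique; I would handle this by a direct analysis of the simple-pole structure at $z_U = 0$, using that the parabolic structure rigidifies the residual gauge freedom exactly as in the analogous logarithmic arguments of \cite[Proposition~7.2]{Inaba-1} and \cite{Inaba-3}, to which the computation can be reduced.
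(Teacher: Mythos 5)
There is a genuine gap in both halves of your argument. On existence, you set up a nontrivial linear problem — solve $\partial\tilde{A}/\partial\epsilon = \partial B/\partial z + [\tilde{A},B]$ for a holomorphic $B$ by matching polar parts — and then rest the solvability on an unproven expectation that the residue of $\tilde{A}$ "deforms only by conjugation" along $v$. You never produce the lift. The point you are missing is that in the logarithmic case the exponents $\lambda$ are \emph{not} time variables (only the coefficients of $\tilde{\mu}$ and $\tilde{\nu}$ vary over ${\mathcal T}$), so there is nothing forcing a nonzero ${\rm d}\epsilon$-part. The paper's construction is a one-liner: choose a frame adapted to $\tilde{l}$ so that $\tilde{\nabla}|_U = {\rm d} + A(z)\,{\rm d}z/z$ with $A(0)$ lower triangular with diagonal $\lambda^{(i)}_0,\dots,\lambda^{(i)}_{r-1}$, lift $A(z)$ to $\tilde{A}(z)$ over $U[v]$ keeping this normal form and with the ${\rm d}\epsilon$-coefficient of ${\rm d}\tilde{A}$ equal to zero, and set $\nabla^v_U = {\rm d} + \tilde{A}(z)\,{\rm d}z/z$, i.e., $B = 0$. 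Integrability is then automatic.

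On uniqueness, your gauge computation is incorrect. For $g = \mathrm{id} + \epsilon g_1$ one has
\begin{equation*}
 g^{-1}{\rm d}g + g^{-1}\big(A\,{\rm d}z + B\,{\rm d}\epsilon\big)g
 = \Big(A + \epsilon\big(\tfrac{\partial g_1}{\partial z} + [A,g_1]\big)\Big){\rm d}z + (B + g_1)\,{\rm d}\epsilon,
\end{equation*}
so the difference of the ${\rm d}\epsilon$-coefficients is $g_1$ itself, while $\partial g_1/\partial z + [A,g_1]$ is the change in the $\epsilon$-part of the ${\rm d}z$-coefficient — you have swapped the two. There is also no requirement that $g_1$ commute with $\tilde{A}$: any $g \equiv \mathrm{id} \pmod{\epsilon}$ automatically induces the identity on the reduction. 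The correct argument writes a second lift as ${\rm d} + (\tilde{A}+\epsilon C')\,{\rm d}z/z + B'\,{\rm d}\epsilon$ (with $B'$ holomorphic and $B'(0)$ lower triangular by the parabolic condition), notes that integrability forces $C'\,{\rm d}z/z = {\rm d}B' + [\tilde{A},B']\,{\rm d}z/z$, and then checks that the explicit gauge transformation $I_r - \epsilon B'$ — read off directly from the ${\rm d}\epsilon$-coefficient — simultaneously kills the $B'\,{\rm d}\epsilon$ term and the $\epsilon C'$ discrepancy, carrying the second lift to $\nabla^v_U$. Uniqueness of the isomorphism follows because it is determined by the ${\rm d}\epsilon$-coefficient; no appeal to $\mathbf{H}^0$ of a deformation complex is needed, and such an appeal would not by itself rule out nontrivial flat endomorphisms on the affine piece $U$.
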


\begin{proof}
Note that $\big(\tilde{\ell}|_U,\tilde{\mathcal V}|_U\big)$ is nothing in this case.
Put $\tilde{x}:=\big(\tilde{x}^{\mathrm{log}}_i\big)_{\tilde{M}'}\cap U$.
For a suitable choice of a frame
$e_0,\dots,e_{r-1}$ of $\tilde{E}|_U\cong{\mathcal O}_U^{\oplus r}$,
we may assume that $\tilde{l}_k\cap U$ is given by
$\langle e_k|_{\tilde{x}},\dots,e_{r-1}|_{\tilde{x}} \rangle$.
With respect to the frame $e_0,\dots,e_{r-1}$ of $\tilde{E}|_U$, we can write
$\tilde{\nabla}|_U={\rm d}+A(z){\rm d}z/z$,
where $A(z)$ is a matrix with values in ${\mathcal O}_U$
such that $A(0)$ is a lower triangular matrix with the diagonal entries
$\lambda^{(i)}_0,\dots,\lambda^{(i)}_{r-1}$.
Take a lift $\tilde{A}(z)$ of $A(z)$ as a matrix with values in
${\mathcal O}_{U[v]}$ such that
$\tilde{A}(0)$ is a lower triangular matrix with the diagonal entries
$\lambda^{(i)}_0,\dots,\lambda^{(i)}_{r-1}$.
After replacing $\tilde{A}(z)$, we may assume that the $d\epsilon$-coefficient
of each entry of $d\tilde{A}(z)$ in
$\Omega^1_{U[v]/\tilde{M}'}={\mathcal O}_{U[v]}{\rm d}z\oplus{\mathcal O}_U {\rm d}\epsilon$
vanishes.
Then $\nabla^v_U:={\rm d}+\tilde{A}(z){\rm d}z/z$ defines an integrable connection
on ${\mathcal E}^v_U:={\mathcal O}_{U[v]}^{\oplus r}$,
which preserves the parabolic structure $l^v_U$
on ${\mathcal E}^v_U$ defined by
$l^v_{U,k}=\langle e_k|_{\tilde{x}_{U[v]}},\dots,e_{r-1}|_{\tilde{x}_{U[v]}}\rangle$.

Assume that
$\big({\mathcal E'}_U,\nabla'_U,l'_U\big)$ is another local horizontal lift of
$\big(\tilde{E}|_U,\tilde{\nabla}|_U,\tilde{l}|_U\big)$.
Then we have ${\mathcal E'}_U\cong{\mathcal O}_{U[v]}^{\oplus r}$
and we can write $\nabla'_U={\rm d}+\tilde{A}'(z){\rm d}z/z+B'(z){\rm d}\epsilon$.
After replacing the frame $e_0,\dots,e_{r-1}$
of ${\mathcal E'}_U\cong{\mathcal O}_{U[v]}^{\oplus r}$,
we may assume that $l'_U$ is given by
$l'_{U,k}=\langle e_k|_{\tilde{x}_{U[v]}},\dots,e_{r-1}|_{\tilde{x}_{U[v]}}\rangle$.
Then $\tilde{A}'(0)$ is a lower triangular matrix and
$B'(0)$ is also lower triangular by the condition~(a) of
Definition~\ref{definition: horizontal lift in one vector case}.
Since $\nabla'_U$ is a lift of $\tilde{\nabla}|_U$,
we can write $\tilde{A}'(z)=\tilde{A}(z)+\epsilon C'(z)$,
with~$C'(0)$ a~lower triangular matrix whose diagonal entries are zero.
The integrability condition of~$\nabla'$ yields
$C'(z){\rm d}z/z={\rm d}B'(z)+[\tilde{A}(z),B'(z)]{\rm d}z/z$.
Applying the transform $I_r-\epsilon B'(z)$ to the connection~$\nabla'_U$,
the matrix of connection becomes
\begin{gather*}
 (I_r+\epsilon B'(z)){\rm d}(I_r-\epsilon B'(z))
 +(I_r+\epsilon B'(z))\big( (\tilde{A}(z)+\epsilon C'(z)){\rm d}z/z+B'(z){\rm d}\epsilon \big)(I_r-\epsilon B'(z))
 \\
 =\tilde{A}{\rm d}z/z
 +\epsilon \big( C'(z){\rm d}z/z-{\rm d}B'(z)-[\tilde{A}(z),B'(z)]{\rm d}z/z \big)
 -B'(z){\rm d}\epsilon+B'(z){\rm d}\epsilon
 =\tilde{A}(z){\rm d}z/z.
\end{gather*}
So $I_r-\epsilon B'(z)$ transforms
$\big({\mathcal E'}_U,\nabla'_U,l'_U\big)$
to
$\big({\mathcal E}^v_U,\nabla^v_U,l^v_U\big)$.
The transform $I_r-\epsilon B'(z)$ also preserves the parabolic structures
on the both sides.
Since the transform is uniquely determined by the ${\rm d}\epsilon$-coefficient,
we can see the uniqueness of the transform.
\end{proof}

The following lemma is essentially given in
\cite[Theorem~6.2]{Inaba-Saito}.

\begin{Lemma}[unramified irregular singular local horizontal lift]
\label{lemma: local horizontal lift of unramified connection}
Let $U$ be an affine open subset of ${\mathcal C}_{\tilde{M}'}$
such that
$\tilde{E}|_U\cong{\mathcal O}_U^{\oplus r}$,
${\mathcal D}_{\tilde{M'}}\cap U=m^{\mathrm{un}}_i\big(\tilde{x}^{\mathrm{un}}_i\big)_{\tilde{M}'}\cap U$
for some $i$
and that $\big(\tilde{x}^{\mathrm{un}}_i\big)_{\tilde{M}'}\cap U$ is defined by the equation $z_U=0$
for a section $z$ of ${\mathcal O}_{{\mathcal C}_{\mathcal T'}}$
on a Zariski open subset of ${\mathcal C}_{\mathcal T'}$.
Then there exists a local horizontal lift
$\big({\mathcal E}^v_U,\nabla^v_U, \ell^v_U\big)$
of
$\big(\tilde{E}|_U,\tilde{\nabla}|_U,\tilde{\ell}|_U\big)$
with respect to $v$,
which is unique up to an isomorphism.
\end{Lemma}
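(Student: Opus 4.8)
The plan is to mimic the proof of Lemma~\ref{lemma: local horizontal lift of regular singular connection}, the new feature being that the exponents $\tilde{\mu}$ genuinely vary along $v$, so the $d\epsilon$-coefficient $B(z)$ can no longer be taken to vanish and must be solved for; the generic unramified condition is exactly what makes this equation solvable and its solution unique. First I would choose a frame $e_0,\dots,e_{r-1}$ of $\tilde{E}|_U\cong{\mathcal O}_U^{\oplus r}$ adapted to the filtration $\tilde{\ell}$, so that $\tilde{\ell}_k|_{m^{\mathrm{un}}_i\tilde{x}}=\langle e_k,\dots,e_{r-1}\rangle$ and, writing $\tilde{\nabla}|_U=d+A(z)\,dz/z^{m}$ with $m:=m^{\mathrm{un}}_i$ and $A(z)\in M_r({\mathcal O}_U)$, the defining property of a generic unramified $\mu$-parabolic structure forces the $dz/z^m$-matrix to be lower triangular modulo the filtration with diagonal entries the $\mu_k$; since the leading terms of the $\mu_k$ are mutually distinct, the leading coefficient $A(0)$ has distinct eigenvalues. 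Giving a local horizontal lift then amounts to producing $\tilde{A}(z)\in M_r({\mathcal O}_{U[v]}(\ldots))$ reducing to $A(z)$ modulo $\epsilon$ and whose leading parabolic data is the prescribed $I_v^{*}\tilde{\mu}$, together with $B(z)\in M_r({\mathcal O}_U({\mathcal D}'_{\tilde{M}'}\cap U))$ of pole order at most $m-1$, such that condition~(iii) of Definition~\ref{definition: horizontal lift in one vector case} holds; the parabolic structure $\ell^v_U$ is then read off from the same flag over $U[v]$.

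Unwinding condition~(iii), after multiplying by $z^m$, the integrability condition becomes the single matrix equation
\begin{equation*}
 \frac{\partial \tilde{A}}{\partial\epsilon}
 =
 z^m\frac{\partial B}{\partial z}+[\tilde{A},B].
\end{equation*}
Writing $A=\sum_{j\ge 0}A_jz^j$ and $B=\sum_{j\ge 0}B_jz^{\,j-m+1}$ and comparing coefficients of powers of $z$, the polar part $B_0,\dots,B_{m-2}$ is determined recursively: at each order the equation has the shape $\mathrm{ad}(A_0)(B_j)=(\text{terms already computed})$ on the off-diagonal part, which is uniquely solvable because $A_0$ has distinct eigenvalues so that $\mathrm{ad}(A_0)$ is invertible on off-diagonal matrices, while the diagonal part of the equation is exactly the prescribed variation $I_v^{*}\tilde{\mu}$ of the exponents and thus fixes the diagonal of $B_j$ and of $\partial_\epsilon\tilde{A}$ simultaneously. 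I would carry this recursion out to the order at which the holomorphic part begins, thereby determining $B$ with a pole of order at most $m-1$ and the $\epsilon$-part of $\tilde{A}$; the remaining holomorphic coefficients of $\tilde{A}$ are free and fixed by any lift of $A$ compatible with $I_v^{*}\tilde{\mu}$. Setting $\nabla^v_U:=d+\tilde{A}(z)\,dz/z^m+B(z)\,d\epsilon$ then gives an integrable connection, and the flag $\langle e_k,\dots,e_{r-1}\rangle$ over $U[v]$ is preserved by $\nabla^v_U$ by the triangular structure at $z=0$, so $({\mathcal E}^v_U,\nabla^v_U,\ell^v_U)$ is the desired lift. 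This is precisely the computation of \cite[Theorem~6.2]{Inaba-Saito}, now carried out over $\mathbb{C}[\epsilon]/(\epsilon^2)$ and keeping track of the filtration.

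For uniqueness I would take a second lift $({\mathcal E}'_U,\nabla'_U,\ell'_U)$, adapt its frame to $\ell'_U$, and write $\nabla'_U=d+\tilde{A}'(z)\,dz/z^m+B'(z)\,d\epsilon$ with $\tilde{A}'=\tilde{A}+\epsilon\,C'(z)$, where $C'$ has vanishing prescribed leading part because both lifts realize $I_v^{*}\tilde{\mu}$. Applying a gauge transformation $I_r-\epsilon P(z)$ and imposing that it carry $\nabla'_U$ to $\nabla^v_U$ leads, order by order in $z$, to the same $\mathrm{ad}(A_0)$-equations, which are uniquely solvable for $P$ by the distinct-eigenvalue condition; the transformation preserves the flags on both sides and, being determined by its $d\epsilon$-coefficient, is unique, exactly as the transform $I_r-\epsilon B'(z)$ in the logarithmic case. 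The main obstacle is the recursive solvability of the integrability equation: one must check that the prescribed deformation of the exponents supplies exactly the diagonal (resonant) data at each order while $\mathrm{ad}(A_0)$ inverts the off-diagonal data, and that the resulting $B$ has pole order at most $m-1$ and is consistent to first order in $\epsilon$; this is where the generic unramified hypothesis is essential and is the heart of \cite[Theorem~6.2]{Inaba-Saito}.
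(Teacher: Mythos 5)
Your proof is correct in substance and rests on the same pillar as the paper's argument -- the invertibility of $\mathrm{ad}(A(0))$ away from its centralizer, guaranteed by the distinct leading terms of the $\mu^x_k$ -- but it is organized differently. The paper first replaces the frame so that $A(z)\,{\rm d}z/z^m$ is \emph{diagonal} modulo $z^{2m-1}\,{\rm d}z/z^m$ with diagonal entries $\mu^{(i)}_k$, then writes the ${\rm d}\epsilon$-coefficient in closed form, $B(z)=\int\mathrm{diag}\big(\mu^{(i)}_{0,v},\dots,\mu^{(i)}_{r-1,v}\big)$, and simply \emph{defines} $C(z)\,{\rm d}z/z^m:={\rm d}B+[A,B]\,{\rm d}z/z^m$; regularity of $C$ is then immediate because $[A,B]$ is divisible by $z^m$ ($A$ is diagonal to order $2m-1$ and $B$ is diagonal of pole order $m-1$), so no recursion is needed for existence. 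Your order-by-order solution of the integrability equation achieves the same thing but needs one repair: in a frame merely adapted to the filtration, $A(0)$ is only lower triangular, and then $\ker(\mathrm{ad}(A(0)))=\mathbb{C}[A(0)]$ is \emph{not} the space of diagonal matrices, so the splitting of your recursion into ``diagonal part $=$ prescribed exponent variation'' and ``off-diagonal part $=$ uniquely solvable'' is not literally valid; you must first diagonalize $A$ to order $z^{2m-1}$ (exactly the normalization the paper performs, and which the generic unramified structure makes possible) before the diagonal/off-diagonal decomposition coincides with $\ker\oplus\mathrm{im}$ of $\mathrm{ad}(A(0))$. With that normalization your recursion, and your uniqueness step via the gauge $I_r-\epsilon P$ with $P$ forced to equal $B'-B$ by the ${\rm d}\epsilon$-coefficient, reduce to the paper's computation: there one shows directly that $[A,z^{m-1}B']\equiv0\pmod{z^{m-1}}$ forces the polar part of $B'$ to be diagonal with the prescribed entries $\mu^{(i)}_{k,v}$, so that $B-B'$ is regular with diagonal constant term and furnishes the unique isomorphism.
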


\begin{proof}
We put $\tilde{x}:=\big(\tilde{x}^{\mathrm{un}}_i\big)_{\tilde{M}'}\cap U$
and $m:=m^{\mathrm{un}}_i$.
Write
\[
 \mu^{(i)}_k(z)=\sum_{j=0}^{m-2} a_{k,j}(z) \frac {{\rm d}z} {z^m} +c_k \frac {{\rm d}z}{z}.
\]
We can write
\[
 I_v^*(a_{k,j})=
 a_{k,j}+\epsilon b_{k,j}
 \in {\mathcal O}_{{\mathcal T'}[v]}
 ={\mathcal O}_{{\mathcal T'}\times\Spec\mathbb{C}[\epsilon]}
 ={\mathcal O}_{\mathcal T'}\oplus \epsilon{\mathcal O}_{\mathcal T'}.
\]
We express the above equality by
\[
 I_v^*\mu^{(i)}_k(z)=
 \mu^{(i)}_k(z)+\epsilon \mu^{(i)}_{k,v}(z),
 \hspace{30pt}
 \mu_{k,v}(z)=
 \sum_{j=0}^{m-2}b_{k,j}z^j \frac{{\rm d}z}{z^m}.
\]
Take a local frame $e_0,\dots,e_{r-1}$ of $\tilde{E}|_U$
such that $\tilde{\ell}_k\cap U$ is given by
$\langle e_k|_{\tilde{x}},\dots,e_{r-1}|_{\tilde{x}}\rangle$.
After a~suitable replacement of the frame $e_0,\dots,e_{r-1}$,
we can write
$\tilde{\nabla}|_U={\rm d}+A(z){\rm d}z/z^m$
such that
$A(z){\rm d}z/z^m \pmod{z^{2m-1}{\rm d}z/z^m}$ is the diagonal matrix
with the diagonal entries
$\mu^{(i)}_0,\dots,\mu^{(i)}_{r-1}$.
We can take a matrix $\tilde{A}(z)$ with entries in ${\mathcal O}_{U[v]}$
which is a lift of $A(z)$ such that
$\partial \tilde{A}/\partial \epsilon=0$
and that
$\tilde{A}(z){\rm d}z/z^m\big|_{z^{2m-1}=0}$
is a diagonal matrix with the diagonal entries
$\mu^{(i)}_0,\dots,\mu^{(i)}_{r-1}$.
Set
\[
 B(z):=\int
 \begin{pmatrix}
 \mu^{(i)}_{0,v}(z) & 0 & 0 \\
 0 & \ddots & 0 \\
 0 & 0 & \mu^{(i)}_{r-1,v}(z)
 \end{pmatrix},
 \qquad
 C(z){\rm d}z/z^m
 :=
 {\rm d}B+[A,B]\frac{{\rm d}z}{z^m}.
\]
Then $\nabla^v_U:={\rm d}+(\tilde{A}(z)+\epsilon C(z)){\rm d}z/z^m+B{\rm d}\epsilon$
defines an integrable connection on
${\mathcal E}^v_U={\mathcal O}_{U[v]}^{\oplus r}$.
By construction, the connection~$\nabla^v_U$
preserves the parabolic structure $\ell^v_U$ on
${\mathcal E}^v_U$ induced by $e_0,\dots,e_{r-1}$.
So we can see the existence of the local horizontal lift
$\big({\mathcal E}^v,\nabla^v,\ell^v\big)$.

Assume that $\big({\mathcal E}'_U,\nabla'_U,\ell'_U\big)$
is another local horizontal lift of
$\big(\tilde{E}|_U,\tilde{\nabla}|_U,\tilde{\ell}|_U\big)$.
Note that ${{\mathcal E}'_U\cong{\mathcal O}_{U[v]}^{\oplus r}}$.
So we may write
$\nabla'_U={\rm d}+(\tilde{A}(z)+\epsilon C'(z)){\rm d}z/z^m+ B'(z){\rm d}\epsilon$
with $C'(z)\equiv C(z) \pmod{z^m}$.
The integrability condition
\begin{equation} \label{equation: integrability condition in local unramified case}
 C'(z){\rm d}z/z^m
 :=
 {\rm d}B'+[A,B']\frac{{\rm d}z}{z^m}
\end{equation}
yields $\big[A,z^{m-1}B'\big]\equiv 0 \pmod{z^{m-1}}$.
Since $A(z)|_{z^{2m-1}=0}$ is a diagonal matrix whose constant term $A(0)$ has distinct eigenvalues,
we can see that $z^{m-1}B'|_{z^{m-1}=0}$ is also a diagonal matrix.
Looking at~(\ref{equation: integrability condition in local unramified case}) again
and using $C(z)\equiv C'(z) \pmod{z^m}$,
we can see that
$B'(z)|_{z^{2m-1}=0}$ is also a diagonal matrix
with the diagonal entries $\mu^{(i)}_{0,v},\dots,\mu^{(i)}_{r-1,v}$.
So $B(z)-B'(z)$ is a matrix of regular functions on $U$,
whose constant term is diagonal.
We can see by the same calculation as in the proof of
Lemma \ref{lemma: local horizontal lift of regular singular connection}
that the automorphism
$I_r+\epsilon(B-B')$ transforms
$\nabla'_U$ to $\nabla^v_U$
and it also preserves the parabolic structures on the both sides.
We can see that such an automorphism is unique,
because it is determined by the $d\epsilon$-coefficient
of $\nabla'_U$.
\end{proof}

\begin{Lemma}[existence of ramified irregular singular local horizontal lift]\label{lemma: existence of local horizontal lift of ramified connection}
Let $U$ be an affine open subset of ${\mathcal C}_{\tilde{M}'}$
such that
$\tilde{E}|_U\cong{\mathcal O}_U^{\oplus r}$,
${\mathcal D}_{\tilde{M'}}\cap U=
m^{\mathrm{ram}}_i\big(\tilde{x}^{\mathrm{ram}}_i\big)_{\tilde{M}'}\cap U$
for some $i$ and that
$\big(\tilde{x}^{\mathrm{ram}}_i\big)_{\tilde{M}'}\cap U$
is defined by the equation $z_U=0$
for a section $z$ of ${\mathcal O}_{{\mathcal C}_{\mathcal T'}}$
on a Zariski open subset of ${\mathcal C}_{\mathcal T'}$.
Then there exists a local horizontal lift
$\big({\mathcal E}^v_U,\nabla^v_U, {\mathcal V}^v_U\big)$
of
$\big(\tilde{E}|_U,\tilde{\nabla}|_U,{\mathcal V}|_U\big)$
with respect to~$v$.
\end{Lemma}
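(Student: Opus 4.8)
The plan is to mirror the proofs of Lemmas \ref{lemma: local horizontal lift of regular singular connection} and \ref{lemma: local horizontal lift of unramified connection}, replacing the diagonal normal form of the unramified case by the explicit ramified normal form recorded in \eqref{equation: representation matrix with ambiguity} and \eqref{equation: formal fundamental solution of ramified connection}. First I would fix a frame $e_0,\dots,e_{r-1}$ of $\tilde E|_U\cong{\mathcal O}_U^{\oplus r}$ adapted to the factorized ramified structure, namely the frame produced in the proof of Proposition \ref{prop:correspondence-factorized} in which $N=\theta\circ\kappa$ is the companion matrix of $w^r-z$ and $V_k$ is generated by the images of $e_k,\dots,e_{r-1},ze_0,\dots,ze_{k-1}$. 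Writing $m:=m^{\mathrm{ram}}_i$ and applying Proposition \ref{proposition: normalization of formal type over finite scheme} with $q=2m-1$, I may assume after a gauge transformation that $\tilde\nabla|_U={\rm d}+A(z)\,{\rm d}z/z^m$, where $A(z)$ coincides with the representation matrix \eqref{equation: representation matrix with ambiguity} of $\nabla_{\tilde\nu}$ modulo $z^{2m-1}$. The vector field $v$ deforms the ramified exponent; writing $I_v^*\tilde\nu=\tilde\nu+\epsilon\tilde\nu_v$ with $\tilde\nu_v=\sum_{k}\tilde\nu_{k,v}(z)w^k$, the aim is a connection whose relative part carries a generic $I_v^*\tilde\nu$-ramified structure.

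Next I would write down the ${\rm d}\epsilon$-coefficient $B(z)$ directly from \eqref{equation: formal fundamental solution of ramified connection}: it is the matrix whose entries are of the form $w^{-k}\,\partial_\epsilon\!\int w^{k}\tilde\nu_{k}$, up to the $z$-twists dictated by $w^r=z$, expressed in the frame $e_0,\dots,e_{r-1}$. Equivalently, $B(z)$ is obtained by conjugating the diagonal ${\rm d}\epsilon$-coefficient $\partial_\epsilon\!\int(\tilde\nu(\zeta_r^kw)-\tilde\nu_0)$ on the ramified cover $\Delta_w$ back to $\Delta_z$ through the Vandermonde matrix of \eqref{equation: formal transform using z parameter}. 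The crucial point is the pole order: since the residue of $\tilde\nu_0$, i.e.\ the coefficient $c^{\mathrm{ram}}_i$ of its $z^{m-1}$-term, is a fixed constant independent of the moduli, $\partial_\epsilon$ annihilates the logarithmic term of $\int\tilde\nu$, so every entry of $B(z)$ has a pole of order at most $m-1$; that is, $B(z)\in M_r({\mathcal O}_U({\mathcal D}'_{\tilde M'}\cap U))$, matching the subsheaf $\tilde\Omega_v$ in \eqref{equation: definition of total differential}. I would then take an $\epsilon$-constant lift $\tilde A(z)$ of $A(z)$ and define the correction $C(z)$ by the integrability relation $C(z)\,{\rm d}z/z^m:={\rm d}B+[A,B]\,{\rm d}z/z^m$, setting $\nabla^v_U:={\rm d}+(\tilde A(z)+\epsilon C(z))\,{\rm d}z/z^m+B(z)\,{\rm d}\epsilon$ on ${\mathcal E}^v_U:={\mathcal O}_{U[v]}^{\oplus r}$.

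It then remains to verify the conditions of Definition \ref{definition: horizontal lift in one vector case}. Integrability ${\rm d}\Gamma^v+\Gamma^v\wedge\Gamma^v=0$ holds to first order in $\epsilon$ by the very definition of $C$, and the restriction at $\epsilon=0$ recovers $\tilde\nabla|_U$ since $\tilde A\equiv A$. The relative connection $\overline{\nabla^v_U}={\rm d}+(\tilde A+\epsilon C)\,{\rm d}z/z^m$ carries a generic $I_v^*\tilde\nu$-ramified structure because $C$ shifts the exponent precisely by $\tilde\nu_v$; I may take the lifted filtration $V^v_k:=\tilde V_k\otimes\mathbb C[\epsilon]$ together with $\epsilon$-constant lifts of the pairings $\vartheta^v_k,\varkappa^v_k$, checking the relative version of Definition \ref{def-fac-connection} exactly as in the construction at the end of the proof of Proposition \ref{prop: tangent space of the moduli space}. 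Finally, the subsheaves $\ker({\mathcal E}^v\to{\mathcal E}^v|_{({\mathcal D}_{\mathrm{ram}})}/V^v_k)$ are preserved by the full connection $\nabla^v$ because $B(z)$ preserves the companion filtration $(w^k)$, i.e.\ $B(z)\,V^v_k\subset V^v_k\otimes\tilde\Omega_v$.

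The hard part will be the pole-order control of the correction $C(z)$: a priori ${\rm d}B+[A,B]\,{\rm d}z/z^m$ could have a pole of order exceeding $m$, which would force $\tilde A+\epsilon C$ out of $M_r({\mathcal O}_U({\mathcal D}))$ and destroy the ramified type of the relative connection. The clean way to control this is to transport the whole construction to the ramified cover through the shearing transformation of Proposition \ref{proposition: local correspondence between ramified and unramified connection}: there the connection becomes the diagonal normal form \eqref{equation: diagonal matrix of normal form}, its ${\rm d}\epsilon$-coefficient is diagonal with entries $\partial_\epsilon\!\int(\tilde\nu(\zeta_r^kw)-\tilde\nu_0)$, and both integrability and the pole bound of order $mr-r$ are immediate because each diagonal exponent is a closed one-form (this is exactly the content of the last item of the Remark following Theorem \ref{theorem: Jimbo-Miwa-Ueno equation}). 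Descending this Galois-equivariant integrable lift through the inverse elementary transform, as in the proof of Proposition \ref{proposition: ramified local generalized isomonodromic deformation}, produces a matrix $C(z)$ of the required pole order $\le m$ and simultaneously furnishes the compatibility of $\nabla^v_U$ with the filtration $V^v_k$. Checking that the descended $C(z)$ is genuinely algebraic (holomorphic) on all of $U$, rather than merely formal, is the step I expect to demand the most care.
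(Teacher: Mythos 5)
Your construction of $B(z)$ and $C(z)$ is exactly the one the paper uses: in the frame adapted to the factorized structure (normalized to order $2m-1$ via Proposition \ref{proposition: normalization of formal type over finite scheme}), $B(z)=\sum_{k,l}\frac{rb_{k,l}}{-mr+lr+r+k}\,z^{l-m+1}\tilde N^k$ is a primitive of $\nu_v(\tilde N)$, and $C$ is forced by integrability. But the step you defer --- showing that $C(z)=z^m\frac{{\rm d}B}{{\rm d}z}+[A,B]$ is regular and that the induced relative connection restricts to $I_v^*\nu(\tilde N)$ on $(2m-1)\tilde x$ --- is the actual content of the lemma, and the detour you propose does not close it. The shearing transformation of Proposition \ref{proposition: local correspondence between ramified and unramified connection} lives on the analytic ramified cover $\Delta_w\to\Delta_z$, which has no counterpart over the affine open $U$ in the algebraic family; ``descending'' an integrable lift from there cannot by itself produce the algebraic matrix $C(z)$ on all of $U$. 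You flag this yourself, but it is not merely a point requiring care: it is a change of category that the statement does not permit, so the argument as proposed is incomplete at its crucial step.

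The gap is closed by a direct computation with the ingredients you already assembled. Because the frame is normalized to order $2m-1$, one has $A(z)=\sum_{k,l}a_{k,l}z^l\tilde N^k+z^{m-1}R_r+z^{2m-1}A'(z)$ with $R_r=\mathrm{diag}\big(0,\tfrac1r,\dots,\tfrac{r-1}r\big)$. Since $B$ is a polynomial in $\tilde N$ with coefficients of pole order at most $m-1$, the $\tilde N$-polynomial part of $A$ commutes with $B$, and $[z^{2m-1}A',B]\,{\rm d}z/z^m$ is regular; hence $C\,{\rm d}z/z^m\equiv{\rm d}B+\big[z^{m-1}R_r,B\big]{\rm d}z/z^m$ modulo regular forms. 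The identity ${\rm d}\tilde N^k+\big[R_r\tfrac{{\rm d}z}{z},\tilde N^k\big]=\tfrac kr\tilde N^k\tfrac{{\rm d}z}{z}$ then shows this equals $\nu_v(\tilde N)$ on $(2m-1)\tilde x$; the denominators $-mr+lr+r+k=r(l-m+1)+k$ are exactly what make this telescoping work, and they are nonzero precisely because $l\le m-2$ (no residue term to integrate, as you correctly observed). This simultaneously yields the regularity of $C$, the pole bound you were worried about, and the identification $\overline{\nabla^v_U}\big|_{(2m-1)\tilde x}=I_v^*\nu\big(\tilde N\big)\big|_{(2m-1)\tilde x}$, from which the factorized $I_v^*\tilde\nu$-ramified structure is read off from $\tilde N$ via $V^v_{U,k}=\im\big(\tilde N^k\big)$ and the factorization $\tilde N=\tilde\theta\circ\tilde\kappa$, essentially as you describe in your third paragraph.
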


\begin{proof}
Write $\tilde{x}=\big(\tilde{x}^{\mathrm{ram}}_i\big)_{\tilde{M}'}\cap U$
and $m=m^{\mathrm{ram}}_i$.
We denote the pullback of $\nu$ via the trivial first projection
${\mathcal T}'[v]\longrightarrow {\mathcal T'}\hookrightarrow {\mathcal T}$
by the same symbol $\nu$.
As in the proof of Lemma~\ref{lemma: local horizontal lift of unramified connection},
we express
\begin{gather*}
 I_v^*\nu(w)=
 \nu(w)+\epsilon \nu_v(w),
 \qquad\!\!
 \nu(w)
 =
 \sum_{k=0}^{r-1}\sum_{j=0}^{m-1}\!a_{k,j}z^j w^k \frac{{\rm d}z}{z^m},
 \qquad\!\!
 \nu_v(w)
 =
 \sum_{k=0}^{r-1}\sum_{j=0}^{m-2}\!b_{k,j}z^j w^k \frac{{\rm d}z}{z^m},
\end{gather*}
where $a_{1,0}\in {\mathcal O}_{\tilde{M}'}^{\times}$ and
$a_{k,m-1}=0$ for $1\leq k\leq r-1$.

We choose a local frame
$e_0,\dots,e_{r-1}$ of $\tilde{E}|_U$
whose restriction to $(2m-1)\tilde{x}$ corresponds to
$1,w,\dots,w^{r-1}$
via the isomorphism
$\tilde{E}|_{(2m-1)\tilde{x}}\cong {\mathcal O}_{\tilde{M}'}[w]/\big(w^{(2m-1)r}\big)$
given by Proposition \ref{proposition: normalization of formal type over finite scheme}
in the case $q=2m-1$.
Let
\[
 N\colon \ \tilde{E}|_U \longrightarrow \tilde{E}|_U
\]
be the homomorphism
defined by the representation matrix
\begin{equation} \label{equation: representation matrix of N}
 \begin{pmatrix}
 0 & 0 & \cdots & 0 & z \\
 1 & 0 & \cdots & 0 & 0 \\
 \vdots & \ddots & \ddots & \vdots & \vdots \\
 0 & \cdots & 1 & 0 & 0 \\
 0 & \cdots & 0 & 1 & 0
 \end{pmatrix}
\end{equation}
with respect to the basis
$e_0,\dots,e_{r-1}$ of $\tilde{E}|_U$.
As in the proof of Theorem~\ref{thm: smoothness of the moduli space},
we can construct homomorphisms
$\theta\colon \tilde{E}_{\alpha}|_{m\tilde{x}}^{\vee}
\longrightarrow \tilde{E}_{\alpha}|_{m\tilde{x}}$
and
$\kappa\colon E_{\alpha}|_{m\tilde{x}}
\longrightarrow E_{\alpha}|_{m\tilde{x}}^{\vee}$
which satisfy
$^t\theta=\theta$, $^t\kappa=\kappa$ and
$N|_{m\tilde{x}}=\theta\circ\kappa$.
We may assume that $(\tilde{\theta}_k)$ and $(\tilde{\kappa}_k)$ are induced by
$\theta$ and $\kappa$, respectively.

Write
$\tilde{\nabla}|_{\bar{U}_{\alpha}}={\rm d}+A(z)\frac{{\rm d}z}{z^m}$
with respect to the frame $e_0,\dots,e_{r-1}$
of $\tilde{E}|_U \cong{\mathcal O}_U^{\oplus r}$.
Since
$\big(\tilde{E}|_{(2m-1)\tilde{x}},\tilde{\nabla}|_{(2m-1)\tilde{x}}\big)\cong
\big({\mathcal O}_{\tilde{M}'}[w]/\big(w^{(2m-1)r}\big),\nabla_{\nu}\big)$
as in Proposition~\ref{proposition: normalization of formal type over finite scheme},
we can write
\begin{equation} \label{equation: normalization of a connection to (2m-1)x}
 A(z)=
 \sum_{k=0}^{r-1}\sum_{l=0}^{m-1} a_{k,l} z^l N^k
 +
 z^{m-1}R_r+z^{2m-1}A'(z)
\end{equation}
for some matrix $A'(z)$ of regular functions,
where we are putting
\begin{equation} \label{equation: definition of R}
 R_r:=
 \begin{pmatrix}
 0 & 0 & \cdots & 0 \\
 0 & \frac{1}{r} & \cdots & 0 \\
 \vdots & \vdots & \ddots & \vdots \\
 0 & 0 & \cdots & \frac{r-1}{r}
 \end{pmatrix}.
\end{equation}

Set ${\mathcal E}^v_U:={\mathcal O}_{U[v]}^{\oplus r}$ with
the identification
${\mathcal E}^v_U\otimes{\mathcal O}_{U[v]}/(\epsilon)=\tilde{E}|_U$.
Define the ${\mathcal O}_{U[v]}$-homomorphism
\[
 \tilde{N}\colon \ {\mathcal E}^v_U \longrightarrow {\mathcal E}^v_U
\]
by the same matrix (\ref{equation: representation matrix of N}) as $N$.
Then $\big({\mathcal E}^v_U,\tilde{N}\big)$ becomes a lift of $\big(\tilde{E}|_U, N\big)$.
Define matrices $\tilde{A}(z)$, $B(z)$, $C(z)$ by setting
\begin{gather*}
 \tilde{A}(z)
 :=
 \sum_{k=0}^{r-1}\sum_{l=0}^{m-1} a_{k,l} z^l \tilde{N}^k +
 z^{m-1}\tilde{R}_r+z^{2m-1}\tilde{A}'(z),
 \\
 B(z)
 :=
 \sum_{k=0}^{r-1}\sum_{l=0}^{m-2}
 \frac { r b_{k,l} } { (-mr+lr+r+k)z^{m-l-1} } \tilde{N}^k, \\
 C(z) \frac{{\rm d}z} {z^m}
 :=
 {\rm d}B(z)+ [A(z),B(z)]\frac{{\rm d}z} {z^m},
\end{gather*}
where $\tilde{R}_r$ is the endomorphisms of
${\mathcal E}^v_U$ whose representation matrix with respect to the basis
$e_0,\dots,e_{r-1}$ is the same as that of $R_r$ in (\ref{equation: definition of R})
and $\tilde{A}'(z)$
is a lift of $A'(z)$ such that
$\frac{\partial \tilde{A}'(z)}{\partial\epsilon}=0$.
Using the calculations
\begin{equation*}
 \tilde{N}^k
 =
 \begin{pmatrix}
 0 & \cdots & 0 & z & \cdots & 0 \\
 \vdots & \ddots & \vdots & \vdots & \ddots & \vdots \\
 0 & \cdots & 0 & 0 & \cdots & z \\
 1 & \cdots & 0 & 0 & \cdots & 0 \\
 \vdots & \ddots & \vdots & \vdots & \ddots & \vdots \\
 0 & \cdots & 1 & 0 & \cdots & 0
 \end{pmatrix},
 \qquad
 \big[ \tilde{R}_r,\tilde{N}^k \big]
 =
 \begin{pmatrix}
 0 & \cdots & 0 & -\frac {r-k} {r} z & \cdots & 0 \\
 \vdots & \ddots & \vdots & \vdots & \ddots & \vdots \\
 0 & \cdots & 0 & 0 & \cdots & -\frac {r-k} {r}z \\
 \frac {k} {r} & \cdots & 0 & 0 & \cdots & 0 \\
 \vdots & \ddots & \vdots & \vdots & \ddots & \vdots \\
 0 & \cdots & \frac {k} {r} & 0 & \cdots & 0
 \end{pmatrix},
\end{equation*}
we can check the equality
\begin{equation} \label{equation: differential of k-th power of N}
 {\rm d}\tilde{N}^k+\left[\tilde{R}_r\frac{{\rm d}z}{z},\tilde{N}^k\right]
 =
 \frac{k}{r}\tilde{N}^k\frac{{\rm d}z}{z}.
\end{equation}
Then we can see
\begin{gather*}
 \left( {\rm d}B(z)+ [A(z),B(z)]\frac{{\rm d}z} {z^m} \right) \Big|_{(2m-1)\tilde{x}} \\
 =\sum_{k=0}^{r-1}\sum_{l=0}^{m-2}
 \frac{rb_{k,l}} {-mr+lr+r+k} \left( {\rm d} \left( \frac{1}{z^{m-l-1}} \tilde{N}^k \right)
 +\left[ \tilde{R}_r\frac{{\rm d}z}{z} , \frac{1}{z^{m-l-1}}\tilde{N}^k \right] \right)
 \Big|_{(2m-1)\tilde{x}}
 \\
\qquad{} =
 \sum_{k=0}^{r-1}\sum_{l=0}^{m_i-2}
 \frac{rb_{k,l}} {-mr+lr+r+k}
 \left( \frac{-m+l+1}{z^{m-l}}\tilde{N}^k {\rm d}z +\frac{1}{z^{m-l-1}}\frac{k}{r}\tilde{N}^k\frac{{\rm d}z}{z} \right)
 \Big|_{(2m-1)\tilde{x}}
 \\
\qquad{} =
 \sum_{k=0}^{r-1}\sum_{l=0}^{m-2}
 \frac { r(-m+l+1)+k } {-mr+lr+r+k} \frac { b_{k,l} } {z^{m-l}} \tilde{N}^k {\rm d}z
 \bigg|_{(2m-1)\tilde{x}}
 =
 \nu_v(\tilde{N}) \big|_{(2m-1)\tilde{x}}.
\end{gather*}
So the matrix
\[
 \big(\tilde{A}(z)+\epsilon C(z)\big) \frac{{\rm d}z} {z^m}
 +B(z) {\rm d}\epsilon
\]
determines an integrable connection
$\nabla^{\mathrm{flat}}_{U[v]} \colon
{\mathcal O}_{U[v]}^{\oplus r} \longrightarrow
{\mathcal O}_{U[v]}^{\oplus r}\otimes_{{\mathcal O}_{U[v]}} \tilde{\Omega}_v$
such that the induced relative connection
$\overline{\nabla^{\mathrm{flat}}_{U[v]}}
\colon {\mathcal O}_{U[v]}^{\oplus r} \longrightarrow
{\mathcal O}_{U[v]}^{\oplus r}\otimes
 \Omega^1_{U[v]/{\mathcal T'}[v]}({\mathcal D}_{\mathcal T'}\cap U)$
satisfies
\[
\overline{\nabla^{\mathrm{flat}}_{U[v]}}\big|_{(2m-1)\tilde{x}}
=I_v^*\nu\big(\tilde{N}\big)\big|_{(2m-1)\tilde{x}}.
\]
We can give a filtration
${\mathcal O}_{U[v]}^{\oplus r}\big|_{{\mathcal D}_{\tilde{M}'[v]}}
=V^v_{U,0}\supset V^v_{U,1}\supset\cdots\supset
V^v_{U,r-1}\supset V^v_{U,r}=z V^v_{U,0}$
by setting
$V^v_{U,k}:=\im \big(\tilde{N}^k\big|_{{\mathcal D}_{\tilde{M}'[v]}}\big)$
for $k=0,1,\dots,r$.
So we can see that
$\{ V^v_{U,k} \}$ induces
$\overline{V}^v_{U,k}$, $\overline{W}^v_{U,k}$
and that the homomorphism
$\overline{\tilde{N}|_{V^v_{U,k}}}\colon \overline{V}^v_{U,k}
\longrightarrow \overline{V}^v_{U,k}$
induced by the restriction $\tilde{N}|_{V^v_{U,k}}$
has a factorization
\[
 \overline { \tilde{N}|_{V^v_{U,k}} }
 \colon \
 \overline{V}^v_{U,k} \xrightarrow{\kappa^v_{U,k}}
 \overline{W}^v_{U,k} \xrightarrow[\sim]{\theta^v_{U,k}}
 \overline{V}^v_{U,k}.
\]
Then $\big(V^v_{U,k},\theta^v_{U,k},\kappa^v_{U,k}\big)$
induces a factorized ramified structure
$\big(V^v_{U,k},\vartheta^v_{U,k},\varkappa^v_{U,k}\big)$
on $\big({\mathcal E}^v_U,\overline{\nabla^{\mathrm{flat}}_{U[v]}}\big)$,
where $\overline{\nabla^{\mathrm{flat}}_{U[v]}}$
is the relative connection induced by
$\nabla^{\mathrm{flat}}_{U[v]}$.
Thus
$\big(
{\mathcal E}^v_U ,,
\nabla^{\mathrm{flat}}_{U[v]} ,
\big\{
V^v_{U,k},\vartheta^v_{U,k},\varkappa^v_{U,k} \big\} \big)$
becomes a local horizontal lift of
$\big(\tilde{E},\tilde{\nabla},\{\tilde{V}_{k},
\tilde{\vartheta}_{k},\tilde{\varkappa}_{k}\}\big)
\big|_U$.
\end{proof}

\begin{Lemma}[uniqueness of ramified irregular singular local horizontal lift]\label{lemma: uniqueness of ramified local horizontal lift}
Under the same assumption as
Lemma~{\rm \ref{lemma: existence of local horizontal lift of ramified connection}},
a local horizontal lift
$\big({\mathcal E}^v_U,\nabla^v_U, {\mathcal V}^v_U\big)$
of
$\big(\tilde{E}|_U,\tilde{\nabla}|_U,{\mathcal V}|_U\big)$
with respect to~$v$
is unique up to an isomorphism.
\end{Lemma}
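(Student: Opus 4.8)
The plan is to mimic the uniqueness arguments of Lemmas~\ref{lemma: local horizontal lift of regular singular connection} and~\ref{lemma: local horizontal lift of unramified connection}, replacing the eigenvalue decomposition of the leading term used there by the $\mathbb{C}[w]$-module structure attached to $N$. First I would take a second local horizontal lift $\big({\mathcal E}'_U,\nabla'_U,{\mathcal V}'_U\big)$. Since $U$ is affine and ${\mathcal E}'_U$ is a lift of the free module $\tilde{E}|_U\cong{\mathcal O}_U^{\oplus r}$, we have ${\mathcal E}'_U\cong{\mathcal O}_{U[v]}^{\oplus r}$. Using Proposition~\ref{proposition: normalization of formal type over finite scheme} in the case $q=2m-1$, together with the compatibility of $\overline{\nabla'_U}$ with ${\mathcal V}'_U$, I would choose a frame $e_0,\dots,e_{r-1}$ adapted to $N$, $\theta$, $\kappa$ exactly as in the proof of Lemma~\ref{lemma: existence of local horizontal lift of ramified connection}, so that the $\frac{{\rm d}z}{z^m}$-part of $\nabla'_U$ has the form $\big(\tilde{A}(z)+\epsilon C'(z)\big)\frac{{\rm d}z}{z^m}$ with the same $\tilde{A}(z)$ and the same $\tilde{N}$, $\theta$, $\kappa$ as before, the ramified structure forcing $\overline{\nabla'_U}\big|_{(2m-1)\tilde{x}}=I_v^*\nu\big(\tilde{N}\big)\big|_{(2m-1)\tilde{x}}$. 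The genuinely new datum is then the ${\rm d}\epsilon$-coefficient $B'(z)\in M_r\big({\mathcal O}_U({\mathcal D}'_{\tilde{M}'}\cap U)\big)$, from which $C'(z)$ is itself recovered through integrability, so uniqueness reduces to pinning down $B'(z)$.

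Second, I would exploit the integrability condition $C'(z)\frac{{\rm d}z}{z^m}={\rm d}B'(z)+[A(z),B'(z)]\frac{{\rm d}z}{z^m}$ to control the polar part of $B'(z)$, in parallel with the step $\big[A,z^{m-1}B'\big]\equiv 0$ of the unramified case. The algebraic substitute for the distinct-eigenvalue argument is that, since the leading coefficient $a_{1,0}$ of $\nu_1$ is a unit, the constant term $A_0=\sum_{k=0}^{r-1}a_{k,0}N^k$ of $A(z)$ generates the same commutant as $N$, namely the polynomials in $N$; equivalently, by \cite[Lemma~1.4]{Inaba-3} one has $\im(\mathrm{ad}(N))=\{f\mid \Tr(f\circ N^l)=0\ \text{for all }l\}$, and $\mathrm{ad}(A_0)$ is invertible on this complement of the commutant. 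Solving the integrability equation pole-order by pole-order, and imposing the ramified-structure normalization, I expect to conclude that $B(z)-B'(z)$ extends to a matrix of regular functions on $U$ whose restriction to the divisor commutes with $N$.

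Finally, I would apply the gauge transformation $I_r+\epsilon\big(B(z)-B'(z)\big)$ to $\nabla'_U$. Since $B-B'$ is regular on $U$, this is a genuine automorphism of ${\mathcal E}'_U$ over $U[v]$, and a computation identical to the one in Lemma~\ref{lemma: local horizontal lift of unramified connection} shows that it carries $\nabla'_U$ to $\nabla^v_U$. Because the restriction of $B-B'$ to the divisor commutes with $N$, the transformation preserves the filtration $V^v_{U,k}=\im\big(\tilde{N}^k\big)$ and intertwines $\big(\theta^v_{U,k},\kappa^v_{U,k}\big)$, hence sends ${\mathcal V}'_U$ to ${\mathcal V}^v_U$; uniqueness of the transform follows since it is determined by its ${\rm d}\epsilon$-coefficient.

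The main obstacle will be the second step. Unlike the unramified case, $N$ is not semisimple, so the splitting of $M_r$ into the commutant of $N$ and $\im(\mathrm{ad}(N))$ must be handled through the $\mathbb{C}[w]$-module structure rather than a diagonalization, and the pole-order bookkeeping needed to prove that $B-B'$ has no pole at all (not merely that its commutant part is controlled) is the delicate point. Here the invertibility of $a_{1,0}$ and the normalization to order $(2m-1)\tilde{x}$ are precisely what make the off-commutant part of the integrability equation uniquely solvable at each pole order.
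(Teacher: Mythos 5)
Your proposal follows the paper's proof essentially step for step: normalize the second lift via Proposition~\ref{proposition: normalization of formal type over finite scheme} with $q=2m-1$, use the integrability condition together with $\ker\big(\mathrm{ad}\big(\tilde N|_{z=0}\big)\big)={\mathcal O}\big[\tilde N|_{z=0}\big]$ (resting on $a_{1,0}$ being a unit) to force the polar part of $B'$ into the commutant of $\tilde N$ and then to agree with that of $B$, and finally gauge by $I_r+\epsilon(B-B')$, whose restriction to the divisor lies in ${\mathcal O}_{m\tilde x}\big[\tilde N\big]$ and hence preserves the factorized structure. The one step you leave as ``expected'' --- that the polar coefficients of $B'$ are pinned down exactly --- is carried out in the paper by pairing the integrability identity against $\Tr\big(\tilde N^l(\cdot)\big)$ for $0\leq l\leq r-1$, which is precisely the mechanism you indicate.
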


\begin{proof}
Let $\big( {\mathcal E}^v_U , \nabla^{\mathrm{flat}}_{U[v]} ,
\big\{ V^v_{U,k} , \vartheta^v_{U,k} , \varkappa^v_{U,k} \big\} \big)$
be the local horizontal lift constructed in
Lemma~\ref{lemma: existence of local horizontal lift of ramified connection}.
Take another local horizontal lift
$\big({\mathcal O}_{U_{\alpha}[v]}^{\oplus r}, \nabla',
\big\{ V'_k,\vartheta'_k,\varkappa'_k \big\} \big)$ of
$\big(\tilde{E},\tilde{\nabla},\big\{\tilde{V}_{k},
\tilde{\vartheta}_{k},\tilde{\varkappa}_{k}\big\}\big)
\big|_{U}$.
The connection
$\nabla' \colon {\mathcal O}_{U[v]}^{\oplus r} \longrightarrow
{\mathcal O}_{U[v]}^{\oplus r}
 \otimes \Omega^1_{U[v]/M'}(m(\tilde{x})_{M'})$
can be given by
\[
 \nabla'
 \begin{pmatrix} f_1 \\ \vdots \\ f_r \end{pmatrix}
 =
 \begin{pmatrix} {\rm d}f_1 \\ \vdots \\ {\rm d}f_r \end{pmatrix}
 +
 \left( \big( \tilde{A}(z)+\epsilon C'(z) \big) \frac {{\rm d}z} {z^m}
 + B'(z) {\rm d}\epsilon
 \right)
 \begin{pmatrix} f_1 \\ \vdots \\ f_r \end{pmatrix}
\]
with $B'(z)$ a rational function on $U$ admitting a pole at
$z=0$ of order at most $m-1$.
Note that $\nabla'$ satisfies the integrability condition
\begin{equation} \label{equation: integrability condition for another local connection}
 C'(z)\frac{{\rm d}z} {z^m} =
 {\rm d} B'(z) + [A(z),B'(z)] \frac{{\rm d}z} {z^m}.
\end{equation}

Now we apply Proposition~\ref{proposition: normalization of formal type over finite scheme}
in the case $q=2m-1$ to the relative connection
$\overline{\nabla'}$
on ${\mathcal O}_{U[v]}^{\oplus r}$
induced by $\nabla'$.
Then, after applying an automorphism of
${\mathcal O}_{U[v]}^{\oplus r}$
of the form $I_r+\epsilon h$,
we may assume that
\begin{equation} \label{equation: condition of exponent for another local horizontal lift}
 C'(z)\frac{{\rm d}z}{z^m}\Big|_{(2m-1)\tilde{x}}
 =
 \nu_v\big(\tilde{N}\big)\big|_{(2m-1)\tilde{x}},
\end{equation}
$V'_k=\im\big(\tilde{N}^k|_{m\tilde{x}}\big)$
and that $\theta'_k\circ\kappa'_k$ is induced by
the restriction $\tilde{N}|_{V'_k}$
for $0\leq k\leq r-1$.

By the equality~(\ref{equation: integrability condition for another local connection}),
 we have $\big[A(z),z^{m-1} B'(z)\big]\equiv 0 \pmod{z^{m-1}}$.
Note that $A(z)$ satisfies the equality
(\ref{equation: normalization of a connection to (2m-1)x})
with $a_{1,0}\in{\mathcal O}_{\mathcal T}^{\times}$ and
$B'(z) \in M_r({\mathcal O}_U ({\mathcal D}'_{\tilde{M}'}\cap U))$
by the condition~(ii) of Definition~\ref{definition: horizontal lift in one vector case}.
So we can find $c_0(z),\dots,c_{r-1}(z)\in {\mathcal O}_{U[v]}$
satisfying
\[
 z^{m-1}B'(z)\equiv \sum_{k=0}^{r-1} c_k(z) \tilde{N}^k \quad
 \big({\rm mod} \ z^{m-1}\End\big({\mathcal O}_{U_{\alpha}[v]}^{\oplus r}\big) \big)
\]
since the equality
$\ker \big(\mathrm{ad}\big( \tilde{N}|_{z=0}\big) \big)
={\mathcal O}_{U[v]}\big[\tilde{N}|_{z=0}\big]$
holds.
Then we can write
\[
 B'(z)=\sum_{k=0}^{r-1} \frac {c_k(z)} {z^{m-1}} \tilde{N}^k +B_m (z).
\]
with $B_m (z)$ a matrix of regular functions.
Furthermore,
we can see that
$B_m (0)$ is a lower triangular matrix, since
$\nabla^v={\rm d}+\big(\tilde{A}(z)+\epsilon C(z)\big)\frac{{\rm d}z}{z^m}+B'(z){\rm d}\epsilon$
preserves the filtration~$(V^v_k)$.
Looking at the equality (\ref{equation: integrability condition for another local connection})
again, we can see that
\begin{gather*}
 C'(z){\rm d}z-z^m\left( {\rm d}B'(z)+[R_r,B'(z)]\frac{{\rm d}z}{z} \right)
 \\
\qquad{}=
 \big[\tilde{A}(z)-z^{m-1}R_r,B'(z)\big]{\rm d}z
 \\
\qquad{} =\left[ \sum_{k=0}^{r-1}\sum_{l=0}^{m-1}a_{k,l}z^l\tilde{N}^k+z^{2m-1}\tilde{A}' ,
 \sum_{k=0}^{r-1} \frac{c_k(z)}{z^{m-1}} \tilde{N}^k+B_m(z) \right]{\rm d}z
 \\
\qquad{} \equiv
 \sum_{k=0}^{r-1}
 \left[ z^m c_k(z)\tilde{A}'(z)
 -\sum_{l=0}^{m-1}a_{k,l}z^l B_m(z) , \tilde{N}^k \right]{\rm d}z
 \in\mathrm{ad}\big(\tilde{N}\big){\rm d}z\quad \big({\rm mod} \ z^{2m-1}{\rm d}z\big).
\end{gather*}
In particular, we have
\begin{equation}\label{equation: trace condition for extending generalized isomonodromy}
\Tr \left( \tilde{N}^l \left( C'(z){\rm d}z-z^m\left( {\rm d}B'(z)+[R_r,B'(z)]\frac{{\rm d}z}{z} \right)
\right)\right)\equiv 0 \quad \big({\rm mod} \ z^{2m-1}{\rm d}z\big)
\end{equation}
for $0\leq l\leq r-1$.
Since
\begin{gather*}
 z^m \left( {\rm d}B'(z) +[R_r,B'(z)]\frac{{\rm d}z}{z} \right)
\\
\qquad{} \equiv
 \sum_{k=0}^{r-1} z^m \left( {\rm d}\left( \frac{c_k(z)} {z^{m-1}}\right)
 +\frac {kc_k(z) } {rz^m}{\rm d}z \right) \tilde{N}^k
 +z^{m-1}[R_r,B_m(z)] {\rm d}z \quad \big({\rm mod} \ z^m {\rm d}z\big)
\end{gather*}
and since
$[R_r,B_m]|_{z=0}$ is lower triangular nilpotent matrix,
we can see that the condition
(\ref{equation: trace condition for extending generalized isomonodromy})
implies
\begin{gather*}
 z^m \left( {\rm d}\left( \frac{c_0(z)} {z^{m-1}}\right) -\nu_{0,v}(z)\right)
\equiv 0
 \pmod{z^m},
 \\
 z^{m+1} \left( {\rm d}\left( \frac{c_{r-l}(z)} {z^{m-1}}\right)
 +\frac {(r-l)c_{r-l}(z) } {rz^m}{\rm d}z -\nu_{r-l,v}(z)\right)
 \equiv 0
 \pmod{z^m} \quad
 (1\leq l\leq r-1).
\end{gather*}
In other words, we have
\[
{\rm d}\left( \frac{c_0(z)} {z^{m-1}} \right)\Big|_{m\tilde{x}}
 = \nu_{0,v}(z)\big|_{m\tilde{x}},
\qquad
 \left( {\rm d}\left( \frac{c_k(z)} {z^{m-1}}\right)
 +\frac {kc_k(z) } {rz^m}{\rm d}z\right)\Big|_{(m-1)\tilde{x}}
 = \nu_{k,v}(z) \big|_{(m-1)\tilde{x}}
\]
for $1\leq k\leq r-1$,
which implies that
\[
 c_k(z)\equiv \sum_{l=0}^{m-2} \frac{rb_{k,l}}{-mr+lr+r+k} z^l
 \quad \big({\rm mod} \ z^{m-1}\big).
\]
Thus
\[
 Q(z):=
 B(z)-B'(z)
\]
becomes
a matrix of regular functions.
Furthermore, (\ref{equation: integrability condition for another local connection})
and (\ref{equation: condition of exponent for another local horizontal lift})
implies the equality
\[
{\rm d}Q(z)+[A(z),Q(z)]\frac{{\rm d}z}{z^m}
 \equiv
 0 \quad \big({\rm mod} \ z^{2m-1}{\rm d}z/z^m \big),
\]
from which we can see
$Q(z)|_{m\tilde{x}} \in {\mathcal O}_{m\tilde{x}}\big[\tilde{N}\big]$.
If we apply the transform
$I_r+\epsilon Q(z)$ to the connection~$\nabla'$,
then the consequent connection has the matrix form
\begin{gather*}
 (I_r\!+\epsilon Q(z))^{-1}
 {\rm d} (I_r+\epsilon Q(z))
 +
 (I_r\!+\epsilon Q(z))^{-1}\!
 \left( \!\big( \tilde{A}(z) + \epsilon C'(z) \big) \frac {{\rm d}z} {z^m}\!
 + B'(z) {\rm d}\epsilon \!\right)\!
 (I_r\!+\epsilon Q(z))
 \\
\qquad{} =
 (I_r-\epsilon Q(z))(\epsilon {\rm d}Q(z)+Q(z) {\rm d}\epsilon)
 +\tilde{A}(z)\frac{{\rm d}z}{z^m} +\epsilon ([A(z),Q(z)]+C'(z) )\frac{{\rm d}z}{z^m}
 +B'(z){\rm d}\epsilon
 \\
\qquad{} =
 \tilde{A}(z)\frac{{\rm d}z} {z^m}
 +\epsilon \left( {\rm d}B(z)-{\rm d}B'(z)
 +\big([A(z),B(z)-B'(z)]+C'(z)\big) \frac{{\rm d}z} {z^m} \right)\\
 \qquad\quad{}
 +(Q(z)+B'(z)){\rm d}\epsilon
 \\
\qquad{} =
 \tilde{A}(z)\frac{{\rm d}z} {z^m}
 +\epsilon \left( {\rm d}B(z) + [A(z),B(z)] \frac {{\rm d}z} {z^m}
 -{\rm d}B'(z) - [A(z),B'(z)]\frac{{\rm d}z} {z^m} + C'(z)\frac{{\rm d}z} {z^m} \right)\\
 \qquad\quad{}
 +B(z)){\rm d}\epsilon
 \\
\qquad{} =
 \big( \tilde{A}(z)+\epsilon C(z) \big) \frac {{\rm d}z} {z^m} +B(z) {\rm d}\epsilon,
\end{gather*}
which means that
$\big({\mathcal O}_{U_{\alpha}[v]}^{\oplus r}, \nabla'\big)$
is isomorphic to
$\big(
{\mathcal O}_{U_{\alpha}[v]}^{\oplus r} ,
\nabla^{\mathrm{flat}}_{U_{\alpha}[v]} \big)$
via $I_r+\epsilon Q(z)$.
Since $Q(z)|_{m\tilde{x}}$ belongs to
${\mathcal O}_{m\tilde{x}}[\tilde{N}|_{m\tilde{x}}]$,
we can see that
$I_r+\epsilon Q(z)$ induces an isomorphism
which transforms
$\big({\mathcal O}_{U[v]}^{\oplus r}, \nabla',
\{ V'_k,\vartheta'_k,\varkappa'_k \} \big)$
to
$\big(
{\mathcal O}_{U[v]}^{\oplus r} ,
\nabla^{\mathrm{flat}}_{U[v]} ,
\big\{
V^v_k,\vartheta^v_k,\varkappa^v_k \big\} \big)$.
We can see that such an isomorphism is unique,
because it is determined by
the coefficient of $d\epsilon$.
\end{proof}

\begin{Proposition} \label{proposition: horizontal lift in one variable}
For any vector field
$v\in H^0({\mathcal T'},T_{\mathcal T'})$,
there is a unique global horizontal lift
$\big({\mathcal E}^v,\nabla^v, l^v,\ell^v,{\mathcal V}^v\big)$
of
$\big(\tilde{E},\tilde{\nabla},\tilde{l},\tilde{\ell},\tilde{\mathcal V}\big)_{\tilde{M}'}$.
\end{Proposition}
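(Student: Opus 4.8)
The plan is to obtain the global horizontal lift by gluing the local horizontal lifts supplied by Lemmas~\ref{lemma: local horizontal lift of regular singular connection}, \ref{lemma: local horizontal lift of unramified connection} and~\ref{lemma: existence of local horizontal lift of ramified connection}, reading off both existence and uniqueness of the gluing data from the uniqueness clauses of those lemmas together with Lemma~\ref{lemma: uniqueness of ramified local horizontal lift}. This is the mechanism already used in the logarithmic and unramified cases in~\cite{Inaba-1,Inaba-3,Inaba-Saito}; the only new ingredient here is the ramified uniqueness statement, so the argument is essentially formal once the local theory is in place.

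First I would fix an affine open covering $\{U_\alpha\}$ of ${\mathcal C}_{\tilde M'}$ with $\tilde E|_{U_\alpha}\cong{\mathcal O}_{U_\alpha}^{\oplus r}$, arranged so that each $U_\alpha$ meets the support of ${\mathcal D}_{\tilde M'}$ in at most one section and of at most one of the three types, and so that every intersection $U_{\alpha\beta}=U_\alpha\cap U_\beta$ is disjoint from ${\mathcal D}_{\tilde M'}$. On each $U_\alpha$ I then choose a local horizontal lift $({\mathcal E}^v_\alpha,\nabla^v_\alpha,\{l^v_\alpha,\ell^v_\alpha,{\mathcal V}^v_\alpha\})$ of $(\tilde E,\tilde\nabla,\{\tilde l,\tilde\ell,\tilde{\mathcal V}\})|_{U_\alpha}$: when $U_\alpha$ meets ${\mathcal D}_{\mathrm{log}}$, ${\mathcal D}_{\mathrm{un}}$ or ${\mathcal D}_{\mathrm{ram}}$ I invoke the relevant lemma, while when $U_\alpha\cap{\mathcal D}_{\tilde M'}=\varnothing$ the lift exists and is unique by the pole-free specialization of the computation in Lemma~\ref{lemma: local horizontal lift of regular singular connection} — here there is no exponent datum and, since a first-order deformation of a smooth affine curve is trivial, one trivializes ${\mathcal C}_{U_\alpha[v]}$ and solves the resulting linear equation for the ${\rm d}\epsilon$-coefficient with no pole to obstruct integration.

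Next I would build the transition isomorphisms. On each $U_{\alpha\beta}$, which is disjoint from ${\mathcal D}_{\tilde M'}$, both ${\mathcal E}^v_\alpha|_{U_{\alpha\beta}}$ and ${\mathcal E}^v_\beta|_{U_{\alpha\beta}}$ are local horizontal lifts of the pole-free relative connection $(\tilde E,\tilde\nabla)|_{U_{\alpha\beta}}$. Writing $\tilde g_{\beta\alpha}$ for the transition isomorphism of $\tilde E$, I push ${\mathcal E}^v_\alpha|_{U_{\alpha\beta}}$ forward by $\tilde g_{\beta\alpha}$ and compare it with ${\mathcal E}^v_\beta|_{U_{\alpha\beta}}$; by the uniqueness of pole-free local horizontal lifts there is a unique isomorphism $\phi_{\beta\alpha}$ intertwining the connections and reducing modulo $\epsilon$ to $\tilde g_{\beta\alpha}$. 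On a triple overlap $U_{\alpha\beta\gamma}$ the two isomorphisms $\phi_{\gamma\beta}\circ\phi_{\beta\alpha}$ and $\phi_{\gamma\alpha}$ both reduce modulo $\epsilon$ to $\tilde g_{\gamma\alpha}$, and the uniqueness statement — an isomorphism of local horizontal lifts is determined by its ${\rm d}\epsilon$-coefficient — forces them to coincide. Thus the cocycle condition holds and $\{{\mathcal E}^v_\alpha,\phi_{\beta\alpha}\}$ glues to a bundle ${\mathcal E}^v$ on ${\mathcal C}_{\tilde M'[v]}$ equipped with $\nabla^v$, $l^v$, $\ell^v$, ${\mathcal V}^v$; since conditions (i)--(v) of Definition~\ref{definition: horizontal lift in one vector case} are all local they are inherited from the $U_\alpha$. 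Uniqueness of the global lift follows the same pattern: given two global lifts, restricting to each $U_\alpha$ produces an isomorphism by the uniqueness parts of the lemmas, and these local isomorphisms, being uniquely determined by their ${\rm d}\epsilon$-coefficients, agree on overlaps and glue to a global isomorphism.

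The step I expect to be the main obstacle is the rigidity underlying the cocycle check, namely that a local horizontal lift admits \emph{no} nontrivial automorphism reducing to the identity modulo $\epsilon$ and that the comparison isomorphism is pinned down by its ${\rm d}\epsilon$-coefficient. At the ramified sections this is exactly Lemma~\ref{lemma: uniqueness of ramified local horizontal lift}, whose proof relies on normalizing the connection in deep order $q=2m-1$ via Proposition~\ref{proposition: normalization of formal type over finite scheme}; the remaining care is bookkeeping — checking that the pushed-forward structures match the pulled-back parabolic data $I_v^*\tilde\mu$ and the factorized $I_v^*\tilde\nu$-ramified structure, and that the pole-order constraint in Definition~\ref{definition: horizontal lift in one vector case}(ii) on the ${\rm d}\epsilon$-coefficient is preserved under gluing. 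None of this presents a conceptual difficulty beyond the local uniqueness already established.
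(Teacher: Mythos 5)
Your proposal is correct and follows essentially the same route as the paper: choose a covering in which each point of ${\mathcal D}_{\tilde M'}$ lies in exactly one chart, produce local horizontal lifts from Lemmas~\ref{lemma: local horizontal lift of regular singular connection}, \ref{lemma: local horizontal lift of unramified connection} and~\ref{lemma: existence of local horizontal lift of ramified connection} (with the pole-free case handled by the same computation as the logarithmic one), and patch using the fact that local horizontal lifts are unique up to \emph{unique} isomorphism, which gives both the cocycle condition and global uniqueness. The paper's own proof is exactly this argument, stated more briefly.
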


\begin{proof}
We take an affine open covering
${\mathcal C}_{\tilde{M}'} =\bigcup _{\alpha} U_{\alpha}$
such that
$\tilde{E}|_{U_{\alpha}}
\cong {\mathcal O}_{U_{\alpha}}^{\oplus r}$ for each $\alpha$.
We may assume that
$\# \big\{ \alpha \mid U_{\alpha} \supset \tilde{x} \big\}
=1$
for each
$\tilde{x}=\big(\tilde{x}^{\mathrm{log}}_i\big)_{\tilde{M}'}$, $\tilde{x}=\big(\tilde{x}^{\mathrm{un}}_i\big)_{\tilde{M}'}$
and $\tilde{x}=\big(\tilde{x}^{\mathrm{ram}}_i\big)_{\tilde{M}'}$.
We may further assume that, for each $\alpha$,
$U_{\alpha} \cap {\mathcal D}_{\tilde{M}'}=\varnothing$ holds or
$U_{\alpha} \cap {\mathcal D}_{\tilde{M}'}=\tilde{x}$
holds for some $\tilde{x}=\big(\tilde{x}^{\mathrm{log}}_i\big)_{\tilde{M}'}$,
$\tilde{x}=\big(\tilde{x}^{\mathrm{un}}_i\big)_{\tilde{M}'}$
or $\tilde{x}=\big(\tilde{x}^{\mathrm{ram}}_i\big)_{\tilde{M}'}$.

Let $U_{\alpha}[v]$ be the open subscheme of ${\mathcal C}_{\tilde{M}'[v]}$
whose underlying set is $U_{\alpha}$.
If $U_{\alpha}\cap {\mathcal D}_{\tilde{M}'}=\varnothing$,
then we can write
$\tilde{\nabla}|_{U_{\alpha}}={\rm d}+A_{\alpha}(z){\rm d}z$
for a~matrix~$A_{\alpha}$ with values in ${\mathcal O}_{U_{\alpha}}$.
We can take a~matrix~$\tilde{A}_{\alpha}$ with values in
${\mathcal O}_{U_{\alpha}}[v]$ which is a lift of $A_{\alpha}$.
After adding an element of $\epsilon M_r({\mathcal O}_{U_{\alpha}})$,
we can assume that $\partial \tilde{A}_{\alpha}/\partial \epsilon=0$.
Then $\nabla_{\alpha}={\rm d}+\tilde{A}_{\alpha}{\rm d}z$
is an integrable connection
and $\big({\mathcal O}_{U_{\alpha}[v]}^{\oplus r},\nabla_{\alpha}\big)$
is a local horizontal lift of
$\big(\tilde{E}|_{U_{\alpha}},\tilde{\nabla}|_{U_{\alpha}}\big)$.
Furthermore, we can prove the uniqueness of the local horizontal lift
by the same proof as Lemma~\ref{lemma: local horizontal lift of regular singular connection}.

If $\alpha$ satisfies
$U_{\alpha} \cap {\mathcal D}_{\tilde{M}'}=\tilde{x}$
for some $\tilde{x}=\big(\tilde{x}^{\mathrm{log}}_i\big)_{\tilde{M}'}$,
$\tilde{x}=\big(\tilde{x}^{\mathrm{un}}_i\big)_{\tilde{M}'}$
or $\tilde{x}=\big(\tilde{x}^{\mathrm{ram}}_i\big)_{\tilde{M}'}$,
we can take a local horizontal lift
$\big({\mathcal E}^v_{U_{\alpha}},\nabla^v_{U_{\alpha}},l^v_{U_{\alpha}},\ell^v_{U_{\alpha}},
{\mathcal V}^v_{U_{\alpha}}\big)$
of
$\big(\tilde{E}|_{U_{\alpha}},\tilde{\nabla}|_{U_{\alpha}},
\tilde{l}|_{U_{\alpha}},\tilde{\ell}|_{U_{\alpha}},\tilde{\mathcal V}|_{U_{\alpha}}\big)$
by Lemmas \ref{lemma: local horizontal lift of regular singular connection},~\ref{lemma: local horizontal lift of unramified connection} and
\ref{lemma: existence of local horizontal lift of ramified connection}.
Since the local horizontal lifts are unique up to unique isomorphisms,
we can patch them and get a global horizontal lift
$\big({\mathcal E}^v,\nabla^v,l^v,\ell^v,{\mathcal V}^v \big)$
of
$\big(\tilde{E},\tilde{\nabla}, \tilde{l}, \tilde{\ell},
\tilde{\mathcal V}\big)_{\tilde{M}'}$,
which is unique up to an isomorphism.
\end{proof}

For a Zariski open subset
${\mathcal T'}\subset{\mathcal T}$,
consider a morphism
\begin{equation*}
 u\colon \ \Spec{\mathcal O}_{\mathcal T'}[\epsilon_1,\epsilon_2]/\big(\epsilon_1^2,\epsilon_2^2\big)
 \longrightarrow{\mathcal T'}
\end{equation*}
such that
$u|_{\mathcal T'}=\mathrm{id}_{\mathcal T'}$.
Let
\begin{equation*}
 \bar{u} \colon \ \Spec {\mathcal O}_{\mathcal T'}[\epsilon_1,\epsilon_2]
 /\big(\epsilon_1^2,\epsilon_1\epsilon_2,\epsilon_2^2\big)
 \longrightarrow {\mathcal T'}
\end{equation*}
be the induced morphism which corresponds to a pair
$(u_1,u_2)$ of vector fields.
We write
\[
 {\mathcal T'}[\bar{u}]:={\mathcal T}'\times
 \Spec\mathbb{C}[\epsilon_1,\epsilon_2]/\big(\epsilon_1^2,\epsilon_1\epsilon_2,\epsilon_2^2\big),
 \qquad
 {\mathcal T'}[u]:={\mathcal T}'\times
 \Spec\mathbb{C}[\epsilon_1,\epsilon_2]/\big(\epsilon_1^2,\epsilon_2^2\big)
\]
with the structure morphisms
${\mathcal T'}[\bar{u}]\xrightarrow{\bar{u}}{\mathcal T'}$
and
${\mathcal T'}[u]\xrightarrow{u}{\mathcal T'}$,
respectively.
We further set
\begin{alignat*}{3}
& \tilde{M'}[\bar{u}]
:=\tilde{M}'\times_{\mathcal T'}{\mathcal T'}[\bar{u}] ,\qquad &&
 {\mathcal C}_{ {\mathcal M'}[\bar{u}] }
 :=
 {\mathcal C}\times_{\mathcal T'} {\mathcal M'}[\bar{u}],&
 \\
& \tilde{M'}[u]
 :=
 \tilde{M}'\times_{\mathcal T'}{\mathcal T'}[u],
 \qquad&&
 {\mathcal C}_{ {\mathcal M'}[u] }
 :=
 {\mathcal C}\times_{\mathcal T'} {\mathcal M'}[u].&
\end{alignat*}
We define a coherent subsheaf
$\tilde{\Omega}_u$ of
$\Omega^1_{{\mathcal C}_{\tilde{M}'[u]}/\tilde{M}'}\big({\mathcal D}_{\tilde{M'}[u]}\big)$
in the same way as in (\ref{equation: definition of total differential})
and define a coherent subsheaf
$\tilde{\Omega}_{\bar{u}}$ of
$\Omega^1_{{\mathcal C}_{\tilde{M}'[\bar{u}]}/\tilde{M}'}\big({\mathcal D}_{\tilde{M'}[\bar{u}]}\big)$
similarly.

\begin{Definition} \label{definition: horizontal lifts in general}
We say that
$\big({\mathcal E}^u,\nabla^u,l^u,\ell^u,{\mathcal V}^u\big)$
\big(resp.\ $\big({\mathcal E}^{\bar{u}},\nabla^{\bar{u}},l^{\bar{u}},\ell^{\bar{u}},{\mathcal V}^{\bar{u}}\big)$\big)
is a horizontal lift of
$\big(\tilde{E},\tilde{\nabla},\tilde{l},\tilde{\ell},\tilde{\mathcal V}\big)_{\tilde{M}'}$
with respect to~$u$ (resp.\ $\bar{u}$)
if the conditions (i), (ii), (iii), (iv) and (v) of Definition~\ref{definition: horizontal lift in one vector case}
are satisfied after replacing
$\tilde{M}'[v]$ with $\tilde{M'}[u]$ (resp.\ $\tilde{M'}[\bar{u}]$),
replacing $\tilde{\Omega}_v$ with $\tilde{\Omega}_u$
(resp. $\tilde{\Omega}_{\bar{u}}$),
replacing $(\lambda,I_v^*\tilde{\mu},I_v^*\tilde{\nu})$-structure in~(vi)
with $(\lambda,I_u^*\tilde{\mu},I_u^*\tilde{\nu})$-structure
(resp.\ $(\lambda,I_{\bar{u}}^*\tilde{\mu},I_{\bar{u}}^*\tilde{\nu})$-structure)
and replacing the equality of integrability condition in (iii) with
\begin{gather*}
\frac{\partial A}{\partial \epsilon_1}{\rm d}z\wedge {\rm d}\epsilon_1
 + \frac{\partial A}{\partial \epsilon_2}{\rm d}z\wedge {\rm d}\epsilon_2
 +\frac{\partial B_1}{\partial \epsilon_2}{\rm d}\epsilon_2\wedge {\rm d}\epsilon_1
 +\frac{\partial B_2}{\partial \epsilon_1}{\rm d}\epsilon_1\wedge {\rm d}\epsilon_2
 \\
 \qquad{} =
 {\rm d}B_1\wedge {\rm d}\epsilon_1+[A,B_1]{\rm d}z\wedge {\rm d}\epsilon_1
 +
{\rm d}B_2\wedge {\rm d}\epsilon_1+[A,B_2]{\rm d}z\wedge d\epsilon_2
 +[B_1,B_2]{\rm d}\epsilon_1\wedge {\rm d}\epsilon_2
\end{gather*}
for
$\Gamma^u=A{\rm d}z+B_1{\rm d}\epsilon_1+B_2{\rm d}\epsilon_2$
(resp.\ replacing with
\begin{align*}
 \frac{\partial A}{\partial \epsilon_1}{\rm d}z\wedge {\rm d}\epsilon_1
 + \frac{\partial A}{\partial \epsilon_2}{\rm d}z\wedge {\rm d}\epsilon_2
 =
{\rm d}B_1\wedge d\epsilon_1+[A,B_1]{\rm d}z\wedge {\rm d}\epsilon_1
 +
{\rm d}B_2\wedge {\rm d}\epsilon_1+[A,B_2]{\rm d}z\wedge {\rm d}\epsilon_2
\end{align*}
for
$\Gamma^{\bar{u}}=Adz+B_1{\rm d}\epsilon_1+B_2{\rm d}\epsilon_2$).
\end{Definition}

The following proposition can be proved in the same way as
Proposition~\ref{proposition: horizontal lift in one variable}.
So we omit its proof.

\begin{Proposition} \label{proposition: horizontal lift in two variable}
There exists a unique horizontal lift
$\big({\mathcal E}^{\bar{u}},\nabla^{\bar{u}},l^{\bar{u}},\ell^{\bar{u}},{\mathcal V}^{\bar{u}}\big)$
of
$\big(\tilde{E},\tilde{\nabla},\tilde{l},\tilde{\ell},\tilde{\mathcal V}\big)_{\tilde{M}'}$
with respect to
$\bar{u}\colon
{\mathcal T'}[\bar{u}]=\Spec {\mathcal O}_{\mathcal T'}[\epsilon_1,\epsilon_2]
 /\big(\epsilon_1^2,\epsilon_1\epsilon_2,\epsilon_2^2\big)
 \longrightarrow {\mathcal T'}$.
\end{Proposition}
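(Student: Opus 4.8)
The plan is to follow verbatim the strategy of Proposition~\ref{proposition: horizontal lift in one variable}, exploiting the fact that the structure ring $\mathbb{C}[\epsilon_1,\epsilon_2]/(\epsilon_1^2,\epsilon_1\epsilon_2,\epsilon_2^2)$ of $\bar{u}$ makes the two deformation directions completely decoupled. Concretely, $\bar{u}$ corresponds to the pair $(u_1,u_2)$ of vector fields, and over $\mathcal{T}'[\bar{u}]$ every product $\epsilon_1\epsilon_2$ vanishes, so in the integrability condition of Definition~\ref{definition: horizontal lifts in general} for $\Gamma^{\bar{u}}=A\,{\rm d}z+B_1\,{\rm d}\epsilon_1+B_2\,{\rm d}\epsilon_2$ the cross terms $[B_1,B_2]\,{\rm d}\epsilon_1\wedge {\rm d}\epsilon_2$ and $\partial B_i/\partial\epsilon_j\,(i\neq j)$ are absent. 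The condition therefore splits into the two independent ${\rm d}z\wedge {\rm d}\epsilon_i$-components, each of which is exactly the one-variable integrability condition appearing in Definition~\ref{definition: horizontal lift in one vector case}\,(iii) for the single vector field $u_i$.

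First I would construct the local horizontal lifts. Take an affine open covering $\mathcal{C}_{\tilde{M}'}=\bigcup_\alpha U_\alpha$ trivializing $\tilde{E}$ and meeting $\mathcal{D}_{\tilde{M}'}$ in at most one point, exactly as in the proof of Proposition~\ref{proposition: horizontal lift in one variable}. On each $U_\alpha$ I would run the construction of Lemmas~\ref{lemma: local horizontal lift of regular singular connection},~\ref{lemma: local horizontal lift of unramified connection} and~\ref{lemma: existence of local horizontal lift of ramified connection} once for $\epsilon_1$ (with respect to $u_1$) and once for $\epsilon_2$ (with respect to $u_2$), producing matrices $B_1,C_1$ and $B_2,C_2$ together with the relevant parabolic or factorized data. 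Setting $\Gamma^{\bar{u}}_\alpha:=(\tilde{A}+\epsilon_1 C_1+\epsilon_2 C_2)\,{\rm d}z/z^{m}+B_1\,{\rm d}\epsilon_1+B_2\,{\rm d}\epsilon_2$ (with the evident modification in the logarithmic and regular cases) gives a connection whose ${\rm d}z\wedge {\rm d}\epsilon_i$-integrability is guaranteed by the corresponding one-variable lemma; since no further components occur over $\mathcal{T}'[\bar{u}]$, the connection $\nabla^{\bar{u}}_{U_\alpha}$ is integrable in the sense of Definition~\ref{definition: horizontal lifts in general}. The filtrations $V^{\bar{u}}_{\alpha,k}$ and the pairings $\vartheta^{\bar{u}}_{\alpha,k},\varkappa^{\bar{u}}_{\alpha,k}$ are defined from the lifted endomorphism $\tilde{N}$ exactly as in Lemma~\ref{lemma: existence of local horizontal lift of ramified connection}, and the logarithmic and unramified parabolic structures as in the other two lemmas.

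Next I would prove uniqueness of the local lift on each $U_\alpha$. Given a second lift with matrix $(\tilde{A}+\epsilon_1 C_1'+\epsilon_2 C_2')\,{\rm d}z/z^m+B_1'\,{\rm d}\epsilon_1+B_2'\,{\rm d}\epsilon_2$, I would apply the gauge transformation $I_r+\epsilon_1 Q_1+\epsilon_2 Q_2$, noting that $\epsilon_1\epsilon_2=0$ makes its inverse $I_r-\epsilon_1 Q_1-\epsilon_2 Q_2$ and kills all mixed contributions, so that the transformation acts on the $\epsilon_1$- and $\epsilon_2$-parts independently. The analysis of the ${\rm d}\epsilon_i$-coefficient then reduces to the one-variable computation carried out in Lemmas~\ref{lemma: local horizontal lift of regular singular connection},~\ref{lemma: local horizontal lift of unramified connection} and~\ref{lemma: uniqueness of ramified local horizontal lift}, determining $Q_i$ uniquely and transforming the second lift into the first. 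Because each local lift is thus unique up to a unique isomorphism determined by the ${\rm d}\epsilon_i$-coefficients, the local lifts glue along the $U_{\alpha\beta}$ to a global horizontal lift $\big(\mathcal{E}^{\bar{u}},\nabla^{\bar{u}},l^{\bar{u}},\ell^{\bar{u}},\mathcal{V}^{\bar{u}}\big)$, unique up to isomorphism.

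The only point requiring genuine care --- and the one I would flag as the main obstacle --- is verifying that the ramified data produced separately for $\epsilon_1$ and $\epsilon_2$ assemble into a bona fide factorized $I_{\bar{u}}^*\tilde{\nu}$-ramified structure, i.e.\ that the compatibility diagrams of Definition~\ref{def-fac-connection} hold for the combined first-order deformation. This is handled exactly as in Lemma~\ref{lemma: existence of local horizontal lift of ramified connection}: the filtration $V^{\bar{u}}_{\alpha,k}=\im(\tilde{N}^k)$ and the factorization $\overline{\tilde{N}|_{V_k}}=\theta^{\bar{u}}_k\circ\kappa^{\bar{u}}_k$ are defined from the single lifted endomorphism $\tilde{N}$ over the whole ring $\mathbb{C}[\epsilon_1,\epsilon_2]/(\epsilon_1^2,\epsilon_1\epsilon_2,\epsilon_2^2)$, and the relation $\tilde{N}^r=z\cdot\mathrm{id}$ together with the commutativity with $\overline{\nabla^{\bar{u}}}$ is checked by the same identity~\eqref{equation: differential of k-th power of N} used in the one-variable case, now read modulo $(\epsilon_1^2,\epsilon_1\epsilon_2,\epsilon_2^2)$. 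Since all such checks are linear and the cross term $\epsilon_1\epsilon_2$ never enters, no new phenomenon appears and the one-variable verifications carry over unchanged.
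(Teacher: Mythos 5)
Your proposal is correct and matches the paper's intended argument: the paper explicitly omits the proof of this proposition, stating only that it "can be proved in the same way as Proposition \ref{proposition: horizontal lift in one variable}," and your observation that the relation $\epsilon_1\epsilon_2=0$ decouples the two deformation directions (so the integrability condition splits into two independent one-variable conditions and the gauge transformation $I_r+\epsilon_1Q_1+\epsilon_2Q_2$ acts componentwise) is precisely the content of that remark. Nothing further is needed.
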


If a horizontal lift
$\big({\mathcal E}^u,\nabla^u,l^u,\ell^u,{\mathcal V}^u\big)$
of
$\big(\tilde{E},\tilde{\nabla},\tilde{l},\tilde{\ell},\tilde{\mathcal V}\big)_{\tilde{M}'}$
with respect to $u$ exists,
it can be obtained as a lift of
$\big({\mathcal E}^{\bar{u}},\nabla^{\bar{u}},l^{\bar{u}},\ell^{\bar{u}},{\mathcal V}^{\bar{u}}\big)$
whose existence is ensured by Proposition~\ref{proposition: horizontal lift in two variable}.

\begin{Proposition} \label{proposition: horizontal lift in two variable of deep order}
There exists a unique horizontal lift
$\big({\mathcal E}^u,\nabla^u,l^u,\ell^u,{\mathcal V}^u\big)$
of
$\big(\tilde{E},\tilde{\nabla},\tilde{l},\tilde{\ell},\tilde{\mathcal V}\big)_{\tilde{M}'}$
with respect to
$u\colon {\mathcal T'}[u]=\Spec {\mathcal O}_{\mathcal T'}[\epsilon_1,\epsilon_2]
 /\big(\epsilon_1^2,\epsilon_2^2\big)
 \longrightarrow {\mathcal T'}$.
\end{Proposition}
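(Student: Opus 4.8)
The plan is to follow the architecture of the proof of Proposition~\ref{proposition: horizontal lift in one variable}: reduce the global statement to local existence and uniqueness on an affine open covering ${\mathcal C}_{\tilde M'}=\bigcup_\alpha U_\alpha$ adapted to ${\mathcal D}$ (each $U_\alpha$ meeting ${\mathcal D}_{\tilde M'}$ in at most one section, of a single local type), and then patch the local lifts using their uniqueness up to unique isomorphism. By the remark preceding the statement, I would start from the unique $\bar u$-horizontal lift $\big({\mathcal E}^{\bar u},\nabla^{\bar u},l^{\bar u},\ell^{\bar u},{\mathcal V}^{\bar u}\big)$ produced by Proposition~\ref{proposition: horizontal lift in two variable} and construct the $u$-lift as an extension of it along the surjection ${\mathcal O}_{\mathcal T'}[\epsilon_1,\epsilon_2]/(\epsilon_1^2,\epsilon_2^2)\longrightarrow{\mathcal O}_{\mathcal T'}[\epsilon_1,\epsilon_2]/(\epsilon_1^2,\epsilon_1\epsilon_2,\epsilon_2^2)$, whose kernel is generated by $\epsilon_1\epsilon_2$. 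Thus the only genuinely new local datum is the $\epsilon_1\epsilon_2$-coefficient of the connection matrix, subject to the integrability condition of Definition~\ref{definition: horizontal lifts in general}, whose $d\epsilon_1\wedge d\epsilon_2$-component now carries the commutator term $[B_1,B_2]$.

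On each $U_\alpha$ with $U_\alpha\cap{\mathcal D}_{\tilde M'}=\varnothing$ the extension is immediate: lift $A$ to a matrix over ${\mathcal O}_{U_\alpha}[\epsilon_1,\epsilon_2]/(\epsilon_1^2,\epsilon_2^2)$ with vanishing $\partial/\partial\epsilon_j$ and take $B_1=B_2=0$, uniqueness following as in Lemma~\ref{lemma: local horizontal lift of regular singular connection}. In the logarithmic and generic unramified cases the arguments of Lemmas~\ref{lemma: local horizontal lift of regular singular connection} and~\ref{lemma: local horizontal lift of unramified connection} carry over at each order: with $B_1,B_2$ diagonal (respectively determined by the diagonal exponents $\mu_{k,v}$), the commutator $[B_1,B_2]$ vanishes on the relevant diagonal, so the $d\epsilon_1\wedge d\epsilon_2$-equation is solved by the diagonalization already used there, and the normalizing transform is again determined uniquely by its $d\epsilon_j$-coefficients.

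The heart of the proof is the ramified case, refining Lemmas~\ref{lemma: existence of local horizontal lift of ramified connection} and~\ref{lemma: uniqueness of ramified local horizontal lift}. Here I would first apply Proposition~\ref{proposition: normalization of formal type over finite scheme} in the case $q=3m-1$ — rather than $q=2m-1$ as in the one-variable case — to choose a frame in which $(\tilde E,\tilde\nabla)|_{(3m-1)\tilde x}$ is normalized to $(\mathbb{C}[[w]],\nabla_\nu)$ in deep order, so that the normal form \eqref{equation: normalization of a connection to (2m-1)x} of $A(z)$ is refined (the remainder $z^{2m-1}A'(z)$ being itself controlled modulo $z^{3m-1}$) and the centralizer identity $\ker(\mathrm{ad}(\tilde N|_{z=0}))={\mathcal O}[\tilde N|_{z=0}]$ becomes available for expressing the solution matrices in the basis $\{\tilde N^k\}_{0\le k\le r-1}$. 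Since $B_1,B_2$ each have pole of order at most $m-1$ along $\tilde x$ by Definition~\ref{definition: horizontal lift in one vector case}(ii), the commutator $[B_1,B_2]$ has pole of order at most $2m-2$; solving the $d\epsilon_1\wedge d\epsilon_2$-component of integrability for the $\epsilon_1\epsilon_2$-coefficient $C_{12}(z)\,dz/z^m$ of $A$, and matching it against the terms $dB_j$ and $[A,B_j]$, is exactly what forces control of $A$ to order $z^{3m-1}$. Following the pattern of Lemma~\ref{lemma: existence of local horizontal lift of ramified connection}, the coefficients of $C_{12}$ are produced from the integrability equations via the commutator identity \eqref{equation: differential of k-th power of N}, and one checks that $C_{12}$ and the $B_j$ remain compatible with the filtration $(V^v_k)$ and the factorized pairings $(\vartheta_k,\varkappa_k)$.

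For uniqueness, a second $u$-lift agreeing with the fixed $\bar u$-lift differs only in its $\epsilon_1\epsilon_2$-part, and, exactly as in Lemma~\ref{lemma: uniqueness of ramified local horizontal lift}, the integrability condition forces the discrepancy $Q(z)=B(z)-B'(z)$ to be a matrix of regular functions with $Q(z)|_{m\tilde x}\in{\mathcal O}_{m\tilde x}[\tilde N]$; the transform $I_r+\epsilon_1\epsilon_2\,Q(z)$ then carries one lift to the other, preserves the ramified structure because $Q(z)|_{m\tilde x}$ lies in ${\mathcal O}_{m\tilde x}[\tilde N]$, and is the unique such transform since it is determined by its $d\epsilon_1\wedge d\epsilon_2$-coefficient. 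The main obstacle I anticipate is precisely this ramified existence step: verifying that the $\epsilon_1\epsilon_2$-integrability equation, now carrying the genuinely non-trivial term $[B_1,B_2]$ with its pole of order up to $2m-2$, admits a solution within the prescribed pole order that is simultaneously compatible with the factorized ramified structure — and this is exactly what the passage from $q=2m-1$ to $q=3m-1$ in Proposition~\ref{proposition: normalization of formal type over finite scheme} is designed to make possible.
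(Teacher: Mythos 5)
Your overall architecture matches the paper's: start from the unique $\bar u$-lift of Proposition~\ref{proposition: horizontal lift in two variable}, reduce to the local problem on a covering adapted to ${\mathcal D}$, use the deep-order normalization $q=3m-1$ from Proposition~\ref{proposition: normalization of formal type over finite scheme} at the ramified points, and prove uniqueness via a transform $I_r+\epsilon_1\epsilon_2\,Q(z)$ determined by its $\epsilon_1\epsilon_2$-coefficient. However, there is a genuine gap in the ramified existence step: you identify $[B_1,B_2]$ as ``the genuinely non-trivial term'' to be absorbed into the $\epsilon_1\epsilon_2$-coefficient of $A$, but in the actual construction $B_1$ and $B_2$ are both ${\mathcal O}$-linear combinations of powers of the single endomorphism $\tilde N$, so $[B_1,B_2]=0$ identically and the ${\rm d}\epsilon_1\wedge {\rm d}\epsilon_2$-component of the curvature causes no trouble at all. (You also conflate equations here: $C_{12}\,{\rm d}z/z^m$ lives in the ${\rm d}z\wedge {\rm d}\epsilon_j$-components, not in the ${\rm d}\epsilon_1\wedge {\rm d}\epsilon_2$-component.)

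The difficulty you should have isolated, and which your proposal omits, is the \emph{compatibility of the two cross-derivative equations}: the $\epsilon_2$-part of the ${\rm d}z\wedge{\rm d}\epsilon_1$-equation and the $\epsilon_1$-part of the ${\rm d}z\wedge{\rm d}\epsilon_2$-equation each determine the pair $(C_{1,2},B_{1,2})$, the first through ${\rm d}B_{1,2}+([A,B_{1,2}]+[C_2,B_1])\frac{{\rm d}z}{z^m}$ and the second through ${\rm d}B_{1,2}+([A,B_{1,2}]+[C_1,B_2])\frac{{\rm d}z}{z^m}$. A single symmetric $B_{1,2}=B_{2,1}$ and a single $C_{1,2}$ solving both exist only because of the identity $[C_1(z),B_2(z)]=[C_2(z),B_1(z)]$, which the paper proves as a separate Claim using the vanishing of $[B_1,B_2]$ together with the relation ${\rm d}B_j=\sum_{k,l}\frac{r(-m+l+1)+k}{-mr+lr+r+k}\frac{b_{j,k,l}}{z^{m-l}}\tilde N^k{\rm d}z+[B_j,R_r]\frac{{\rm d}z}{z}$ coming from \eqref{equation: differential of k-th power of N}. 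Without this Claim the existence of the $\epsilon_1\epsilon_2$-extension is not established, so your argument as written does not close.
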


\begin{proof}By Proposition~\ref{proposition: horizontal lift in two variable},
there is a unique horizontal lift
$\big({\mathcal E}^{\bar{u}}\! ,\nabla^{\bar{u}} \! ,l^{\bar{u}} \! ,\ell^{\bar{u}} \! ,{\mathcal V}^{\bar{u}}\big)$
of
$\big(\tilde{E},\tilde{\nabla},\tilde{l},\tilde{\ell},\tilde{\mathcal V}\big)_{\tilde{M}'}$
with respect to
$\bar{u}\colon \Spec {\mathcal O}_{\mathcal T'}[\epsilon_1,\epsilon_2]
 /\big(\epsilon_1^2,\epsilon_1\epsilon_2,\epsilon_2^2\big)
 \rightarrow {\mathcal T'}$.
So we only have to show the existence and the uniqueness of a lift of
$\big({\mathcal E}^{\bar{u}},\nabla^{\bar{u}},l^{\bar{u}},\ell^{\bar{u}},{\mathcal V}^{\bar{u}}\big)$,
which is a horizontal lift of
$\big(\tilde{E},\tilde{\nabla},\tilde{l},\tilde{\ell},\tilde{\mathcal V}\big)_{\tilde{M}'}$
with respect to the morphism
$u\colon {\mathcal T'}[u]=\Spec {\mathcal O}_{\mathcal T'}[\epsilon_1,\epsilon_2]
 /\big(\epsilon_1^2,\epsilon_2^2\big)
 \longrightarrow {\mathcal T'}$.
The method of the proof is similar to that of
Proposition \ref{proposition: horizontal lift in one variable}.

We take an affine open covering
${\mathcal C}\times_{\mathcal T}\tilde{M}'
=\bigcup U_{\alpha}$
as in the proof of Proposition~\ref{proposition: horizontal lift in one variable}.
If $U_{\alpha}$ is an open neighborhood of $\big(\tilde{x}_i^{\mathrm{un}}\big)_{\tilde{M}'}$,
then the existence and the uniqueness of the local horizontal lift
with respect to~$u$ is given in the proof of \cite[Lemma~5.5]{Inaba-3}.
If $U_{\alpha}$ is an open neighborhood of
$\big(\tilde{x}_i^{\mathrm{log}}\big)_{\tilde{M}'}$,
then it is much easier to prove the existence and the uniqueness of
a logarithmic local horizontal lift.

So assume that $\tilde{x}:=\big(\tilde{x}_i^{\mathrm{ram}}\big)_{\tilde{M}'}$
is contained in $U_{\alpha}$.
If $u$ is given by
\begin{align*}
 u^*(\nu(w))
 &=
 \nu(w) + \epsilon_1 \nu_{u_1}(w)
 +\epsilon_2 \nu_{u_2}(w) +\epsilon_1\epsilon_2 \nu_{u_{12}}(w)
 \\
 &=
 \sum_{k=0}^{r-1}\left( a_{k,m-1} z^{m-1}
 +\sum_{l=0}^{m-2} ( a_{k,l} + \epsilon_1 b_{1,k,l}
 +\epsilon_2 b_{2,k,l} +\epsilon_1\epsilon_2 b_{1,2,k,l} ) z^l \right)w^k,
\end{align*}
then, by the proof of Proposition \ref{proposition: horizontal lift in one variable},
the restriction of $\nabla^{\bar{u}}$ to
$U_{\alpha}[\bar{u}]=U_{\alpha}[u]\otimes{\mathcal O}_{{\mathcal T'}[u]/(\epsilon_1\epsilon_2)}$
can be given by
\[
 A(z)\frac{{\rm d}z} {z^m}
 +\epsilon_1C_1(z)\frac{{\rm d}z} {z^m}
 +\epsilon_2C_2(z)\frac{{\rm d}z} {z^m}
 +B_1(z) {\rm d}\epsilon_1+B_2(z){\rm d}\epsilon_2
\]
where $\frac{\partial A(z)} {\partial\epsilon_1}=\frac {\partial A(z)} {\partial \epsilon_2}=0$
and
\begin{gather}
 A(z)
 =
 \sum_{k=0}^{r-1} \sum_{l=0}^{m-1} a_{k,l} z^l \tilde{N}^k
 + z^{m-1}R_r+z^{3m-1}A'(z),
 \label{equation: assumption of A(z)} \\
 B_1(z) =
 \sum_{k=0}^{r-1} \sum_{l=0}^{m-2}
 \frac {rb_{1,k,l}} {(-mr+lr+r+k)z^{m-l-1}} \tilde{N}^k,
 \notag \\
 B_2(z)
 =
 \sum_{k=0}^{r-1} \sum_{l=0}^{m-2}
 \frac {rb^{(i)}_{2,k,l}} {(-mr+lr+r+k)z^{m-l-1}} \tilde{N}^k,
 \notag \\
 C_1(z)\frac{{\rm d}z}{z^m}
 =
 {\rm d}B_1(z)+[A(z),B_1(z)]\frac{{\rm d}z}{z^m},
 \qquad
 C_2(z)\frac{{\rm d}z} {z^m}
 = {\rm d}B_2(z)+[A(z),B_2(z)]\frac{{\rm d}z} {z^m}.
 \notag
\end{gather}
Then we can see by the above equality that
\[
 C_j(z)|_{2m\tilde{x}}
 =
 \sum_{k=0}^{r-1} \sum_{l=0}^{m-2}
 b_{j,k,l} z^l \tilde{N}^k|_{2m\tilde{x}}
\]
for $j=1,2$.
So we have
$[C_1(z),B_2(z)],
[C_2(z),B_1(z)]
\in z^{m+1}\End(\tilde{E}|_{U_{\alpha}})$.

\begin{Claim}
$[C_1(z),B_2(z)]=[C_2(z),B_1(z)]$.
\end{Claim}

\begin{proof}
First notice that we can check the equality
\[
 {\rm d}B_1(z)
 =
 \sum_{k=0}^{r-1}\sum_{l=0}^{m-2}
 \frac { r(-m+l+1)+k} {-mr+lr+r+k}
 \frac {b_{1,k,l}} {z^{m-l}} \tilde{N}^k {\rm d}z
 +\left[B_1(z),R_r \right]\frac{{\rm d}z} {z}
\]
using (\ref{equation: differential of k-th power of N}).
So we have
\begin{align*}
 [{\rm d}B_1(z),B_2(z)]
 &=
 \left[\left[B_1(z),R_r\right],B_2(z)\right]\frac{{\rm d}z} {z}
 \\
 &=
 \left[
 \left[B_2(z),R_r \right], B_1(z)\right] \frac{{\rm d}z} {z}
 +\left[ [B_1(z),B_2(z) ] , R_r \right]\frac{{\rm d}z}{z}
 \\
 &=[{\rm d}B_2(z),B_1(z)],
\end{align*}
because
$[B_1(z),B_2(z)]=0$.
Thus we have
\begin{align*}
 \left[C_1(z)\frac{{\rm d}z}{z^m} , B_2(z) \right]
 &=
 \left[ {\rm d}B_1(z)+[A(z),B_1(z)]\frac{{\rm d}z}{z^m} , B_2(z)\right]
 \\
 &=
 [{\rm d}B_1(z),B_2(z)]+[[A(z),B_1(z)],B_2(z)]\frac{{\rm d}z}{z^m}
 \\
 &=
 [{\rm d}B_2(z),B_1(z)]+[[A(z),B_2(z)],B_1(z)]\frac{{\rm d}z}{z^m}
 =\left[C_2(z)\frac{{\rm d}z}{z^m},B_1(z)\right].\!\!\!\tag*{\qed}
\end{align*}\renewcommand{\qed}{}
\end{proof}

We put
\begin{gather*}
 \tilde{A}(z)
 :=
 \sum_{k=0}^{r-1} \sum_{l=0}^{m-1} a_{k,l} z^l \tilde{N}^k
 + z^{m-1}R_r+z^{3m-1}\tilde{A}'(z),
 \\
 B_{1,2}(z)
 :=
 \sum_{k=0}^{r-1} \sum_{l=0}^{m-2}
 \frac {rb_{1,2,k,l}} {(-mr+lr+r+k)z^{m-l-1}} \tilde{N}^k,
 \\
 C_{1,2}(z)\frac{{\rm d}z} {z^m}
:=
 {\rm d}B_{1,2}(z)+\left([A(z),B_{1,2}(z)]+[C_1(z),B_2(z)]\right)\frac{{\rm d}z} {z^m},
\end{gather*}
where
$\tilde{A}'(z)$ is a lift of $A'(z)$
as a matrix with coefficients in
${\mathcal O}_{U_{\alpha}[u]}$
such that
$\frac {\partial \tilde{A}(z)} {\partial \epsilon_1}=
\frac {\partial \tilde{A}(z)} {\partial \epsilon_2}=0$.
Define a connection
$\nabla^u_{\alpha} \colon
{\mathcal O}_{U_{\alpha}[u]}^{\oplus r}
\longrightarrow
{\mathcal O}_{U_{\alpha}[u]}^{\oplus r}\otimes
\Omega^1_{{\mathcal C}_{\tilde{M}'[u]/\tilde{M}'}}({\mathcal D}_{\tilde{M}'[u]})$
by setting
\[
 \nabla^u_{\alpha}
 =
{\rm d}+
 \big(\tilde{A}+\epsilon_1C_1+\epsilon_2C_2+\epsilon_1\epsilon_2 C_{1,2}\big)\frac{{\rm d}z} {z^m}
 +B_1{\rm d}\epsilon_1+B_2{\rm d}\epsilon_2+B_{1,2}(\epsilon_1{\rm d}\epsilon_2+\epsilon_2{\rm d}\epsilon_1).
\]
Then $\nabla^u_{\alpha}$ is an integrable connection
because its curvature form becomes
\begin{gather*}
( C_1+\epsilon_2 C_{1,2} ) {\rm d}\epsilon_1\wedge \frac {{\rm d}z} {z^m}
 +
 ( C_2+\epsilon_1C_{1,2} ) {\rm d}\epsilon_2\wedge \frac {{\rm d}z} {z^m}
 +( {\rm d}B_1+\epsilon_2 {\rm d}B_{1,2} ) \wedge {\rm d}\epsilon_1
 \\
 \qquad\quad{}+
 ( {\rm d}B_2+\epsilon_1{\rm d}B_{1,2} ) \wedge {\rm d}\epsilon_2
 +
 B_{1,2}{\rm d}\epsilon_1\wedge {\rm d}\epsilon_2+B_{1,2}{\rm d}\epsilon_2\wedge {\rm d}\epsilon_1\\
 \qquad\quad{}
 +
 [A,B_1+\epsilon_2 B_{1,2}]
 \frac{{\rm d}z} {z^{m_i}} \wedge {\rm d}\epsilon_1
+[A,B_2+\epsilon_1 B_{1,2}]
 \frac{{\rm d}z} {z^{m_i}} \wedge {\rm d}\epsilon_2
 +\epsilon_2 [C_2,B_1]\frac{{\rm d}z} {z^m} \wedge {\rm d}\epsilon_1\\
 \qquad\quad{}
 +\epsilon_1[C_1,B_2]\frac{{\rm d}z} {z^m} \wedge {\rm d}\epsilon_2
 +[B_1,B_2]{\rm d}\epsilon_1 \wedge {\rm d}\epsilon_2
 \\
 \qquad{} =
 \left( {\rm d}B_1+(-C_1+[A,B_1])\frac{{\rm d}z} {z^m} \right)\wedge {\rm d}\epsilon_1
 +
 \left( {\rm d}B_2+(-C_2+[A,B_2])\frac{{\rm d}z} {z^m} \right)\wedge {\rm d}\epsilon_2
 \\
 \qquad\quad{}
 +
 \epsilon_2
 \left( {\rm d}B_{1,2}+(-C_{1,2}+[A,B_{1,2}]+[C_2,B_1])\frac{{\rm d}z} {z^m}
 \right) \wedge {\rm d}\epsilon_1
 \\
 \qquad \quad{}
 +
 \epsilon_1
 \left( {\rm d}B_{1,2}+(-C_{1,2}+[A,B_{1,2}]+[C_1,B_2])\frac{{\rm d}z} {z^m}
 \right) \wedge {\rm d}\epsilon_2
 =0.
\end{gather*}
We can define
$V^u_{k,\alpha}$, $\vartheta^u_{k,\alpha}$, $\varkappa^u_{k,\alpha}$
on ${\mathcal O}_{{\mathcal D}[u]}^{\oplus r}$
in the same way as in the proof of
Lemma~\ref{lemma: existence of local horizontal lift of ramified connection}.
So we can get a local horizontal lift
$\big(
{\mathcal O}_{U_{\alpha}[u]}^{\oplus r},\nabla^u_{\alpha},
(V^u_{k,\alpha},\vartheta^u_{k,\alpha},\varkappa^u_{k,\alpha})_{0\leq k\leq r-1} \big)$,
which is a lift of the restriction
$\big({\mathcal E}^{\bar{u}},\nabla^{\bar{u}},l^{\bar{u}},\ell^{\bar{u}},{\mathcal V}^{\bar{u}}\big)
|_{U_{\alpha}[\bar{u}]}$.

Let
$\big(
{\mathcal O}_{U_{\alpha}[u]}^{\oplus r},\nabla'_{\alpha},
(V'_{k,\alpha},\vartheta'_{k,\alpha},\varkappa'_{k,\alpha})_{0\leq k\leq r-1}\big)$
be another local horizontal lift with respect to~$u$,
which is a lift of
$\big({\mathcal E}^{\bar{u}},\nabla^{\bar{u}},l^{\bar{u}},\ell^{\bar{u}},{\mathcal V}^{\bar{u}}\big)
|_{U_{\alpha}[\bar{u}]}$.
Then we can write
\[
 \nabla'_{\alpha}=
 {\rm d}+
\big( \tilde{A}+\epsilon_1C_1+\epsilon_2 C_2+\epsilon_1\epsilon_2 C'_{1,2} \big)
 \frac {{\rm d}z} {z^m}
 +B_1 {\rm d}\epsilon_1+B_2{\rm d}\epsilon_2+B'_{1,2}\epsilon_1{\rm d}\epsilon_2
 +B'_{2,1}\epsilon_2{\rm d}\epsilon_1.
\]
The integrability condition of $\nabla'$ implies the equalities
\begin{align}
 C'_{1,2}(z)\frac{{\rm d}z} {z^m}
 &=
 {\rm d}B'_{1,2}(z)+\big( [A(z), B'_{1,2}(z)]+[C_1(z),B_2(z)] \big)\frac{{\rm d}z} {z^m}
 \nonumber\\
 &={\rm d}B'_{2,1}(z)+\big( [A(z),B'_{2,1}(z)]+[C_1(z),B_2(z)] \big)\frac{{\rm d}z} {z^m}\label{equation: another integrability condition in two variables}
\end{align}
and $B'_{1,2}=B'_{2,1}$.
Since $\nabla'$ has the property of local horizontal lift,
we have
\[
 C'_{1,2}(z) \big|_{m\tilde{x}}
 =
 \sum_{k=0}^{r-1} \sum_{l=0}^{m-2}
 b_{1,2,k,l} z^l \big(\tilde{N}\big)^k\big|_{m\tilde{x}}.
\]
We can see that
$[A(z),B'_{1,2}(z)]$ is regular from the equality
(\ref{equation: another integrability condition in two variables}).
Since $A(z)$ satisfies~(\ref{equation: assumption of A(z)}),
we can first verify
$z^{m-1}B'_{1,2}(z)\in {\mathcal O}_{(m-1)\tilde{x}}\big[\tilde{N}|_{(m-1)\tilde{x}}\big]$.
Combining with the condition (c) of
Definition \ref{definition: horizontal lift in one vector case}\,(iv),
we can take $\beta_{k,l}$ such that
\[
 B'_{1,2}(z)-\sum_{k=0}^{r-1}\sum_{l=0}^{m-1}
 \beta_{k,l} z^{l-m-1}\tilde{N}^k
 \in \End\big(\tilde{E}|_{U_{\alpha}}\big).
\]
is a matrix of regular functions whose constant term
is a lower triangular matrix.
Using the same argument as in the proof of
Lemma \ref{lemma: uniqueness of ramified local horizontal lift},
we can see
\[
 \beta_{k,l}=\frac { rb_{1,2,k,l} } { -mr+l+r+k }.
\]
So
$B_{12}(z)-B'_{12}(z)$ becomes a matrix of regular functions
and
$I_r+\epsilon_1\epsilon_2(B_{12}(z)-B'_{12}(z))$
gives an automorphism of
${\mathcal O}_{U_{\alpha}[u]}^{\oplus r}$
which transform $\nabla'_{\alpha}$ to $\nabla^u_{\alpha}$
and which sends $V'_{k,\alpha}$ to $V^u_{k,\alpha}$.
Furthermore, we can see that such a transform is uniquely
determined by the coefficient of $\epsilon_2 \, d\epsilon_1$.
Thus the existence and the uniqueness of a ramified local horizontal lift
with respect to $u$ is proved.

Patching the local horizontal lifts together,
we get a unique horizontal lift
$\big({\mathcal E}^{\bar{u}},\nabla^{\bar{u}},l^{\bar{u}},\ell^{\bar{u}},{\mathcal V}^{\bar{u}}\big)$
of
$\big(\tilde{E},\tilde{\nabla},\tilde{l},\tilde{\ell},\tilde{\mathcal V}\big)_{\tilde{M}'}$
on
${\mathcal C}\times_{\mathcal T}\Spec{\mathcal O}_{\tilde{M}'}
[\epsilon_1,\epsilon_2]/\big(\epsilon_1^2,\epsilon_2^2\big)$
with respect to $u$.
\end{proof}

\section{Global generalized isomonodromic deformation}\label{section: generalized isomonodromy equation}

\begin{Definition}\label{definition: generalized isomonodromy splitting}
For each vector field $v\in T_{\mathcal T'}$,
the relative connection
$\big({\mathcal E}^v,\overline{\nabla^v},l^v,\ell^v, {\mathcal V}^v \big)$
induced by the global horizontal lift
$\big({\mathcal E}^v,\nabla^v, l^v,\ell^v,{\mathcal V}^v \big)$
(which exists by Proposition~\ref{proposition: horizontal lift in one variable})
defines a morphism
\begin{equation*}
 I_{\Phi(v)}\colon \ \tilde{M}' \times\Spec\mathbb{C}[\epsilon]/\big(\epsilon^2\big)
 \longrightarrow \tilde{M}'
\end{equation*}
which makes the diagram
\begin{equation} \label{equation: diagram defining isomonodromic vector}
 \begin{CD}
 \tilde{M}' \times\Spec\mathbb{C}[\epsilon]/\big(\epsilon^2\big)
 @> I_{\Phi(v)} >> \tilde{M}' \\
 @V\pi_{\mathcal T'}\times\mathrm{id} VV @V V \pi_{\mathcal T'} V \\
 {\mathcal T}'\times\Spec\mathbb{C}[\epsilon]
 @> I_v >> {\mathcal T}'
 \end{CD}
\end{equation}
commutative.
We can see by the uniqueness of the horizontal lift that
the morphism $I_{\Phi(v)}$ descends to a~morphism
$
 M^{\balpha}_{{\mathcal C},{\mathcal D}}(\lambda,\tilde{\mu},\tilde{\nu})_{\mathcal T'}
 \times\mathbb{C}[\epsilon]
 \longrightarrow M^{\balpha}_{{\mathcal C},{\mathcal D}}(\lambda,\tilde{\mu},\tilde{\nu})_{\mathcal T'}
$
which corresponds to a~vector field
\begin{equation*}
 \Phi(v)\in
 H^0\big(M^{\balpha}_{{\mathcal C},{\mathcal D}}(\lambda,\tilde{\mu},\tilde{\nu})_{\mathcal T'},
 T_{M^{\balpha}_{{\mathcal C},{\mathcal D}}(\lambda,\tilde{\mu},\tilde{\nu})_{\mathcal T'}}\big).
\end{equation*}
We call this vector field $\Phi(v)$
a generalized isomonodromic vector field.
\end{Definition}

\begin{Proposition}
\label{proposition: isomonodromic splitting is a homomorphism}
The map
\[
 \Phi \colon \ H^0( {\mathcal T'},T_{\mathcal T'})
 \ni v \mapsto \Phi(v) \in
 H^0\big(M^{\balpha}_{{\mathcal C},{\mathcal D}}(\lambda,\tilde{\mu},\tilde{\nu})_{\mathcal T'},
 T_{M^{\balpha}_{{\mathcal C},{\mathcal D}}(\lambda,\tilde{\mu},\tilde{\nu})_{\mathcal T'}}\big)
\]
is a homomorphism of
$H^0({\mathcal T'},{\mathcal O}_{\mathcal T'})$-modules.
\end{Proposition}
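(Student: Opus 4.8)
**The plan is to show $\Phi$ is additive and $\mathbb{C}$-linear (hence $H^0(\mathcal{T}',\mathcal{O}_{\mathcal{T}'})$-linear) by exploiting the uniqueness of the horizontal lift.** The key point is that $\Phi(v)$ is characterized as the unique infinitesimal deformation of the universal family whose associated relative connection arises from the horizontal lift of $(\tilde{E},\tilde{\nabla},\tilde{l},\tilde{\ell},\tilde{\mathcal{V}})_{\tilde{M}'}$ along $v$. I would isolate this characterization first: by Proposition~\ref{proposition: horizontal lift in one variable}, for each $v$ there is a unique $({\mathcal{E}}^v,\nabla^v,l^v,\ell^v,{\mathcal{V}}^v)$, and the induced relative data defines $\Phi(v)$ via the diagram~(\ref{equation: diagram defining isomonodromic vector}). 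Since everything is pinned down uniquely, it suffices to match horizontal lifts on both sides of the desired identities.

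\textbf{First I would reduce to the local statements.} Because the horizontal lift is constructed by patching unique local horizontal lifts (Lemmas~\ref{lemma: local horizontal lift of regular singular connection}, \ref{lemma: local horizontal lift of unramified connection}, \ref{lemma: existence of local horizontal lift of ramified connection} and \ref{lemma: uniqueness of ramified local horizontal lift}) and these local lifts are unique up to unique isomorphism, it is enough to verify additivity and scalar-compatibility at the level of the local connection matrices $\Gamma^v = \tilde{A}(z)\,{\rm d}z + B(z)\,{\rm d}\epsilon$. For additivity, given $v_1,v_2\in H^0(\mathcal{T}',T_{\mathcal{T}'})$, I would compare the horizontal lift along $v_1+v_2$ with a lift assembled from those along $v_1$ and $v_2$. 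The crucial observation is that in each local model the ${\rm d}\epsilon$-coefficient $B(z)$ depends $\mathbb{C}$-linearly on the deformation data of $\tilde{\nu}$ (resp.\ $\tilde{\mu}$, $\lambda$): in the ramified case, $B(z)=\sum_{k,l}\frac{r\,b_{k,l}}{(-mr+lr+r+k)z^{m-l-1}}\tilde{N}^k$ where the $b_{k,l}$ are precisely the $\epsilon$-linear parts $I_v^*\nu=\nu+\epsilon\,\nu_v$, and $I_{v_1+v_2}^*\nu$ has $\epsilon$-coefficient $\nu_{v_1}+\nu_{v_2}$. Thus the $B$-term for $v_1+v_2$ is the sum of those for $v_1$ and $v_2$, and the corresponding $C(z)$ (determined by the integrability equation $C(z){\rm d}z/z^m = {\rm d}B + [A,B]{\rm d}z/z^m$) is likewise linear in $B$. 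Matching these yields that the horizontal lift along $v_1+v_2$ agrees with the one whose induced tangent vector is $\Phi(v_1)+\Phi(v_2)$, and uniqueness forces $\Phi(v_1+v_2)=\Phi(v_1)+\Phi(v_2)$.

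\textbf{For $\mathcal{O}_{\mathcal{T}'}$-linearity} I would treat scalar multiplication $v\mapsto f v$ for $f\in H^0(\mathcal{T}',\mathcal{O}_{\mathcal{T}'})$ analogously: the morphism $I_{fv}$ replaces $\epsilon$ by $f\epsilon$ at the level of the base, so $I_{fv}^*\nu=\nu+\epsilon f\nu_v$, and the local $B$-coefficient scales by $f$ while the relation determining $C(z)$ scales correspondingly. Since $f$ is constant along the fibers of $\pi_{\mathcal{T}'}$, it passes through the patching untouched, and uniqueness of the horizontal lift gives $\Phi(fv)=f\,\Phi(v)$. The genuinely delicate point, and the step I expect to be the main obstacle, is verifying that the \emph{patching data} (the transition isomorphisms $I_r-\epsilon B'(z)$ between local lifts) behave additively and scalar-compatibly as well: one must check that combining the local lifts for $v_1$ and $v_2$ via their respective unique transition automorphisms produces precisely the transition automorphism for $v_1+v_2$, rather than merely isomorphic local data. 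This is where the uniqueness clauses in Lemmas~\ref{lemma: local horizontal lift of regular singular connection}--\ref{lemma: uniqueness of ramified local horizontal lift} (each transition being determined uniquely by its ${\rm d}\epsilon$-coefficient) do the essential work: because each gluing automorphism is \emph{the} unique one fixing the underlying reduction, the global assembly is functorial in the deformation parameter, and the $\mathbb{C}$-linear dependence of the local $B$-terms propagates to a global linear dependence of $\Phi(v)$ on $v$. I would organize the argument so that this compatibility of transitions is checked once, using the explicit linear formulas for $B(z)$ together with the fact that the integrability-constrained $C(z)$ is an affine-linear function of $B$.
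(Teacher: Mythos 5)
Your route is genuinely different from the paper's, and the difference matters. The paper never opens up the local connection matrices in this proof: for additivity it packages the pair $(v_1,v_2)$ into a morphism $\bar u\colon {\mathcal T}'\times\Spec\mathbb{C}[\epsilon_1,\epsilon_2]/\big(\epsilon_1^2,\epsilon_1\epsilon_2,\epsilon_2^2\big)\to{\mathcal T}'$, takes the unique horizontal lift with respect to $\bar u$ supplied by Proposition~\ref{proposition: horizontal lift in two variable}, and pulls back along the diagonal $\epsilon_1,\epsilon_2\mapsto\epsilon$; since $\bar u\circ\Delta_{{\mathcal T}'}=I_{v_1+v_2}$ and the pullback of a horizontal lift along a base change is again a horizontal lift, uniqueness (Proposition~\ref{proposition: horizontal lift in one variable}) forces $I_{\Phi(v_1+v_2)}=I_{\Phi(\bar u)}\circ\Delta_{\tilde M'}$, and the right-hand side is by definition the morphism of the vector field $\Phi(v_1)+\Phi(v_2)$. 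Scalar multiplication is handled the same way via the rescaling $\alpha_f\colon\epsilon\mapsto f\epsilon$. What this buys is that the whole argument is base-change functoriality plus uniqueness; no choice of frame, no explicit $B(z)$, $C(z)$, and no discussion of gluing data is needed. Your approach, by contrast, buys concreteness (you see exactly why the $\epsilon$-linear part of the lift is linear in $\nu_v$), but it pushes all the work into the patching.

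That is also where your sketch has a real gap. You correctly identify the delicate step — that the transition automorphisms between local lifts combine additively — but you resolve it only by asserting that ``uniqueness does the essential work.'' Uniqueness of each individual gluing automorphism does not by itself give linearity in $v$: you must exhibit the candidate transition $\mathrm{id}+\epsilon(Q_1+Q_2)$ for $v_1+v_2$, verify that it intertwines the summed local connections (which uses that the defining equations are linear in the $\epsilon$-order terms \emph{and} that the same frame, the same lift $\tilde A(z)$ with $\partial\tilde A/\partial\epsilon=0$, and the same normalization from Proposition~\ref{proposition: normalization of formal type over finite scheme} are used for $v_1$, $v_2$ and $v_1+v_2$), and only then invoke uniqueness. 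Moreover, you implicitly identify ``the sum of the tangent vectors $\Phi(v_1)+\Phi(v_2)$'' with ``the deformation obtained by adding the local cocycle data''; this is true via the \v{C}ech description of the tangent space in Proposition~\ref{prop: tangent space of the moduli space}, but it is exactly the content that the paper's diagonal-pullback construction supplies, so it must be stated and used explicitly rather than assumed. With those two points filled in, your computational route closes; as written, the decisive step is named but not carried out.
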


\begin{proof}
Take vector fields
$v_1,v_2\in H^0({\mathcal T'},T_{\mathcal T'})$.
Then $(v_1,v_2)$ corresponds to a morphism
\[
 \bar{u}\colon \ {\mathcal T'}\times
 \Spec\mathbb{C}[\epsilon_1,\epsilon_2]/\big(\epsilon_1^2,\epsilon_1\epsilon_2,\epsilon_2^2\big)
 \longrightarrow {\mathcal T'}
\]
such that the composition
${\mathcal T'}\times\Spec\mathbb{C}[\epsilon_i]/\big(\epsilon_i^2\big)
\hookrightarrow
{\mathcal T'}\times
 \Spec\mathbb{C}[\epsilon_1,\epsilon_2]/\big(\epsilon_1^2,\epsilon_1\epsilon_2,\epsilon_2^2\big)
\xrightarrow{\bar{u}}{\mathcal T'}$
coincides with the morphism $I_{v_i}$
for $i=1,2$.
Let
\[
 \Delta_{\mathcal T'}\colon \
 {\mathcal T'}\times\Spec\mathbb{C}[\epsilon]/\big(\epsilon^2\big)
 \longrightarrow
 {\mathcal T'}\times
 \Spec\mathbb{C}[\epsilon_1,\epsilon_2]/\big(\epsilon_1^2,\epsilon_1\epsilon_2,\epsilon_2^2\big)
\]
be the morphism corresponding to the ring homomorphism
\[
 {\mathcal O} _{\mathcal T'}[\epsilon_1,\epsilon_2]
 /\big(\epsilon_1^2,\epsilon_1\epsilon_2,\epsilon_2^2\big)
 \ni a+b_1\epsilon_1+b_2\epsilon_2
 \mapsto a+b_1\epsilon+b_2\epsilon
 \in
 {\mathcal O}_{\mathcal T'}[\epsilon]/\big(\epsilon^2\big).
\]
Then the composition
\[
 \bar{u}\circ\Delta_{\mathcal T'} \colon \
 {\mathcal T'}\times\Spec\mathbb{C}[\epsilon]/\big(\epsilon^2\big)
 \xrightarrow { \Delta_{\mathcal T'}}
 {\mathcal T'}\times
 \Spec\mathbb{C}[\epsilon_1,\epsilon_2]/\big(\epsilon_1^2,\epsilon_1\epsilon_2,\epsilon_2^2\big)
 \xrightarrow{\bar{u}}
 {\mathcal T'}
\]
coincides with the morphism
$I_{v_1+v_2}$ corresponding to the vector field $v_1+v_2$.
By virtue of Proposition~\ref{proposition: horizontal lift in two variable},
there exists a horizontal lift
$\big({\mathcal E}^{\bar{u}},\nabla^{\bar{u}},l^{\bar{u}},\ell^{\bar{u}},{\mathcal V}^{\bar{u}}\big)$
of
$\big(\tilde{E},\tilde{\nabla},\tilde{l},\tilde{\ell},\tilde{\mathcal V}\big)_{\tilde{M}'}$
with respect to
$\bar{u}$.
By the same procedure as Definition~\ref{definition: generalized isomonodromy splitting},
the flat family of connections induced by the horizontal lift
$\big({\mathcal E}^{\bar{u}},\nabla^{\bar{u}},l^{\bar{u}},\ell^{\bar{u}},{\mathcal V}^{\bar{u}}\big)$
provides a morphism
$I_{\Phi(\bar{u})}\colon
\tilde{M'}
 \times\mathbb{C}[\epsilon_1,\epsilon_2]/\big(\epsilon_1^2,\epsilon_1\epsilon_2,\epsilon_2^2\big)
 \longrightarrow\tilde{M'}$
such that the right square of the diagram
\[
 \begin{CD}
 \tilde{M}'\times\Spec\mathbb{C}[\epsilon]/\big(\epsilon^2\big)
 @>\Delta_{\tilde{M'}}>>
 \tilde{M}'\times\Spec\mathbb{C}[\epsilon_1,\epsilon_2]/\big(\epsilon_1^2,\epsilon_1\epsilon_2,\epsilon_2^2\big)
 @>I_{\Phi(\bar{u})} >> \tilde{M'}
 \\
 @VVV @VVV @VVV \\
 {\mathcal T'}\times\Spec\mathbb{C}[\epsilon]/\big(\epsilon^2\big)
 @> { \Delta_{\mathcal T'}} >>
 {\mathcal T'}\times
 \Spec\mathbb{C}[\epsilon_1,\epsilon_2]/\big(\epsilon_1^2,\epsilon_1\epsilon_2,\epsilon_2^2\big)
 @>{\bar{u}}>>
 {\mathcal T'}
 \end{CD}
\]
is commutative.
The left square of the above diagram is defined as a Cartesian diagram.
By the definition of horizontal lift, the pullback
$\Delta_{\tilde{M}'}^*
\big({\mathcal E}^{\bar{u}},\nabla^{\bar{u}},l^{\bar{u}},\ell^{\bar{u}},{\mathcal V}^{\bar{u}}\big)$
is a horizontal lift of
$\big(\tilde{E},\tilde{\nabla},\tilde{l},\tilde{\ell},\tilde{\mathcal V}\big)_{\tilde{M}'}$
with respect to $I_{v_1+v_2}$.
So the composition
$I_{\Phi(\bar{u})}\circ\Delta_{\tilde{M'}}$
coincides with the morphism
$I_{\Phi(v_1+v_2)}$ determined by the vector field
$\Phi(v_1+v_2)$.
On the other hand, the morphism~$I_{\Phi(\bar{u})}$ corresponds to the pair
$(\Phi(v_1),\Phi(v_2))$ of vector fields and
the composition
$I_{\Phi(\bar{u})}\circ\Delta_{\tilde{M'}}$ corresponds to
the vector field $\Phi(v_1)+\Phi(v_2)$.
So we have the equality
\[
 I_{\Phi(v_1+v_2)}=I_{\Phi(v_1)+\Phi(v_2)}
\]
which means the equality $\Phi(v_1+v_2)=\Phi(v_1)+\Phi(v_2)$.

Take a vector field $v\in H^0({\mathcal T'},T_{\mathcal T'})$
and a regular function
$f\in H^0({\mathcal T'},{\mathcal O}_{\mathcal T'})$.
Consider the morphism
\[
 \alpha_f\colon \ {\mathcal T'}\times\Spec\mathbb{C}[\epsilon]/\big(\epsilon^2\big)
 \longrightarrow
 {\mathcal T'}\times\Spec\mathbb{C}[\epsilon]/\big(\epsilon^2\big)
\]
corresponding to the ring homomorphism
\[
 {\mathcal O}_{\mathcal T'}[\epsilon]/\big(\epsilon^2\big)
 \ni a+\epsilon b \mapsto a+\epsilon fb \in
 {\mathcal O}_{\mathcal T'}[\epsilon]/\big(\epsilon^2\big).
\]
Then the composition
\[
 {\mathcal T'}\times\Spec\mathbb{C}[\epsilon]/\big(\epsilon^2\big)
 \xrightarrow { \alpha_f}
 {\mathcal T'}\times\Spec\mathbb{C}[\epsilon]/\big(\epsilon^2\big)
 \xrightarrow {I_v} {\mathcal T'}
\]
coincides with the morphism~$I_{fv}$
corresponding to the vector field~$fv$.
As in Definition~\ref{definition: generalized isomonodromy splitting},
the horizontal lift
$\big({\mathcal E}^v,\nabla^v, l^v,\ell^v,{\mathcal V}^v \big)$
induces a morphism
$I_{\Phi(v)} \colon
\tilde{M}'\times\Spec\mathbb{C}[\epsilon]/\big(\epsilon^2\big)
\longrightarrow\tilde{M'}$
which makes the diagram
\[
 \begin{CD}
 \tilde{M'}\times\Spec\mathbb{C}[\epsilon]/\big(\epsilon^2\big)
 @> (\alpha_f)_{\tilde{M'}} >>
 \tilde{M'}\times\Spec\mathbb{C}[\epsilon]/\big(\epsilon^2\big)
 @>I_{\Phi(v)} >> \tilde{M'}
 \\
 @VVV @VVV @VVV
 \\
 {\mathcal T'}\times\Spec\mathbb{C}[\epsilon]/\big(\epsilon^2\big)
 @> \alpha_f >>
 {\mathcal T'}\times\Spec\mathbb{C}[\epsilon]/\big(\epsilon^2\big)
 @>I_v >> {\mathcal T'}
 \end{CD}
\]
commutative, where the right square is Cartesian.
By the definition of horizontal lift,
the pullback
$(\alpha_f)_{\tilde{M'}}^*\big({\mathcal E}^v,\nabla^v, l^v,\ell^v,{\mathcal V}^v \big)$
is a horizontal lift of
$\big(\tilde{E},\tilde{\nabla},\tilde{l},\tilde{\ell},\tilde{\mathcal V}\big)_{\tilde{M}'}$
with respect to $fv$.
So the composition
$I_{\Phi(v)}\circ (\alpha_f)_{\tilde{M'}}$
coincides with the morphism
$I_{\Phi(fv)}$ corresponding to $\Phi(fv)$.
On the other hand,
the composition
$I_{\Phi(v)}\circ (\alpha_f)_{\tilde{M'}}$
coincides with the morphism
$I_{f\Phi(v)}$ corresponding to the vector field
$f\Phi(v)$.
So we have
$\Phi(fv)=f\Phi(v)$.
\end{proof}

By Proposition \ref{proposition: isomonodromic splitting is a homomorphism},
$\Phi$ defines a homomorphism
\[
 \Phi\colon \
 T_{\mathcal T} \longrightarrow
 (\pi_{\mathcal T})_*
 T_{M^{\balpha}_{{\mathcal C},{\mathcal D}}(\lambda,\tilde{\mu},\tilde{\nu})}
\]
of sheaves of ${\mathcal O}_{\mathcal T}$-modules.
By the adjoint property, $\Phi$ corresponds to a
homomorphism
\begin{equation} \label{equation: generalized isomonodromic splitting homomorphism}
 \Psi \colon \
 (\pi_{\mathcal T})^* T_{\mathcal T}
 \longrightarrow
 T_{M^{\balpha}_{{\mathcal C},{\mathcal D}}(\lambda,\tilde{\mu},\tilde{\nu})}
\end{equation}
Since the diagram (\ref{equation: diagram defining isomonodromic vector})
in Definition \ref{definition: generalized isomonodromy splitting} is commutative,
we can see
${\rm d}\pi_{\mathcal T}\circ\Psi=\mathrm{id}_{T_{\mathcal T}}$
for the canonical surjection
${\rm d}\pi_{\mathcal T} \colon
T_{M^{\balpha}_{{\mathcal C},{\mathcal D}}(\lambda,\tilde{\mu},\tilde{\nu})}
\longrightarrow
(\pi_{\mathcal T})^* T_{\mathcal T}$.
In particular,
the image
$\im \Psi$ is a~subbundle of
$T_{M^{\balpha}_{{\mathcal C},{\mathcal D}}(\lambda,\tilde{\mu},\tilde{\nu})}$.

\begin{Definition}\label{definition: generalized isomonodromic subbundle}
We call $\im\Psi$ the
generalized isomonodromic subbundle of
$T_{M^{\balpha}_{{\mathcal C},{\mathcal D}}(\lambda,\tilde{\mu},\tilde{\nu})}$.
\end{Definition}

By using the generalized isomonodromic subbundle $\im\Psi$,
we can extend the relative symplectic form
$\omega_{M^{\balpha}_{{\mathcal C},{\mathcal D}}(\lambda,\tilde{\mu},\tilde{\nu})}$
constructed in Theorem \ref{theorem: existence of symplectic form and d-closedness}
to a total $2$-form on the moduli space
$M^{\balpha}_{{\mathcal C},{\mathcal D}}(\lambda,\tilde{\mu},\tilde{\nu})$
in the following.

\begin{Definition}\label{definition: generalized isomonodromic 2-form}
We define a $2$-form
$\omega_{M^{\balpha}_{{\mathcal C},{\mathcal D}}(\lambda,\tilde{\mu},\tilde{\nu})}^{\mathrm{GIM}}$
on $M^{\balpha}_{{\mathcal C},{\mathcal D}}(\lambda,\tilde{\mu},\tilde{\nu})$
by setting
\[
 \omega_{M^{\balpha}_{{\mathcal C},{\mathcal D}}(\lambda,\tilde{\mu},\tilde{\nu})}^{\mathrm{GIM}}
 (v_1,v_2)
 =
 \omega_{M^{\balpha}_{{\mathcal C},{\mathcal D}}(\lambda,\tilde{\mu},\tilde{\nu})}
 \big(v_1-\Psi({\rm d}\pi_{\mathcal T}(v_1)), v_2-\Psi({\rm d}\pi_{\mathcal T}(v_2))\big)
\]
for $v_1,v_2\in
T_{M^{\balpha}_{{\mathcal C},{\mathcal D}}(\lambda,\tilde{\mu},\tilde{\nu})}$
and call it the generalized isomonodromic $2$-form.
\end{Definition}

\begin{Remark}
In the logarithmic case,
the above formulation of isomonodromic $2$-form is given by
A.~Komyo in~\cite{Komyo-1}.
For a vector field $v\in T_{M^{\balpha}_{{\mathcal C},{\mathcal D}}(\lambda,\tilde{\mu},\tilde{\nu})}$
we can immediately see the equivalence
\[
 v \in \im \Psi \Leftrightarrow
 \omega_{M^{\balpha}_{{\mathcal C},{\mathcal D}}(\lambda,\tilde{\mu},\tilde{\nu})}^{\mathrm{GIM}}
 (v,w)=0 \qquad \text{for any $w\in T_{M^{\balpha}_{{\mathcal C},{\mathcal D}}(\lambda,\tilde{\mu},\tilde{\nu})}$}
\]
from the definition of the generalized isomonodromic $2$-form.
So the generalized isomonodromic $2$-form recovers the generalized isomonodromic
subbundle.
\end{Remark}

\begin{Theorem} \label{theorem: integrability of generalized isomonodromy}
For any vector fields $v_1,v_2\in T_{\mathcal T}$,
the equality
\begin{equation*}
 \Phi([v_1,v_2])=[\Phi(v_1),\Phi(v_2)]
\end{equation*}
holds, where $[v_1,v_2]=v_1v_2-v_2v_1$ is the commutator of the vector fields $v_1$, $v_2$.
In particular,
the generalized isomonodromic subbundle $\im\Psi$ of
$T_{M^{\balpha}_{{\mathcal C},{\mathcal D}}(\lambda,\tilde{\mu},\tilde{\nu})}$
satisfies the integrability condition
\[
 [\im\Psi,\im\Psi]\subset\im\Psi.
\]
\end{Theorem}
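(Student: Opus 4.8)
The plan is to compare the two vector fields $\Phi([v_1,v_2])$ and $[\Phi(v_1),\Phi(v_2)]$ on $M^{\balpha}_{{\mathcal C},{\mathcal D}}(\lambda,\tilde{\mu},\tilde{\nu})_{\mathcal T'}$ through their associated infinitesimal morphisms, exactly as in the proof of Proposition~\ref{proposition: isomonodromic splitting is a homomorphism}, but now retaining the $\epsilon_1\epsilon_2$-term that was suppressed there by working over $\mathbb{C}[\epsilon_1,\epsilon_2]/(\epsilon_1^2,\epsilon_1\epsilon_2,\epsilon_2^2)$. First I would fix $v_1,v_2\in H^0({\mathcal T'},T_{\mathcal T'})$ and form the two-variable time morphism $u\colon {\mathcal T'}[u]\to{\mathcal T'}$ over $R:=\mathbb{C}[\epsilon_1,\epsilon_2]/(\epsilon_1^2,\epsilon_2^2)$ whose pullback on functions is $f\mapsto f+\epsilon_1 v_1 f+\epsilon_2 v_2 f+\epsilon_1\epsilon_2\,v_1(v_2 f)$; this classifies the composite of the flows $I_{v_1}$ and $I_{v_2}$, and its $\epsilon_1\epsilon_2$-term is precisely what distinguishes it from the opposite-order composite $u'$, with $u-u'$ concentrated in degree $\epsilon_1\epsilon_2$ and equal there to $[v_1,v_2]$.

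The key step is to identify the morphism induced by the unique two-variable horizontal lift. By Proposition~\ref{proposition: horizontal lift in two variable of deep order} there is a unique horizontal lift $\big({\mathcal E}^u,\nabla^u,l^u,\ell^u,{\mathcal V}^u\big)$ of $\big(\tilde{E},\tilde{\nabla},\tilde{l},\tilde{\ell},\tilde{\mathcal V}\big)_{\tilde{M}'}$ with respect to $u$, whose induced relative family defines a morphism $I_{\Phi(u)}\colon \tilde M'[u]\to\tilde M'$ descending to the moduli space. On the other hand, the composite flow of $\Phi(v_2)$ followed by $\Phi(v_1)$ pulls back the universal family to $\tilde M'[u]$, and superposing the absolute connections $\nabla^{v_1}$, $\nabla^{v_2}$ supplied by Proposition~\ref{proposition: horizontal lift in one variable} equips it with an absolute integrable connection extending the relative one; hence it too is a horizontal lift with respect to $u$. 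By the uniqueness clause of Proposition~\ref{proposition: horizontal lift in two variable of deep order} the two coincide, so $I_{\Phi(u)}$ realizes the composite isomonodromic flow. This is the two-variable analogue of the identity used to prove additivity in Proposition~\ref{proposition: isomonodromic splitting is a homomorphism}, where only the $\bar u$-restriction (the object of Proposition~\ref{proposition: horizontal lift in two variable}) was needed.

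Running the same argument with the reversed order gives $I_{\Phi(u')}$ as the opposite composite flow. Since $u$ and $u'$ agree modulo $\epsilon_1\epsilon_2$, both $I_{\Phi(u)}$ and $I_{\Phi(u')}$ restrict to the same morphism modulo $\epsilon_1\epsilon_2$, namely the one attached to the unique $\bar u$-horizontal lift; their difference is therefore a single tangent vector times $\epsilon_1\epsilon_2$. Computing this difference from the composite-flow description yields $[\Phi(v_1),\Phi(v_2)]$ by the standard functional formula $v_1(v_2 g)-v_2(v_1 g)=[v_1,v_2]g$ transported to the moduli space. The same difference projects under $d\pi_{\mathcal T}$ to $[v_1,v_2]$ and, being the $\epsilon_1\epsilon_2$-difference of two integrable horizontal lifts, is itself a horizontal lift of $[v_1,v_2]$; its $\epsilon_1\epsilon_2$-behaviour matches the single-variable construction of Proposition~\ref{proposition: horizontal lift in one variable} applied to $[v_1,v_2]$ (visible in the local formulas of Proposition~\ref{proposition: horizontal lift in two variable of deep order}, where the $\epsilon_1\epsilon_2$-coefficients $B_{1,2},C_{1,2}$ obey the same recipe as the single-variable data). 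By uniqueness of the single-variable horizontal lift this horizontal vector is $\Phi([v_1,v_2])$, giving $\Phi([v_1,v_2])=[\Phi(v_1),\Phi(v_2)]$. The integrability $[\im\Psi,\im\Psi]\subset\im\Psi$ then follows at once, since $\im\Psi$ is locally generated by the vector fields $\Phi(v)$ and the bracket of two such lies in the image of $\Phi$.

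The hardest point will be the claim in the second paragraph that superposing the two one-variable horizontal lifts genuinely yields a horizontal lift with respect to $u$: one must verify that the resulting absolute connection is integrable in every pair of directions, in particular the delicate $(\epsilon_1,\epsilon_2)$ cross term, and that it meets the pole-order bound of Definition~\ref{definition: horizontal lift in one vector case}(ii) and respects the factorized ramified structure near $D_{\mathrm{ram}}$. This is exactly the content of the explicit $\epsilon_1\epsilon_2$-computation carried out in the proof of Proposition~\ref{proposition: horizontal lift in two variable of deep order}, notably the Claim that $[C_1,B_2]=[C_2,B_1]$, which forces the existence of the cross-term $B_{1,2}$ with the correct pole order; once this integrability is in hand, the uniqueness statement does the remaining work.
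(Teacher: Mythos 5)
Your proposal is correct and follows essentially the same route as the paper: both arguments reduce the bracket identity to the uniqueness of the horizontal lift over $\mathbb{C}[\epsilon_1,\epsilon_2]/\big(\epsilon_1^2,\epsilon_2^2\big)$ (Proposition~\ref{proposition: horizontal lift in two variable of deep order}) and a comparison of $\epsilon_1\epsilon_2$-terms, with the cross-term integrability resting on the Claim $[C_1,B_2]=[C_2,B_1]$ exactly as you identify. The only (cosmetic) difference is that the paper packages the comparison as a single commutator automorphism $\tilde{I}_{v_2}\circ\tilde{I}_{v_1}\circ\tilde{I}_{v_2}^{-1}\circ\tilde{I}_{v_1}^{-1}$ matched against $I_{[v_1,v_2]}\circ\rho$ (where $\rho$ sends $\epsilon\mapsto\epsilon_1\epsilon_2$), rather than taking the difference of the two composite orderings $u$ and $u'$ as you do.
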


\begin{proof}Take vector fields
$v_1,v_2\in H^0({\mathcal T'},T_{\mathcal T}|_{\mathcal T'})$
over a Zariski open subset ${\mathcal T'}$ of ${\mathcal T}$.
Let
\[
 \tilde{I}_{v_1} \colon \
 {\mathcal T'}\times\Spec\mathbb{C}[\epsilon_1,\epsilon_2]/\big(\epsilon_1^2,\epsilon_2^2\big)
 \longrightarrow
 {\mathcal T'}\times\Spec\mathbb{C}[\epsilon_1,\epsilon_2]/\big(\epsilon_1^2,\epsilon_2^2\big)
\]
be the automorphism corresponding to the ring automorphism $\tilde{I}_{v_1}^*$ of
${\mathcal O}_{\mathcal T'}[\epsilon_1,\epsilon_2]/\big(\epsilon_1^2,\epsilon_2^2\big)$
defined by
\[
 \tilde{I}_{v_1}^* \left( a+b_1\epsilon_1+b_2\epsilon_2+c\, \epsilon_1\epsilon_2 \right)
 =
 a+(v_1(a)+b_1)\epsilon_1+b_2\epsilon_2+(v_1(b_2)+c)\epsilon_1\epsilon_2.
\]
Similarly, we can define an automorphism
$\tilde{I}_{v_2}$ of
${\mathcal T'}\times\Spec\mathbb{C}[\epsilon_1,\epsilon_2]/\big(\epsilon_1^2,\epsilon_2^2\big)$
corresponding to $v_2$.
By construction, we can see that
$\tilde{I}_{-v_1}=\tilde{I}_{v_1}^{-1}$ and $\tilde{I}_{-v_2}=\tilde{I}_{v_2}^{-1}$.
The composition
$\tilde{I}_{v_2}\circ\tilde{I}_{v_1}\circ\tilde{I}_{-v_2}\circ\tilde{I}_{-v_1}$
corresponds to the ring automorphism of
${\mathcal O}_{\mathcal T'}[\epsilon_1,\epsilon_2]/\big(\epsilon_1^2,\epsilon_2^2\big)$
determined by
\begin{gather*}
 \tilde{I}_{-v_1}^*\circ\tilde{I}_{-v_2}^*\circ\tilde{I}_{v_1}^*\circ\tilde{I}_{v_2}^*
 ( a+b_1\epsilon_1+b_2\epsilon_2+c\,\epsilon_1\epsilon_2 )
 \\
\quad{} =
 \tilde{I}_{-v_1}^*\circ\tilde{I}_{-v_2}^*\circ\tilde{I}_{v_1}^*
 ( a+b_1\epsilon_1+(v_2(a)+b_2)\epsilon_2+(c+v_2(b_1))\epsilon_1\epsilon_2 )
 \\
\quad{} =
 \tilde{I}_{-v_1}^*\circ\tilde{I}_{-v_2}^*
 ( a+(v_1(a)+b_1)\epsilon_1+(v_2(a)+b_2)\epsilon_2
 +(v_1v_2(a)+v_1(b_2)+c+v_2(b_1))\epsilon_1\epsilon_2 )
 \\
 \quad {} =
 \tilde{I}_{-v_1}^* (
 a+(v_1(a)+b_1)\epsilon_1+b_2\epsilon_2
 +(-v_2v_1(a)+v_1v_2(a)+v_1(b_2)+c)\epsilon_1\epsilon_2
 )
 \\
\quad{} =
 a+b_1\epsilon_1+b_2\epsilon_2+
 ((v_1v_2-v_2v_1)(a)+c)\epsilon_1\epsilon_2.
\end{gather*}
Let
\[
 \rho \colon \
 {\mathcal T'}\times\Spec\mathbb{C}[\epsilon_1,\epsilon_2]/\big(\epsilon_1^2,\epsilon_2^2\big)
 \longrightarrow
 {\mathcal T'}\times\Spec\mathbb{C}[\epsilon]/\big(\epsilon^2\big)
\]
be the morphism
corresponding to the ring homomorphism
$\rho^* \colon
{\mathcal O}_{\mathcal T'}[\epsilon]/\big(\epsilon^2\big)
\rightarrow {\mathcal O}_{\mathcal T'}[\epsilon_1,\epsilon_2]/\big(\epsilon_1^2,\epsilon_2^2\big)$
determined by
$\rho^* (a+c \epsilon) =a+c\epsilon_1\epsilon_2$.
Then the composition
\begin{equation} \label{composition on T-space (1)}
 {\mathcal T'}\times\Spec\mathbb{C}[\epsilon_1,\epsilon_2]/\big(\epsilon_1^2,\epsilon_2^2\big)
 \stackrel {\rho} \longrightarrow
 {\mathcal T'}\times\Spec\mathbb{C}[\epsilon]/\big(\epsilon^2\big)
 \xrightarrow{I_{v_1v_2-v_2v_1}}
 {\mathcal T'}
\end{equation}
coincides with the composition
\begin{gather}
 {\mathcal T'}\times\Spec\mathbb{C}[\epsilon_1,\epsilon_2]/\big(\epsilon_1^2,\epsilon_2^2\big)
 \xrightarrow { \tilde{I}_{v_2}\circ\tilde{I}_{v_1}\circ\tilde{I}_{v_2}^{-1}\circ\tilde{I}_{v_1}^{-1} }
 {\mathcal T'}\times\Spec\mathbb{C}[\epsilon_1,\epsilon_2]/\big(\epsilon_1^2,\epsilon_2^2\big)\nonumber\\
 \hphantom{{\mathcal T'}\times\Spec\mathbb{C}[\epsilon_1,\epsilon_2]/\big(\epsilon_1^2,\epsilon_2^2\big)
 \xrightarrow { \tilde{I}_{v_2}\circ\tilde{I}_{v_1}\circ\tilde{I}_{v_2}^{-1}\circ\tilde{I}_{v_1}^{-1} }}{}
 \xrightarrow {\text{trivial projection}}
 {\mathcal T'}. \label{composition on T-space (2)}
\end{gather}

By Proposition \ref{proposition: horizontal lift in two variable},
there exists a horizontal lift
$\big({\mathcal E}^{\tilde{v}_i},\nabla^{\tilde{v}_i},
l^{\tilde{v}_i},\ell^{\tilde{v}_i},{\mathcal V}^{\tilde{v}_i} \big)$
of $\big(\tilde{E},\tilde{\nabla},\tilde{l},\tilde{\ell},\tilde{\mathcal V}\big)_{\tilde{M}'}$
with respect to the morphism
\[
 {\mathcal T'}\times\Spec\mathbb{C}[\epsilon_1,\epsilon_2]/\big(\epsilon_1^2,\epsilon_2^2\big)
 \xrightarrow {\tilde{I}_{v_i}}
 {\mathcal T'}\times\Spec\mathbb{C}[\epsilon_1,\epsilon_2]/\big(\epsilon_1^2,\epsilon_2^2\big)
 \xrightarrow {\text{trivial projection}} {\mathcal T'}.
\]
For the relative connection
$\overline { \nabla^{\tilde{v}_i} }$ induced by $\nabla^{\tilde{v}_i}$,
the flat family
$\big({\mathcal E}^{\tilde{v}_i},\overline{\nabla^{\tilde{v}_i}} ,
l^{\tilde{v}_i},\ell^{\tilde{v}_i},{\mathcal V}^{\tilde{v}_i} \big)$
determines a morphism
$I_{\Phi(\tilde{v}_i)}\colon
\tilde{M}'\times\Spec\mathbb{C}[\epsilon_1,\epsilon_2]/\big(\epsilon_1^2,\epsilon_2^2\big)
\longrightarrow \tilde{M}'$
which is canonically extended to a morphism
\[
 \tilde{I}_{\Phi(\tilde{v}_i)}
 \colon \
 \tilde{M}'\times\Spec\mathbb{C}[\epsilon_1,\epsilon_2]/\big(\epsilon_1^2,\epsilon_2^2\big)
 \longrightarrow
 \tilde{M}'\times\Spec\mathbb{C}[\epsilon_1,\epsilon_2]/\big(\epsilon_1^2,\epsilon_2^2\big)
\]
over $\Spec\mathbb{C}[\epsilon_1,\epsilon_2]/\big(\epsilon_1^2,\epsilon_2^2\big)$.
Furthermore, the diagram
\[
 \begin{CD}
 \tilde{M}'\times\Spec\mathbb{C}[\epsilon_1,\epsilon_2]/\big(\epsilon_1^2,\epsilon_2^2\big)
 @> \tilde{I}_{\Phi(\tilde{v}_2)}\circ\tilde{I}_{\Phi(\tilde{v}_1)}
 \circ\tilde{I}_{\Phi(\tilde{v}_2)}^{-1}\circ\tilde{I}_{\Phi(\tilde{v}_1)}^{-1} >>
 \tilde{M}'\times\Spec\mathbb{C}[\epsilon_1,\epsilon_2]/\big(\epsilon_1^2,\epsilon_2^2\big)
 \\
 @VVV @VVV \\
 {\mathcal T'}\times\Spec\mathbb{C}[\epsilon_1,\epsilon_2]/\big(\epsilon_1^2,\epsilon_2^2\big)
 @> \tilde{I}_{v_2}\circ\tilde{I}_{v_1}
 \circ\tilde{I}_{v_2}^{-1}\circ\tilde{I}_{v_1}^{-1} >>
 {\mathcal T'}\times\Spec\mathbb{C}[\epsilon_1,\epsilon_2]/\big(\epsilon_1^2,\epsilon_2^2\big)
 \end{CD}
\]
is commutative.

By the definition of horizontal lift,
we can see that the pullback
\[
 \big(\tilde{I}_{\Phi(\tilde{v}_1)}^{-1}\big)^*\big(\tilde{I}_{\Phi(\tilde{v}_2)}^{-1}\big)^*
 \tilde{I}_{\Phi(\tilde{v}_1)}^*\big({\mathcal E}^{\tilde{v}_2},\nabla^{\tilde{v}_2} ,
 l^{\tilde{v}_2},\ell^{\tilde{v}_2},{\mathcal V}^{\tilde{v}_2} \big)
\]
becomes a horizontal lift of
$\big(\tilde{E},\tilde{\nabla},\tilde{l},\tilde{\ell},\tilde{\mathcal V}\big)_{\tilde{M}'}$
with respect to the morphism
(\ref{composition on T-space (2)}).
On the other hand, there is a canonical commutative diagram
\[
 \begin{CD}
 \tilde{M'}\times\Spec\mathbb{C}[\epsilon_1,\epsilon_2]/\big(\epsilon_1^2,\epsilon_2^2\big)
 @> \rho_{\tilde{M'}} >>
 \tilde{M'}\times\Spec\mathbb{C}[\epsilon]/\big(\epsilon^2\big)
 @> I_{\Phi(v_1v_2-v_2v_1)} >> \tilde{M'}
 \\
 @VVV @VVV @VVV
 \\
 {\mathcal T'}\times\Spec\mathbb{C}[\epsilon_1,\epsilon_2]/\big(\epsilon_1^2,\epsilon_2^2\big)
 @>\rho>>
 {\mathcal T'}\times\Spec\mathbb{C}[\epsilon]/\big(\epsilon^2\big)
 @> I_{v_1v_2-v_2v_1} >>
 {\mathcal T'},
 \end{CD}
\]
whose left square is Cartesian.
So we can see that the pullback
\[
 \rho_{\tilde{M'}}^*\big({\mathcal E}^{v_1v_2-v_2v_1},\nabla^{v_1v_2-v_2v_1} ,
 l^{v_1v_2-v_2v_1},\ell^{v_1v_2-v_2v_1},{\mathcal V}^{v_1v_2-v_2v_1} \big)
\]
becomes a horizontal lift of
$\big(\tilde{E},\tilde{\nabla},\tilde{l},\tilde{\ell},\tilde{\mathcal V}\big)_{\tilde{M}'}$
with respect to the morphism
(\ref{composition on T-space (1)}).
Since the morphism (\ref{composition on T-space (2)})
coincides with the morphism (\ref{composition on T-space (1)}),
we can deduce an isomorphism
\begin{gather*}
 \big(\tilde{I}_{\Phi(\tilde{v}_1)}^{-1}\big)^*\big(\tilde{I}_{\Phi(\tilde{v}_2)}^{-1}\big)^*
 \tilde{I}_{\Phi(\tilde{v}_1)}^*
 \big({\mathcal E}^{\tilde{v}_2},\nabla^{\tilde{v}_2} ,
 l^{\tilde{v}_2},\ell^{\tilde{v}_2},{\mathcal V}^{\tilde{v}_2}\big)
 \\
 \qquad{} \cong
 \rho_{\tilde{M'}}^*\big({\mathcal E}^{v_1v_2-v_2v_1},\nabla^{v_1v_2-v_2v_1} ,
 l^{v_1v_2-v_2v_1},\ell^{v_1v_2-v_2v_1},{\mathcal V}^{v_1v_2-v_2v_1} \big)
\end{gather*}
by the uniqueness of horizontal lift
proved in Proposition \ref{proposition: horizontal lift in two variable}.
Considering the induced morphism, we have
\[
 (\text{trivial projection})\circ
 \tilde{I}_{\Phi(\tilde{v}_2)\circ\tilde{I}_{\Phi(\tilde{v}_1)}
 \circ\tilde{I}_{\Phi(\tilde{v}_2)}^{-1}\circ\tilde{I}_{\Phi(\tilde{v}_1)}^{-1}}
 =
 I_{\Phi(v_1v_2-v_2v_1)}\circ\rho_{\tilde{M'}},
\]
from which we get
$\Phi(v_1v_2-v_2v_1)=
\Phi(v_1)\Phi(v_2)-\Phi(v_2)\Phi(v_1)$.
\end{proof}

\begin{Definition}\label{definition: generalized isomonodromic foliation}
Since the subbundle
$\im\Psi\subset T_{M^{\balpha}_{{\mathcal C},{\mathcal D}}(\lambda,\tilde{\mu},\tilde{\nu})}$
satisfies the integrability condition by Theorem~\ref{theorem: integrability of generalized isomonodromy},
it determines a foliation
${\mathcal F}^{\rm GIM}_{T_{M^{\balpha}_{{\mathcal C},{\mathcal D}}(\lambda,\tilde{\mu},\tilde{\nu})}}$
on the moduli space
$M^{\balpha}_{{\mathcal C},{\mathcal D}}(\lambda,\tilde{\mu},\tilde{\nu})$.
We call it the generalized isomonodromic foliation.
\end{Definition}

Take a point $t_0\in {\mathcal T}$
and a point $y$ of the fiber
$M^{\balpha}_{{\mathcal C},{\mathcal D}}(\lambda,\tilde{\mu},\tilde{\nu})_{t_0}$
over $t_0$.
Then we can take an analytic open neighborhood
${\mathcal M'}$ of $y$ in
$M^{\balpha}_{{\mathcal C},{\mathcal D}}(\lambda,\tilde{\mu},\tilde{\nu})$
and an analytic open neighborhood ${\mathcal T'}$ of $t_0$ in ${\mathcal T}$
together with an analytic isomorphism
\begin{equation}
\label{equation: analytic isomorphism from isomonodromic foliation}
 {\mathcal M'}\cong {\mathcal M}'_{t_0}\times{\mathcal T'}
\end{equation}
such that the restriction $\pi_{\mathcal T}|_{\mathcal M'}$ of
$\pi_{\mathcal T} \colon M^{\balpha}_{{\mathcal C},{\mathcal D}}(\lambda,\tilde{\mu},\tilde{\nu})
\longrightarrow {\mathcal T}$
coincides with the second projection
and that the fibers $\{ \{ y' \} \times {\mathcal T'} \}_{y'\in {\mathcal M'_{t_0}}}$
over ${\mathcal M'_{t_0}}$ are leaves in
${\mathcal F}^{\rm GIM}_{M^{\balpha}_{{\mathcal C},{\mathcal D}}(\lambda,\tilde{\mu},\tilde{\nu})}$.

Take a holomorphic system of coordinates
$\theta=(\theta_1,\dots,\theta_N)$ of ${\mathcal T'}$.
If we set
\[
 {\mathcal T'}[\partial\theta]:=
 {\mathcal T'}\times\Spec\mathbb{C}[\epsilon_1,\dots,\epsilon_N]
 \big/ \big( \epsilon_i\epsilon_j \mid 1\leq i,j \leq N \big),
\]
then the tuple
$\partial\theta=
(\partial/\partial\theta_1,\dots,\partial/\partial\theta_N)$
of vector fields on ${\mathcal T'}$
corresponds to a morphism
\[
 I_{\partial\theta}\colon \
 {\mathcal T'}[\partial\theta]
 \longrightarrow {\mathcal T'},
\]
whose restriction to ${\mathcal T'}\subset {\mathcal T'}[\partial \theta]$ is the identity morphism.

By the same proof as Proposition \ref{proposition: horizontal lift in two variable},
we can construct a horizontal lift
$\big({\mathcal E}^{\partial\theta} ,\nabla^{\partial\theta} ,l^{\partial\theta} ,
\ell^{\partial\theta} ,\allowbreak {\mathcal V}^{\partial \theta}\big)$
of the universal family
$(E,\nabla,l,\ell,{\mathcal V})$
on ${\mathcal C}\times_{\mathcal T}{\mathcal M'}$
with respect to the morphism $I_{\partial\theta}$.
On a~small open subset
$U\subset {\mathcal C}\times_{\mathcal T}{\mathcal M'}$,
we may assume
$E|_U\cong{\mathcal O}_U^{\oplus r}$.
Then we can write
$\nabla|_U={\rm d}+A{\rm d}z$
where $z$ is a holomorphic coordinate on ${\mathcal C}\times_{\mathcal T}{\mathcal M'}$
over ${\mathcal M'}$ and $A$ is a matrix of meromorphic functions in $z$.
Let $U[\partial\theta]\subset {\mathcal C}\times_{\mathcal T}{\mathcal M'}[\partial\theta]$
be the open subscheme whose underlying set is $U$.
Then we have
${\mathcal E}|_{U[\partial\theta]}\cong{\mathcal O}_{U[\partial\theta]}^{\oplus r}$
and we can write
$\nabla^{\partial\theta}={\rm d}+A(\epsilon){\rm d}z+\sum_{j=1}^N B_j {\rm d}\epsilon_j$,
where $A(\epsilon)$ is a lift of~$A$.
By the integrability condition of $\nabla^{\partial\theta}$, we have the equality
\[
 \sum_{j=1}^N \frac{\partial A(\epsilon)}{\partial \epsilon_j}{\rm d}\epsilon_j\wedge {\rm d}z
 +\sum_{j=1}^N \frac{\partial B_j}{\partial z}{\rm d}z\wedge {\rm d}\epsilon_j
 +\sum_{j=1}^N [A,B_j]{\rm d}z\wedge {\rm d}\epsilon_j
 =0.
\]
Take a holomorphic coordinate system
$x_1,\dots,x_{\delta}$ of ${\mathcal M}'_{t_0}$.
With respect to the coordinate system
$z,x_1,\dots,x_{\delta},\theta_1,\dots,\theta_N$,
the partial derivative
$\partial/\partial \theta_j$ coincides with the vector field
$\Phi(\partial /\partial \theta_j)$
and the partial derivative $\partial A/\partial \theta_j$
coincides with
$\partial A(\epsilon)/\partial \epsilon_j$.
So the above integrability condition of~$\nabla^{\partial\theta}$ is the same as the
integrability condition
\[
 \sum_{j=1}^N \frac{\partial A}{\partial \theta_j}{\rm d}\theta_j\wedge {\rm d}z
 +\sum_{j=1}^N \frac{\partial B_j}{\partial z}{\rm d}z\wedge {\rm d}\theta_j
 +\sum_{j=1}^N [A,B_j]{\rm d}z\wedge {\rm d}\theta_j=0
\]
of the connection
\[
 \nabla_U^{\rm flat}={\rm d}+A{\rm d}z+\sum_{j=1}^NB_j{\rm d}\theta_j
\]
on $E|_U$ relative to the composition
\[
U\hookrightarrow{\mathcal C}\times_{\mathcal T}{\mathcal M'}
 \longrightarrow
 {\mathcal M'}\cong{\mathcal M}'_{t_0}\times{\mathcal T'}
 \xrightarrow{\pi_{{\mathcal M}'_{t_0}}} {\mathcal M}'_{t_0},
 \]
where
$\pi_{{\mathcal M}'_{t_0}}\colon
{\mathcal M'}\cong{\mathcal M}'_{t_0}\times{\mathcal T'}
\longrightarrow {\mathcal M}'_{t_0}$
is the first projection.
So we can see from Theorem \ref{theorem: Jimbo-Miwa-Ueno equation}
and Corollary \ref{corollary: rigorous ramified local generalized isomonodromic deformation}
that $\nabla|_U$ is a local generalized isomonodromic deformation
in the sense of Definition \ref{definition: local generalized isomonodromic deformation}
or Definition \ref{definition: ramified local generalized isomonodromic deformation}.

We can patch $\nabla_U^{\rm flat}$ together to get a global connection on $E$.
Indeed, take another open subset $U'\subset{\mathcal C}\times_{\mathcal T}{\mathcal M'}$
and write $\nabla|_{U'}={\rm d}+A'{\rm d}z$.
Then we have
$P^{-1}{\rm d}P+P^{-1}A{\rm d}zP=A'{\rm d}z$
for a transition matrix~$P$.
There is a local horizontal lift ${\rm d}+A'(\epsilon){\rm d}z+\sum_{j=1}^NB'_j{\rm d}\epsilon_j$
of $\nabla|_{U'}$
and by the uniqueness of the local horizontal lift,
we have a uniquely lift $P(\epsilon)$ of~$P$
satisfying
\begin{gather*}
 P(\epsilon)^{-1}\left( \frac{\partial P(\epsilon)}{\partial z}{\rm d}z
 +\sum_{j=1}^N \frac{\partial P(\epsilon)}{\partial \epsilon_j} {\rm d}\epsilon_j \right)
 +P(\epsilon)^{-1} \left( A(\epsilon){\rm d}z+\sum_{j=1}^N B_j{\rm d}\epsilon_j \right) P(\epsilon)\\
 \qquad{}
 =A'(\epsilon){\rm d}z+\sum_{j=1}^NB'_j{\rm d}\epsilon_j.
\end{gather*}
Since
$\partial P(\epsilon)/\partial \epsilon_j
=\partial P/\partial \theta_j$,
the above equality yields the equality
\[
 P^{-1}\left( \frac{\partial P} {\partial z}{\rm d}z
 +\sum_{j=1}^N \frac{\partial P} {\partial \theta_j} {\rm d}\theta_j \right)
 +P^{-1} \left( A{\rm d}z+\sum_{j=1}^N B_j {\rm d}\theta_j \right) P
 =A'{\rm d}z+\sum_{j=1}^N B'_j {\rm d}\theta_j.
\]
So we can patch the local connections
$\nabla^{\rm flat}_U$ together
to get an integrable connection
\begin{equation} \label{global integrable connection analytic local over moduli}
 \nabla^{\rm flat} \colon \
 E \longrightarrow E\otimes
 \Omega_{{\mathcal C}\times_{\mathcal T}{\mathcal M'}/{\mathcal M}'_{t_0}}(D_{\mathcal M'})
\end{equation}
relative to the composition
${\mathcal C}\times_{\mathcal T}{\mathcal M'}
 \longrightarrow{\mathcal M'}\cong{\mathcal M}'_{t_0}\times{\mathcal T'}
 \longrightarrow {\mathcal M}'_{t_0}$.

\begin{Corollary} \label{corollary d-closedness of generalized isomonodromic 2-form}
The generalized isomonodromic $2$-form
$\omega_{M^{\balpha}_{{\mathcal C},{\mathcal D}}(\lambda,\tilde{\mu},\tilde{\nu})}^{\mathrm{GIM}}$
constructed in Definition~{\rm \ref{definition: generalized isomonodromic 2-form}} is ${\rm d}$-closed.
\end{Corollary}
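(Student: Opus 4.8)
The plan is to work analytically and locally, exploiting the product structure \eqref{equation: analytic isomorphism from isomonodromic foliation} produced by the generalized isomonodromic foliation (Definition~\ref{definition: generalized isomonodromic foliation}). Fixing $t_0\in{\mathcal T}$ and a point $y$, I would choose the isomorphism ${\mathcal M'}\cong{\mathcal M}'_{t_0}\times{\mathcal T'}$ together with holomorphic coordinates $x_1,\dots,x_\delta$ on ${\mathcal M}'_{t_0}$ and $\theta_1,\dots,\theta_N$ on ${\mathcal T'}$, so that the leaves are the slices $\{x=\mathrm{const}\}$ and $\Phi(\partial/\partial\theta_k)=\partial/\partial\theta_k$ in these coordinates. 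Since $v-\Psi({\rm d}\pi_{\mathcal T}(v))\in\ker({\rm d}\pi_{\mathcal T})$ and $\im\Psi$ is exactly the radical of $\omega^{\mathrm{GIM}}$ (by the Remark following Definition~\ref{definition: generalized isomonodromic 2-form}), the form $\omega^{\mathrm{GIM}}$ has no $d\theta$-component: writing $\omega^{\mathrm{GIM}}=\sum_{i<j}f_{ij}(x,\theta)\,dx_i\wedge dx_j$, its differential splits as ${\rm d}\omega^{\mathrm{GIM}}={\rm d}_x\omega^{\mathrm{GIM}}+\sum_k d\theta_k\wedge\big(\sum_{i<j}\frac{\partial f_{ij}}{\partial\theta_k}\,dx_i\wedge dx_j\big)$, with no term carrying two or more $d\theta$'s. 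The first summand is the fibrewise derivative, which vanishes because on each slice $\{\theta=\mathrm{const}\}$ the restriction of $\omega^{\mathrm{GIM}}$ is the relative symplectic form, already shown to be fibrewise ${\rm d}$-closed in Theorem~\ref{theorem: existence of symplectic form and d-closedness}. Thus ${\rm d}$-closedness is equivalent to $\partial f_{ij}/\partial\theta_k=0$.

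To package this reduction cleanly I would invoke the Cartan formula: as each leaf vector field $\Phi(v)$ lies in $\im\Psi=\ker\omega^{\mathrm{GIM}}$, one has $\iota_{\Phi(v)}\omega^{\mathrm{GIM}}=0$, whence $L_{\Phi(v)}\omega^{\mathrm{GIM}}=\iota_{\Phi(v)}\,{\rm d}\omega^{\mathrm{GIM}}$. Together with the coordinate description, this identifies the single remaining obstruction with the leaf-invariance $L_{\Phi(\partial/\partial\theta_k)}\omega^{\mathrm{GIM}}=0$, i.e. with the statement that the isomonodromic flow preserves the relative symplectic form. So the entire content of the corollary is this invariance.

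For the invariance I would use the global integrable connection $\nabla^{\mathrm{flat}}$ of \eqref{global integrable connection analytic local over moduli}, which realizes the universal family over ${\mathcal M'}\cong{\mathcal M}'_{t_0}\times{\mathcal T'}$ as a connection flat in the ${\mathcal T'}$-directions. Since $\omega$ is given by the explicit \v{C}ech cup-product pairing \eqref{equation: definition of symplectic form over etale cover}, I would differentiate the three components of \eqref{expression of symplectic form} in $\theta_k$ by deforming the \v{C}ech data $(\{u_{\alpha\beta}\},\{v_\alpha,\eta_\alpha\})$ along the horizontal lift, taking the $d\theta_k$-coefficients $B_j$ of $\nabla^{\mathrm{flat}}$ as the infinitesimal gauge generators, exactly as the $\epsilon$-deformation was constructed in the proof of Proposition~\ref{prop: tangent space of the moduli space}. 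The flatness (curvature vanishing) of $\nabla^{\mathrm{flat}}$ then forces the $\theta_k$-derivative of the pairing to be a coboundary in $\mathbf{H}^2\big({\mathcal O}_C\to\Omega^1_C(D_{\mathrm{ram}})\to\Omega^1_C(D_{\mathrm{ram}})|_{D_{\mathrm{ram}}}\big)\cong\mathbb{C}$, hence to vanish. This is the same mechanism as in the logarithmic and unramified cases of \cite[Proposition~7.3]{Inaba-1} and \cite{Inaba-3}, now carried out for the complex ${\mathcal F}^\bullet$ built from the factorized ramified structure, and it is consistent with the extendability principle of Theorem~\ref{theorem: Jimbo-Miwa-Ueno equation} and Corollary~\ref{corollary: rigorous ramified local generalized isomonodromic deformation}.

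The main obstacle will be the ramified boundary contribution. The $d\theta_k$-coefficients $B_j$ of the horizontal lift carry poles (of order up to $m_x-1$) along $D_{\mathrm{ram}}$, and once they are inserted into the residue-type pairing $\Xi_{\mathrm{ram}}$ of \eqref{equation: definition of Xi}, the $\theta_k$-derivative produces divisor-supported terms that are not visibly coboundaries. Controlling these is precisely where the factorized ramified structure must intervene: the symmetries $^t\tau_{r-k-1}=\tau_k$, $^t\xi_{r-k-1}=\xi_k$ and the compatibilities of $\theta_k,\kappa_k$ should make the boundary contributions cancel in pairs, so that the derivative lands in the image of the differential. I expect the bookkeeping of these pole orders, and the matching of the $\Xi_{\mathrm{ram}}$-terms against the trace terms $\Tr(u_{\alpha\beta}\circ v'_\beta-v_\alpha\circ u'_{\alpha\beta})$, to be the delicate point; I would tame it by realizing all infinitesimal data through honest endomorphisms via Lemmas~\ref{lemma: extension of endomorphisms} and~\ref{lemma: extension of symmetric pairings} before taking traces, reducing the computation to the nondegeneracy argument of Proposition~\ref{proposition: nondegenerate pairing on tangent space}.
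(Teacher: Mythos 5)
Your overall strategy coincides with the paper's: both reduce the corollary to the statement that $\omega^{\mathrm{GIM}}$ restricted to ${\mathcal M'}\cong{\mathcal M}'_{t_0}\times{\mathcal T'}$ equals the pullback $\pi_{{\mathcal M}'_{t_0}}^*\big(\omega_{M^{\balpha}_{{\mathcal C},{\mathcal D}}(\lambda,\tilde\mu,\tilde\nu)}|_{{\mathcal M}'_{t_0}}\big)$ (equivalently, that the isomonodromic flow preserves the relative symplectic form), and both establish this by extending the \v{C}ech representatives of tangent classes to total (integrable) data via $\nabla^{\mathrm{flat}}$ of~(\ref{global integrable connection analytic local over moduli}). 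Your coordinate reduction to $\partial f_{ij}/\partial\theta_k=0$ and the Cartan-formula packaging are a correct rephrasing of the paper's equality~(\ref{equation: isomonodromy 2-form is a pullback}).

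However, the technical core of your argument is left unresolved, and this is a genuine gap. You correctly identify the delicate point --- the contribution of the residue pairing $\Xi_{\mathrm{ram}}$ of~(\ref{equation: definition of Xi}) once the $d\theta_k$-coefficients $B_j$ (which carry poles along $D_{\mathrm{ram}}$) enter the differentiation of~(\ref{expression of symplectic form}) --- but your proposed resolution, that the symmetries $^t\tau_{r-k-1}=\tau_k$, $^t\xi_{r-k-1}=\xi_k$ ``should make the boundary contributions cancel in pairs,'' is only an expectation, not a proof, and it is not how the difficulty is actually removed. The paper's proof avoids the entire issue by a normalization of the cocycle: since ${\mathcal G}^1\to G^1$ is surjective and ${\mathcal G}^0\to{\mathcal S}^1_{\mathrm{ram}}$ surjects onto $\ker\big({\mathcal S}^1_{\mathrm{ram}}\to A^1\big)$, the representative $\{u_{\alpha\beta},v_\alpha,\eta_\alpha\}$ of a class in $\mathbf{H}^1({\mathcal F}^\bullet)$ can be modified so that $\eta_\alpha=0$. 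After this the pairing consists only of the trace terms $\Tr(u_{\alpha\beta}u'_{\beta\gamma})$ and $\Tr(u_{\alpha\beta}v'_\beta-v_\alpha u'_{\alpha\beta})$, and the flat extensions $v_\alpha^{\mathrm{flat}}$ assemble these into a class of the absolute de Rham complex $\mathbb{H}^2\big(\Omega^\bullet_{{\mathcal C}_{y\times{\mathcal T}'}}\big)\cong\mathbb{C}$ mapping onto $\omega\big(\tilde v,\tilde v'\big)\in{\mathcal O}_{{\mathcal T}'}$, which forces constancy along the leaf. Without either this normalization or an actual verification of your claimed cancellations (which would require tracking the pole orders of $B_j$ against the definition of $\Xi_{\mathrm{ram}}$ and the compatibility of the deformed $(\theta_k,\kappa_k)$ with $\nabla^{\mathrm{flat}}$), your proof of the leaf-invariance is incomplete.
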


\begin{proof}
Under the above notations, we will prove the equality
\begin{equation} \label{equation: isomonodromy 2-form is a pullback}
 \omega_{M^{\balpha}_{{\mathcal C},{\mathcal D}}
 (\lambda,\tilde{\mu},\tilde{\nu})}^{\mathrm{GIM}}\big|_{{\mathcal M}'}
 =
 \pi_{{\mathcal M}'_{t_0}}^* \big(
 \omega_{M^{\balpha}_{{\mathcal C},{\mathcal D}}
 (\lambda,\tilde{\mu},\tilde{\nu})}\big|_{{\mathcal M}'_{t_0}} \big)
\end{equation}
where
$\pi_{{\mathcal M}'_{t_0}} \colon {\mathcal M}'\cong {\mathcal M}'_{t_0}\times{\mathcal T'}
\longrightarrow {\mathcal M}'_{t_0}$
corresponds to the first projection with respect to the isomorphism
(\ref{equation: analytic isomorphism from isomonodromic foliation}).
The corollary follows from this equality, since
$\omega_{M^{\balpha}_{{\mathcal C},{\mathcal D}}
(\lambda,\tilde{\mu},\tilde{\nu})}|_{{\mathcal M}'_{t_0}}$
is $d$-closed by Theorem \ref{theorem: existence of symplectic form and d-closedness}.

Take two tangent vectors
$v,v'\in T_{{\mathcal M}'}(y,t_0)$
at $(y,t)\in{\mathcal M}'_{t_0}\times{\mathcal T'}$.
We have the equalities
\[
(\pi_{{\mathcal M}'_{t_0}})_*(v)=(\pi_{{\mathcal M}'_{t_0}})_*(v-\Phi(\pi_{\mathcal T'})_*(v)),
\qquad
(\pi_{{\mathcal M}'_{t_0}})_*(v')=(\pi_{{\mathcal M}'_{t_0}})_*(v-\Phi(\pi_{\mathcal T'})_*(v'))
\]
because $\{ y\times{\mathcal T'}\}_{y\in{\mathcal M}'_{t_0}}$
are leaves of the foliation
${\mathcal F}^{\rm GIM}_{M^{\balpha}_{{\mathcal C},{\mathcal D}}
(\lambda,\tilde{\mu},\tilde{\nu})}$,
which is determined by the subbundle $\im\Phi$
of $T_{M^{\balpha}_{{\mathcal C},{\mathcal D}}
(\lambda,\tilde{\mu},\tilde{\nu}) }$.
The tangent vector
$(\pi_{{\mathcal M}'_{t_0}})_*(v-\Phi(\pi_{\mathcal T'})_*(v))$
corresponds to
a morphism
$\Spec\mathbb{C}[\epsilon]/\big(\epsilon^2\big)\longrightarrow
{\mathcal M}'_{t_0}$.
Let{\samepage
\[
 \tilde{I}_v\colon \
 \Spec\mathbb{C}[\epsilon]/\big(\epsilon^2\big)\times{\mathcal T'}
 \longrightarrow {\mathcal M}'_{t_0}\times{\mathcal T'}
 \cong{\mathcal M}'
\]
be its base change.}

We can construct a complex
${\mathcal F}^{\bullet}$ of sheaves on ${\mathcal C}\times_{\mathcal T}{\mathcal M'}$
from the universal family
$(E,\nabla,l,\ell,{\mathcal V})$
in the same way as~(\ref{equation: definition of tangent complex})
in Section~\ref{section: tangent space}.
Since $(\mathrm{id}\times \tilde{I}_v)^*(E,\nabla,l,\ell,{\mathcal V})$
is a lift of $(E,\nabla,l,\ell,{\mathcal V})|_{{\mathcal C}_{y\times{\mathcal T'}}}$,
it induces a gluing data
$\{u_{\alpha\beta},v_{\alpha},\eta_{\alpha}\}$
with respect to an open covering $\{U_{\alpha}\}$ of
${\mathcal C}_{y\times{\mathcal T'}}:={\mathcal C}\times_{\mathcal T}(y\times{\mathcal T'})$.
as in Proposition~\ref{prop: tangent space of the moduli space}.
Set
\[
 \tilde{v}:=[\{ u_{\alpha\beta}, v_{\alpha},\eta_{\alpha} \}]
 \in \mathbb{R}^1(p_{y\times{\mathcal T'}})_*
 \big( {\mathcal F}^{\bullet}| _{{\mathcal C}_{y\times{\mathcal T'}}} \big).
\]
Then we can see from the construction of $\tilde{v}$ that the equalities
\begin{gather*}
 \tilde{v}|_{(y,t_0)}
 =
 (\pi_{{\mathcal M'}_{t_0}})_*(v)
\in \mathbb{H}^1\big({\mathcal F}^{\bullet}|_{{\mathcal C}_{(y,t_0)}}\big),
 \\
\tilde{v}|_{(y,t)}
 =
 v-\Phi(\pi_{\mathcal T'})_*(v)
 \in \mathbb{H}^1\big({\mathcal F}^{\bullet}|_{{\mathcal C}_{(y,t)}}\big).
\end{gather*}
hold.
We can similarly construct an element
$\tilde{v'}=[\{ u'_{\alpha\beta}, v'_{\alpha},\eta'_{\alpha} \}]$ of
$\mathbb{R}^1(p_{y\times{\mathcal T'}})_*
\big( {\mathcal F}^{\bullet}| _{{\mathcal C}_{y\times{\mathcal T'}}} \big)$
from the tangent vector $v'$.
Recall the construction of the complex ${\mathcal F}^{\bullet}$
in~(\ref{equation: definition of tangent complex}).
Since the map ${\mathcal G}^1\longrightarrow G^1$ is surjective
and the map ${\mathcal G}^0 \longrightarrow {\mathcal S}^1_{\rm ram}$
is a surjection to the kernel of the surjection
${\mathcal S}^1_{\rm ram}\longrightarrow A^1$,
we can replace $u_{\alpha\beta}$, $v_{\alpha}$ so that
$\eta_{\alpha}=0$ holds.
Similarly we may assume $\eta'_{\alpha}=0$.

Consider the pairing
\[
 \omega \big(\tilde{v},\tilde{v'} \big)
 :=
 \big[ \big\{ \Tr(u_{\alpha\beta} u'_{\beta\gamma}) ,
 -\Tr(u_{\alpha\beta}v_{\beta}-v_{\alpha}u'_{\alpha\beta}) \big\} \big]
 \in \mathbb{R}^2(p_{y\times{\mathcal T'}})_*
 \big( \Omega^{\bullet}_{{\mathcal C}_{y\times{\mathcal T'}}/{\mathcal T'}} \big)
 \cong{\mathcal O}_{\mathcal T'}
\]
of $\tilde{v}$ and $\tilde{v'}$.
Then we have the equalities
\begin{gather*}
 \pi_{{\mathcal M'}_{t_0}}^* \big(
 \omega_{M^{\balpha}_{{\mathcal C},{\mathcal D}}(\lambda,\tilde{\mu},\tilde{\nu})}
 |_{{\mathcal M'}_{t_0}} \big) (v,v')
 =
 \omega_{M^{\balpha}_{{\mathcal C},{\mathcal D}}(\lambda,\tilde{\mu},\tilde{\nu})_{t_0}}
 \big( (\pi_{{\mathcal M'}_{t_0}})_*(v) , (\pi_{{\mathcal M'}_{t_0}})_*(v') \big)
 =
 \omega \big(\tilde{v},\tilde{v'} \big)|_{(y,t_0)} ,
 \\
 \omega^{\rm GIM}_{M^{\balpha}_{{\mathcal C},{\mathcal D}}(\lambda,\tilde{\mu},\tilde{\nu})}
 (v,v')
 =
 \omega_{M^{\balpha}_{{\mathcal C},{\mathcal D}}(\lambda,\tilde{\mu},\tilde{\nu})_{t}}
 \big( v-\Phi(\pi_{\mathcal T'})_*(v) , v'-\Phi(\pi_{\mathcal T'})_*(v') \big)
 =
 \omega \big(\tilde{v},\tilde{v'} \big)|_{(y,t)}.
\end{gather*}
So, in order to prove (\ref{equation: isomonodromy 2-form is a pullback}),
we only have to prove that
$\omega \big(\tilde{v},\tilde{v'} \big)
 \in {\mathcal O}_{\mathcal T'}$
is constant on ${\mathcal T'}$.
We may assume that ${\mathcal T}'$ is isomorphic to a polydisk.
Then it is sufficient to show that
$\omega \big(\tilde{v},\tilde{v'} \big)$
belongs to the image of the canonical map
\begin{gather}
 \mathbb{C}\cong
 \mathbb{H}^2\big({\mathcal O}_{{\mathcal C}_{y\times{\mathcal T}'}}
\xrightarrow{{\rm d}} \Omega^1_{{\mathcal C}_{y\times{\mathcal T}'}}
\xrightarrow{{\rm d}}\cdots\xrightarrow{{\rm d}} \Omega^{N+1}_{{\mathcal C}_{y\times{\mathcal T}'}} \big)\nonumber\\
\hphantom{\mathbb{C}}{}
 \hookrightarrow
\mathbb{R}^2(p_{y\times{\mathcal T'}})_*\big( {\mathcal O}_{{\mathcal C}_{y\times{\mathcal T}'}}
\xrightarrow{{\rm d}} \Omega^1_{{\mathcal C}_{y\times{\mathcal T}'}/{\mathcal T'}} \big)
\cong{\mathcal O}_{\mathcal T'}. \label{equation: map from de Rham cohomology of total differentials}
\end{gather}

Recall that $(E,\nabla)$ can be extended to the family of integrable connections
$\big(E,\nabla^{\rm flat}\big)$ in~(\ref{global integrable connection analytic local over moduli}).
Then the pullback
$\big(\mathrm{id}\times\tilde{I}_v\big)^*\big(E,\nabla^{\rm flat}\big)$
is a family of integrable connections relative to $\Spec\mathbb{C}[\epsilon]$
whose induced relative connection is
$\big(\mathrm{id}\times\tilde{I}_v\big)^*(E,\nabla)$.
So we can extend the relative meromorphic differential
$v_{\alpha}=B_{\alpha}{\rm d}z$ to a total differential
$v^{\rm flat}_{\alpha}=B_{\alpha}{\rm d}z+\sum_{i=1}^N C^i_{\alpha}{\rm d}\theta_i$
which satisfies the patching condition
\[
 (\mathrm{id}+\epsilon u_{\alpha\beta})\circ\big(\nabla^{\rm flat}+\epsilon v^{\rm flat}_{\beta}\big)
 =
 \big(\nabla^{\rm flat}+\epsilon v^{\rm flat}_{\alpha}\big)\circ(\mathrm{id}+\epsilon u_{\alpha\beta})
\]
on $E|_{U_{\alpha\beta}}\otimes\mathbb{C}[\epsilon]$
and the integrability condition
\[
 \big(\nabla^{\rm flat}+\epsilon v^{\rm flat}_{\alpha}\big)
 \circ\big(\nabla^{\rm flat}+\epsilon v^{\rm flat}_{\alpha}\big)
 =0.
\]
Let
\begin{gather*}
 \nabla^{\rm flat}_{\dag}\colon \
 {\mathcal E}{\rm nd}(E_{y\times{\mathcal T'}})
 \ni u \mapsto \nabla^{\rm flat}\circ u-(u\otimes\mathrm{id})\circ \nabla^{\rm flat}
 \in {\mathcal E}{\rm nd}(E_{y\times{\mathcal T'}})\otimes
 \Omega^1_{{\mathcal C}\times_{\mathcal T}(y\times {\mathcal T}')}(D_{y\times{\mathcal T}'})
\end{gather*}
be the induced connection on ${\mathcal E}{\rm nd}(E|_{{\mathcal C}_{y\times{\mathcal T'}}})$.
Then the above two equalities become
\begin{gather*}
 \nabla^{\rm flat}_{\dag}(u_{\alpha\beta})
 =
 v_{\beta}^{\rm flat}-v_{\alpha}^{\rm flat}, \qquad
 \nabla^{\rm flat}_{\dag}\big(v^{\rm flat}_{\alpha}\big) =0.
\end{gather*}
We can check the equalities
\begin{gather*}
 {\rm d} \Tr\big(u_{\alpha\beta}u'_{\beta\gamma}\big)
 =
 \Tr\big(\nabla^{\rm flat}_{\dag}(u_{\alpha\beta}u'_{\beta\gamma})\big)
 =
 \Tr\big( \nabla^{\rm flat}_{\dag}(u_{\alpha\beta})u'_{\beta\gamma}
 +u_{\alpha\beta}\nabla^{\rm flat}_{\dag}(u'_{\beta\gamma})\big)
 \\
 \hphantom{{\rm d} \Tr\big(u_{\alpha\beta}u'_{\beta\gamma}\big)}{}
 =
 \Tr \big( ( v_{\beta}^{\rm flat}- v_{\alpha}^{\rm flat} ) u'_{\beta\gamma}
 +u_{\alpha\beta} ( v'^{\rm flat}_{\gamma} - v'^{\rm flat}_{\beta}) \big)
 \\
 \hphantom{{\rm d} \Tr\big(u_{\alpha\beta}u'_{\beta\gamma}\big)}{}=
 \Tr \big( v_{\beta}^{\rm flat}u'_{\beta\gamma}
 -v_{\alpha}^{\rm flat} (u'_{\alpha\gamma}-u'_{\alpha\beta})
 + (u_{\alpha\gamma}-u_{\beta\gamma}) v'^{\rm flat}_{\gamma}
 - u_{\alpha\beta} v'^{\rm flat}_{\beta} \big)
 \\
 \hphantom{{\rm d} \Tr\big(u_{\alpha\beta}u'_{\beta\gamma}\big)}{}=
 \Tr \big( {-}u_{\beta\gamma} v'^{\rm flat}_{\gamma}
 +v^{\rm flat}_{\beta} u'_{\beta\gamma} \big)
 -
 \Tr \big( {-}u_{\alpha\gamma} v'^{\rm flat}_{\gamma}
 +v^{\rm flat}_{\alpha} u'_{\alpha\gamma} \big)\\
\hphantom{{\rm d} \Tr\big(u_{\alpha\beta}u'_{\beta\gamma}\big)=}{}
 +\Tr \big( {-}u_{\alpha\beta} v'^{\rm flat}_{\beta}
 +v^{\rm flat}_{\alpha} u'_{\alpha\beta} \big),
\\
 {\rm d}\Tr \big( {-}u_{\alpha\beta} v'^{\rm flat}_{\beta} +v^{\rm flat}_{\alpha} u'_{\alpha\beta} \big) =
 \Tr \big( \nabla^{\rm flat}_{\dag} \big({-}u_{\alpha\beta}v'^{\rm flat}_{\beta}+v^{\rm flat}_{\alpha}u'_{\alpha\beta}\big)\big)
 \\
\hphantom{{\rm d}\Tr \big( {-}u_{\alpha\beta} v'^{\rm flat}_{\beta} +v^{\rm flat}_{\alpha} u'_{\alpha\beta} \big)}{}
 =
 \Tr \big( {-}\nabla^{\rm flat}_{\dag}(u_{\alpha\beta})\wedge v'^{\rm flat}_{\beta}
 -v^{\rm flat}_{\alpha} \wedge \nabla^{\rm flat}_{\dag}(u'_{\alpha\beta}) \big)
 \\
 \hphantom{{\rm d}\Tr \big( {-}u_{\alpha\beta} v'^{\rm flat}_{\beta} +v^{\rm flat}_{\alpha} u'_{\alpha\beta} \big)}{}=
\Tr \big( \big( v^{\rm flat}_{\alpha} - v^{\rm flat}_{\beta} \big) \wedge v'^{\rm flat}_{\beta} \big)
-\Tr \big( v^{\rm flat}_{\alpha} \wedge \big( v'^{\rm flat}_{\beta}-v'^{\rm flat}_{\alpha} \big) \big)
\\
\hphantom{{\rm d}\Tr \big( {-}u_{\alpha\beta} v'^{\rm flat}_{\beta} +v^{\rm flat}_{\alpha} u'_{\alpha\beta} \big)}{} =
-\Tr \big( v^{\rm flat}_{\beta} \wedge v'^{\rm flat}_{\beta} \big)
+\Tr \big( v^{\rm flat}_{\alpha} \wedge v'^{\rm flat}_{\alpha} \big)
\end{gather*}
and
\begin{gather*}
 {\rm d}\Tr\big(v_{\alpha}^{\rm flat}\wedge v'^{\rm flat}_{\alpha}\big)
 =
 \Tr \big( \nabla^{\rm flat}_{\dag}\big(v^{\rm flat}_{\alpha}\wedge v'^{\rm flat}_{\alpha}\big)\big)
 =0.
\end{gather*}
Therefore,
$\big[ \big\{ \Tr(u_{\alpha\beta}u'_{\beta\gamma}) ,
-\Tr\big(u_{\alpha\beta}v'^{\rm flat}_{\beta}+v^{\rm flat}_{\alpha}u'_{\alpha\beta}\big) ,
\Tr\big(v^{\rm flat}_{\alpha}\wedge v'^{\rm flat}_{\alpha}\big) \big\} \big]$
defines an element of
$\mathbb{H}^2\big(\Omega^{\bullet}_{{\mathcal C}_{y\times{\mathcal T}'}}\big)\cong\mathbb{C}$
and its image by the map~(\ref{equation: map from de Rham cohomology of total differentials})
coincides with
\[
\omega\big(\tilde{v},\tilde{v'} \big)
=\big[ \big\{ \Tr(u_{\alpha\beta}u'_{\beta\gamma}) ,
-\Tr(u_{\alpha\beta}v'_{\beta}+v_{\alpha}u'_{\alpha\beta}) \big\} \big].
\]
Thus we have proved the corollary.
\end{proof}

\subsection*{Acknowledgements}

The author would like to thank Professor Takuro Mochizuki for
having a discussion and giving a useful advice.
The author also would like to thank Professor Arata Komyo for
having useful discussions frequently.
The author would like to thank the referee for valuable comments to
improve of the paper.
This work is partially supported by
JSPS Grant-in-Aid for Scientific Research (C) 19K03422.

\addcontentsline{toc}{section}{References}
\LastPageEnding

\end{document}